\theoremstyle{plain}
\newtheorem{thm}{Theorem}
\numberwithin{thm}{section}
\newtheorem{thmx}{Theorem}
\newtheorem{lem}[thm]{Lemma}
\newtheorem{prop}[thm]{Proposition}
\newtheorem{cor}[thm]{Corollary}
\newtheorem{rmk}[thm]{Remark}
\theoremstyle{definition}
\newtheorem{defn}[thm]{Definition}
\newtheorem{conj}[thm]{Conjecture}
\newtheorem{ex}[thm]{Example}
\tikzstyle{V}=[draw, fill =black, circle, inner sep=0pt, minimum size=3pt]
\begin{document}

\title[Kostant's problem for permutations 
of shape $(n-2,1,1)$ and $(n-3,2,1)$]{Kostant's problem for permutations\\ 
of shape $(n-2,1,1)$ and $(n-3,2,1)$}

\author{Samuel Creedon and Volodymyr Mazorchuk}

\date{}

\begin{abstract}
For a permutation $z$ in the symmetric group $\mathrm{S}_{n}$, denote by $L_{z}$ the corresponding simple highest weight module in the principal block of the BGG category $\mathcal{O}$ for the Lie algebra $\mathfrak{sl}_{n}(\mathbb{C})$. In this paper, we provide a combinatorial answer to Kostant's problem for the modules $L_{z}$ when $z$ has shape (associated Young diagram/integer partition via Robinson-Schensted correspondence) equal to $(n-2,1,1)$ or $(n-3,2,1)$. Moreover, we verify that certain closely related conjectures hold for such permutations, including the Indecomposability Conjecture, which states that applying any indecomposable projective functor to the corresponding simple highest weight module outputs either an indecomposable module or zero. 
\end{abstract}

\maketitle

\section{Introduction}\label{Sec1}

\subsection*{Background}

Let $\mathfrak{g}$ be a complex finite dimensional Lie algebra, and $M$ a $\mathfrak{g}$-module. Then \emph{Kostant's problem} (see \cite{Jo80}) is the question of whether the universal enveloping algebra $U(\mathfrak{g})$ of $\mathfrak{g}$ surjects onto the algebra of ad-finite linear endomorphisms of $M$. The answer to Kostant's problem is known for some classes of modules, but is wide open in general. See \cite{Ma23} for a recent overview.

The simple highest weight modules of the principal block of the Bernstein-Gelfand-Gelfand category $\mathcal{O}$ for $\mathfrak{g}=\mathfrak{sl}_{n}$ are indexed by permutations of the symmetric group $\mathrm{S}_{n}$. As such, we call a permutation \emph{Kostant positive}, if the answer to Kostant's problem is positive for the corresponding simple highest weight module, and \emph{Kostant negative} otherwise. Thus we may speak of answering Kostant's problem for permutations. Even in this setting, a general answer to Kostant's problem is not known. However, significant theory has been developed and many special cases have been resolved.

The results from \cite{KMM23} connected Kostant's problem for permutations to the action of projective functors (see \cite{BG80}) on the corresponding simple highest weight modules. As a result, Kostant's problem is shown to be closely linked to both the \emph{Indecomposability Conjecture} (\cite[Conjecture~2]{KiM16}) and 
\emph{K{\aa}hrstr{\"o}m's Conjecture} (\cite[Conjecture~1.2]{KMM23}). The 
former claims that applying a projective functor on such a simple highest 
weight module outputs a module that is either zero or indecomposable, while the latter gives a completely combinatorial reformulation of Kostant's problem, for these simple highest weight modules, via Hecke algebra combinatorics. Furthermore, it was proven in \cite{MS08} that the answer to Kostant's problem on permutations is an invariant of the Kazhdan-Lusztig left cell. Each Kazhdan-Lusztig left cell of $\mathrm{S}_{n}$ contains a unique involution, thus to answer Kostant's problem for all permutations in $\mathrm{S}_{n}$, it suffices to answer it for all involutions $\mathrm{Inv}_{n}\subset\mathrm{S}_{n}$. 

By the results from \cite{KM10}, \cite{KMM23}, and \cite{CM25-1}, Kostant's problem has been answered for all permutations in $\mathrm{S}_{n}$, for $1\leq n\leq 7$. It was also recently shown by the authors in \cite{CM24} that, as $n$ tends to infinity, almost all permutations are Kostant negative. This was proven by employing the result, from the same paper, that any permutation is Kostant negative whenever it consecutively contains the pattern $2143$. Moreover, very recently in \cite{CM26}, it was shown that any permutation is Kostant negative whenever it consecutively contains a pattern $p$ which is itself Kostant's negative when viewed as a permutation. With this in mind, the main results of this paper concerning Kostant's problem (as summarised in the next section) are given in terms of consecutive patterns. 

Lastly, the main result of \cite{MMM24} was an explicit combinatorial answer to Kostant's problem for all fully commutative permutations in $\mathrm{S}_{n}$. A fully commutative permutation is one whose shape has at most two rows, where shape refers to the Young diagram/integer partition $\lambda$ associated to the permutation via the Robinson-Schensted correspondence. In the same paper, both the Indecomposability Conjecture and 
K{\aa}hrstr{\"o}m's Conjecture were shown to hold for all fully commutative permutations. The former was shown by employing results of Brundan and Stroppel, while the latter was shown by translating Hecke algebra combinatorics into combinatorics of Temperley-Lieb diagrams. It was also proven in \cite[Theorem 6.1 (b)]{MMM24} that the proportion of Kostant positive fully commutative permutations of shape $\lambda$, out of all fully commutative permutations of shape $\lambda$, tends to $1$ as the number of boxes in the first row of $\lambda$ increases. It was then conjectured that this holds for any shape $\lambda$, not just those with at most two rows, see \cite[Section 6.9]{MMM24}. We refer to this conjecture as the \emph{Asymptotic Shape Conjecture}.

\subsection*{Results and paper outline}

The goal of this paper is to answer Kostant's problem, and investigate the conjectures mentioned above, for all permutations in $\mathrm{S}_{n}$ whose shape is either $\lambda=(n-2,1,1)$ or $\lambda=(n-3,2,1)$. These two families of shapes are two of the simplest non-fully commutative (at least three rows) families to consider, and are closed under adding boxes to the top row. Therefore, they make for natural candidates to study next.

To begin, we set up our notation, and recall all the background we will need in \Cref{Sec:2}. Then, in \Cref{Sec:3}, we establish various technical/reduction type results connecting the consecutive containment of patterns with Hecke algebra combinatorics relevant to Kostant's problem. Most of these results concern the Kazhdan-Lusztig preorders, and may be of independent interest.  

\Cref{Sec:4} focuses on answering Kostant's problem, and verifying the related conjectures, for all permutations of shape $(n-2,1,1)$. Firstly, by employing a result from \cite{CMZ19}, the Indecomposability Conjecture is quickly shown to hold for all such permutations. Then, following a technical proposition, we give the first main theorem of the paper, \Cref{Thm:4:11}, which provides a combinatorial answer to Kostant's problem for all involutions of shape $(n-2,1,1)$, as follows:    
\begin{thmx}
An involution with shape $\lambda=(n-2,1,1)$, where $n\geq 3$, is Kostant negative if and only if it consecutive contains the pattern $14325$.
\end{thmx}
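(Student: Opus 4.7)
The plan is to combine a combinatorial classification of involutions of shape $(n-2,1,1)$ with the consecutive-pattern machinery developed earlier in the paper and in \cite{CM26}. Since the Young diagram $(n-2,1,1)$ has every column of odd length, any such involution has exactly $n-2$ fixed points and is therefore a single transposition $t_{i,j}$ with $i<j$. A direct application of Schensted's theorem (longest decreasing subsequence has length equal to the first column of the shape) shows that $t_{i,j}$ has shape $(n-2,1,1)$ precisely when $j-i \geq 2$. Next, I would carry out a short case analysis to decide when such a $t_{i,j}$ consecutively contains $14325$. The crucial observation is that $14325$ contains a decreasing run of length three in positions $2,3,4$, and a single transposition can only produce three consecutive decreasing values within a window of five positions when the transposition swaps the outermost entries of that run. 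This forces $j=i+2$ together with $2\leq i \leq n-3$, so that the two anchor positions of the window fit inside $\{1,\dots,n\}$ and provide the outer entries $1$ and $5$ of the pattern.

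For the ``if'' direction, one checks directly (or invokes the $n=5$ data from \cite{KMM23} and \cite{CM25-1}) that $14325 \in \mathrm{S}_{5}$ is itself Kostant negative. The main theorem of \cite{CM26} then upgrades this to Kostant negativity of every permutation consecutively containing $14325$, which immediately settles this implication of the equivalence.

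For the ``only if'' direction, I would argue that each remaining involution $t_{i,j}$ of shape $(n-2,1,1)$ is Kostant positive. These split into two families: the two boundary transpositions $t_{1,3}$ and $t_{n-2,n}$, which realise the longest element of a rank-two parabolic $\mathrm{S}_{3}$ and can be handled by direct inspection, and the generic far cases $t_{i,j}$ with $j-i\geq 3$. The latter family is the hard part; here I would apply the technical proposition immediately preceding the theorem, together with the Section~3 reductions that translate consecutive pattern containment into statements about the Kazhdan--Lusztig preorders, and with K{\aa}hrstr{\"o}m's reformulation from \cite{KMM23} of Kostant's problem in terms of Hecke algebra combinatorics. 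The principal obstacle is controlling all of the far cases uniformly: one needs either a clean inductive step on $n$ driven by a Section~3 reduction, or a closed-form computation of the relevant Hecke algebra brackets, after which the Indecomposability Conjecture (already verified at the start of Section~4 via \cite{CMZ19}) allows one to translate the Hecke-algebraic conclusion back into Kostant positivity of $L_{t_{i,j}}$.
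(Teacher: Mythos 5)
Your classification of the involutions of shape $(n-2,1,1)$ as the non-elementary transpositions, and your characterisation of when such a transposition consecutively contains $14325$ (distance two, both endpoints away from the boundary), are both correct and agree with the paper's \Cref{Eq:4:1} and the first paragraph of the proof of \Cref{Thm:4:11}. The ``if'' direction is also fine: the paper cites \cite[Theorem 3.6]{CM25-1} where you invoke the consecutive-pattern hereditarity result of \cite{CM26} applied to the Kostant negative pattern $14325$; these are interchangeable here.

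The genuine gap is in the ``only if'' direction, which is the mathematical substance of the theorem, and you explicitly defer it (``the principal obstacle is controlling all of the far cases uniformly: one needs either a clean inductive step \dots or a closed-form computation''). Concretely, two things are missing. First, the technical proposition you gesture at (\Cref{Prop:4:3}) must itself be proved: after reducing to pairs $(x_k,y_k)$ of shape-$(n-1,1)$ elements indexed by $k\in[n-1]$, the paper establishes $D_{z}^{(n)}C_{x_k}^{(n)}\neq D_{z}^{(n)}C_{y_k}^{(n)}$ by an explicit base-case claim (comparing the low-length dual Kazhdan--Lusztig terms produced by \Cref{Eq:2.8:1}) followed by three separate inductions on $k$, each driven by identities of the form $C_{x_k}^{(n)}C_{s_{k\pm1}}^{(n)}=C_{x_ks_{k\pm1}}^{(n)}+C_{x_{k\pm1}}^{(n)}$ and vanishing statements $D_{z}^{(n)}C_{x_ks_{k\pm1}}^{(n)}=0$ coming from descent-set obstructions. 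None of this is supplied by your sketch. Second, ruling out coincidences is not exhausted by the shape-$(n-1,1)$ pairs: for $z=(1,j)$ with $j\geq 4$ (which sits in your ``far'' family) and for the interior far cases, one must also exclude equalities between the two shape-$(n-2,2)$ right cells in the candidate list \eqref{Eq:4:13}, which the paper does not do via \Cref{Prop:4:3} but by showing the relevant involutions $s_{j-1}s_{j}s_{j-2}s_{j-1}$ and $s_{i-1}s_{i}s_{i-2}s_{i-1}$ fail $\leq_{R}^{(n)}z$, using Claim~(b) of \Cref{Prop:3:7} with the patterns $2314$ and $1423$. Your proposal never mentions this second mechanism, so even granting the technical proposition, the case analysis would be incomplete. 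Your treatment of $t_{1,3}$ and $t_{n-2,n}$ as longest elements of rank-two parabolics is a legitimate alternative shortcut for those two involutions, but it does not reduce the remaining workload.
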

By the results from \cite{MS08}, this theorem gives an answer to Kostant's problem for all permutations of shape $(n-2,1,1)$, not just involutions. Proving the ``if'' statement follows quickly from a result in \cite{CM25-1}, while the ``only if'' statement is shown to reduce down to establishing a family of inequalities within the Hecke algebra $\mathrm{H}_{n}$ associated to $\mathrm{S}_{n}$, which the technical proposition preceding the theorem accomplishes. It turns out, the manner in which this theorem is proved, also verifies that K{\aa}hrstr{\"o}m's Conjecture holds for such involutions. We close out this section by proving that the Asymptotic Shape Conjecture holds for the family of shapes $(n-2,1,1)$ for $n\geq3$, or, in the notation of \Cref{Sec:2.13}, holds for shape $(1,1)$.

Lastly, \Cref{Sec:5} focuses on answering Kostant's problem, and investigating the related conjectures, for all permutations of shape $(n-3,2,1)$. This shape is significantly more difficult than $(n-2,1,1)$, with \Cref{Sec:5} occupying most of the paper. We begin by classifying the involutions of shape $(n-3,2,1)$ into 13 different \emph{types}, and explain that, up to symmetry, only 7 such types need to be considered. We then go through each of these 7 types, one at a time, and, for the corresponding involutions, we prove that the Indecomposable Conjecture holds, and we give an answer to Kostant's problem. To accomplish this, we employ various known results within the literature, the reduction type results from \Cref{Sec:3}, and numerous ad hoc arguments. We then bring together the answers to Kostant's problem for all 13 types to produce the last main theorem of the paper, which gives a combinatorial answer to Kostant's problem for all involutions of shape $(n-3,2,1)$, as follows:
\begin{thmx}
An involution with shape $\lambda=(n-3,2,1)$, for any $n\geq 5$, is Kostant negative if and only if it consecutive contains one of the following patterns: $2143$, $14325$, $1536247$, $1462537$.
\end{thmx}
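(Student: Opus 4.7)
The strategy is to assemble the theorem from the case-by-case analyses carried out throughout \Cref{Sec:5}. For the ``if'' direction, I would first verify that each of the four listed patterns is itself Kostant negative as a permutation: $2143$ by the main result of \cite{CM24}; $14325$ as a direct consequence of the first main theorem above; and the two length-seven patterns $1536247$ and $1462537$ either by direct Hecke-algebraic computation or by invoking the enumeration in \cite{CM25-1}. The consecutive-pattern theorem of \cite{CM26} then yields immediately that any permutation, and in particular any involution, consecutively containing one of these patterns is Kostant negative.

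For the ``only if'' direction, I would invoke the preceding classification of involutions of shape $(n-3,2,1)$ into 13 types, which reduces by symmetry to 7 representative types. For each of these 7 types, the analysis of \Cref{Sec:5} produces a combinatorial criterion for Kostant negativity within that type. The remaining task is to show that, when one ranges over all 13 types, the disjunction of these type-specific criteria is equivalent to consecutive containment of at least one of $2143$, $14325$, $1536247$, $1462537$. More precisely, for each type one checks that every Kostant negative involution of that type consecutively contains one of the four patterns; the converse implication is then automatic from the ``if'' direction already established.

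The main obstacle will be the bookkeeping needed to match the type-specific obstructions with the four uniform patterns. For types whose involutions are all Kostant positive, there is nothing to check; for types admitting Kostant negative involutions, one must translate the local structural feature identified in \Cref{Sec:5} (a particular descent configuration, a specific relative position of fixed points and 2-cycles, and so on) into consecutive pattern containment. I expect the reduction results from \Cref{Sec:3}, which link consecutive pattern containment to Kazhdan--Lusztig preorder data, to be the central tool for carrying out this translation uniformly across all types, and the symmetry reduction from 13 to 7 types to keep the verification tractable.
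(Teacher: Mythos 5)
Your proposal matches the paper's proof in all essentials: the theorem is assembled by reducing to the seven representative types via $w_{0,n}$-conjugation and then checking, type by type against the one-line notation, that consecutive containment of one of the four patterns coincides exactly with the Kostant-negativity criterion established for that type (a direct combinatorial inspection, not requiring the Kazhdan--Lusztig machinery of \Cref{Sec:3} at this final stage). The only differences are cosmetic: the paper obtains both directions at once from these per-type equivalences, so it never needs to separately verify that the four patterns are themselves Kostant negative or invoke the transfer theorem of \cite{CM26} for the ``if'' direction as you do, and it makes explicit the small point you leave implicit, namely that the four-pattern list is closed under conjugation by $w_{0,n}$, which is what licenses restricting the pattern-containment check to seven of the thirteen types.
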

Lastly, with this theorem in hand, we are able to prove that the Asymptotic Shape Conjecture holds for the family of shapes $(n-3,2,1)$ for $n\geq5$, i.e. holds for shape $(2,1)$, using the notation from \Cref{Sec:2.13}. Unlike for the involutions of shape $(n-2,1,1)$, we were unable to verify K{\aa}hrstr{\"o}m's Conjecture for involutions of shape $(n-3,2,1)$. However, it can be deduced (with a little bit of work) that, from the manner in which we answered Kostant's problem for the 13 types of involutions of shape $(n-3,2,1)$, if \cite[Conjecture 41]{CM25-2} and \cite[Conjecture 42]{CM25-2} hold, then K{\aa}hrstr{\"o}m's Conjecture holds for involutions of shape $(n-3,2,1)$.

\subsection*{Acknowledgements}

The first author is partially supported by Vergstiftelsen.
The second author is partially supported by the Swedish Research Council.
All computations were done using the CHEVIE package in GAP3 \cite{Mi15}, and are easily replicable (see the appendix in \cite{CM25-1}). 

\section{Preliminaries}\label{Sec:2}

\subsection{Symmetric group}\label{Sec:2.1}

For $n\geq 0$, let $\mathrm{S}_{n}$ denote the symmetric group of permutations of the set $[n]:=\{1,2,\dots,n\}$ and let $1_{n}\in\mathrm{S}_{n}$ be the identity element. Furthermore, we will denote by
\[ \mathrm{Sim}_{n}:=\{s_{1},s_{2},\dots,s_{n-1}\} \hspace{2mm} \text{ and } \hspace{2mm} \mathrm{Inv}_{n}:=\{x\in\mathrm{S}_{n}\ | \ x^{2}=1_{n}\}, \]
the subsets of $\mathrm{S}_{n}$ consisting of the \emph{simple transpositions} and \emph{involutions}, respectively. Hence, given $i\in[n-1]$,  $s_{i}=(i,i+1)$ is the permutation exchanging $i$ and $i+1$, and fixing everything else.

For $x\in\mathrm{S}_{n}$, a string $s_{i_{l}}s_{i_{l-1}}\dots s_{i_{1}}$, with $s_{i_{j}}\in\mathrm{Sim}_{n}$, is called an \emph{expression of }$x$ if $x=s_{i_{l}}s_{i_{l-1}}\dots s_{i_{1}}$. The expression is \emph{reduced} if $l$ is minimal, in which case $l$ is the \emph{length} of $x$, written $\ell(x):=l$. We will write $\underline{x}=s_{i_{l}}s_{i_{l-1}}\dots s_{i_{1}}$ to refer to both $x$ and the expression $s_{i_{l}}s_{i_{l-1}}\dots s_{i_{1}}$ representing it. 

We let $\leq$ denote the \emph{Bruhat order} on $\mathrm{S}_{n}$: $x\leq y$ if there exists a \emph{substring} of a (equivalently all) reduced expression of $y$ which is a reduced expression of $x$. Let $w_{0,n}\in\mathrm{S}_{n}$ be the maximum element with respect to $\leq$, which is the permutation that reverses the list $1,2,\dots,n$. We will denote by 
\[ \mathrm{Des}_{L}(x):=\{s\in \mathrm{Sim}_{n}\ | \ sx<x\} \hspace{1mm} \text{ and } \hspace{1mm} \mathrm{Des}_{R}(x):=\{s\in \mathrm{Sim}_{n}\ | \ xs<x\}, \]
the \emph{left} and \emph{right descent sets} of $x$, respectively. Note, $\mathrm{Des}_{L}(x)=\mathrm{Des}_{R}(x)$ for all $x\in\mathrm{Inv}_{n}$. Also, we will denote by $\mathrm{Sup}(x)$ the \emph{support of }$x$, which we define as the subset of $\mathrm{Sim}_{n}$ consisting of all simple transpositions which appear in a (equivalently all) reduced expression for $x$.

\subsection{Consecutive patterns}\label{Sec:2.2}

For $x\in\mathrm{S}_{n}$ and $i\in[n]$, we write $x(i)\in[n]$ to denote the image of $i$ under the permutation $x$. We compose permutations as functions from right to left. The \emph{one-line notation} of $x$ is given by $x(1)x(2)\cdots x(n)$, which is considered to be a word with letters in $[n]$. 

For any $x,y\in\mathrm{S}_{n}$, the one-line notation of $xy$ is obtained from that of $x$ by permuting the places of the letters according to $y$. On the other hand, the one-line notation of $xy$ is obtained from that of $y$ by acting on the letters themselves by $x$. As special cases, the one-line notation of $xs_{i}$ is obtained from that of $x$ by swapping the positions of the letters $x(i)$ and $x(i+1)$, while the one-line notation of $s_{i}x$ is obtained from that of $x$ by swapping the letters $i$ and $i+1$ around.

\begin{ex}\label{Ex:2.2:1}
Consider $x=s_{1}s_{2}s_{1}s_{3}\in\mathrm{S}_{4}$, with one-line notation $x=3241$. Then we have that
\[ s_{1}x=s_{2}s_{1}s_{3}=3{\color{teal}1}4{\color{teal}2} \ \text{ and } \ xs_{1}=s_{1}s_{2}s_{3}={\color{red}23}41. \]
The {\color{teal}teal} colours here represent the action of $s_{1}$ on the letters of $x$, swapping $1$ and $2$ around, while {\color{red}red} represents the action of $s_{1}$ on the position of the letters of $x$, swapping $x(1)$ and $x(2)$ around.
\end{ex}

Now, given $m\leq n$, $p\in\mathrm{S}_{m}$, and $x\in\mathrm{S}_{n}$, we say that $x$ \emph{consecutively contains} $p$ \emph{as a pattern} if there exists an $i\in[n]$ such that the letters appearing within the two words
\[ p(1)p(2)\cdots p(m) \hspace{2mm} \text{ and } \hspace{2mm} x(i)x(i+1)\cdots x(i+m-1) \]
share the same relative order. Explicitly, this means that, for any $a,b\in[m]$, the relation $p(a)<p(b)$ holds if and only if $x(a+i-1)<x(b+i-1)$ holds. If this is the case, then we say that the pattern $p$ has \emph{consecutively appeared in} $x$ \emph{at position} $i$. We also say that the pattern $p$ is of \emph{size} $m$.

\begin{ex}\label{Ex:2.2:2}
The pattern $2143$ consecutively appears in $21743856$ at positions $1$ and $4$.
\end{ex}

The following is a well-known result, see for example \cite[Lemma 2.1.4]{BB05}.

\begin{lem}\label{Lem:2.2:3}
Given $x\in\mathrm{S}_{n}$ and $i\in[n-1]$, then $s_{i}\in\mathrm{Des}_{R}(x)$ if and only if $x(i+1)<x(i)$, with the latter condition being equivalent to the pattern $21$ consecutively appearing in $x$ at position $i$.
\end{lem}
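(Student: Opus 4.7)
The plan is to combine two standard facts: the length of a permutation equals its number of inversions, and the explicit description of how right-multiplication by $s_{i}$ alters the one-line notation, which has already been spelled out in the text directly above the statement. With these in hand, the lemma reduces to a single-pair inversion count.

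First, I would recall the inversion formula $\ell(x) = |\{(j,k) : 1 \leq j < k \leq n,\ x(j) > x(k)\}|$, which follows from the subword/reduced expression framework already set up for the Bruhat order. I would then invoke the observation, explicit in \Cref{Sec:2.2}, that the one-line notation of $xs_{i}$ is obtained from that of $x$ by interchanging the entries in positions $i$ and $i+1$, leaving all other entries in place. In particular, for every pair of positions $(j,k)$ with $\{j,k\} \neq \{i,i+1\}$, the ordered pair of values $(x(j),x(k))$ coincides with the ordered pair of values at the same positions in $xs_{i}$, so such pairs contribute identically to the inversion counts of $x$ and of $xs_{i}$. The only pair whose contribution can differ is $(i,i+1)$, which is an inversion in exactly one of $x$ or $xs_{i}$. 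Hence
\[
\ell(xs_{i}) = \ell(x) - 1 \quad \text{if } x(i) > x(i+1), \qquad \ell(xs_{i}) = \ell(x) + 1 \quad \text{if } x(i) < x(i+1).
\]

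Next, I would combine this with the basic fact that for a simple transposition $s_{i}$, the two elements $x$ and $xs_{i}$ are always Bruhat-comparable, with the smaller being the one of smaller length; equivalently, $xs_{i} < x$ if and only if $\ell(xs_{i}) < \ell(x)$. This fact is a direct consequence of the subword characterisation of Bruhat order recalled in \Cref{Sec:2.1}: any reduced expression for the longer one either contains a reduced expression for the shorter obtained by deleting a terminal $s_{i}$, or else the exchange condition produces such a deletion. Putting this together with the length calculation gives $s_{i} \in \mathrm{Des}_{R}(x) \iff \ell(xs_{i}) < \ell(x) \iff x(i) > x(i+1)$, which is the first equivalence of the lemma.

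For the remaining equivalence, I would simply unwind the definition of consecutive pattern containment from \Cref{Sec:2.2}: the pattern $21$ consecutively appears in $x$ at position $i$ precisely when the two-letter word $x(i)x(i+1)$ shares the same relative order as $21$, which is literally the inequality $x(i+1) < x(i)$. There is essentially no obstacle here; the only mildly delicate point is appealing to the comparability of $x$ and $xs_{i}$ in Bruhat order, which is standard Coxeter theory and follows from the framework already introduced.
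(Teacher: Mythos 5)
Your argument is correct and is the standard one; the paper gives no proof of \Cref{Lem:2.2:3} at all, deferring to \cite[Lemma 2.1.4]{BB05}, and your inversion-count-plus-Bruhat-neighbour route is essentially what that reference does. One small imprecision to fix: for a pair of positions with exactly one element in $\{i,i+1\}$ the values at those \emph{same} positions in $xs_{i}$ do \emph{not} coincide with those in $x$ (e.g.\ $(xs_{i})(i)=x(i+1)$); what is true is that $s_{i}$ induces a bijection on the position-pairs other than $\{i,i+1\}$ carrying inversions of $x$ to inversions of $xs_{i}$, which still gives the claimed equality of the two restricted inversion counts and hence $\ell(xs_{i})=\ell(x)\pm1$ as you state.
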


\subsection{Compatible Bruhat walks}\label{Sec:2.3}

A pair $x,y\in\mathrm{S}_{n}$ are called \emph{Bruhat neighbours} if $x\leq y$ or $y\leq x$, and $|\ell(x)-\ell(y)|=1$. Equivalently, either $y$ covers $x$ with respect to the Bruhat order, or $x$ covers $y$. We call $x,y\in\mathrm{S}_{n}$ \emph{strong right Bruhat neighbours} if there exists an $s\in\mathrm{Sim}_{n}$ such that $y=xs$. We call any tuple $\mathrm{w}=(z_{1},z_{2},\dots,z_{l})\in\mathrm{S}_{n}^{\times l}$ a \emph{strong right Bruhat walk} if each consecutive pair $z_{i}$ and $z_{i+1}$ are strong right Bruhat neighbours. For a reduced expression $\underline{x}=s_{i_{1}}s_{i_{2}}\dots s_{i_{l}}$, we say that the strong right Bruhat walk $\mathrm{w}$ is $\underline{x}$-\emph{compatible} if, for all valid $k$,
\begin{equation}\label{Eq:2.3:1}
z_{k}s_{i_{k}}<z_{k} \ \text{ and } \ z_{k}s_{i_{k\pm1}}>z_{k}.
\end{equation}

\begin{ex}\label{Ex:2.3:1}
Let $z:=235146\in\mathrm{S}_{6}$, given in one-line notation. Consider the reduced expression $\underline{x}:=s_{3}s_{2}s_{1}$ (so $s_{i_{1}}=s_{3}$, $s_{i_{2}}=s_{2}$, and $s_{i_{3}}=s_{1}$) and the strong right Bruhat walk given by
\[ \mathrm{w}=(z_{1},z_{2},z_{3}):=(z,zs_{3},zs_{3}s_{2})=(235146,231546,213546)\in\mathrm{S}_{6}^{\times 3}. \]
By \Cref{Lem:2.2:3}, one can show that $\mathrm{w}$ is $\underline{x}$-compatible, in other words, the Bruhat relations in \eqref{Eq:2.3:1} are upheld. There are seven such Bruhat relations to check, and we present the details here:
\begin{itemize}
\item[(i)] $zs_{3}<z$ holds since $z(4)=1<5=z(3)$.
\item[(ii)] $(zs_{3})s_{2}<zs_{3}$ holds since $(zs_{3})(3)=1<3=(zs_{3})(2)$.
\item[(iii)] $(zs_{3}s_{2})s_{1}<zs_{3}s_{2}$ holds since $(zs_{3}s_{2})(2)=1<2=(zs_{3}s_{2})(1)$.
\item[(iv)] $zs_{2}>z$ holds since $z(3)=5>3=z(2)$.
\item[(v)] $(zs_{3})s_{1}>zs_{3}$ holds since $(zs_{3})(2)=3>2=(zs_{3})(1)$.
\item[(vi)] $(zs_{3})s_{3}>zs_{3}$ holds since $(zs_{3})(4)=5>1=(zs_{3})(3)$.
\item[(vii)] $(zs_{3}s_{2})s_{2}>zs_{3}s_{2}$ holds since $(zs_{3}s_{2})(3)=3>1=(zs_{3}s_{2})(2)$.
\end{itemize}
\end{ex}

\subsection{Robinson-Schensted correspondence}\label{Sec:2.4}

For $n\geq 0$, let $\Lambda_{n}$ denote the set of all \emph{Young diagrams of size} $n$ (equivalently \emph{integer partitions of} $n$), and $\preceq$ the usual \emph{dominance order} on $\Lambda_{n}$. For $\lambda\in\Lambda_{n}$, let $\mathtt{SYT}_{n}(\lambda)$ denote the set of \emph{standard Young tableaux of shape} $\lambda$, and we denote by
\[ \mathtt{RS}_{n}:\mathrm{S}_{n}\xrightarrow{\sim}\bigsqcup_{\lambda\in\Lambda_{n}}\mathtt{SYT}_{n}(\lambda)\times\mathtt{SYT}_{n}(\lambda) \] 
the \emph{Robinson-Schensted correspondence}, defined by \emph{Schensted's insertion algorithm} from \cite{Sc61}, see also \cite{Sa01}. For $x\in\mathrm{S}_{n}$, we let $\mathtt{RS}_{n}(x)=:(\mathtt{P}_{x},\mathtt{Q}_{x})$. So $\mathtt{P}_{x}$ and $\mathtt{Q}_{x}$ denote the \emph{insertion} and \emph{recording} tableau associated to $x$, respectively. We also let $\mathtt{sh}(x)\in\Lambda_{n}$ denote the underlying Young diagram of $\mathtt{Q}_{x}$, or equivalently of $\mathtt{P}_{x}$, which we call the \emph{shape} of $x$. 

Let $\lambda=(\lambda_{1},\dots,\lambda_{r})\in\Lambda_{n}$ be a Young diagram of size $n$, and let $\lambda^{-}$ denote the Young diagram of size $n-\lambda_{1}$ obtained from $\lambda$ by removing the top row. Naturally, $\lambda$ can be uniquely recovered from knowing $\lambda^{-}$ and $\lambda_{1}$. As such, we will employ the notation $(\lambda^{-})^{\langle\lambda_{1}\rangle}:=\lambda$. Now let $\mathtt{T}\in\mathtt{SYT}_{n}(\lambda)$ be a standard Young tableau of size $n$ and shape $\lambda$, and similarly let $\mathtt{T}^{-}$ denote the standard Young tableau of size $n-\lambda_{1}$ and shape $\lambda^{-}$ obtained from $\mathtt{T}$ by removing the top row (hence the boxes of $\mathtt{T}^{-}$ are filled by the elements of some strict subset of $[n]$). Then $\mathtt{T}$ can be uniquely recovered from knowing $\mathtt{T}^{-}$ and $\lambda_{1}$, hence we will write $(\mathtt{T}^{-})^{\langle\lambda_{1}\rangle}:=\mathtt{T}$. 

It is known that $\mathtt{P}_{x^{-1}}=\mathtt{Q}_{x}$ and $\mathtt{Q}_{x^{-1}}=\mathtt{P}_{x}$, and therefore we have that $\mathtt{P}_{x}=\mathtt{Q}_{x}$ whenever $x\in\mathrm{Inv}_{n}$. Because of this, we will also work with the induced bijection on involutions
\begin{equation}\label{Eq:2.4:0}
\tilde{\mathtt{RS}}_{n}:\mathrm{Inv}_{n}\xrightarrow{\sim}\bigsqcup_{\lambda\in\Lambda_{n}}\mathtt{SYT}_{n}(\lambda), \end{equation}
which sends $x\in\mathrm{Inv}_{n}$ to the standard Young tableau $\mathtt{P}_{x}=\mathtt{Q}_{x}$. We now record the following well known result for latter use, and give an example illustrating the above definitions.

\begin{lem}\label{Lem:2.4:1}
For $w\in\mathrm{S}_{n}$, then $s_{i}\in\mathrm{Des}_{R}(w)$ if and only if the box of $\mathtt{Q}_{w}$ containing $i$ belongs to a higher row than that of the box containing $i+1$. Equivalently, $s_{i}\in\mathrm{Des}_{L}(w)$ if and only if the box of $\mathtt{P}_{w}$ containing $i$ belongs to a higher row than that of the box containing $i+1$.
\end{lem}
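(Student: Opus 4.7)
The plan is to prove the right-descent version; the left-descent version then follows immediately from the identities $\mathtt{P}_w = \mathtt{Q}_{w^{-1}}$ (recorded in \Cref{Sec:2.4}) and $\mathrm{Des}_L(w) = \mathrm{Des}_R(w^{-1})$, the latter coming from $s_i w < w \iff w^{-1} s_i < w^{-1}$. By \Cref{Lem:2.2:3}, the condition $s_i \in \mathrm{Des}_R(w)$ is equivalent to $w(i) > w(i+1)$, so the statement reduces to showing that $w(i) > w(i+1)$ if and only if the box of $\mathtt{Q}_w$ labelled $i$ lies in a strictly higher row than the box labelled $i+1$.

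Recall that, by definition, $\mathtt{Q}_w$ is constructed by successively applying Schensted row insertion to $w(1), w(2), \dots, w(n)$ into the empty tableau, and at each step $k$ the unique new box added to the shape is labelled with $k$. Writing $B_k$ for this new box, the claim becomes: $w(i) > w(i+1)$ if and only if $B_{i+1}$ lies strictly below $B_i$, i.e.\ in a row with strictly larger row-index.

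The main (and only) non-trivial input is the classical row-bumping lemma (see, e.g., \cite{Sa01}): if two successive letters $a, b$ are inserted into a tableau $T$, then in the case $a < b$ the new box produced by inserting $b$ lies strictly to the right of, and weakly higher than, the one produced by inserting $a$; whereas in the case $a > b$ the new box produced by inserting $b$ lies strictly below, and weakly to the left of, the one produced by inserting $a$. Applying this lemma to the two consecutive insertion steps with $a = w(i)$ and $b = w(i+1)$ (and $T$ the tableau obtained after the first $i-1$ insertions) immediately yields the desired equivalence. No obstacle beyond invoking this standard insertion lemma is anticipated, which is why the result is stated without proof in the paper.
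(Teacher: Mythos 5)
Your proof is correct. The paper states this lemma as a well-known fact and gives no proof of its own, so there is nothing to compare against; your argument --- reducing the left version to the right version via $\mathtt{P}_{w}=\mathtt{Q}_{w^{-1}}$ and $\mathrm{Des}_{L}(w)=\mathrm{Des}_{R}(w^{-1})$, translating $s_{i}\in\mathrm{Des}_{R}(w)$ into $w(i)>w(i+1)$ by \Cref{Lem:2.2:3}, and then invoking the classical row-bumping lemma for two consecutive Schensted insertions --- is exactly the standard textbook derivation (e.g.\ via Fulton's Row Bumping Lemma), and the two cases of that lemma match the two sides of the claimed equivalence as you describe.
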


\begin{ex}\label{Ex:2.4:2}
Consider $x=s_{1}s_{2}s_{3}s_{4}s_{3}s_{2}s_{1}=5234167\in\mathrm{Inv}_{7}$. Then we have that
\[ \tilde{\mathtt{RS}}_{7}(x)=\mathtt{P}_{x}=\mathtt{Q}_{x}\hspace{1mm}=\hspace{1mm}{\small{\begin{ytableau}1&3&4&6&7\\2\\5\end{ytableau}}}\hspace{1mm}=\hspace{1mm}{\small{\begin{ytableau}2\\5\end{ytableau}}}^{\ \langle5\rangle}, \]
thus $\mathtt{sh}(x)=(5,1,1)=(1,1)^{\langle5\rangle}$. As an example of the dominance order, $(3,2,2)\preceq\mathtt{sh}(x)\preceq(5,2)$. Also, we have that $\mathrm{Des}_{R}(x)=\mathrm{Des}_{L}(x)=\{1,4\}$, and it is precisely the boxes containing $1$ and $4$ in $\mathtt{P}_{x}=\mathtt{Q}_{x}$ which appear in a higher row than the boxes containing their successors.
\end{ex}

\subsection{Hecke algebra}\label{Sec:2.5}

Let $\mathbb{A}:=\mathbb{Z}[v,v^{-1}]$ denote the ring of Laurent polynomials with variable $v$ and coefficients from $\mathbb{Z}$. Given any element $a=a(v)\in\mathbb{A}$ and any integer $i\in\mathbb{Z}$, we will let $[v^{i}](a)\in\mathbb{Z}$ denote the coefficient of $v^{i}$ appearing within the element $a$. 

We will let $\mathrm{H}_{n}^{\mathbb{A}}$ denote the \emph{Hecke algebra associated to} $\mathrm{S}_{n}$, given over the ring $\mathbb{A}$. Then we have the \emph{standard $\mathbb{A}$-basis} $\mathcal{T}_{n}:=\{T_{x}^{(n)} \ | \ x\in\mathrm{S}_{n}\}$ of $\mathrm{H}_{n}^{\mathbb{A}}$, whose element satisfy the relations
\begin{equation}\label{Eq:2.5:1}
T_{x}^{(n)}T_{y}^{(n)}=T_{xy}^{(n)} \hspace{1mm} \text{ and } \hspace{1mm} T_{z}^{(n)}T_{s}^{(n)}=(v^{-1}-v)T_{z}^{(n)}+T_{zs}^{(n)},
\end{equation}
for $x,y\in\mathrm{S}_{n}$ such that $\ell(xy)=\ell(x)+\ell(y)$, and all $z\in\mathrm{S}_{n}$ and $s\in\mathrm{Des}_{R}(z)$. Later we will often work with different Hecke algebras simultaneously, hence we will be adding $n$ as a superscript to much of our notation, as done here for $\mathcal{T}_{n}$, since this will aid in clarifying where objects live. The collection $\mathcal{T}_{n}$, together with the above relations, give a presentation of $\mathrm{H}_{n}^{\mathbb{A}}$ as an $\mathbb{A}$-algebra. Consider the ring epimorphism $\varepsilon:\mathbb{A}\rightarrow\mathbb{Z}$ given by the evaluation $\varepsilon(a(v))=a(1)$. Then we let
\[ \mathrm{H}_{n}^{\mathbb{Z}}:=\mathbb{Z}\otimes_{\mathbb{A}}\mathrm{H}_{n}^{\mathbb{A}}, \]
the $\mathbb{Z}$-algebra obtained from $\mathrm{H}_{n}^{\mathbb{A}}$ by extension of scalars, where $\mathbb{Z}$ is viewed as an $\mathbb{A}$ via $\varepsilon$. For an $h\in\mathrm{H}_{n}^{\mathbb{A}}$, we will abuse notation and simply write $h$ for the element $1\otimes h$ belonging to $\mathrm{H}_{n}^{\mathbb{Z}}$.

Lastly, we setup the following notation/terminology: Let $\mathcal{B}=\{B_{x} \ | \ x\in\mathrm{S}_{n}\}\subset\mathrm{H}_{n}^{\mathbb{A}}$ be an $\mathbb{A}$-basis. For any $h\in\mathrm{H}_{n}^{\mathbb{A}}$, we write $[B_{x}](h)\in\mathbb{A}$ to denote the coefficient of $B_{x}$ appearing in $h$ when expressed in terms of the $\mathbb{A}$-basis $\mathcal{B}$. We write $B_{x}\in h$ to denote $[B_{x}](h)\neq0$, and we say that $B_{x}$ appears in $h$. For $i\in\mathbb{Z}$, we write $v^{i}B_{x}\in h$ to denote that $[v^{i}][B_{x}](h)\neq0$, and in this case we say that $B_{x}$ appears in $h$ at degree $i$, or that $v^{i}B_{x}$ appears in $h$. We adopt the same notation for $\mathrm{H}_{n}^{\mathbb{Z}}$.

\subsection{Kazhdan-Lusztig bases}\label{Sec:2.6}

Any standard basis element of $\mathrm{H}_{n}^{\mathbb{A}}$ is invertible, which allows one to define the \emph{bar involution} $\overline{(-)}:\mathrm{H}_{n}^{\mathbb{A}}\rightarrow\mathrm{H}_{n}^{\mathbb{A}}$, a ring involution defined on the standard basis by
\[ \overline{T^{(n)}_{x}}:=\left(T_{x^{-1}}^{(n)}\right)^{-1}, \]
and on the variable $v$ by $\overline{v}:=v^{-1}$. Originally introduced within \cite{KL79}, we let $\mathcal{C}_{n}:=\{C_{x}^{(n)} \ | \ x\in\mathrm{S}_{n}\}$ denote the \emph{Kazhdan-Lusztig basis}, however, we are using the normalisation from \cite{So07}. 

The basis element $C_{x}^{(n)}$ is uniquely determined by the following properties:
\begin{equation}\label{Eq:2.6:1}
\overline{C^{(n)}_{x}}=C_{x}^{(n)}, \hspace{1mm} \text{ and } \hspace{1mm} C_{x}^{(n)}=T_{x}^{(n)}+\sum_{y<x}p_{x,y}^{(n)}T_{y}^{(n)} \hspace{1mm} \text{ where } \hspace{1mm} p_{x,y}^{(n)}\in v\mathbb{Z}[v].
\end{equation}
Extending the definition of these polynomials $p_{x,y}^{(n)}$ by setting $p_{x,y}^{(n)}:=0$ whenever $x\not<y$, and $p_{x,x}^{(n)}:=1$, then these are the well known \emph{Kazhdan-Lusztig polynomials}. Also, for $y\leq x$, the \emph{Kazhdan-Lusztig $\mu$-function} $\mu:\mathrm{S}_{n}\times\mathrm{S}_{n}\rightarrow\mathbb{Z}$ is defined as the coefficient
\[ \mu^{(n)}(x,y)=\mu^{(n)}(y,x):=[v]p_{x,y}^{(n)}. \]
It is known that $\mu^{(n)}(x,y)=1$ if $x$ and $y$ are Bruhat neighbours, and $\mu^{(n)}(x,y)=0$ if $\ell(x)-\ell(y)\in2\mathbb{Z}$. Also, for any $w\in\mathrm{S}_{n}$ and $s\in\mathrm{Sim}_{n}$, we have
\begin{equation}\label{Eq:2.6:1.5R}
C_{w}^{(n)}C_{s}^{(n)}=
\begin{cases}
(v+v^{-1})C_{w}^{(n)}, & ws<w, \\
C_{ws}^{(n)}+{\displaystyle\sum_{\substack{x<w \\ xs<x}}\mu^{(n)}(x,w)C_{x}^{(n)}}, & ws>w.
\end{cases}
\end{equation}
\begin{equation}\label{Eq:2.6:1.5L}
C_{s}^{(n)}C_{w}^{(n)}=
\begin{cases}
(v+v^{-1})C_{w}^{(n)}, & sw<w, \\
C_{sw}^{(n)}+{\displaystyle\sum_{\substack{x<w \\ sx<x}}\mu^{(n)}(x,w)C_{x}^{(n)}}, & sw>w.
\end{cases}
\end{equation}

\begin{lem}\label{Lem:2.6:2}
For $x,y\in\mathrm{S}_{n}$ such that $\mathrm{Sup}(x)\cap\mathrm{Sup}(y)=\emptyset$, we have $C_{x}^{(n)}C_{y}^{(n)}=C_{xy}^{(n)}$.
\end{lem}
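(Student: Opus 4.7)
The plan is to invoke the uniqueness characterisation of the Kazhdan-Lusztig basis element $C_{xy}^{(n)}$ from \eqref{Eq:2.6:1}: it suffices to show that $C_x^{(n)} C_y^{(n)}$ is bar-invariant and admits a standard-basis expansion of the form $T_{xy}^{(n)} + \sum_{w < xy} c_w T_w^{(n)}$ with each $c_w \in v\mathbb{Z}[v]$. Bar-invariance is immediate, since the bar involution is a ring involution of $\mathrm{H}_n^{\mathbb{A}}$, so $\overline{C_x^{(n)} C_y^{(n)}} = \overline{C_x^{(n)}}\cdot \overline{C_y^{(n)}} = C_x^{(n)} C_y^{(n)}$.

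The crucial auxiliary input for the standard-basis expansion is the following length-additivity fact: if $a, b \in \mathrm{S}_n$ satisfy $\mathrm{Sup}(a) \cap \mathrm{Sup}(b) = \emptyset$, then $\ell(ab) = \ell(a) + \ell(b)$, and hence $T_a^{(n)} T_b^{(n)} = T_{ab}^{(n)}$ by \eqref{Eq:2.5:1}. I would prove this via a reflection-sequence argument: the concatenation of reduced expressions $\underline{a}\cdot\underline{b}$ is associated with $\ell(a)+\ell(b)$ reflections of $\mathrm{S}_n$, the first $\ell(a)$ lying in the parabolic subgroup generated by $\mathrm{Sup}(a)$, and the last $\ell(b)$ being $a$-conjugates of reflections in the parabolic generated by $\mathrm{Sup}(b)$. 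Analysing the block partitions of $[n]$ induced by the two supports, one checks that for any reflection $(c,d)$ in the second parabolic, $c$ and $d$ lie in distinct $\mathrm{Sup}(a)$-blocks (because no simple transposition $s_i$ with $c \leq i < d$ belongs to $\mathrm{Sup}(a)$); since $a$ permutes each $\mathrm{Sup}(a)$-block internally, the conjugate $(a(c), a(d))$ retains this property and cannot lie in the first parabolic. Thus all $\ell(a)+\ell(b)$ reflections are pairwise distinct, the concatenation is reduced, and length-additivity holds.

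Now, for any $a \leq x$ and $b \leq y$, the subword characterisation of Bruhat order gives $\mathrm{Sup}(a) \subseteq \mathrm{Sup}(x)$ and $\mathrm{Sup}(b) \subseteq \mathrm{Sup}(y)$, so the two supports remain disjoint and $T_a^{(n)} T_b^{(n)} = T_{ab}^{(n)}$ with $ab \leq xy$ (by concatenating the subword expressions). Expanding,
$$C_x^{(n)} C_y^{(n)} = \sum_{a \leq x,\; b \leq y} p_{x,a}^{(n)}\, p_{y,b}^{(n)}\, T_{ab}^{(n)},$$
with the convention $p_{x,x}^{(n)} = p_{y,y}^{(n)} = 1$. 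Only the pair $(a,b) = (x,y)$ can yield $ab = xy$, because $\ell(ab) = \ell(a) + \ell(b) = \ell(x) + \ell(y)$ together with $a \leq x, b \leq y$ forces $a = x, b = y$; hence the coefficient of $T_{xy}^{(n)}$ is $1$. Every other pair contributes $p_{x,a}^{(n)} p_{y,b}^{(n)} T_{ab}^{(n)}$ with $ab < xy$ and with coefficient in $v\mathbb{Z}[v]$ (as at least one factor does). Collecting terms by $T_w^{(n)}$ preserves membership in $v\mathbb{Z}[v]$, so the required expansion holds, and the uniqueness in \eqref{Eq:2.6:1} yields $C_x^{(n)} C_y^{(n)} = C_{xy}^{(n)}$. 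I anticipate the main obstacle to be the reflection-counting step establishing length-additivity, which relies on careful tracking of the interaction between the two block structures on $[n]$.
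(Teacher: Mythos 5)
Your proof is correct and follows essentially the same route as the paper's: one checks bar-invariance and the unitriangular standard-basis expansion of $C_{x}^{(n)}C_{y}^{(n)}$ (using length-additivity for elements below $x$ and $y$) and then invokes the uniqueness characterisation in \eqref{Eq:2.6:1}. The only difference is that you supply a full reflection-counting justification of the fact that $\ell(ab)=\ell(a)+\ell(b)$ when the supports are disjoint, which the paper simply asserts.
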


\begin{proof}
Since $\mathsf{Sup}(x)\cap\mathsf{Sup}(y)=\emptyset$, then, given any $a\leq x$ and $b\leq y$, we have that $\ell(ab)=\ell(a)+\ell(b)$. Therefore, we see that we have the equality of sets
\[ \{c\in\mathrm{S}_{n} \ | \ c\leq xy\}=\{ab\in\mathrm{S}_{n} \ | \ a\leq x, b\leq y \}. \] 
Hence, by \Cref{Eq:2.5:1} we have that
\[ C_{x}^{(n)}C_{y}^{(n)}=\left(T_{x}^{(n)}+\sum_{a<x}p_{a,x}^{(n)}T_{a}^{(n)}\right)\left(T_{y}^{(n)}+\sum_{b<y}p_{b,y}^{(n)}T_{b}^{(n)}\right)=T_{xy}^{(n)}+\sum_{\substack{ab<xy \\ a\leq x, b\leq y}}p_{a,x}^{(n)}p_{b,y}^{(n)}T_{ab}^{(n)}. \]
Moreover, $C_{x}^{(n)}C_{y}^{(n)}$ is naturally invariant under the bar involution. Therefore, $C_{x}^{(n)}C_{y}^{(n)}$ satisfies the defining properties of the element $C_{xy}^{(n)}$ as given in \Cref{Eq:2.6:1}, hence they must be equal.
\end{proof}

\subsection{Lusztig's $\mathbf{a}$-function}\label{Sec:2.7}

Given any $x,y\in\mathrm{S}_{n}$, it is well known that 
\begin{equation}\label{Eq:2.7:0}
C_{x}^{(n)}C_{y}^{(n)}=\sum_{z\in\mathrm{S}_{n}}\gamma_{x,y,z}^{(n)}C_{z}^{(n)}, \hspace{1mm} \text{ where } \hspace{1mm} \gamma_{x,y,z}^{(n)}\in\mathbb{A}_{\geq0}:=\mathbb{Z}_{\geq0}[v,v^{-1}].
\end{equation}
The fact that these structure constants $\gamma_{x,y,z}^{(n)}$ have non-negative coefficients is a consequence of the Kazhdan-Lusztig conjecture
(now theorem). By \Cref{Eq:2.6:1}, one can deduce that we have
\begin{equation}\label{Eq:2.7:1}
\gamma_{x,y,xy}^{(n)}\neq0 \hspace{1mm} \text{ whenever } \hspace{1mm} \ell(xy)=\ell(x)+\ell(y).
\end{equation}
The \emph{Lusztig's $\mathbf{a}$-function} $\mathbf{a}:\mathrm{S}_{n}\rightarrow\mathbb{Z}_{\geq0}$ is defined as follows:
\[ \mathbf{a}_{n}(z):=\mathrm{max}\{\mathrm{deg}(\gamma_{x,y,z}^{(n)}) \ | \ x,y\in\mathrm{S}_{n}\}. \]
In our setting (Type A), this function is equivalent to $\mathbf{a}_{n}(z)=\ell(w_{0,n}^{P})$ whenever $\mathtt{sh}(z)=\mathtt{sh}(w_{0,n}^{P})$, where $P\subset\mathrm{Sim}_{n}$ and $w_{0,n}^{P}$ is the longest element in the parabolic subgroup $\langle P\rangle\subseteq\mathrm{S}_{n}$.

\subsection{Dual Kazhdan-Lusztig bases}\label{Sec:2.8}

Let $\tau_{n}:\mathrm{H}_{n}^{\mathbb{A}}\rightarrow\mathbb{A}$ be the \emph{standard trace}, which is the $\mathbb{A}$-linear map defined on the standard basis by $\tau_{n}(T_{x}^{(n)})=\delta_{x,1_{n}}$ with $\delta$ the Kronecker delta function. 

The \emph{dual Kazhdan-Lusztig basis} $\mathcal{D}_{n}:=\{D_{x}^{(n)} \ | \ x\in\mathrm{S}_{n}\}$ is uniquely defined by $\tau_{n}(D_{x}^{(n)}C_{y^{-1}}^{(n)})=\delta_{x,y}$, for all $y\in\mathrm{S}_{n}$. For any $x\in\mathrm{S}_{n}$ and $s\in\mathrm{Sim}_{n}$, using \cite[Proposition 10.8]{Lu03} we have
\begin{equation}\label{Eq:2.8:1}
D_{x}^{(n)}C_{s}^{(n)}=
\begin{cases}
(v+v^{-1})D_{x}^{(n)}+D_{xs}^{(n)}+{\displaystyle\sum_{x<y<ys}\mu^{(n)}(x,y)D_{y}^{(n)}}, & s\in\mathrm{Des}_{R}(x), \\
0, & s\not\in\mathrm{Des}_{R}(x).
\end{cases}
\end{equation}
By \cite[Proposition 3.3]{KMM23}, for any $a,b,c\in\mathrm{S}_{n}$ we have the equality of coefficients
\begin{equation}\label{Eq:2.8:2}
[D_{a}^{(n)}](D_{b}^{(n)}C_{c}^{(n)})=[C_{b}^{(n)}](C_{a}^{(n)}C_{c^{-1}}^{(n)}).
\end{equation}

\subsection{Kazhdan-Lusztig preorders}\label{Sec:2.9}

For $x,y\in\mathrm{S}_{n}$, write $x\leq_{R}^{(n)}y$ if there exists $h\in\mathrm{H}_{n}^{\mathbb{A}}$ where
\[ [C_{y}^{(n)}](C_{x}^{(n)}h)\neq0. \]
This gives a preorder on $\mathrm{S}_{n}$ called the \emph{right Kazhdan-Lusztig preorder}. The \emph{left Kazhdan-Lusztig preorder} $\leq_{L}^{(n)}$ is defined similarly with left multiplication. The induced equivalence relations are denoted by $\sim_{R}^{(n)}$ and $\sim_{L}^{(n)}$, where the respective equivalence classes are called \emph{right} and \emph{left cells}.

From both \cite[Section 5]{KL79} and \cite[Theorem 5.1]{Ge06}, for example, we have that
\begin{equation}\label{Eq:2.9:1}
x\sim_{L}^{(n)}y \iff \mathtt{Q}_{x}=\mathtt{Q}_{y}, \hspace{1mm} \text{ and } \hspace{2mm} x\sim_{R}^{(n)}y \iff \mathtt{P}_{x}=\mathtt{P}_{y}.
\end{equation}
\begin{equation}\label{Eq:2.9:2}
x\leq_{R}^{(n)}y \iff x^{-1}\leq_{L}^{(n)}y^{-1}.
\end{equation}
\begin{equation}\label{Eq:2.9:3}
x\leq_{R}^{(n)}y \iff w_{0,n}xw_{0,n}\leq_{R}^{(n)}w_{0,n}yw_{0,n}.
\end{equation}
\begin{equation}\label{Eq:2.9:4}
x\leq_{R}^{(n)}y \implies \mathrm{Des}_{L}(x)\subset\mathrm{Des}_{L}(y) \ \text{ and } \  \mathtt{sh}(y)\preceq\mathtt{sh}(x).
\end{equation}
\begin{equation}\label{Eq:2.9:4.5}
x\leq_{R}^{(n)}y \hspace{1mm} \text{ and } \hspace{1mm} \mathtt{sh}(y)=\mathtt{sh}(x) \implies x\sim_{L}^{(n)}y.
\end{equation}
The equations above tell us that each left/right cell contains a unique involution. It is also worth mentioning that our convention on the direction of the left and right Kazhdan-Lusztig orders is often the opposite to that employed by others, including \cite{Ge06} and \cite{KL79}. In particular, for us the identity $1_{n}$ is the minimum element under both preorders.

Now, from \cite[Lemma 12]{MM11}, for example, and \cite[Proposition 27]{CM25-2}), we have that
\begin{equation}\label{Eq:2.9:5}
D_{z}^{(n)}C_{x}^{(n)}\neq0 \iff x\leq_{R}^{(n)}z^{-1}.
\end{equation}
\begin{equation}\label{Eq:2.9:6}
D_{z}^{(n)}C_{x}^{(n)}=D_{z}^{(n)}C_{y}^{(n)}\neq0 \implies x\sim_{L}^{(n)}y.
\end{equation}
Lastly, we record a well-known property which we will often use without reference, and which follows from \Cref{Eq:2.8:1,Eq:2.9:2,Eq:2.9:5}. Given any $z\in\mathrm{S}_{n}$ and $s_{i}\in\mathrm{Sim}_{n}$, we have that
\begin{equation}\label{Eq:2.9:7}
D_{z}^{(n)}C_{s_{i}}^{(n)}\neq0 \iff s_{i}\leq_{L}^{(n)}z \iff s_{i}\in\mathrm{Des}_{R}(z).
\end{equation}

\subsection{Categories $^{\mathbb{Z}}\mathcal{O}_{0}$ and $^{\mathbb{Z}}\mathcal{P}_{0}$}\label{Sec:2.10}

We let $\mathfrak{sl}_{n}:=\mathfrak{sl}_{n}(\mathbb{C})$ be the \emph{complex special linear Lie algebra} of all traceless $n\times n$ matrices, and we consider the \emph{standard triangular decomposition} 
\begin{equation}\label{Eq:2.10:1}
\mathfrak{sl}_{n}=\mathfrak{n}^{-}\oplus\mathfrak{h}\oplus\mathfrak{n}^{+}.
\end{equation}
The Weyl group of $(\mathfrak{sl}_{n},\mathfrak{h})$ is $\mathrm{S}_{n}$, and it acts naturally on the dual space $\mathfrak{h}^{*}$. For $\rho\in\mathfrak{h}^{*}$ the half sum of all positive roots, $x\in\mathrm{S}_{n}$, and $\lambda\in\mathfrak{h}^{*}$, then the \emph{dot-action} is given by $x\cdot\lambda:=x(\lambda+\rho)-\rho$. 

We let $\mathcal{O}^{(n)}:=\mathcal{O}(\mathfrak{sl}_{n})$ denote the \emph{BGG category} associated to \Cref{Eq:2.10:1} (see \cite{BGG76,Hu08}). The simple objects of $\mathcal{O}^{(n)}$ are (up to isomorphism) the \emph{simple highest weight modules} $L(\lambda)$ for $\lambda\in\mathfrak{h}^{*}$. We let $\mathcal{O}_{0}^{(n)}:=\mathcal{O}_{0}(\mathfrak{sl}_{n})$ be the \emph{principal block}, the indecomposable summand of $\mathcal{O}^{(n)}$ containing the trivial $\mathfrak{sl}_{n}$-module. The simple objects of $\mathcal{O}_{0}^{(n)}$ are indexed by the elements of $\mathrm{S}_{n}$ via $L_{x}^{(n)}:=L(x\cdot 0)$. We denote the indecomposable projective cover of $L_{x}^{(n)}$ within $\mathcal{O}_{0}^{(n)}$ by $P_{x}^{(n)}$. 

The principal block $\mathcal{O}_{0}^{(n)}$ is equivalent to the left module category for some finite-dimensional basic associative algebra $A$ (unique up to isomorphism). This algebra $A$ is Koszul by \cite{So90}, and therefore admits a Koszul $\mathbb{Z}$-grading. We let $^{\mathbb{Z}}\mathcal{O}_{0}^{(n)}$ denote the corresponding $\mathbb{Z}$-graded version of $\mathcal{O}_{0}^{(n)}$, and $\langle 1\rangle$ the grading shift functor decreasing the degree by one (see \cite{St03}). For each $x\in\mathrm{S}_{n}$, the module $L_{x}^{(n)}$ admits a graded lift, placing it in degree $0$, and we use the same symbol to denote it. 

A \emph{projective functor} of $\mathcal{O}_{0}^{(n)}$ is a direct summand of $(V\otimes-)$ with $V$ some finite dimensional $\mathfrak{sl}_{n}$-module. Let $\mathcal{P}_{0}^{(n)}:=\mathcal{P}_{0}(\mathfrak{sl}_{n})$ be the monoidal category of such functors. By \cite[Theorem~3.3]{BG80}, the indecomposable objects in $\mathcal{P}_{0}^{(n)}$ are in bijection with $\mathrm{S}_{n}$, where, for each $x\in\mathrm{S}_{n}$, we let $\theta_{x}^{(n)}\in\mathcal{P}_{0}^{(n)}$ be the unique (up to isomorphism) indecomposable projective functor normalized such that $\theta_{x}^{(n)}P_{1_{n}}^{(n)}\cong P_{x}^{(n)}$. By \cite[Theorem 8.2]{St03}, each $\theta_{x}^{(n)}$ admits a graded lift to an endofunctor of $^{\mathbb{Z}}\mathcal{O}_{0}^{(n)}$, which we represent with the same symbol. We let $^{\mathbb{Z}}\mathcal{P}_{0}^{(n)}$ denote the $\mathbb{Z}$-graded version of $\mathcal{P}_{0}^{(n)}$.

We recall two results which will be used in later sections. The first is well-known (see for example \cite[Proposition 46]{CMZ19}), and the second follows from the natural symmetry of the root system of $\mathfrak{sl}_{n}$, corresponding on $\mathrm{S}_{n}$ to conjugation by $w_{0,n}$.

\begin{prop}\label{Prop:2.10:2}
Let $z\in\mathrm{S}_{n}$ and $s\in S_{n}$. Then $\theta_{s}^{(n)}L_{z}^{(n)}\neq0$ if and only if $s\in\mathrm{Des}_{R}(z)$. In this case $\theta_{s}^{(n)}L_{z}^{(n)}$ is indecomposable, of graded length three, has simple top $L_{w}^{(n)}$ in degree $-1$, simple socle $L_{w}^{(n)}$ in degree $1$, and semi-simple module $\mathrm{J}_{s}^{(n)}(z)$ in degree zero \emph{(}\emph{Jantzen middle}\emph{)} where 
\[ \mathrm{J}_{s}^{(n)}(z)\cong L_{zs}^{(n)}\oplus\bigoplus_{z<x<xs}(L_{x}^{(n)})^{\oplus \mu(z,x)}. \]
\end{prop}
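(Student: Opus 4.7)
The plan is to combine the graded categorification of the Hecke algebra action of projective functors on $^{\mathbb{Z}}\mathcal{O}_0^{(n)}$ with the graded self-duality and biadjointness of $\theta_s^{(n)}$.

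First, I would establish the vanishing criterion. Under the graded categorification (after \cite{St03}), the class of $\theta_s^{(n)} L_z^{(n)}$ in the Grothendieck group of $^{\mathbb{Z}}\mathcal{O}_0^{(n)}$ corresponds to the product $D_z^{(n)} C_s^{(n)}$. By the dichotomy in \Cref{Eq:2.8:1}, this product vanishes exactly when $s \notin \mathrm{Des}_R(z)$, which gives the first claim. Assume from now on $s \in \mathrm{Des}_R(z)$. The same categorification, together with \Cref{Eq:2.8:1}, yields the graded composition multiplicities
\begin{equation*}
[\theta_s^{(n)} L_z^{(n)}] \;=\; (v + v^{-1})[L_z^{(n)}] + [L_{zs}^{(n)}] + \sum_{z < x < xs} \mu^{(n)}(z,x)[L_x^{(n)}].
\end{equation*}
Since indecomposable projective functors preserve graded self-duality of the simples, $\theta_s^{(n)} L_z^{(n)}$ is graded self-dual; combined with the $v \leftrightarrow v^{-1}$ symmetry of the above polynomial, this forces a Loewy-length-three structure, with two copies of $L_z^{(n)}$ appearing in degrees $\pm 1$ and all the remaining composition factors concentrated in degree $0$.

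Next, I would identify the top and socle via graded biadjointness of $\theta_s^{(n)}$ with itself, with the grading shift inherited from the graded lift of \cite{St03}. For any $x \in \mathrm{S}_n$ and $i \in \mathbb{Z}$, this provides a natural isomorphism
\begin{equation*}
\mathrm{Hom}_{^{\mathbb{Z}}\mathcal{O}_0^{(n)}}\bigl(\theta_s^{(n)} L_z^{(n)}, L_x^{(n)}\langle i\rangle\bigr) \;\cong\; \mathrm{Hom}_{^{\mathbb{Z}}\mathcal{O}_0^{(n)}}\bigl(L_z^{(n)}, \theta_s^{(n)} L_x^{(n)}\langle i \pm 1\rangle\bigr),
\end{equation*}
where the exact shift is fixed by the standard adjunction conventions. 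The right-hand side is nonzero precisely when $L_z^{(n)}$ appears as a graded composition factor of $\theta_s^{(n)} L_x^{(n)}$ in the prescribed degree, and inspecting the graded-character formula above forces $x = z$ and the unique value of $i$ corresponding to the degree-$(-1)$ copy of $L_z^{(n)}$. Hence the graded top of $\theta_s^{(n)} L_z^{(n)}$ is the simple module $L_z^{(n)}$ in degree $-1$; in particular the module is indecomposable, and by graded self-duality its socle is $L_z^{(n)}$ in degree $+1$.

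Finally, the Jantzen middle $\mathrm{J}_s^{(n)}(z)$, being the degree-$0$ layer sandwiched strictly between the simple top and simple socle in a graded module of Loewy length three, is automatically semisimple, and its composition multiplicities are read off from the graded character computed above, yielding the displayed formula for $\mathrm{J}_s^{(n)}(z)$. The main obstacle is the careful bookkeeping of grading-shift conventions both in the categorification dictionary and in the biadjunction, but once these are pinned down everything else follows mechanically from \Cref{Eq:2.8:1}; this is why the result is considered well known and is available in the cited reference \cite[Proposition 46]{CMZ19}.
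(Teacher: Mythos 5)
The paper does not actually prove this proposition --- it is quoted as well known, with a pointer to \cite[Proposition 46]{CMZ19} --- so there is no internal proof to compare against; I am therefore judging your sketch on its own terms. The architecture is the standard one (decategorify to get the graded character from \Cref{Eq:2.8:1}, use commutation of $\theta_{s}^{(n)}$ with the graded duality, use adjunction to find the top), and the vanishing criterion and the graded character computation are fine.

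The gap is in the step that identifies the simple top, and it matters because everything structural (indecomposability, Loewy length three, semisimplicity of the Jantzen middle) hangs on it. First, the claim that self-duality plus the $v\leftrightarrow v^{-1}$ symmetry of the character ``forces'' Loewy length three is false on its own: a direct sum of simples placed in degrees $-1,0,1$ is self-dual with a symmetric character and has Loewy length one. Second, your adjunction step asserts that $\mathrm{Hom}(\theta_{s}^{(n)}L_{z}^{(n)},L_{x}^{(n)}\langle i\rangle)\cong\mathrm{Hom}(L_{z}^{(n)},\theta_{s}^{(n)}L_{x}^{(n)}\langle i\pm1\rangle)$ is nonzero ``precisely when $L_{z}^{(n)}$ appears as a graded composition factor of $\theta_{s}^{(n)}L_{x}^{(n)}$ in the prescribed degree.'' A Hom out of a simple detects the \emph{socle}, not arbitrary composition factors; the composition-factor condition is only necessary. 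The necessary direction does correctly rule out $x\neq z$ (since $L_{z}^{(n)}$ cannot occur in $\mathrm{J}_{s}^{(n)}(x)$ when $zs<z$), but for $x=z$ it leaves \emph{two} admissible degrees, not one, and deciding which Homs are actually nonzero requires knowing the socle of $\theta_{s}^{(n)}L_{z}^{(n)}$ --- exactly the thing being computed. As written, the argument does not exclude the degenerate possibility $\theta_{s}^{(n)}L_{z}^{(n)}\cong L_{z}^{(n)}\langle1\rangle\oplus M\oplus L_{z}^{(n)}\langle-1\rangle$. The standard non-circular fix is to factor $\theta_{s}^{(n)}$ as translation onto and out of the $s$-wall: translation onto the wall sends $L_{z}^{(n)}$ to a simple module (nonzero exactly when $zs<z$) and kills or separates the other simples, so the adjunction computes $\mathrm{Hom}(\theta_{s}^{(n)}L_{z}^{(n)},L_{x}^{(n)})$ as exactly one-dimensional for $x=z$ and zero otherwise; alternatively one computes $\mathrm{End}(\theta_{s}^{(n)}L_{z}^{(n)})$ directly. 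With that ingredient supplied, the rest of your sketch (self-duality for the socle, reading off the middle layer from the character) does go through.
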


\begin{lem}\label{Lem:2.10:3}
We have auto-equivalences $F:{^{\mathbb{Z}}}\mathcal{O}_{0}^{(n)}\xrightarrow{\sim}{^{\mathbb{Z}}}\mathcal{O}_{0}^{(n)}$ and $G:{^{\mathbb{Z}}}\mathcal{P}_{0}^{(n)}\xrightarrow{\sim}{^{\mathbb{Z}}}\mathcal{P}_{0}^{(n)}$ given by
\[ L_{x}^{(n)}\mapsto L_{w_{0,n}xw_{0,n}}^{(n)} \ \text{ and } \ \theta_{x}^{(n)}\mapsto\theta_{w_{0,n}xw_{0,n}}^{(n)}, \]
respectively. Moreover, the natural action of   $^{\mathbb{Z}}\mathcal{P}_{0}^{(n)}$ on $^{\mathbb{Z}}\mathcal{O}_{0}^{(n)}$ commutes with these auto-equivalences, meaning that, for any $x,z\in\mathrm{S}_{n}$, we have $F(\theta_{x}^{(n)}L_{z}^{(n)})=G(\theta_{x}^{(n)})F(L_{z}^{(n)})$.
\end{lem}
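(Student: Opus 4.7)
The plan is to construct both $F$ and $G$ from the non-trivial Dynkin diagram automorphism $\sigma$ of $\mathfrak{sl}_n$, corresponding to the diagram involution $\alpha_i\leftrightarrow\alpha_{n-i}$ (for $n\leq 2$ the statement is trivial, since then $w_{0,n}xw_{0,n}=x$ for all $x$). One may choose $\sigma$ to preserve the triangular decomposition in \eqref{Eq:2.10:1}, so twisting modules by $\sigma$ (i.e.\ replacing the action of $X$ by that of $\sigma(X)$) yields an auto-equivalence $F$ of $\mathcal{O}^{(n)}$. The induced map $\sigma^*$ on $\mathfrak{h}^*$ permutes the simple roots and hence fixes $\rho$, while the induced Weyl-group automorphism sends $s_i\mapsto s_{n-i}$. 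A direct computation using $w_{0,n}(i)=n+1-i$ gives $w_{0,n}s_iw_{0,n}=s_{n-i}$, so this automorphism is precisely conjugation by $w_{0,n}$. Therefore $\sigma^*(x\cdot 0)=\sigma^*(x)\cdot 0=(w_{0,n}xw_{0,n})\cdot 0$, showing that $F$ preserves the principal block and satisfies $F(L_x^{(n)})\cong L_{w_{0,n}xw_{0,n}}^{(n)}$.

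I would then define $G$ by $F$-conjugation, restricted to projective functors. For any finite-dimensional $\mathfrak{sl}_n$-module $V$, the twist $V^\sigma$ is again finite-dimensional, and one has a canonical natural isomorphism $F\circ(V^\sigma\otimes-)\cong(V\otimes-)\circ F$. Hence conjugation by $F$ sends the endofunctor $V\otimes-$ to $V^\sigma\otimes-$, and restricts to a monoidal auto-equivalence $G$ of $\mathcal{P}_0^{(n)}$. The required compatibility $F(\theta_x^{(n)}L_z^{(n)})=G(\theta_x^{(n)})F(L_z^{(n)})$ then holds tautologically from the definition $G(\theta)=F\circ\theta\circ F^{-1}$. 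To identify $G$ on indecomposables, apply $F$ to the normalisation $\theta_x^{(n)}P_{1_n}^{(n)}\cong P_x^{(n)}$: since projective covers follow their simple tops, $F$ sends each $P_y^{(n)}$ to $P_{w_{0,n}yw_{0,n}}^{(n)}$, and in particular $F(P_{1_n}^{(n)})\cong P_{1_n}^{(n)}$, from which one deduces $G(\theta_x^{(n)})\cong\theta_{w_{0,n}xw_{0,n}}^{(n)}$.

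Finally, both $F$ and $G$ must be lifted to the graded setting. By \cite{So90}, the Koszul grading on the basic algebra underlying $\mathcal{O}_0^{(n)}$ is intrinsic, so any auto-equivalence of $\mathcal{O}_0^{(n)}$ permuting simple modules lifts, uniquely up to an overall shift, to an auto-equivalence of $^{\mathbb{Z}}\mathcal{O}_0^{(n)}$; fixing the convention that each graded simple lives in degree $0$ pins this lift down. The corresponding graded lift of $G$ is then furnished by \cite[Theorem 8.2]{St03}, which provides graded lifts of projective functors. The main obstacle is the bookkeeping in the graded setting: one must verify that the chosen shifts are consistent so that the compatibility $F(\theta_x^{(n)}L_z^{(n)})=G(\theta_x^{(n)})F(L_z^{(n)})$ holds strictly, and not merely up to a grading shift. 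This is automatic from our construction, because $G$ is defined as $F$-conjugation of tensor endofunctors, which respects the degree-$0$ normalisation once $F$ itself has been so normalised.
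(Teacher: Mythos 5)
Your proposal is correct and follows exactly the route the paper indicates: the paper gives no detailed proof of this lemma, remarking only that it ``follows from the natural symmetry of the root system of $\mathfrak{sl}_{n}$, corresponding on $\mathrm{S}_{n}$ to conjugation by $w_{0,n}$'', and your construction via the Dynkin diagram automorphism $\sigma$ is precisely the standard realisation of that symmetry. The details you supply --- that $\sigma^{*}$ fixes $\rho$ and induces conjugation by $w_{0,n}$ on the Weyl group, that $G$ is $F$-conjugation of tensor endofunctors (making the compatibility tautological), the identification of $G$ on indecomposables via the normalisation at $P_{1_{n}}^{(n)}$, and the degree-zero normalisation of the graded lifts --- are all sound.
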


\subsection{Decategorification}\label{Sec:2.11}

We let $\mathbf{Gr}(^{\mathbb{Z}}\mathcal{O}_{0}^{(n)})$ and $\mathbf{Gr}(\mathcal{O}_{0}^{(n)})$ be the Grothendieck groups of $^{\mathbb{Z}}\mathcal{O}_{0}^{(n)}$ and $\mathcal{O}_{0}^{(n)}$, respectively. We view the former as an $\mathbb{A}$-module where $v$ acts by the shift functor $\langle -1\rangle$. For a module $M$ in either $^{\mathbb{Z}}\mathcal{O}_{0}^{(n)}$ or $\mathcal{O}_{0}^{(n)}$, let $[M]$ denote its image in the respective Grothendieck group. Then by \cite{BB,BK,So92}, we have the two module isomorphism 
\[ \mathbf{Gr}(^{\mathbb{Z}}\mathcal{O}_{0}^{(n)})\xrightarrow{\sim}\mathrm{H}_{n}^{\mathbb{A}} \ \text{ and } \ \mathbf{Gr}(\mathcal{O}_{0}^{(n)})\xrightarrow{\sim}\mathrm{H}_{n}^{\mathbb{Z}}, \]
where both are defined by $[L_{x}^{(n)}]\mapsto D_{x}^{(n)}$, for $x\in\mathrm{S}_{n}$. Recall that the dual Kazhdan-Lusztig basis of $\mathrm{H}_{n}^{\mathbb{Z}}$ is obtained from that of $\mathrm{H}_{n}^{\mathbb{A}}$ by projecting through $\varepsilon:\mathrm{H}_{n}^{\mathbb{A}}\rightarrow\mathrm{H}_{n}^{\mathbb{Z}}$. In particular, the projection $\varepsilon$ is the decategorification of the forgetful functor $^{\mathbb{Z}}\mathcal{O}_{0}^{(n)}\rightarrow\mathcal{O}_{0}^{(n)}$ which simply forgets the grading.

We also let $\mathbf{Gr}({^{\mathbb{Z}}}\mathcal{P}_{0}^{(n)})$ and $\mathbf{Gr}(\mathcal{P}_{0}^{(n)})$ denote the split Grothendieck rings of ${^{\mathbb{Z}}}\mathcal{P}_{0}^{(n)}$ and $\mathcal{P}_{0}^{(n)}$, respectively. We view the former as an $\mathbb{A}$-algebra where $v$ again acts by $\langle -1\rangle$. For a projective functor $\theta$ belonging to either ${^{\mathbb{Z}}}\mathcal{P}_{0}^{(n)}$ or $\mathcal{P}_{0}^{(n)}$, let $[\theta]$ denote its image in the respective split Grothendieck ring. By, for example \cite{So92, Ma12}, we have the two ring isomorphism
\[ \mathbf{Gr}(^{\mathbb{Z}}\mathcal{P}_{0}^{(n)})\xrightarrow{\sim}(\mathrm{H}_{n}^{\mathbb{A}})^{\mathsf{op}} \ \text{ and } \ \mathbf{Gr}(\mathcal{P}_{0}^{(n)})\xrightarrow{\sim}(\mathrm{H}_{n}^{\mathbb{Z}})^{\mathsf{op}}, \]
where both are defined by $[\theta_{x}^{(n)}]\mapsto C_{x}^{(n)}$, for $x\in\mathrm{S}_{n}$. The Kazhdan-Lusztig basis of $\mathrm{H}_{n}^{\mathbb{Z}}$ is obtained from that of $\mathrm{H}_{n}^{\mathbb{A}}$ by projecting through $\varepsilon:\mathrm{H}_{n}^{\mathbb{A}}\rightarrow\mathrm{H}_{n}^{\mathbb{Z}}$, and again, the projection $\varepsilon$ is the decategorification of the forgetful functor $^{\mathbb{Z}}\mathcal{P}_{0}^{(n)}\rightarrow\mathcal{P}_{0}^{(n)}$. Furthermore, the natural action of $^{\mathbb{Z}}\mathcal{P}_{0}^{(n)}$ on $^{\mathbb{Z}}\mathcal{O}_{0}^{(n)}$, and $\mathcal{P}_{0}^{(n)}$ on $\mathcal{O}_{0}^{(n)}$, decategorify to the regular right action of $\mathrm{H}_{n}^{\mathbb{A}}$ and $\mathrm{H}_{n}^{\mathbb{Z}}$, respectively. 

From above, $[\theta_{x}^{(n)}L_{z}^{(n)}]=[L_{z}^{(n)}][\theta_{x}^{(n)}]=D_{z}^{(n)}C_{x}^{(n)}$, and so \Cref{Eq:2.9:5,Eq:2.9:6} imply
\begin{equation}\label{Eq:2.11:1}
\theta_{x}^{(n)}L_{z}^{(n)}\neq0 \iff x\leq_{R}^{(n)}z^{-1}, \ \text{and}
\end{equation}
\begin{equation}\label{Eq:2.11:2}
\theta_{x}^{(n)}L_{z}^{(n)}\cong\theta_{y}^{(n)}L_{z}^{(n)}\neq0 \implies x\sim_{L}^{(n)}y.
\end{equation}
Moreover, suppose we have $x\sim_{L}^{(n)}y$ and $x'\sim_{L}^{(n)}y'$ such that $x\sim_{R}^{(n)}x'$ and $y\sim_{R}^{(n)}y'$. Then it was proven in \cite[Proposition 32]{CM25-2}, that, for any $z\in\mathrm{S}_{n}$, we have the equivalence
\begin{equation}\label{Eq:2.11:3}
\theta_{x}^{(n)}L_{z}^{(n)}\cong\theta_{y}^{(n)}L_{z}^{(n)}\neq0 \iff \theta_{x'}^{(n)}L_{z}^{(n)}\cong\theta_{y'}^{(n)}L_{z}^{(n)}\neq0.
\end{equation}
As such, if one finds a pair $x,y\in\mathrm{S}_{n}$ such that $\theta_{x}L_{z}\cong\theta_{y}L_{z}\neq0$, then one immediately has a family of pairs which give analogous isomorphisms obtained by matching up the right cells of $x$ and $y$ via the left equivalence $\sim_{L}^{(n)}$. It is worth remarking that we only know that this holds categorically, that is to say, we do not know whether the analogous decategorified statement to \Cref{Eq:2.11:3} holds. This has been conjectured in \cite[Conjectures 39 and 40]{CM25-2}.

We conclude by recalling the following two results: The first is implicit in \cite{KMM23}, and is partially proved in \cite[Lemma 5.3]{CM25-1}, and the second follows from \cite[Lemma 3.5]{CM25-1}.

\begin{lem}\label{Lem:2.11:4}
For any $z\in\mathrm{S}_{n}$ and involution $x\in\mathrm{Inv}_{n}$, the following hold:
\begin{itemize}
\item[(1)] We have the two equivalences
\[ \theta_{x}^{(n)}L_{z}^{(n)}\neq0 \iff [v^{\mathbf{a}_{n}(x)}][D_{z}^{(n)}](D_{z}^{(n)}C_{x}^{(n)})\neq0 \iff [D_{z}^{(n)}](D_{z}^{(n)}C_{x}^{(n)})\neq0. \]
\item[(2)] The module $\theta_{x}^{(n)}L_{z}^{(n)}$ is indecomposable whenever $[v^{\mathbf{a}_{n}(x)}][D_{z}^{(n)}](D_{z}^{(n)}C_{x}^{(n)})=1$.
\end{itemize} 
\end{lem}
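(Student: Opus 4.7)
The plan is to translate both parts through the decategorification dictionary of \Cref{Sec:2.11} to the Hecke algebra, and then to reduce everything to a single non-vanishing claim about a diagonal Kazhdan-Lusztig structure constant, which is the substantive content extracted from \cite{KMM23} (and repackaged in \cite[Lemma~5.3]{CM25-1}).

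For part~(1), first note that by \eqref{Eq:2.11:1} and \eqref{Eq:2.9:5} we already have
\[ \theta_{x}^{(n)}L_{z}^{(n)}\neq 0 \iff x\leq_{R}^{(n)}z^{-1} \iff D_{z}^{(n)}C_{x}^{(n)}\neq 0, \]
so it remains to relate non-vanishing of $D_{z}^{(n)}C_{x}^{(n)}$ to non-vanishing of its $D_{z}^{(n)}$-coefficient, and of the top-degree part thereof. Applying \eqref{Eq:2.8:2} with $(a,b,c)=(z,z,x)$, and using that $x=x^{-1}$ since $x$ is an involution, I would rewrite
\[ [D_{z}^{(n)}](D_{z}^{(n)}C_{x}^{(n)})=[C_{z}^{(n)}](C_{z}^{(n)}C_{x^{-1}}^{(n)})=\gamma_{z,x,z}^{(n)}. \]
The non-vanishing $\gamma_{z,x,z}^{(n)}\neq 0$ whenever $x\leq_{R}^{(n)}z^{-1}$ (with $x$ an involution) is the key input I would extract from \cite{KMM23}. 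For the second reduction, since $C_{z}^{(n)}C_{x}^{(n)}$ is bar-invariant, the polynomial $\gamma_{z,x,z}^{(n)}$ is itself bar-invariant and lies in $\mathbb{Z}_{\geq 0}[v,v^{-1}]$ by Kazhdan-Lusztig positivity; combined with the $\mathbf{a}$-function degree bound $\deg\gamma_{z,x,z}^{(n)}\leq\mathbf{a}_{n}(x)$ together with Lusztig's leading-coefficient positivity for diagonal structure constants on involutions, the top-degree coefficient detects non-vanishing, giving the equivalence with $[v^{\mathbf{a}_{n}(x)}]\gamma_{z,x,z}^{(n)}\neq 0$.

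For part~(2), if $[v^{\mathbf{a}_{n}(x)}][D_{z}^{(n)}](D_{z}^{(n)}C_{x}^{(n)})=1$, then under the decategorification of \Cref{Sec:2.11} the simple $L_{z}^{(n)}\langle-\mathbf{a}_{n}(x)\rangle$ appears exactly once in a graded composition series of $\theta_{x}^{(n)}L_{z}^{(n)}$, and it sits at the extreme top shift. Since $x$ is an involution, $\theta_{x}^{(n)}$ is self-adjoint, and the standard Koszul-duality symmetry for graded $\mathcal{O}_{0}^{(n)}$ places a dual copy of $L_{z}^{(n)}\langle\mathbf{a}_{n}(x)\rangle$ alone at the extreme bottom shift. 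These unique extreme copies force $\theta_{x}^{(n)}L_{z}^{(n)}$ to have simple top and simple socle, hence to be indecomposable.

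The hard part will be the non-vanishing claim $\gamma_{z,x,z}^{(n)}\neq 0$ for involutions $x$ with $x\leq_{R}^{(n)}z^{-1}$: this is not a formal consequence of positivity, but depends on a cell-theoretic pairing argument specific to involutions, which my approach would invoke from \cite{KMM23} as a black box.
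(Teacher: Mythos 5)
There are genuine gaps in both parts, and the route you take misses the single ingredient that makes the paper's proof work: the Hom-space identity
$\dim\mathrm{Hom}(\theta_{x}^{(n)}L_{z}^{(n)},\theta_{x}^{(n)}L_{z}^{(n)})=\dim\mathrm{Hom}(\theta_{x}^{(n)}L_{z}^{(n)},L_{z}^{(n)}\langle\mathbf{a}_{n}(x)\rangle)$ from the proof of \cite[Proposition~5.4]{KMM23} (see also \cite[Lemma~5.3]{CM25-1}). This identity bounds $\dim\mathrm{End}(\theta_{x}^{(n)}L_{z}^{(n)})$ above by $d:=[v^{\mathbf{a}_{n}(x)}][D_{z}^{(n)}](D_{z}^{(n)}C_{x}^{(n)})$, and the identity morphism bounds it below by $1$ whenever the module is nonzero; that alone gives the forward direction of part~(1), and the backward direction is the easy chain $[D_{z}^{(n)}](D_{z}^{(n)}C_{x}^{(n)})\neq0\Rightarrow D_{z}^{(n)}C_{x}^{(n)}\neq0\Rightarrow\theta_{x}^{(n)}L_{z}^{(n)}\neq0$. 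Your version of part~(1) instead tries to prove the purely combinatorial statement that $\gamma_{z,x,z}^{(n)}\neq0$ forces $[v^{\mathbf{a}_{n}(x)}]\gamma_{z,x,z}^{(n)}\neq0$. Positivity of the coefficients only gives the trivial direction (nonzero top coefficient implies nonzero polynomial); for the direction you need, you would have to know that $\deg\gamma_{z,x,z}^{(n)}\leq\mathbf{a}_{n}(x)$ \emph{and} that the degree-$\mathbf{a}_{n}(x)$ coefficient is attained, and the standard degree bound controls $\gamma_{a,b,c}$ by $\mathbf{a}_{n}$ of the \emph{output} index $c$, which here is $z$, not $x$. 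You then black-box "$\gamma_{z,x,z}^{(n)}\neq0$ whenever $x\leq_{R}^{(n)}z^{-1}$" from \cite{KMM23}, but that is essentially the first equivalence of the lemma itself, so the argument is circular as written.

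Part~(2) has a more basic logical gap. Knowing that $L_{z}^{(n)}\langle\mathbf{a}_{n}(x)\rangle$ occurs with multiplicity one in the extreme degree of $\theta_{x}^{(n)}L_{z}^{(n)}$ (and dually at the other extreme) does \emph{not} force the module to have simple top or simple socle: the head of a graded module can perfectly well contain further simple constituents sitting in intermediate degrees, so "unique extreme copies" do not imply indecomposability. The correct argument is that $d=1$ together with the Hom-space identity forces $\dim\mathrm{End}(\theta_{x}^{(n)}L_{z}^{(n)})=1$, hence the endomorphism algebra is local and the module is indecomposable; this last step is \cite[Corollary~5.2]{KMM23}. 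I recommend restructuring the proof around the endomorphism-dimension bound rather than around graded multiplicities at extreme degrees.
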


\begin{proof}
In the proof of \cite[Proposition 5.4]{KMM23}, see also \cite[Lemma 5.3]{CM25-1}, it was shown that
\[ \mathrm{dim}\left(\mathrm{Hom}_{^{\mathbb{Z}}\mathcal{O}_{0}}(\theta_{x}^{(n)}L_{z}^{(n)}, \theta_{x}^{(n)}L_{z}^{(n)})\right)=\mathrm{dim}\left(\mathrm{Hom}_{^{\mathbb{Z}}\mathcal{O}_{0}}(\theta_{x}^{(n)}L_{z}^{(n)},L_{z}^{(n)}\langle\mathbf{a}_{n}(x)\rangle)\right), \]
where morphisms here are \emph{homogeneous of degree zero}. The right hand side here is clearly bounded above by the multiplicity of $L_{z}^{(n)}\langle\mathbf{a}_{n}(x)\rangle$ within $\theta_{x}^{(n)}L_{z}^{(n)}$, and so from the above discussions, we have
\[ \mathrm{dim}\left(\mathrm{Hom}_{^{\mathbb{Z}}\mathcal{O}_{0}}(\theta_{x}^{(n)}L_{z}^{(n)}, \theta_{x}^{(n)}L_{z}^{(n)})\right)\leq d:=[v^{\mathbf{a}_{n}(x)}][D_{z}^{(n)}](D_{z}^{(n)}C_{x}^{(n)})\in\mathbb{Z}_{\geq0}. \]

To prove Claim~(1): For the first equivalence, we have that $\theta_{x}^{(n)}L_{z}^{(n)}\neq0$ if and only if the above endomorphism space is non-zero since such a space contains the identity morphism. Hence, $\theta_{x}^{(n)}L_{z}^{(n)}\neq0$ if and only if $1\leq d$ if and only if $d\neq0$, since $d$ is a non-negative integer. This gives the first equivalence, for the second, the forward implication is immediate, while for the latter implication we have that $[D_{z}^{(n)}](D_{z}^{(n)}C_{x}^{(n)})\neq0$ which implies $D_{z}^{(n)}C_{x}^{(n)}\neq0$ and, in turn, $\theta_{x}^{(n)}L_{z}^{(n)}\neq0$, and by the first equivalence, this implies $[v^{\mathbf{a}_{n}(x)}][D_{z}^{(n)}](D_{z}^{(n)}C_{x}^{(n)})\neq0$, completing the proof of Claim~(1).

To prove Claim~(2): If $d=1$, then the dimension of the above endomorphism space of $\theta_{x}^{(n)}L_{z}^{(n)}$ is bounded above by 1, but it is also bounded below by 1 since $\theta_{x}^{(n)}L_{z}^{(n)}\neq0$, which is implied by the fact that
\[ [\theta_{x}^{(n)}L_{z}^{(n)}]=D_{z}^{(n)}C_{x}^{(n)} \hspace{1mm} \text{ and } \hspace{1mm} d=[v^{\mathbf{a}_{n}(x)}][D_{z}^{(n)}](D_{z}^{(n)}C_{x}^{(n)})=1. \]
So the dimension of this endomorphism space is indeed $1$. Claim~(2) now 
follows by \cite[Corollary~5.2]{KMM23}.
\end{proof}

\begin{lem}\label{Lem:2.11:5}
Let $\underline{x}=s_{i_{1}}s_{i_{2}}\cdots s_{i_{l}}\in\mathrm{S}_{n}$ be a reduced expression and $\mathrm{w}=(w_{1},\dots,w_{l})\in\mathrm{S}_{n}^{\times l}$ a strong right Bruhat walk which is $\underline{x}$-compatible, as described in \Cref{Sec:2.3}. Then we have that
\[ \emph{(a)} \hspace{2mm} D_{w_{1}}^{(n)}C_{s_{i_{1}}}^{(n)}C_{s_{i_{2}}}^{(n)}\cdots C_{s_{i_{l}}}^{(n)}=D_{w_{l}}^{(n)}C_{s_{i_{l}}}^{(n)}\neq0, \hspace{2mm} \text{ and } \hspace{2mm} \emph{(b)} \hspace{2mm} \theta_{s_{i_{l}}}^{(n)}\cdots\theta_{s_{i_{2}}}^{(n)}\theta_{s_{i_{1}}}^{(n)}L_{w_{1}}^{(n)}\cong\theta_{s_{i_{l}}}^{(n)}L_{w_{l}}^{(n)}\neq0. \]
\end{lem}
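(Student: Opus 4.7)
The plan is to induct on the walk length $l$. The base case $l=1$ is immediate: both sides of (a) equal $D_{w_1}^{(n)}C_{s_{i_1}}^{(n)}$ tautologically, and the non-vanishing — both in (a) and in (b) — follows from the compatibility condition $s_{i_1}\in\mathrm{Des}_R(w_1)$ via \Cref{Eq:2.9:7} and \Cref{Prop:2.10:2}.

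For the inductive step, I first observe that the truncated walk $(w_1,\dots,w_{l-1})$ is again compatible with the shorter expression $s_{i_1}\cdots s_{i_{l-1}}$, since the compatibility condition at each intermediate $w_k$ only references $s_{i_{k\pm 1}}$. Applying the inductive hypothesis collapses (a) to the identity
\[
D_{w_{l-1}}^{(n)}\,C_{s_{i_{l-1}}}^{(n)}\,C_{s_{i_l}}^{(n)} = D_{w_l}^{(n)}\,C_{s_{i_l}}^{(n)},
\]
which I would establish by expanding $D_{w_{l-1}}^{(n)} C_{s_{i_{l-1}}}^{(n)}$ via \Cref{Eq:2.8:1} (valid since $s_{i_{l-1}}\in\mathrm{Des}_R(w_{l-1})$) and right-multiplying by $C_{s_{i_l}}^{(n)}$. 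This produces three contributions: a scalar multiple of $D_{w_{l-1}}^{(n)} C_{s_{i_l}}^{(n)}$, which vanishes by \Cref{Eq:2.9:7} since compatibility gives $s_{i_l}\notin\mathrm{Des}_R(w_{l-1})$; the target term $D_{w_l}^{(n)} C_{s_{i_l}}^{(n)}$, with $w_l = w_{l-1} s_{i_{l-1}}$, which is non-zero by \Cref{Eq:2.9:7} because compatibility at $w_l$ gives $s_{i_l}\in\mathrm{Des}_R(w_l)$; and a Jantzen-middle sum $\sum_y \mu^{(n)}(w_{l-1},y)\,D_y^{(n)} C_{s_{i_l}}^{(n)}$ indexed by $y$ with $w_{l-1}<y<y s_{i_{l-1}}$. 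Part (b) is handled in parallel: applying the exact functor $\theta_{s_{i_l}}^{(n)}$ to the three-layer description of $\theta_{s_{i_{l-1}}}^{(n)} L_{w_{l-1}}^{(n)}$ provided by \Cref{Prop:2.10:2}, the top and socle copies of $L_{w_{l-1}}^{(n)}$ are killed, and the semi-simple Jantzen middle decomposes as $\theta_{s_{i_l}}^{(n)} L_{w_l}^{(n)}\oplus\bigoplus_y \bigl(\theta_{s_{i_l}}^{(n)} L_y^{(n)}\bigr)^{\oplus \mu^{(n)}(w_{l-1},y)}$.

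The \emph{main obstacle} in both parts is to show that every $y$ in the Jantzen-middle sum satisfies $s_{i_l}\notin\mathrm{Des}_R(y)$, forcing $D_y^{(n)} C_{s_{i_l}}^{(n)} = 0$ and $\theta_{s_{i_l}}^{(n)} L_y^{(n)} = 0$. A first reduction is that compatibility forces $s_{i_{l-1}}$ and $s_{i_l}$ to be adjacent simple reflections: if they commuted, right-multiplication by $s_{i_{l-1}}$ would leave the $s_{i_l}$-descent status of $w_{l-1}$ unchanged, contradicting compatibility at $w_l$. With adjacency in hand, the residual task is a combinatorial analysis of the Kazhdan--Lusztig $\mu$-function. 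A promising route uses the observation that, under the compatibility pattern, each walk step $w_k \to w_k s_{i_k}$ is an elementary Knuth transformation in one-line notation, so all $w_k$ share a common insertion tableau and hence lie in a single right Kazhdan--Lusztig cell by \Cref{Eq:2.9:1}; combining this structural fact with the descent restrictions implied by $\mu^{(n)}(w_{l-1},y)\neq 0$ and the adjacency of $s_{i_{l-1}}, s_{i_l}$ excludes $s_{i_l}$ from $\mathrm{Des}_R(y)$ and completes both parts.
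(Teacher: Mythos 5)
Your overall architecture is sound and is a genuinely more self-contained route than the paper's own proof, which simply quotes \cite[Lemma 3.5]{CM25-1} for part (b) and decategorifies to get (a): the induction, the reduction to the single identity $D_{w_{l-1}}^{(n)}C_{s_{i_{l-1}}}^{(n)}C_{s_{i_{l}}}^{(n)}=D_{w_{l}}^{(n)}C_{s_{i_{l}}}^{(n)}$, the expansion via \Cref{Eq:2.8:1}, the parallel treatment of (b) via \Cref{Prop:2.10:2}, and the adjacency of $s_{i_{l-1}}$ and $s_{i_{l}}$ are all correct. However, the step you yourself flag as the main obstacle --- that every $y$ with $w_{l-1}<y<ys_{i_{l-1}}$ and $\mu^{(n)}(w_{l-1},y)\neq0$ satisfies $s_{i_{l}}\notin\mathrm{Des}_{R}(y)$ --- is not proved, and the mechanism you propose cannot prove it. The Knuth-move observation shows that the walk elements $w_{1},\dots,w_{l}$ lie in a single right cell, but the $y$'s in the Jantzen middle are not walk elements; and even if one knew the right cell of such a $y$, that determines $\mathtt{P}_{y}$ and hence $\mathrm{Des}_{L}(y)$ (\Cref{Lem:2.4:1}), whereas what must be controlled is $\mathrm{Des}_{R}(y)$, which varies freely within a right cell. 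So the cell-theoretic route does not close the gap.

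What does close it is the standard $\mu$-vanishing criterion plus a dihedral computation. Write $w:=w_{l-1}$, $s:=s_{i_{l-1}}$, $t:=s_{i_{l}}$, so compatibility gives $ws<w$, $wt>w$ and $wst<ws$ (using, as you do implicitly, that $w_{l}=w_{l-1}s_{i_{l-1}}$). Suppose $y$ contributes to the sum in \Cref{Eq:2.8:1} (so $w<y$, $ys>y$, $\mu^{(n)}(w,y)\neq0$) and that $yt<y$. Since $t\in\mathrm{Des}_{R}(y)\setminus\mathrm{Des}_{R}(w)$, the well-known identity $p_{w,y}^{(n)}=p_{wt,y}^{(n)}$ (valid because $yt<y$ and $wt>w$) together with the degree bound on Kazhdan--Lusztig polynomials forces $y=wt$ whenever $\mu^{(n)}(w,y)\neq0$. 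It then remains to rule $y=wt$ out of the sum, i.e.\ to show $(wt)s<wt$. If $s$ and $t$ commute this is immediate from $wts=wst<ws<w<wt$. If they are adjacent, write $w=uv$ with $u$ the shortest element of the coset $w\langle s,t\rangle$ and $v\in\langle s,t\rangle$; the pattern $ws<w$, $wt>w$, $wst<ws$ forces $v=ts$, so $wt=u\cdot tst$ is the longest element of the coset and therefore $(wt)s<wt$. Hence $y=wt$ fails the condition $y<ys$ and does not occur, which is exactly what both (a) and (b) need. Without this argument (or the citation to \cite[Lemma 3.5]{CM25-1}) your inductive step is incomplete.
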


\begin{proof}
Item (b) is precisely (ii) of \cite[Lemma 3.5]{CM25-1}. Decategorifying this gives (a), noting that the natural action of ${^\mathbb{Z}}\mathcal{P}_{0}^{(n)}$ on ${^\mathbb{Z}}\mathcal{O}_{0}^{(n)}$ decategorifies to the \emph{right} regular action of $\mathrm{H}_{n}^{\mathbb{A}}$ on itself. It is worth mentioning that the definition of $\underline{x}$-compatibility in \cite{CM25-1} corresponds to $\underline{x^{-1}}$-compatibility for us. This difference occurs as \cite[Lemma 3.5]{CM25-1} gives the categorical statement, written with a left action, while here we focus on the decategorified statement, written with a right action. 
\end{proof}

\subsection{Indecomposability conjecture}\label{Sec:2.12}

For $x,z\in\mathrm{S}_{n}$, let $\mathbf{KM}_{n}(x,z)\in\{\mathtt{true}, \mathtt{false}\}$ be the \emph{truth value} of the statement ``the module  $\theta_{x}^{(n)}L_{z}^{(n)}$ in $^{\mathbb{Z}}\mathcal{O}_{0}^{(n)}$ (or equivalently $\mathcal{O}_{0}^{(n)}$) is zero or indecomposable''. Let $\mathbf{KM}_{n}(x,\star)$ be the conjunction of $\mathbf{KM}_{n}(x,z)$ for all $z\in\mathfrak{S}_{n}$, and define $\mathbf{KM}_{n}(\star,z)$ similarly. Then the \emph{Indecomposability Conjecture}, first presented in \cite{KiM16}, is the following:

\begin{conj}\label{Conj:2.12:1}
For all $z\in\mathrm{S}_{n}$ we have that $\mathbf{KM}_{n}(\star,z)=\mathtt{true}$.
\end{conj}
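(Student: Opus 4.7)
The plan is to reduce the indecomposability statement for $\theta_x^{(n)} L_z^{(n)}$ to a coefficient bound inside the Hecke algebra, and then to verify that bound in a stratified case analysis. By \Cref{Lem:2.11:4}(2), indecomposability is implied by $[v^{\mathbf{a}_n(x)}][D_z^{(n)}](D_z^{(n)} C_x^{(n)}) = 1$, and by \Cref{Lem:2.11:4}(1) this coefficient is nonzero exactly when $\theta_x^{(n)} L_z^{(n)} \neq 0$. Since the coefficient is a non-negative integer by Kazhdan-Lusztig positivity, the conjecture reduces to the purely combinatorial statement
\[
[v^{\mathbf{a}_n(x)}][D_z^{(n)}](D_z^{(n)} C_x^{(n)}) \in \{0,1\} \qquad \text{for all } x,z \in \mathrm{S}_n.
\]

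Before attacking the bound I would prune the state space using the symmetries that preserve indecomposability. By \Cref{Eq:2.11:3}, the isomorphism class of a nonzero $\theta_x^{(n)} L_z^{(n)}$ depends on $x$ only through its right cell, so it suffices to let $x$ range over the unique involution representative of each right cell. The $w_{0,n}$-conjugation auto-equivalence from \Cref{Lem:2.10:3} identifies each shape with its transpose, and together with the left-right involution $x\leftrightarrow x^{-1}$, this cuts the problem down to a manageable list of shape families. By \Cref{Eq:2.9:4}, I need only consider $z$ with $\mathtt{sh}(z)\preceq\mathtt{sh}(x)$ in the dominance order, so for each fixed $\mathtt{sh}(x)$ only finitely many shapes of $z$ contribute.

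With these reductions in place I would stratify by $\mathtt{sh}(x)$. The case $\ell(x)=1$ is immediate from \Cref{Prop:2.10:2}; the fully commutative case (shapes with at most two rows) is handled in \cite{MMM24}; the present paper then extends this to the shapes $(n-2,1,1)$ and $(n-3,2,1)$ by classifying the relevant involutions into a finite list of combinatorial types and verifying the coefficient bound on each type. A useful tool throughout is \Cref{Lem:2.11:5}: whenever a reduced expression $\underline{x}=s_{i_1}\cdots s_{i_l}$ admits a strong right Bruhat walk that is $\underline{x}$-compatible and starts at $z$, the product $D_z^{(n)}C_{s_{i_1}}^{(n)}\cdots C_{s_{i_l}}^{(n)}$ collapses to $D_{w_l}^{(n)}C_{s_{i_l}}^{(n)}$, which typically constrains the top-degree coefficient of $D_z^{(n)}C_x^{(n)}$ enough to conclude directly, since the Soergel product $C_{s_{i_1}}^{(n)}\cdots C_{s_{i_l}}^{(n)}$ differs from $C_x^{(n)}$ only by lower-cell terms.

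The hard part will be the involutions for which no such compatible walk exists and $\mathbf{a}_n(x)$ is large: there the expansion of $D_z^{(n)}C_x^{(n)}$ in the dual Kazhdan-Lusztig basis has many summands, and ruling out a coefficient of $2$ or more at the top degree $v^{\mathbf{a}_n(x)}D_z^{(n)}$ requires delicate bar-invariance bookkeeping and $\mu$-function recursions (using \Cref{Eq:2.6:1.5R} and \Cref{Eq:2.6:1.5L}), together with the type-specific tricks announced for shape $(n-3,2,1)$. The absence of any uniform factorisation of $C_x^{(n)}$ in terms of the $C_s^{(n)}$'s is precisely what prevents a shape-independent argument and forces the case analysis, and is presumably why the full conjecture remains open beyond the shapes treated here.
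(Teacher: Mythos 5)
The statement you were asked to prove is \Cref{Conj:2.12:1}, which is a \emph{conjecture}: the paper does not prove it in general, and neither do you. The paper only establishes $\mathbf{KM}_{n}(\star,z)=\mathtt{true}$ for $z$ of shape $(n-2,1,1)$ and $(n-3,2,1)$ (\Cref{Lem:4:2}, \Cref{Thm:5.8:1}), and your proposal likewise defers the general case, conceding at the end that it ``remains open beyond the shapes treated here.'' So as a proof of the stated conjecture the proposal has an unavoidable gap: it is a strategy outline whose hard step (ruling out top-degree coefficient $\geq 2$ for arbitrary $x$, $z$) is exactly the open problem.

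Beyond that, your opening reduction is logically flawed. \Cref{Lem:2.11:4}(2) says the module is indecomposable \emph{whenever} $[v^{\mathbf{a}_{n}(x)}][D_{z}^{(n)}](D_{z}^{(n)}C_{x}^{(n)})=1$; it does not say the converse. The coefficient only bounds $\dim\mathrm{Hom}(\theta_{x}^{(n)}L_{z}^{(n)},\theta_{x}^{(n)}L_{z}^{(n)})$ from above, and an indecomposable module can perfectly well have a local endomorphism algebra of dimension greater than one. Hence the conjecture does not ``reduce to'' the combinatorial statement that this coefficient lies in $\{0,1\}$ --- that statement is strictly stronger and could fail even where the conjecture holds. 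It is also worth noting that the paper's actual proofs of the special cases proceed quite differently: after the right-cell and dominance reductions (which you correctly identify), almost every case is dispatched either by the small-support criterion in Equation~\eqref{Eq:2.12:4}, by the same-shape result of \cite[Section 5.2]{KiM16}, by induction along $\underline{x}$-compatible strong right Bruhat walks via \Cref{Lem:2.11:5}, or by the parabolic shift argument of \Cref{Lem:3:19} combined with Kostant positivity of a smaller instance; the coefficient criterion of \Cref{Lem:2.11:4}(2) plays essentially no role there.
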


This conjecture has been confirmed up to $n=7$, and many special cases and properties are known, some of which we recall now: By \cite[Proposition 2]{CMZ19} and \cite[Corollary 5.6]{CM25-1}, we have
\begin{equation}\label{Eq:2.12:2}
x\sim_{R}^{(n)}x' \implies \mathbf{KM}_{n}(x,\star)=\mathbf{KM}_{n}(x',\star).
\end{equation}
\begin{equation}\label{Eq:2.12:3}
z\sim_{L}^{(n)}z' \implies \mathbf{KM}_{n}(\star,z)=\mathbf{KM}_{n}(\star,z').
\end{equation}
\begin{equation}\label{Eq:2.12:4}
|\mathrm{Sup}(x)|\leq6 \implies \mathbf{KM}_{n}(x,\star)=\mathtt{true}.
\end{equation}
Moreover, from \Cref{Lem:2.10:3} and \Cref{Eq:2.9:3}, we have, for any $z\in\mathrm{S}_{n}$, that
\begin{equation}\label{Eq:2.12:5}
\mathbf{KM}_{n}(\star,z)=\mathbf{KM}_{n}(\star,w_{0,n}zw_{0,n}).
\end{equation}
It was also shown in \cite[Section 4.3]{MMM24} that $\mathbf{KM}_{n}(\star,z)=\mathtt{true}$ for all $z\in\mathrm{S}_{n}$ such that $z$ is fully commutative, that is to say, all $z$ with shape $(n-m,m)$, for any $m\leq n$. In this paper, we similarly confirm the indecomposability conjecture for all $z$ with shape $(n-3,2,1)$ and $(n-2,1,1)$.

\subsection{Kostant's problem}\label{Sec:2.13}

For $z\in\mathrm{S}_{n}$, and specialising to our setting, \emph{Kostant's problem} asks if the universal enveloping algebra of $\mathfrak{sl}_{n}$ surjects onto the adjointly finite endomorphism algebra of $L_{z}^{(n)}$ (see for example \cite{Jo80}). If true, we call $z$ \emph{Kostant positive}, otherwise it is called \emph{Kostant negative}. For our purposes, we will be exclusively working with the following equivalent reformulation of Kostant's problem given by Theorem 8.16 within \cite{KMM23}:

\begin{thm}\label{Thm:2.13:1}
An element $z\in\mathrm{S}_{n}$ is Kostant positive if and only if the following hold:
\begin{itemize}
\item[(1)] For any $x\leq_{R}^{(n)}z^{-1}$, the module $\theta_{x}^{(n)}L_{z}^{(n)}$ is indecomposable.
\item[(2)] For any distinct $x,y\leq_{R}^{(n)}z^{-1}$, then $\theta_{x}^{(n)}L_{z}^{(n)}\not\cong\theta_{y}^{(n)}L_{z}^{(n)}$ in $^{\mathbb{Z}}\mathcal{O}_{0}^{(n)}$.
\end{itemize}
\end{thm}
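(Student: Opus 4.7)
The plan is to reformulate Kostant's question for $L_z^{(n)}$ in terms of the action of projective functors on $L_z^{(n)}$, and then match dimensions on the two sides. The starting point is the classical description, going back to Bernstein--Gelfand, Joseph, and Soergel, of the adjointly finite endomorphism algebra of $L_z^{(n)}$ as a Harish-Chandra bimodule. Using the standard equivalence between Harish-Chandra bimodules of generalised trivial central character (on both sides) and a suitable subcategory of $\mathcal{O}_0^{(n)}$, this endomorphism algebra is identified with a direct sum
\[
\bigoplus_{x\,\leq_R^{(n)} z^{-1}} \mathrm{Hom}_{\mathcal{O}_0^{(n)}}\!\bigl(\theta_x^{(n)} L_z^{(n)},\, L_z^{(n)}\bigr)^{*},
\]
indexed, by \eqref{Eq:2.11:1}, precisely by those $x$ for which $\theta_x^{(n)} L_z^{(n)} \neq 0$. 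Each summand contributes a positive dimension controlled by the multiplicity of $L_z^{(n)}$ in the top of $\theta_x^{(n)} L_z^{(n)}$.

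The second step is to identify the image of Kostant's natural map $U(\mathfrak{sl}_n) \to \mathrm{End}(L_z^{(n)})$ inside this decomposition. Since, after projecting onto the principal block, $U(\mathfrak{sl}_n)$ itself decomposes in the appropriate $2$-categorical sense as a sum of the indecomposable projective functors $\theta_x^{(n)}$, each occurring with multiplicity one, the image in the adjointly finite endomorphism algebra has dimension equal to the number of isomorphism classes occurring among $\{\theta_x^{(n)} L_z^{(n)} : x \leq_R^{(n)} z^{-1}\}$. Kostant's map is thus surjective if and only if this count agrees with the total coming from the previous display, and a summand-by-summand comparison forces both $\dim \mathrm{Hom}_{\mathcal{O}_0^{(n)}}(\theta_x^{(n)} L_z^{(n)}, L_z^{(n)}) = 1$ for every relevant $x$ and the pairwise non-isomorphism of all non-zero $\theta_x^{(n)} L_z^{(n)}$. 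By \Cref{Lem:2.11:4}, the first of these is equivalent to indecomposability of each $\theta_x^{(n)} L_z^{(n)}$, which is condition (1); the second is literally condition (2).

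The main obstacle is establishing the bimodule identification in the first paragraph. Making it rigorous requires either the classical Harish-Chandra bimodule machinery together with Soergel's combinatorial description of their endomorphism algebras, or the homological approach of \cite{KMM23} which computes the adjointly finite endomorphism algebra directly via an explicit resolution of $L_z^{(n)}$ built from sums of projective functors and reads the dimensions off from the resulting $\mathrm{Hom}$ complexes. Once this identification is granted, the remaining argument is a routine dimension comparison combined with \Cref{Lem:2.11:4}, the only subtlety being to verify that the natural map from $U(\mathfrak{sl}_n)$ corresponds, under the identification, precisely to the subspace claimed, so that neither direction of the equivalence picks up spurious contributions from lower graded components.
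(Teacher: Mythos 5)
The paper does not actually prove this statement; it is quoted verbatim from \cite[Theorem 8.16]{KMM23}, so the comparison must be with the proof given there. Your overall architecture --- realise the adjointly finite endomorphism algebra as a Harish-Chandra bimodule, measure it against the indecomposable projective functors, and compare with the image of $U(\mathfrak{sl}_n)$ --- is indeed the architecture of that proof. But two things go wrong in the execution. First, the displayed identification of the adjointly finite endomorphism algebra with $\bigoplus_{x}\mathrm{Hom}(\theta_x^{(n)}L_z^{(n)},L_z^{(n)})^{*}$ is false as stated: the left-hand side is an infinite-dimensional algebra. What is true is an identification of multiplicity spaces, namely $\mathrm{Hom}(\theta_x\mathcal{D},\mathcal{L}(L_z,L_z))\cong\mathrm{Hom}_{\mathcal{O}_0}(\theta_x L_z,L_z)$ where $\mathcal{D}=U(\mathfrak{sl}_n)/\mathrm{Ann}\,\Delta(0)\cong\mathcal{L}(\Delta(0),\Delta(0))$ and $\Delta(0)$ is the dominant Verma module. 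This is repairable.

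The genuine gap is the second step. The assertion that the image of $U(\mathfrak{sl}_n)$ accounts for exactly one copy of each isomorphism class among the nonzero $\theta_x^{(n)}L_z^{(n)}$ is the entire content of the theorem, and the justification you offer for it is incorrect: under the Bernstein--Gelfand equivalence, $\mathcal{D}$ corresponds to the single indecomposable object $\Delta(0)=\theta_{1_n}\Delta(0)$, not to a multiplicity-free sum of all the $\theta_x^{(n)}$; the latter only appear after tensoring $\mathcal{D}$ with finite-dimensional modules, and then $U(\mathfrak{sl}_n)$ maps only into the $\mathcal{D}$-component. What actually has to be computed is, for each $x$, the image of the composition map $\mathrm{Hom}(\theta_x\Delta(0),\Delta(0))\rightarrow\mathrm{Hom}(\theta_x L_z,L_z)$, and proving that this image is spanned by adjunction (co)units precomposed with isomorphisms among the various $\theta_y^{(n)}L_z^{(n)}$ is precisely where conditions (1) and (2) enter; this is the delicate part of \cite{KM10} and \cite[Sections 5 and 8]{KMM23}, and your sketch assumes its conclusion. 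Two smaller issues: your count is ungraded while condition (2) lives in ${}^{\mathbb{Z}}\mathcal{O}_0^{(n)}$, and the equivalence between $\dim\mathrm{Hom}(\theta_x^{(n)}L_z^{(n)},L_z^{(n)})=1$ and indecomposability rests on \Cref{Lem:2.11:4}, which is stated only for involutions $x$ and requires a reduction to the Duflo involution of the right cell for general $x$.
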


By \Cref{Eq:2.11:1}, we see that (1) above is equivalent to $\mathbf{KM}_{n}(\star,z)=\mathtt{true}$. We let $\mathbf{Kh}_{n}(z)$ denote the truth value of statement (2) above, and we let $\mathbf{K}_{n}(z)$ denote the truth value of the conjunction of both (1) and (2). In other words, $\mathbf{K}_{n}(z)=\mathtt{true}$ if and only if $z$ is Kostant positive.

Kostant's problem has been answered up to $n=7$, and many special cases and properties are known. Important for the results of this paper, Kostant's problem is a left cell invariant. That is,
\begin{equation}\label{Eq:2.13:2}
z\sim_{L}^{(n)}z' \implies \mathbf{K}_{n}(z)=\mathbf{K}_{n}(z').
\end{equation}
This is given by \cite[Theorem 61]{MS08}. Also, both (1) and (2) from \Cref{Thm:2.13:1} are clearly preserved under the auto-equivalence of \Cref{Lem:2.10:3}. Hence, for all $z\in\mathrm{S}_{n}$, we have that
\begin{equation}\label{Eq:2.13:3}
\mathbf{K}_{n}(z)=\mathbf{K}_{n}(w_{0,n}zw_{0,n}).
\end{equation}
In \cite{MMM24}, Kostant's problem was answered for all fully commutative permutations, that is, all permutations of shape $(n,m)$ for any $m\leq n$. We now recall a conjecture in \cite[Section 6.9]{MMM24}. Let $\lambda=(\lambda_{1},\dots,\lambda_{r})\in\Lambda_{m}$ be a Young diagram with $r$ rows, and here $\lambda_{1}$ is the number of boxes appearing in the first row. Then, for any $n\geq\lambda_{1}$, we define the two quantities
\begin{align*}
\mathbf{k}_{m+n}^{+}(\lambda^{\langle n\rangle})&:=|\{x\in\mathrm{S}_{m+n} \ | \ \mathtt{sh}(x)=\lambda^{\langle n\rangle} \text{ and } \mathbf{K}_{m+n}(x)=\mathtt{true}\}|, \\
\mathbf{k}_{m+n}^{-}(\lambda^{\langle n\rangle})&:=|\{x\in\mathrm{S}_{m+n} \ | \ \mathtt{sh}(x)=\lambda^{\langle n\rangle} \text{ and } \mathbf{K}_{m+n}(x)=\mathtt{false}\}|.
\end{align*}
Hence $\mathbf{k}_{m+n}^{+}(\lambda^{\langle n\rangle})$ and $\mathbf{k}_{m+n}^{+}(\lambda^{\langle n\rangle})$ count the number of Kostant positive and negative permutations in $\mathrm{S}_{m+n}$ with shape $\lambda^{\langle n\rangle}$, respectively, recalling that $\lambda^{\langle n\rangle}$ is the Young diagram obtained from $\lambda$ by adding a new first row consisting of $n$ (which is no smaller than $\lambda_{1}$) boxes. The following conjecture was made in \cite[Section 6.9]{MMM24} (although are notation and formulation is different), and we refer to this conjecture as the \emph{Asymptotic Shape Conjecture}:

\begin{conj}\label{Conj:2.13:5}
Let $m\geq0$ and $\lambda\in\Lambda_{m}$. Then the proportion of permutations within $\mathrm{S}_{m+n}$, and of shape $\lambda^{(n)}$, which are Kostant positive is asymptotically equal to 1. In other words, we have that
\[ \frac{\mathbf{k}_{m+n}^{+}(\lambda^{(n)})}{|\mathtt{SYT}_{m+n}(\lambda^{(n)})|^{2}}
=
\frac{\mathbf{k}_{m+n}^{+}(\lambda^{(n)})}{
\mathbf{k}_{m+n}^{+}(\lambda^{(n)})+\mathbf{k}_{m+n}^{-}(\lambda^{(n)})}
\rightarrow 1 \ \text{(as $n\geq\lambda_{1}$ tends towards $\infty$)}. \] 
\end{conj}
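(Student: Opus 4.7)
The plan is to combine a structural reduction with an enumerative estimate. First, I would try to show that for each $\lambda\in\Lambda_m$ there exists a \emph{finite} set $\mathcal{P}(\lambda)$ of minimal Kostant-negative permutations such that a permutation $z$ of shape $\lambda^{\langle n\rangle}$ is Kostant negative if and only if it consecutively contains some $p\in\mathcal{P}(\lambda)$. The ``if'' direction is immediate from \cite{CM26}. The ``only if'' direction is the substantive claim: intuitively, new obstructions arising at shape $\lambda$ cannot be arbitrarily long because all entries outside the top row of the insertion and recording tableaux are confined to at most $m=|\lambda|$ positions, and long patterns with the right descent structure can only appear with the help of such entries. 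The known cases $\mathcal{P}(())=\emptyset$ (from \cite{MMM24}), $\mathcal{P}((1,1))=\{14325\}$ (\Cref{Thm:4:11}), and $\mathcal{P}((2,1))=\{2143,14325,1536247,1462537\}$ confirm this shape dependence, so a natural target is an induction on $|\lambda|$.

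Granted such a finite set $\mathcal{P}(\lambda)$, the rest is asymptotic counting. For a fixed pattern $p$ of size $k$, I would bound the number of permutations of shape $\lambda^{\langle n\rangle}$ which consecutively contain $p$ at some position $i\in [m+n-k+1]$. The crucial combinatorial input is \Cref{Lem:2.4:1}: the left and right descent sets of $z$ are read off from the rows of $\mathtt{P}_z$ and $\mathtt{Q}_z$. Each element of $\mathcal{P}(\lambda)$ has a prescribed multi-descent structure, so the positions $i$ at which $p$ can consecutively occur are pinned down by the locations of entries in rows $\geq 2$ of the tableaux, of which there are only $m$ in each. Since $|\mathtt{SYT}_{m+n}(\lambda^{\langle n\rangle})|$ grows like $c_\lambda\cdot n^m$ by the hook-length formula (one checks $c_{(1,1)}=1/2$ and $c_{(2,1)}=1/3$ directly), the ``bad'' count of pairs $(\mathtt{P}_z,\mathtt{Q}_z)$ for which $z$ consecutively contains some $p\in\mathcal{P}(\lambda)$ should grow at most like $n^{2m-1}$, strictly slower than $|\mathtt{SYT}_{m+n}(\lambda^{\langle n\rangle})|^2\sim c_\lambda^2\, n^{2m}$, yielding the desired limit.

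The main obstacle is the first step: producing $\mathcal{P}(\lambda)$ in general, or at least controlling its cardinality. Even for $\lambda=(2,1)$, the paper's proof requires a case split into $13$ types of involutions and heavy use of Hecke-algebra identities; extending this to arbitrary $\lambda$ looks daunting without new ideas. A plausible route is induction on $m=|\lambda|$ (or along the dominance order): use the pattern-containment principle of \cite{CM26} to lift Kostant-negative obstructions from smaller shapes, and show that only finitely many ``new'' obstructions can be introduced by the addition of a box. The cases $\lambda=(1,1)$ and $\lambda=(2,1)$ treated in \Cref{Sec:4} and \Cref{Sec:5} would form the base of the induction alongside the fully commutative shapes from \cite{MMM24}. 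Verifying that the induction is closed under both ``add a box to column $1$'' and ``add a box to row $2$'' is where I expect the real difficulty to lie, as each such step may introduce genuinely new obstruction patterns that must be analysed via the Kazhdan--Lusztig preorders and Hecke-algebra combinatorics recalled in \Cref{Sec:2.9}.
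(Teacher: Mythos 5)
The statement you are proving is \Cref{Conj:2.13:5}, which is a \emph{conjecture}: the paper does not prove it for general $\lambda$, and only establishes the cases $\lambda=(1,1)$ and $\lambda=(2,1)$ (\Cref{Prop:4:15} and \Cref{Prop:5.8:4}), each as a corollary of a complete answer to Kostant's problem for the corresponding shape family. Your proposal is therefore a research program rather than a proof, and it contains a genuine gap exactly where you locate it: the existence of a finite set $\mathcal{P}(\lambda)$ of consecutive patterns characterising Kostant negativity within the family $\lambda^{\langle n\rangle}$ is not known, and is essentially the hard content of the conjecture. Note that even the weaker premise --- that Kostant negativity restricted to a fixed shape family is characterised by consecutive pattern containment at all --- is open; the ``if'' direction from \cite{CM26} only applies once you know the patterns in $\mathcal{P}(\lambda)$ are themselves Kostant negative, and the ``only if'' direction required, for $\lambda=(2,1)$ alone, the entire case analysis of \Cref{Sec:5}. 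Your heuristic that obstructions cannot be arbitrarily long because only $m$ entries lie below the first row is suggestive but unproven; nothing currently rules out an infinite family of cuspidal obstructions as $n$ grows.

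On the counting step: your claimed bound of $O(n^{2m-1})$ for the bad count matches both known cases, but your mechanism (pinning down positions via descent sets of $\mathtt{P}_{z}$ and $\mathtt{Q}_{z}$) is not how the paper argues and is shakier than it needs to be. The clean route, used in \Cref{Prop:4:15} and \Cref{Prop:5.8:4}, is via the left-cell invariance of Kostant's problem (\Cref{Eq:2.13:2}): the Kostant negative permutations of shape $\lambda^{\langle n\rangle}$ form a union of left cells, each of size $|\mathtt{SYT}_{m+n}(\lambda^{\langle n\rangle})|\sim c_{\lambda}n^{m}$, so it suffices to show that the number of Kostant negative \emph{involutions} of that shape is $o(n^{m})$. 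In the two proved cases this follows because the negativity criterion imposes at least one equality constraint on the $O(m)$ integer parameters describing the involution (e.g.\ $j=i+1$ for shape $(1,1)$), dropping the count by one degree. If you do succeed in producing a finite $\mathcal{P}(\lambda)$, you should recast your counting in these terms; the argument that consecutive containment of a fixed pattern forces such a constraint for arbitrary $\lambda$ would still need to be written down, but it is far more tractable than the first step.
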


This conjecture was made since it was proved for all fully commutative permutations, equivalently, for all one row shapes $\lambda=(m)$. We will prove this conjecture for shapes $\lambda=(1,1)$ and $\lambda=(2,1)$. 

\subsection{K{\aa}hrstr{\"o}m's conjecture}\label{Sec:2.14}

If the Indecomposability conjecture is true, then from \Cref{Thm:2.13:1} we know that $\mathbf{K}_{n}(z)=\mathbf{Kh}_{n}(z)$ for all $z\in\mathrm{S}_{n}$. The following conjecture is \emph{K\aa hrstr\"om's Conjecture}, which was first proposed in \cite[Conjecture 1.2]{KMM23}, and suggests, when dealing with involutions, we can strengthen the previous equality by ungrading and decategorifying $\mathbf{Kh}_{n}$:

\begin{conj}\label{Conj:2.14:1}
For any involution $z\in\mathrm{Inv}_{n}$, the following statements are equivalent:
\begin{itemize}
\item[(1)] The involution $z$ is Kostant positive.
\item[(2)] For any distinct $x,y\leq_{R}^{(n)}z^{-1}$, then $\theta_{x}^{(n)}L_{z}^{(n)}\not\cong\theta_{y}^{(n)}L_{z}^{(n)}$ in $^{\mathbb{Z}}\mathcal{O}_{0}^{(n)}$.
\item[(3)] For any distinct $x,y\leq_{R}^{(n)}z^{-1}$, then $\theta_{x}^{(n)}L_{z}^{(n)}\not\cong\theta_{y}^{(n)}L_{z}^{(n)}$ in $\mathcal{O}_{0}^{(n)}$.
\item[(4)] For any distinct $x,y\leq_{R}^{(n)}z^{-1}$, then $D_{z}^{(n)}C_{x}^{(n)}\neq D_{z}^{(n)}C_{y}^{(n)}$ in $\mathrm{H}_{n}^{\mathbb{A}}$.
\item[(5)] For any distinct $x,y\leq_{R}^{(n)}z^{-1}$, then $D_{z}^{(n)}C_{x}^{(n)}\neq D_{z}^{(n)}C_{y}^{(n)}$ in $\mathrm{H}_{n}^{\mathbb{Z}}$.
\end{itemize}
\end{conj}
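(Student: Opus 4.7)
The plan is to verify each implication in the conjecture by organizing the five statements according to their implication strength. Most of the implications follow formally from standard categorical tools, and the conceptual content is concentrated in a single upward direction from (2) to (5).

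First I would record the immediate implications. Statement (1) implies (2) directly via \Cref{Thm:2.13:1}, since item~(2) of that theorem is literally statement~(2) of the conjecture. The forgetful functor ${^\mathbb{Z}\mathcal{O}_0^{(n)}} \to \mathcal{O}_0^{(n)}$ descends a graded isomorphism to an ungraded one, giving (3) $\Rightarrow$ (2). The decategorification isomorphisms $\mathbf{Gr}({^\mathbb{Z}\mathcal{O}_0^{(n)}}) \cong \mathrm{H}_n^{\mathbb{A}}$ and $\mathbf{Gr}(\mathcal{O}_0^{(n)}) \cong \mathrm{H}_n^{\mathbb{Z}}$ send $[\theta_x^{(n)}L_z^{(n)}]$ to $D_z^{(n)}C_x^{(n)}$, so isomorphic modules have equal Hecke algebra classes, yielding (4) $\Rightarrow$ (2) and (5) $\Rightarrow$ (3). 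Finally, the ring homomorphism $\varepsilon: \mathrm{H}_n^{\mathbb{A}} \to \mathrm{H}_n^{\mathbb{Z}}$ sends equalities to equalities, so (5) $\Rightarrow$ (4). Together these place (2) at the bottom and (5) at the top of the implication lattice, with (3) and (4) in between.

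To close the conjecture it remains to promote (2) up to both (1) and (5). The step (2) $\Rightarrow$ (1) is handled by invoking the Indecomposability Conjecture (\Cref{Conj:2.12:1}) for $z$: coupling it with (2) supplies both hypotheses of \Cref{Thm:2.13:1}, giving Kostant positivity. This is precisely why the paper first establishes Indec for its target shapes. The step (2) $\Rightarrow$ (5), however, carries the real content: it asks that two graded-non-isomorphic modules $\theta_x^{(n)}L_z^{(n)}$ and $\theta_y^{(n)}L_z^{(n)}$ already be separated at the weakest possible level, namely by their classes in $\mathrm{H}_n^{\mathbb{Z}}$. My approach would be to exploit the rigid structure of these modules: by \Cref{Lem:2.11:4} each nonzero $\theta_x^{(n)}L_z^{(n)}$ is indecomposable with a distinguished simple socle $L_z^{(n)}\langle\mathbf{a}_n(x)\rangle$ of multiplicity one, and \Cref{Eq:2.11:3} propagates any module isomorphism across the left cells of $x$ and $y$, giving combinatorial constraints on when coincidences can occur. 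The task is then to show that any such coincidence is already forced to be visible after the specialization $v=1$.

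The principal obstacle is exactly this last implication. A priori, non-isomorphic indecomposable modules can share a Grothendieck class, and distinct elements of $\mathrm{H}_n^{\mathbb{A}}$ can collapse under $\varepsilon$; ruling out both pathologies simultaneously for the family $\{\theta_x^{(n)}L_z^{(n)}\}$ is exactly the combined content of \cite[Conjectures~41,~42]{CM25-2}, acknowledged as missing at the end of the introduction. I therefore expect the proof to proceed case by case in a shape classification, as in \Cref{Sec:5}, with each Kazhdan-Lusztig cell profile handled by tailored Hecke algebra calculations, and a uniform argument becoming accessible only once the two companion conjectures are settled.
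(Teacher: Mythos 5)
The statement you were given is \Cref{Conj:2.14:1}, K{\aa}hrstr{\"o}m's Conjecture. It is an \emph{open conjecture}: the paper does not prove it, only verifies it for specific families (e.g.\ involutions of shape $(n-2,1,1)$ in \Cref{Thm:4:11}), and explicitly states that the authors could not verify it even for shape $(n-3,2,1)$ without assuming \cite[Conjectures~41 and~42]{CM25-2}. So there is no proof in the paper to compare against, and no complete proof should be expected from you.

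That said, your partial analysis is accurate and matches exactly what the paper records. The downward implications you list --- $(1)\Rightarrow(2)$ via \Cref{Thm:2.13:1}, $(3)\Rightarrow(2)$ via the forgetful functor, $(4)\Rightarrow(2)$ and $(5)\Rightarrow(3)$ via the decategorification maps of \Cref{Sec:2.11}, and $(5)\Rightarrow(4)$ via $\varepsilon$ --- are precisely the content of Equations~\eqref{Eq:2.14:2} and \eqref{Eq:2.14:3}. You also correctly locate the two genuinely hard upward steps: $(2)\Rightarrow(1)$ requires the Indecomposability Conjecture \ref{Conj:2.12:1} (itself open in general), and $(2)\Rightarrow(5)$ is the combinatorial heart of the conjecture, for which no uniform argument is known; your proposed use of \Cref{Lem:2.11:4} and the equivalence in \Cref{Eq:2.11:3} is a reasonable starting point but, as you acknowledge, does not close the gap. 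In short: your proposal is a correct account of what is known and what is missing, not a proof, and it could not be one at present.
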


For any $z\in\mathrm{S}_{n}$, we let $[\mathbf{Kh}_{n}](z)$ denote the truth value of statement (4) above, and we let $[\mathbf{Kh}_{n}^{\mathbb{Z}}](z)$ denote the truth value of statement (5) above. We call these statements the {\em graded and ungraded 
combinatorial K{\aa}hrstr{\"o}m conditions}, respectively.

This conjecture suggests that Kostant problem has an answer involving only Hecke algebra combinatorics. Certain natural implications that we will employ throughout are
\begin{equation}\label{Eq:2.14:2}
(5) \implies (4), (3), \text{ and } (2).
\end{equation}
So, if $\mathbf{KM}_{m}(\star,z)=\mathtt{true}$ and $[\mathbf{Kh}_{n}^{\mathbb{Z}}](z)=\mathtt{true}$, for $z\in\mathrm{Inv}_{n}$, then, by \Cref{Thm:2.13:1}, $z$ is Kostant positive and satisfies K\aa hrstr\"om's Conjecture. On the other hand, we have the implications
\begin{equation}\label{Eq:2.14:3}
\neg (2) \implies \neg (1), \neg (3), \neg (4), \text{ and } \neg (5).
\end{equation}
Thus, if $\mathbf{Kh}_{n}(z)=\mathtt{false}$ for $z\in\mathrm{Inv}_{n}$, then it is Kostant negative and satisfies K\aa hrstr\"om's Conjecture. Lastly, \cite[Conjecture 37]{CM25-2} suggests that \Cref{Conj:2.14:1} holds for all $z\in\mathrm{S}_{n}$.

\section{Left Kazhdan-Lusztig preorder and consecutive patterns}\label{Sec:3}

In this section we present various results which will be helpful for later sections. In particular, most of these results will allow one to investigate whether we have
\[ D_{z}^{(n)}C_{x^{-1}}^{(n)}\neq0, \hspace{1mm} \text{ equivalently } \hspace{1mm} x\leq_{L}^{(n)}z, \]
by reducing to smaller Hecke algebras/symmetric groups via consecutive patterns. We will often be able to reduce down to a small enough situation which is in reach of GAP3 computations.

\begin{defn}\label{Defn:3:1}
For $2\leq m\leq n$ and $i\in[n-m+1]$, let $F_{m,n}^{i}$ represent the following two morphisms:
\begin{itemize}
\item[(i)] The group monomorphism $F_{m,n}^{i}:\mathrm{S}_{m}\hookrightarrow\mathrm{S}_{n}$ given on the simple transpositions $\mathrm{Sim}_{m}$ by 
\[ s_{a}\mapsto s_{a+i-1}. \] 
\item[(ii)] The $\mathbb{A}$-algebra monomorphism $F_{m,n}^{i}:\mathrm{H}_{m}^{\mathbb{A}}\hookrightarrow\mathrm{H}_{n}^{\mathbb{A}}$ given on the standard basis $\mathcal{T}_{m}$ by
\[ T_{x}^{(m)}\mapsto T_{F_{m,n}^{i}(x)}^{(n)}. \]
\end{itemize}
As (ii) is naturally induced from (i), the overloading of the symbol $F_{m,n}^{i}$ should cause no issue. We will often just write $F_{m,n}^{i}$, where the conditions $2\leq m\leq n$ and $i\in[n-m+1]$ are implied. We will refer to such morphisms simply as \emph{shifts}, and refer to the image $F_{m,n}^{i}(x)$ of any $x\in\mathrm{S}_{m}$ as the \emph{shift of} $x$, or more specifically, as the \emph{shift of} $x$ \emph{to position} $i$.
\end{defn}

\begin{ex}\label{Ex:3:2}
Consider $x=s_{2}s_{1}={\color{teal}312}\in\mathrm{S}_{3}$, then we have $F_{3,7}^{4}(x)=s_{5}s_{4}=123{\color{teal}645}7\in\mathrm{S}_{7}$.
\end{ex}

\begin{ex}\label{Ex:3:2.5}
When $i=1$, then the shift $F_{m,n}^{1}$ corresponds to the natural inclusion $\mathrm{S}_{m}\subset\mathrm{S}_{n}$.
\end{ex}

\begin{lem}\label{Lem:3:3}
For any $x,y\in\mathrm{S}_{m}$, we have the following:
\[ (a) \hspace{1mm} F_{m,n}^{i}(C_{x}^{(m)})=C_{F_{m,n}^{i}(x)}^{(n)} \hspace{1mm} \text{ and } \hspace{1mm} (b) \hspace{1mm} p_{x,y}^{(m)}=p_{F_{m,n}^{i}(x),F_{m,n}^{i}(y)}^{(n)}. \] 
\end{lem}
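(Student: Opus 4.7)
The plan is to derive part (a) from the defining uniqueness of the Kazhdan-Lusztig basis in $\mathrm{H}_{n}^{\mathbb{A}}$, by showing that $F_{m,n}^{i}(C_{x}^{(m)})$ satisfies both characterising properties from \Cref{Eq:2.6:1}. Part (b) then follows by comparing coefficients of standard basis elements. So the work splits into three preparatory observations.

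First, I would check that shifts preserve length and the Bruhat order. Since $F_{m,n}^{i}$ sends $\mathrm{Sim}_{m}$ into $\mathrm{Sim}_{n}$ and is an injective group homomorphism, any reduced expression $\underline{x}=s_{j_{1}}\cdots s_{j_{l}}$ in $\mathrm{S}_{m}$ maps to an expression $s_{j_{1}+i-1}\cdots s_{j_{l}+i-1}$ in $\mathrm{S}_{n}$ of the same length, and this image expression is still reduced (otherwise a shorter expression for $F_{m,n}^{i}(x)$, pulled back along $F_{m,n}^{i}$, would give a shorter expression for $x$). Combined with the substring characterisation of the Bruhat order recalled in \Cref{Sec:2.1}, this yields $y\leq x$ in $\mathrm{S}_{m}$ if and only if $F_{m,n}^{i}(y)\leq F_{m,n}^{i}(x)$ in $\mathrm{S}_{n}$.

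Next, I would check that $F_{m,n}^{i}$ intertwines the two bar involutions, in the sense that $F_{m,n}^{i}(\overline{h})=\overline{F_{m,n}^{i}(h)}$ for every $h\in\mathrm{H}_{m}^{\mathbb{A}}$. Since the bar involution is $\mathbb{Z}$-linear with $\overline{v}=v^{-1}$, and $F_{m,n}^{i}$ is $\mathbb{A}$-linear, it suffices to verify this on $v$ and on a standard basis element. For $v$ it is immediate, and for $T_{x}^{(m)}$ I use that $F_{m,n}^{i}$ is an algebra homomorphism (hence preserves inverses) and a group homomorphism (hence $F_{m,n}^{i}(x^{-1})=F_{m,n}^{i}(x)^{-1}$), giving
\[
F_{m,n}^{i}(\overline{T_{x}^{(m)}})=F_{m,n}^{i}\bigl((T_{x^{-1}}^{(m)})^{-1}\bigr)=\bigl(T_{F_{m,n}^{i}(x)^{-1}}^{(n)}\bigr)^{-1}=\overline{T_{F_{m,n}^{i}(x)}^{(n)}}=\overline{F_{m,n}^{i}(T_{x}^{(m)})}.
\]

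With these two facts in hand, part (a) is now essentially forced. Applying $F_{m,n}^{i}$ to the triangular expansion $C_{x}^{(m)}=T_{x}^{(m)}+\sum_{y<x}p_{x,y}^{(m)}T_{y}^{(m)}$ gives
\[
F_{m,n}^{i}(C_{x}^{(m)})=T_{F_{m,n}^{i}(x)}^{(n)}+\sum_{y<x}p_{x,y}^{(m)}\,T_{F_{m,n}^{i}(y)}^{(n)},
\]
which, by the first step, is a sum indexed by elements strictly below $F_{m,n}^{i}(x)$ in the Bruhat order on $\mathrm{S}_{n}$, with coefficients still in $v\mathbb{Z}[v]$; and by the second step it is bar-invariant. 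The uniqueness clause of \Cref{Eq:2.6:1} then identifies this with $C_{F_{m,n}^{i}(x)}^{(n)}$, proving (a). Finally, for (b) I compare the coefficient of $T_{F_{m,n}^{i}(y)}^{(n)}$ on both sides of the identity in (a): on the left it is $p_{x,y}^{(m)}$, while on the right it is $p_{F_{m,n}^{i}(x),F_{m,n}^{i}(y)}^{(n)}$ (with the case $y\not\leq x$ handled by the convention $p_{x,y}^{(m)}=0$, using that shifts preserve Bruhat order in both directions). No step here is a genuine obstacle; the only care needed is the two verifications above that shifts respect the Bruhat order and the bar involution.
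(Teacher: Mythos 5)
Your proposal is correct and follows essentially the same route as the paper: verify that the shift intertwines the bar involutions, apply it to the unitriangular expansion of $C_{x}^{(m)}$, and invoke the uniqueness characterisation of \Cref{Eq:2.6:1}, with (b) read off by comparing coefficients. The only difference is that you spell out the (easy) preservation of length and Bruhat order and the bar-compatibility computation, which the paper leaves as routine checks.
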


\begin{proof}
By the definition of how the shift acts on the standard basis, it is easy to confirm that the shift is compatible with the bar involutions $\overline{(-)}:\mathrm{H}_{m}^{\mathbb{A}}\rightarrow\mathrm{H}_{m}^{\mathbb{A}}$ and $\overline{(-)}:\mathrm{H}_{n}^{\mathbb{A}}\rightarrow\mathrm{H}_{n}^{\mathbb{A}}$, that is to say,
\[ F_{m,n}^{i}\circ\overline{(-)}=\overline{(-)}\circ F_{m,n}^{i}. \]
Thus, as $C_{x}^{(m)}$ is invariant under $\overline{(-)}$, then so is $F_{m,n}^{i}(C_{x}^{(m)})$. Lastly, by \Cref{Eq:2.6:1}, we have that
\begin{equation}\label{Eq:3:3-1}
F_{m,n}^{i}(C_{x}^{(m)})=T_{F_{m,n}^{i}(x)}^{(n)}+\sum_{F_{m,n}^{i}(y)<F_{m,n}^{i}(x)}p_{x,y}^{(m)}T_{F_{m,n}^{i}(y)}^{(n)},
\end{equation}
where $p_{x,y}^{(m)}\in v\mathbb{Z}[v]$. Hence, $F_{m,n}^{i}(C_{x}^{(n)})$ also satisfies the unitriangular property from \Cref{Eq:2.6:1}. These properties uniquely define the element $C_{F_{m,n}^{i}(x)}^{(n)}$, and thus $F_{m,n}^{i}(C_{x}^{(m)})=C_{F_{m,n}^{i}(x)}^{(n)}$, giving (a). Then (b) follows from the definition of the Kazhdan-Lusztig polynomials and \Cref{Eq:3:3-1}.
\end{proof}

\begin{rmk}\label{Rmk:3:4}
For $x\in\mathrm{S}_{m}$, then, in general, $F_{m,n}^{i}(D_{x}^{(m)})\neq D_{F_{m,n}^{i}(x)}^{(n)}$. In fact, we have that
\[ F_{m,n}^{i}(D_{x}^{(m)})=D_{F_{m,n}^{i}(x)w_{0}'w_{0,n}}^{(n)}(T_{w_{0,n}}^{(n)})^{-1}T_{w_{0}'}^{(n)}, \]
where $w_{0}'$ is the longest element in the parabolic subgroup $\mathrm{Im}(F_{m,n}^{i})\subset\mathrm{S}_{n}$. We will not need this, but to see why it is true one can consult \cite[Equation 5.1.8]{Lu84}, see also \cite[Equation 4]{CM25-2}, and note that the shifts commute with the ring involution $\bm{\beta}$ given in the latter reference.
\end{rmk}

\begin{lem}\label{Lem:3:5}
Let $F_{m,n}^{i}:\mathrm{S}_{m}\hookrightarrow\mathrm{S}_{n}$ be a shift. Consider the subset
\[ \mathrm{X}_{m,n}^{i}:=\{x\in\mathrm{S}_{n} \ | \ x \text{ consecutively contains } 1_{m}\in\mathrm{S}_{m} \text{ at position } i\}\subset\mathrm{S}_{n}. \]
Then $\mathrm{X}_{m,n}^{i}$ is the set of left coset representatives of $\mathrm{S}_{n}/\mathrm{Im}(F_{m,n}^{i})$ having minimal length. Moreover, consider any $z\in\mathrm{S}_{n}$ and suppose that $p\in\mathrm{S}_{m}$ is the pattern of size $m$ consecutively appearing in $z$ at position $i$. Then we have the decomposition
\[ z=xF_{m,n}^{i}(p) \]
for some unique $x\in\mathrm{X}_{m,n}^{i}$, and where $\ell(xF_{m,n}^{i}(p))=\ell(x)+\ell(F_{m,n}^{i}(p))$.
\end{lem}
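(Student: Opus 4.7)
The plan is to recognize $\mathrm{Im}(F_{m,n}^{i})$ as the standard parabolic subgroup $W_{J} := \langle J\rangle \subset \mathrm{S}_{n}$ generated by $J := \{s_{i}, s_{i+1}, \ldots, s_{i+m-2}\}$, and then invoke the classical theory of minimal-length coset representatives for parabolic quotients. Both claims of the lemma then reduce to a translation between parabolic-quotient combinatorics and consecutive pattern containment in one-line notation.

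For the first assertion, I would use the standard fact that the minimal-length left coset representatives of $\mathrm{S}_{n}/W_{J}$ are precisely those $x \in \mathrm{S}_{n}$ satisfying $xs > x$ for every $s \in J$. By \Cref{Lem:2.2:3}, the relation $xs_{j} > x$ is equivalent to $x(j) < x(j+1)$, so the conditions for $s_{i}, s_{i+1}, \ldots, s_{i+m-2}$ together read $x(i) < x(i+1) < \cdots < x(i+m-1)$. This is exactly the statement that $x$ consecutively contains $1_{m}$ at position $i$, so $\mathrm{X}_{m,n}^{i}$ coincides with the set of minimal-length left coset representatives of $\mathrm{S}_{n}/\mathrm{Im}(F_{m,n}^{i})$.

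For the decomposition, the standard parabolic-quotient theorem now furnishes, for every $z \in \mathrm{S}_{n}$, a unique factorisation $z = x \cdot w$ with $x \in \mathrm{X}_{m,n}^{i}$ and $w \in \mathrm{Im}(F_{m,n}^{i})$, satisfying $\ell(z) = \ell(x) + \ell(w)$. It remains to identify $w$ with $F_{m,n}^{i}(p)$. Writing $w = F_{m,n}^{i}(q)$ for a unique $q \in \mathrm{S}_{m}$, the right-multiplication convention (which permutes positions, see \Cref{Sec:2.2}) yields $z(i+k-1) = x(w(i+k-1)) = x(q(k)+i-1)$ for every $k \in [m]$. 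Because $x$ is strictly increasing on the positions $i, i+1, \ldots, i+m-1$, the map $k \mapsto x(q(k)+i-1)$ has the same relative order as $k \mapsto q(k)$; hence the relative order of $z(i), z(i+1), \ldots, z(i+m-1)$ matches that of $q(1), q(2), \ldots, q(m)$, forcing $q = p$ by the definition of consecutive pattern appearance.

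There is no genuine obstacle here: the hard work is packaged into the well-known characterisation of parabolic quotients, and the mild care required is just in tracking the position-permutation convention for right multiplication in one-line notation in order to match $w$ with the pattern $p$ at position $i$.
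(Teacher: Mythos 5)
Your proof is correct, and it takes a different (more packaged) route than the paper's. You recognise $\mathrm{Im}(F_{m,n}^{i})$ as the standard parabolic subgroup generated by $J=\{s_{i},\dots,s_{i+m-2}\}$ and import the classical theory of parabolic quotients: the characterisation of minimal-length left coset representatives by the condition $xs>x$ for all $s\in J$ (which, via \Cref{Lem:2.2:3}, translates exactly into consecutive containment of $1_{m}$ at position $i$), and the unique length-additive factorisation $z=xw$ with $w\in W_{J}$. The paper instead argues constructively and self-containedly: it produces the representative directly as $x=zF_{m,n}^{i}(p)^{-1}$ by observing that right multiplication by $F_{m,n}^{i}(p)^{-1}$ sorts the relevant window of the one-line notation into increasing order, checks uniqueness by comparing the letters outside that window, and then proves length additivity $\ell(xy)=\ell(x)+\ell(y)$ by induction on $\ell(y)$ using the pattern-$12$ criterion. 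The two arguments identify the parabolic component with the shifted pattern in opposite directions — the paper sorts $z$ down to $x$, while you start from the abstract factorisation $z=xF_{m,n}^{i}(q)$ and compute $z(i+k-1)=x(q(k)+i-1)$ to force $q=p$ — but this is essentially the same one-line-notation bookkeeping. Your version is shorter and leans on standard facts already available in the paper's reference [BB05]; the paper's version buys self-containedness and makes the explicit formula $x=zF_{m,n}^{i}(p)^{-1}$ visible, which is convenient for later use. Both are complete and correct.
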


\begin{proof}
Firstly, let $z$ and $p$ be as given in the statement of the lemma. Then in one-line notation,
\[ z=z(1)\cdots z(i-1){\color{teal}z(i)z(i+1)\cdots z(i+m-2)z(i+m-1)}z(i+m)\cdots z(n) \]
where the letters appearing in the {\color{teal}teal} subword share the same relative order as those of the pattern $p$, i.e. for any $a,b\in[m]$ we have that $p(a)<p(b)$ if and only if $z(i+a-1)<z(i+b-1)$. Multiplying $z$ on the right by a permutation corresponds, in one-line notation, to shuffling the positions of letters of $z$ according to such a permutation. As the {\color{teal}teal} subword realizes the consecutive containment of the pattern $p$ in $z$ at position $i$, one can deduce that multiplying $z$ on the right by $F_{m,n}^{i}(p^{-1})=F_{m,n}^{i}(p)^{-1}$ amounts to shuffling the letters in this {\color{teal}teal} subword so they are in increasing order. This means that $zF_{m,n}^{i}(p)^{-1}$ consecutively contains the pattern $1_{m}$ at position $i$, and hence
\begin{equation}\label{Eq:N3:3-1}
zF_{m,n}^{i}(p)^{-1}=x,
\end{equation}
for some $x\in\mathrm{X}_{m,n}^{i}$. Moreover, this $x$ is uniquely determined since it must agree with $z$ on the non-teal letters, that is $x(k)=z(k)$ for all $k\in[1,i-1]\cup[i+m,n]$, and, clearly, only one such element of $\mathrm{X}_{m,n}^{i}$ can satisfy this. Solving \Cref{Eq:N3:3-1} in terms of $z$ gives the desired decomposition claimed in the statement of the lemma. Furthermore, since both $z$ and $p$ were arbitrary, we have also shown that $\mathrm{X}_{m,n}^{i}$ is indeed the set of left coset representatives of $\mathrm{S}_{n}/\mathrm{Im}(F_{m,n}^{i})$. 

Lastly, let $x\in\mathrm{X}_{m,n}^{i}$ and $y\in\mathrm{Im}(F_{m,n}^{i})$. Recall that, for  $s_{k}\in\mathrm{Sim}_{n}$, we have $\ell(xs_{k})=\ell(x)+\ell(s_{k})$ if and only if $x$ consecutively contains the pattern $12\in\mathrm{S}_{2}$ at position $k$ (this is essentially \Cref{Lem:2.2:3}). Therefore, since $x$ consecutively contains $1_{m}$ at position $i$, one can confirm via induction on the length of $y$ that we have $\ell(xy)=\ell(x)+\ell(y)$. This establishes the property that the decomposition of $z$ discussed above respects the length function, and that $x$ is of minimal length in its coset.  
\end{proof}

We now recall a known result regarding the left Kazhdan-Lusztig preorder, but rewritten in terms of consecutive patterns in light of the above lemma.

\begin{thm}\label{Thm:3:6}
Let $m\leq n$, $p_{1},p_{2}\in\mathrm{S}_{m}$, and $z_{1},z_{2}\in\mathrm{S}_{n}$. Suppose, for some $i\in[n-m+1]$, that $z_{j}$ consecutively contains $p_{j}$ at position $i$, for $j\in\{1,2\}$. Then we have the following implication:
\begin{equation}\label{Eq:3:6-1}
z_{1}\leq_{L}^{(n)}z_{2} \implies p_{1}\leq_{L}^{(m)}p_{2}.
\end{equation}
\end{thm}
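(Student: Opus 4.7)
The plan is to reduce the theorem to the compatibility of the left Kazhdan--Lusztig preorder with standard parabolic subgroups of $\mathrm{S}_n$, a known result from cell theory that I translate into pattern language via the shift morphism of Definition 3.1. By Lemma 3.5, I first decompose $z_j = x_j F_{m,n}^i(p_j)$ length-additively, with $x_j \in \mathrm{X}_{m,n}^i$ the minimal-length left coset representative of $\mathrm{S}_n / \mathrm{Im}(F_{m,n}^i)$; here $\mathrm{Im}(F_{m,n}^i)$ is the standard parabolic subgroup of $\mathrm{S}_n$ generated by $\{s_i, s_{i+1}, \dots, s_{i+m-2}\}$, so the general parabolic theory applies.

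The first substantive step is a ``left-ascent'' observation: if $sw > w$ for some $s \in \mathrm{Sim}_n$, then \Cref{Eq:2.6:1.5L} gives $[C_{sw}^{(n)}](C_s^{(n)} C_w^{(n)}) = 1$, so $w \leq_L^{(n)} sw$ directly from the definition of the preorder. Iterating this along a reduced expression of $x_1$ produces a chain $F_{m,n}^i(p_1) = u_0, u_1, \dots, u_r = z_1$ of left-ascents (the length-additivity of $z_1 = x_1 F_{m,n}^i(p_1)$ is what guarantees each step ascends), hence $F_{m,n}^i(p_1) \leq_L^{(n)} z_1$. Combined with the hypothesis $z_1 \leq_L^{(n)} z_2$ and transitivity, this yields
\[
F_{m,n}^i(p_1) \;\leq_L^{(n)}\; z_2 \;=\; x_2 F_{m,n}^i(p_2).
\]

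The crux is then to upgrade this to $p_1 \leq_L^{(m)} p_2$. Unpacking the definition, there exists $w \in \mathrm{S}_n$ with $[C_{z_2}^{(n)}](C_w^{(n)} C_{F_{m,n}^i(p_1)}^{(n)}) \neq 0$. The key point is that, by Lemma 3.3, $F_{m,n}^i$ is an $\mathbb{A}$-algebra embedding sending the Kazhdan--Lusztig basis of $\mathrm{H}_m^{\mathbb{A}}$ into that of $\mathrm{H}_n^{\mathbb{A}}$; together with the bimodule structure of $\mathrm{H}_n^{\mathbb{A}}$ over the parabolic subalgebra $F_{m,n}^i(\mathrm{H}_m^{\mathbb{A}})$ and the length-additivity of $z_2 = x_2 F_{m,n}^i(p_2)$, one can isolate the ``parabolic component'' of $C_w^{(n)} C_{F_{m,n}^i(p_1)}^{(n)}$ that actually contributes to the coefficient of $C_{z_2}^{(n)}$. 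Unwinding through the injectivity of $F_{m,n}^i$ then produces an element $u \in \mathrm{S}_m$ with $[C_{p_2}^{(m)}](C_u^{(m)} C_{p_1}^{(m)}) \neq 0$ in $\mathrm{H}_m^{\mathbb{A}}$, which is exactly the desired $p_1 \leq_L^{(m)} p_2$. The main obstacle is making this ``projection to the parabolic component'' precise, since the $C$-basis does not respect the coset decomposition at the level of $T$-basis expansions; one must carefully combine Lemma 3.3 with the specific form of $z_2$ forced by length-additivity. This is the substantive parabolic-induction step that carries out the reduction, and is the only part of the argument that is not essentially formal.
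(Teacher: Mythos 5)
Your setup is sound and partially overlaps with the paper's machinery: the decomposition $z_{j}=x_{j}F_{m,n}^{i}(p_{j})$ via \Cref{Lem:3:5} is exactly the paper's first step, and your left-ascent chain showing $F_{m,n}^{i}(p_{1})\leq_{L}^{(n)}z_{1}$ (via $[C_{sw}^{(n)}](C_{s}^{(n)}C_{w}^{(n)})=1$ when $sw>w$, iterated along a reduced expression of $x_{1}$ using length-additivity) is correct — it is essentially the argument the authors use at the end of the proof of Claim~(b) of \Cref{Prop:3:7}. This validly reduces the theorem to: $F_{m,n}^{i}(p_{1})\leq_{L}^{(n)}x_{2}F_{m,n}^{i}(p_{2})\implies p_{1}\leq_{L}^{(m)}p_{2}$.

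However, that remaining implication is precisely the hard direction of the compatibility of the left Kazhdan--Lusztig preorder with standard parabolic subgroups, and your proposal does not prove it. The easy direction ($\leq_{L}$ inside the parabolic implies $\leq_{L}$ in $\mathrm{S}_{n}$) follows formally from the algebra embedding of \Cref{Lem:3:3}, but the converse for elements of the parabolic is a genuine theorem: the structure constants of the Kazhdan--Lusztig basis of $\mathrm{H}_{n}^{\mathbb{A}}$ do not decompose along cosets of $\mathrm{Im}(F_{m,n}^{i})$ in any way visible from \Cref{Lem:3:3} or from length-additivity, so there is no formal ``projection to the parabolic component'' of $C_{w}^{(n)}C_{F_{m,n}^{i}(p_{1})}^{(n)}$ that extracts an element $u\in\mathrm{S}_{m}$ with $[C_{p_{2}}^{(m)}](C_{u}^{(m)}C_{p_{1}}^{(m)})\neq0$. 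You explicitly flag this step as ``the main obstacle'' and ``the only part of the argument that is not essentially formal'' without resolving it, so the proof is incomplete at its crux. The paper closes exactly this gap by invoking Geck's induction theorem for Kazhdan--Lusztig cells (Equation $(\dagger)$ in \cite[Section 4]{Ge03}), which states $x_{1}w_{1}\leq_{L}^{(n)}x_{2}w_{2}\implies w_{1}\leq_{L}'w_{2}$ for minimal coset representatives $x_{j}$ and parabolic elements $w_{j}$; your reduction amounts to the special case $x_{1}=1_{n}$ of that result, but still requires its full strength. Either cite Geck's theorem or supply its proof; the argument cannot be completed by the manipulations you describe.
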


\begin{proof}
By \Cref{Lem:3:5} we know that $z_{1}=x_{1}F_{m,n}^{i}(p_{1})$ and $z_{2}=x_{2}F_{m,n}^{i}(p_{2})$, for some $x_{1},x_{2}\in X_{m,n}^{i}$. Therefore, we need to establish the implication
\begin{equation*}
x_{1}F_{m,n}^{i}(p_{1})\leq_{L}^{(n)}x_{2}F_{m,n}^{i}(p_{2}) \implies p_{1}\leq_{L}^{(m)}p_{2}.
\end{equation*}
By Equation $(\dagger)$ in \cite[Section 4]{Ge03}, we have the implication
\[ x_{1}F_{m,n}^{i}(p_{1})\leq_{L}^{(n)}x_{2}F_{m,n}^{i}(p_{2}) \implies F_{m,n}^{i}(p_{1})\leq_{L}'F_{m,n}^{i}(p_{2}), \]
where $\leq_{L}'$ denotes the corresponding left Kazhdan-Lusztig preorder for the parabolic subgroup $\mathrm{Im}(F_{m,n}^{i})$. By definition of $F_{m,n}^{i}$, we have an isomorphism $\mathrm{Im}(F_{m,n}^{i})\xrightarrow{\sim}\mathrm{S}_{m}$ given by $F_{m,n}^{i}(p)\mapsto p$. Hence $F_{m,n}^{i}(p_{1})\leq_{L}'F_{m,n}^{i}(p_{2})$ is equivalent to $p_{1}\leq_{L}^{(m)}p_{2}$.
\end{proof}

In general, the implication in Equation~\eqref{Eq:3:6-1} cannot be reversed, but we now demonstrate, in the following proposition, certain specialisations which do allow one to promote this implication into an equivalence.

\begin{prop}\label{Prop:3:7}
For $m\leq n$, let $x,p\in\mathrm{S}_{m}$ and $z\in\mathrm{S}_{n}$, where $z$ consecutively contains $p$ as a pattern at position $i\in[n-m+1]$. Then we have the following two equivalences:
\begin{itemize}
\item[(a)] $x\leq_{L}^{(m)}p \iff F_{m,n}^{i}(x)\leq_{L}^{(n)}F_{m,n}^{i}(p)$.
\item[(b)] $x\leq_{L}^{(m)}p \iff F_{m,n}^{i}(x)\leq_{L}^{(n)}z$.
\end{itemize}
\end{prop}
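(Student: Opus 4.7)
The overall plan is to use the algebra homomorphism $F_{m,n}^{i}$ together with its good behaviour on Kazhdan-Lusztig bases (\Cref{Lem:3:3}) for the forward directions, and to invoke \Cref{Thm:3:6} for the backward directions. A preliminary observation that will be used repeatedly: the element $F_{m,n}^{i}(x)\in\mathrm{S}_{n}$ consecutively contains $x$ as a pattern at position $i$, because the shift only affects positions $i,\dots,i+m-1$ and reshuffles the block of letters $\{i,\dots,i+m-1\}$ there in exactly the relative order that $x$ imposes on $\{1,\dots,m\}$; the analogous statement holds for $F_{m,n}^{i}(p)$ and $p$.

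For part (a), the backward direction is then immediate from \Cref{Thm:3:6} applied with $z_{1}=F_{m,n}^{i}(x)$ and $z_{2}=F_{m,n}^{i}(p)$. For the forward direction, start from $[C_{p}^{(m)}](hC_{x}^{(m)})\neq 0$ for some $h\in\mathrm{H}_{m}^{\mathbb{A}}$, and apply the $\mathbb{A}$-algebra morphism $F_{m,n}^{i}$. By \Cref{Lem:3:3}(a) this converts every $C$-basis element of $\mathrm{H}_{m}^{\mathbb{A}}$ into a $C$-basis element of $\mathrm{H}_{n}^{\mathbb{A}}$; since $F_{m,n}^{i}$ is injective on the indexing set, no coefficient can cancel, so the nonzero coefficient in front of $C_{p}^{(m)}$ transfers to a nonzero coefficient in front of $C_{F_{m,n}^{i}(p)}^{(n)}$ in $F_{m,n}^{i}(h)\cdot C_{F_{m,n}^{i}(x)}^{(n)}$, giving $F_{m,n}^{i}(x)\leq_{L}^{(n)}F_{m,n}^{i}(p)$.

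Part (b) backward is again a direct application of \Cref{Thm:3:6}, now with $z_{1}=F_{m,n}^{i}(x)$ and $z_{2}=z$, since these consecutively contain $x$ and $p$ respectively at position $i$. For the forward direction, two ingredients combine. First, part (a) already gives $F_{m,n}^{i}(x)\leq_{L}^{(n)}F_{m,n}^{i}(p)$. Second, \Cref{Lem:3:5} supplies a length-additive decomposition $z=y\cdot F_{m,n}^{i}(p)$ with $y\in\mathrm{X}_{m,n}^{i}$; by \Cref{Eq:2.7:1}, length additivity forces $[C_{z}^{(n)}](C_{y}^{(n)}C_{F_{m,n}^{i}(p)}^{(n)})\neq 0$, so $F_{m,n}^{i}(p)\leq_{L}^{(n)}z$ by taking $h=C_{y}^{(n)}$ in the definition of the preorder. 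Transitivity of $\leq_{L}^{(n)}$ then yields $F_{m,n}^{i}(x)\leq_{L}^{(n)}z$.

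The whole argument is essentially an assembly of tools already established in the excerpt, and I do not expect a genuinely hard step. The only real bookkeeping is the consecutive-containment observation at the start, which routes \Cref{Thm:3:6} through the forward-shifted elements; this is transparent from the one-line description of the shift illustrated in \Cref{Ex:3:2}. The forward direction of (b) is the only place where a positivity-type input is needed, and \Cref{Eq:2.7:1} delivers exactly what is required.
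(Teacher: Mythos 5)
Your proposal is correct and follows essentially the same route as the paper: Theorem~3.6 for both backward directions, the injective algebra morphism $F_{m,n}^{i}$ together with Lemma~3.3 for the forward direction of (a), and the length-additive decomposition $z=yF_{m,n}^{i}(p)$ from Lemma~3.5 plus transitivity for the forward direction of (b). The only cosmetic difference is that you cite Equation~\eqref{Eq:2.7:1} to get $[C_{z}^{(n)}](C_{y}^{(n)}C_{F_{m,n}^{i}(p)}^{(n)})\neq 0$, whereas the paper re-derives this coefficient (as being equal to $1$) from the unitriangularity of the Kazhdan--Lusztig basis; these are the same fact.
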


\begin{proof}
Claim~(a): Firstly, for any $w\in\mathrm{S}_{m}$, we know that $F_{m,n}^{i}(w)$ consecutively contains $w$ at position $i$. Thus, the backwards implication of (a) holds from \Cref{Thm:3:6}. For the forward implication, since $x\leq_{L}^{(m)}p$, then, by definition, there exists $h\in\mathrm{H}_{m}^{\mathbb{A}}$ such that
\[ [C_{p}^{(m)}](hC_{x}^{(m)})\neq0. \]
Then, by \Cref{Lem:3:3}, acting on $hC_{x}$ by the monomorphism $F_{m,n}^{i}:\mathrm{H}_{m}^{\mathbb{A}}\rightarrow\mathrm{H}_{n}^{\mathbb{A}}$, we see that
\[ [C_{F_{m,n}^{i}(p)}^{(n)}](F_{m,n}^{i}(h)C_{F_{m,n}^{i}(x)}^{(n)})\neq0. \]
By definition, this implies that $F_{m,n}^{i}(x)\leq_{L}^{(n)}F_{m,n}^{i}(y)$, as desired. 

Claim~(b): The reverse implication $F_{m,n}^{i}(x)\leq_{L}^{(n)}z \implies x\leq_{L}^{(m)}p$ holds by \Cref{Thm:3:6}. Also, from Claim~(a), to prove the forward implication it suffices to prove the implication
\begin{equation}\label{Eq:3:7-1}
F_{m,n}^{i}(x)\leq_{L}F_{m,n}^{i}(p) \implies F_{m,n}^{i}(x)\leq_{L}z.
\end{equation}
Moreover, by \Cref{Lem:3:5}, we know that $z=yF_{m,n}^{i}(p)$, where $y$ belongs to the set $X_{m,n}^{i}$ of left coset representatives of $\mathrm{S}_{m}/\mathrm{Im}(F_{m,n}^{i})$ of minimal length. In particular, we have that
\begin{equation}\label{Eq:3:7-2}
\ell(yF_{m,n}^{i}(p))=\ell(y)+\ell(F_{m,n}^{i}(p)).
\end{equation}
Therefore, the implication in Equation~\eqref{Eq:3:7-1} is equivalent to the following:
\begin{equation}\label{Eq:3:7-3}
F_{m,n}^{i}(x)\leq_{L}F_{m,n}^{i}(p) \implies F_{m,n}^{i}(x)\leq_{L}yF_{m,n}^{i}(p).
\end{equation}
To prove this implication, it suffices to show that $F_{m,n}^{i}(p)\leq_{L}^{(n)}yF_{m,n}^{i}(p)$. By \Cref{Eq:2.5:1}, and since the Kazhdan-Lusztig basis $\mathcal{C}_{n}$ has a unitriangular relationship with the standard basis $\mathcal{T}_{n}$ as described in \Cref{Eq:2.6:1}, one can deduce that we have the equation 
\[ C_{y}^{(n)}C_{F_{m,n}^{i}(p)}^{(n)}=C_{yF_{m,n}^{i}(p)}^{(n)}+\sum_{w<yF_{m,n}^{i}(p)}\gamma_{y,F_{m,n}^{i}(p),w}^{(n)}C_{w}^{(n)}. \]
Therefore $[C_{yF_{m,n}^{i}(p)}^{(n)}](C_{y}^{(n)}C_{F_{m,n}^{i}(p)}^{(n)})=1\neq0$, and thus $F_{m,n}^{i}(p)\leq_{L}^{(n)}yF_{m,n}^{i}(p)$, as desired.
\end{proof}

\begin{rmk}\label{Rmk:3:8}
\Cref{Eq:2.9:7} says that $s_{i}\leq_{L}^{(n)}z$ if and only if $s_{i}\in\mathrm{Des}_{R}(z)$, equivalently, that $F_{2,n}^{i}(s_{1})\leq_{L}^{(n)}z$ if and only if the pattern $p\in\mathrm{S}_{2}$ consecutively appearing in $z$ at position $i$ is $p=21$. This is precisely Claim~(b) of \Cref{Prop:3:7} in the special case of $x=s_{1}$ and $m=2$. Thus Claim~(b) of \Cref{Prop:3:7} can be viewed as a generalisation of \Cref{Eq:2.9:7} where we replace $s_{1}\in\mathrm{S}_{2}$ with any permutation $x\in\mathrm{S}_{m}$.
\end{rmk}

\begin{ex}\label{Ex:3:9}
Consider the involution $x=s_{2}s_{1}s_{3}s_{2}=3412\in\mathrm{Inv}_{4}$. We first describe the left cell structure of $\mathrm{S}_{4}$, with a focus given to the involution $x$, by the following figure:
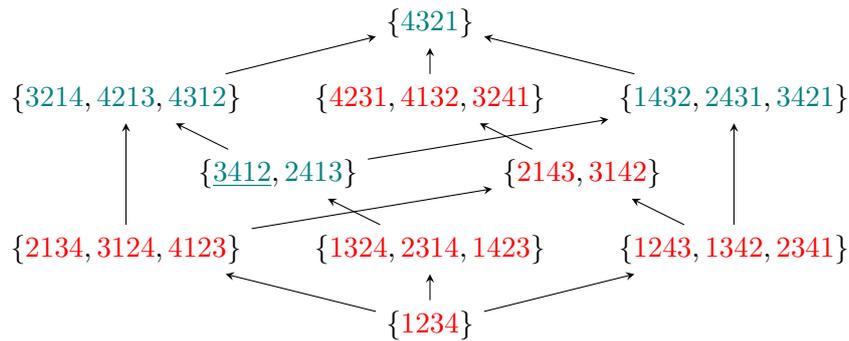
\begin{figure}[H]
\begin{tikzpicture}
\node (4321) at (0,4) {$\{{\color{teal}4321}\}$};

\node (3214) at (-4,3) {$\{{\color{teal}3214},{\color{teal}4213},{\color{teal}4312}\}$};
\node (4231) at (0,3) {$\{{\color{red}4231},{\color{red}4132},{\color{red}3241}\}$};
\node (1432) at (4,3) {$\{{\color{teal}1432},{\color{teal}2431},{\color{teal}3421}\}$};

\node (3412) at (-2,2) {$\{{\color{teal}\underline{3412}},{\color{teal}2413}\}$};
\node (2143) at (2,2) {$\{{\color{red}2143},{\color{red}3142}\}$};

\node (2134) at (-4,1) {$\{{\color{red}2134},{\color{red}3124},{\color{red}4123}\}$};
\node (1324) at (0,1) {$\{{\color{red}1324},{\color{red}2314},{\color{red}1423}\}$};
\node (1243) at (4,1) {$\{{\color{red}1243},{\color{red}1342},{\color{red}2341}\}$};

\node (1234) at (0,0) {$\{{\color{red}1234}\}$};

\draw [-stealth](1432) -- (4321);
\draw [-stealth](4231) -- (4321);
\draw [-stealth](3214) -- (4321);

\draw [-stealth](2143) -- (4231);
\draw [-stealth](3412) -- (1432);
\draw [-stealth](3412) -- (3214);

\draw [-stealth](1243) -- (1432);
\draw [-stealth](1243) -- (2143);
\draw [-stealth](1324) -- (3412);
\draw [-stealth](2134) -- (3214);
\draw [-stealth](2134) -- (2143);

\draw [-stealth](1234) -- (1243);
\draw [-stealth](1234) -- (1324);
\draw [-stealth](1234) -- (2134);
\end{tikzpicture}
\caption{Vertices are all the left cells of $\mathrm{S}_{4}$, with permutations written in one-line notation. An arrow $P\rightarrow Q$ implies $p<_{L}^{(4)}q$ for all $p\in P$ and $q\in Q$. The involution $x=s_{2}s_{1}s_{3}s_{2}$ is underlined. A pattern $p$ is {\color{teal}teal} if $x\leq_{L}^{(4)}p$, and {\color{red}red} if $x\not\leq_{L}^{(4)}p$.}
\label{Fig:3:9-1}
\end{figure}
Now consider any $n\geq 4$, $i\in[n-3]$, and $z\in\mathrm{S}_{n}$, and let $p\in\mathrm{S}_{4}$ be the pattern of size $4$ which consecutively appears in $z$ at position $i$. Then, from Claim~(b) of \Cref{Prop:3:7}, and \Cref{Fig:3:9-1}, we have
\[ F_{4,n}^{i}(x)=s_{i+1}s_{i}s_{i+2}s_{i+1}\leq_{L}^{(n)}z \iff x\leq_{L}^{(4)}p \iff p \text{ is a {\color{teal}teal} pattern in \Cref{Fig:3:9-1}}. \]
Hence, by extracting the consecutive pattern of $z$ at position $i$, understanding whether $F_{4,n}^{i}(x)\leq_{L}^{(n)}z$ or $F_{4,n}^{i}(x)\not\leq_{L}^{(n)}z$ holds in $\mathrm{S}_{n}$, is reduced down to understanding whether $x\leq_{L}^{(4)}p$ or $x\not\leq_{L}^{(4)}p$ holds in $\mathrm{S}_{4}$. For explicit examples, let $n=9$ and $i=4$, then we have that
\[ F_{4,9}^{4}(x)=s_{5}s_{4}s_{6}s_{5}\leq_{L}^{(9)}269731854 \ \text{ and } \ F_{4,9}^{4}(x)=s_{5}s_{4}s_{6}s_{5}\not\leq_{L}^{(9)}216395748. \]
The former relation holds since $269731854$ consecutively contains the patterns $3214$ at position $4$ (which is a {\color{teal}teal} pattern in \Cref{Fig:3:9-1}), while the latter non-relation holds since $216395748$ consecutively contains the pattern $1423$ at position $4$ (which is a {\color{red}red} pattern in \Cref{Fig:3:9-1}).
\end{ex}

With Claim~(b) of \Cref{Prop:3:7} in hand, we now present a mild generalisation and consequence in the form of two corollaries, given below. We first need to set up and generalise some notation. For any $k\geq0$ and tuple $\underline{m}:=(m_{1},\dots,m_{k})\in\mathbb{Z}_{\geq 2}^{\times k}$, we will write
\[ |\underline{m}|:=\sum_{1\leq j\leq k}m_{j} \hspace{2mm} \text{ and } \hspace{2mm} \mathrm{S}_{\underline{m}}:=\mathrm{S}_{m_{1}}\times\cdots\times\mathrm{S}_{m_{k}}. \] 
Also, given any $\underline{x}=(x_{1},\dots,x_{k}), \underline{y}=(y_{1},\dots,y_{k})\in\mathrm{S}_{\underline{m}}$, then we will write $\underline{x}\leq_{L}^{(\underline{m})}\underline{y}$ to mean
\[ x_{j}\leq_{L}^{(m_{j})}y_{j} \hspace{1mm} \text{ for all } \hspace{1mm} j\in[k]. \]
In \Cref{Sec:2.2} we described what it means for a pattern $p\in\mathrm{S}_{m}$ to consecutively appear within a permutation $z\in\mathrm{S}_{n}$ at position $i$, where $m\leq n$ and $i\in[n-m+1]$. We now slightly generalise this notion by replacing $m$, $p$ and $i$, with tuple-counterparts $\underline{m}$, $\underline{p}$, and $\underline{i}$.

\begin{defn}\label{Defn:3:10}
Let $n,k\geq 0$ and $\underline{m}=(m_{1},\dots,m_{k})\in\mathbb{Z}_{\geq 2}^{\times k}$ be such that $|\underline{m}|\leq n$. Furthermore, let $\underline{i}:=(i_{1},\dots,i_{k})\in[n-m_{1}+1]\times\cdots\times[n-m_{k}+1]$ be a tuple satisfying the property that
\begin{equation}\label{Eq:3:10-1}
i_{j+1}>i_{j}+m_{j}-1 \hspace{1mm} \text{ for all } \hspace{1mm} j\in[k-1].
\end{equation}
Then, for $\underline{p}=(p_{1},\dots,p_{k})\in\mathrm{S}_{\underline{m}}$ and $z\in\mathrm{S}_{n}$, we say that $z$ \emph{consecutively contains the pattern} $\underline{p}$ at position $\underline{i}$ if and only if $z$ consecutively contains the pattern $p_{j}$ at position $i_{j}$, for each $j\in[k]$.
\end{defn}

\begin{ex}\label{Ex:3:11}
Let $n=9$, and consider $z:=s_{2}s_{6}s_{5}s_{8}s_{7}s_{6}s_{7}s_{8}=132479685\in\mathrm{S}_{9}$. Let $k=2$,
\[ \underline{m}=(3,4)\in\mathbb{Z}_{\geq 2}^{\times 2}, \hspace{1mm} \text{ and } \hspace{1mm} \underline{i}=(2,6)\in[9-3+1]\times[9-4+1]=[7]\times[6]. \]
Note that \Cref{Eq:3:10-1} holds. Then we see that $z$ consecutively contains the pattern 
$$\underline{p}=(p_{1},p_{2}):=(s_{1},s_{2}s_{1}s_{3})=(213,4231)$$ 
at position $\underline{i}=(2,6)$, that is, the pattern $p_{1}=213$ consecutively appears in $z$ at position $2$, and the pattern $p_{2}=4231$ consecutively appears in $z$ at position $6$.
\end{ex}

\begin{rmk}\label{Rmk:3:12}
Assume the setup from \Cref{Defn:3:10}. Then, for any $j\in[k]$, the pattern $p_{j}$ is of size $m_{j}$ \emph{(}it belongs to $\mathrm{S}_{m_{j}}$\emph{)}. Hence, if $p_{j}$ consecutively appears in $z$ at position $i_{j}$, this means that the letters of $z$ in positions $i_{j},i_{j}+1,\dots,i_{j}+m_{j}-1$ realise the pattern $p_{j}$. From this, we can note two things about the definition of $\underline{i}$. The first is that the coordinate $i_{j}$ needs to belong to the set $[n-m_{j}+1]$ for it to make sense to consider a pattern of size $m_{j}$ at position $i_{j}$. The second is that the condition given by \Cref{Eq:3:10-1} is saying that the positions in which each pattern $p_{j}$ consecutive appear in $z$ are mutually disjoint, i.e. the supports of the patterns in $z$ do not overlap.
\end{rmk}

\begin{defn}\label{Defn:3:13}
Let $n$, $k$, $\underline{m}$, and $\underline{i}$, be as given in \Cref{Defn:3:10}. Then we define the \emph{multi-shift map} $F_{\underline{m},n}^{\underline{i}}:\mathrm{S}_{\underline{m}}\hookrightarrow\mathrm{S}_{n}$ as the group monomorphism given by
\[ F_{\underline{m},n}^{\underline{i}}(\underline{x})=\prod_{1\leq j\leq k}F_{m_{j},n}^{i_{j}}(x_{j}), \hspace{1mm} \text{ for any } \hspace{1mm} \underline{x}=(x_{1},\dots,x_{k})\in\mathrm{S}_{\underline{m}}. \]
By \Cref{Rmk:3:12}, the images of $F_{m_{j},n}^{i_{j}}$, for each $j\in[k]$, pairwise commute, hence $F_{\underline{m},n}^{\underline{i}}$ is well-defined.
\end{defn}

\begin{ex}\label{Ex:3:14}
Let $n=9$, $\underline{m}=(3,4)$, and $\underline{i}=(2,6)$, be as given in \Cref{Ex:3:11}, and consider
\[ \underline{x}:=(s_{2}s_{1},s_{2}s_{1}s_{3})=(312,3142)\in\mathrm{S}_{3}\times\mathrm{S}_{4}. \] 
Then the multi-shift is $F_{\underline{m},n}^{\underline{i}}(\underline{x})=F_{(3,4),9}^{(2,6)}({\color{blue}s_{2}s_{1}},{\color{purple}s_{2}s_{1}s_{3}})={\color{blue}s_{3}s_{2}}{\color{purple}s_{7}s_{6}s_{8}}=1{\color{blue}423}5{\color{purple}8697}\in\mathrm{S}_{9}$.
\end{ex}

When we write a multi-shift map $F_{\underline{m},n}^{\underline{i}}$, it is implied that $n$, $k$, $\underline{m}$, and $\underline{i}$, satisfy all the conditions described in \Cref{Defn:3:10}. We can think of $F_{\underline{m},n}^{\underline{i}}$ as the ``disjoint product'' of the shift maps $F_{m_{j},n}^{i_{j}}$ for all $j\in[k]$. More precisely, restricting this multi-shift map to the $j$-th component $S_{m_{j}}$ of $\mathrm{S}_{\underline{m}}$ recovers the usual shift map $F_{m_{j},n}^{i_{j}}$, and from \Cref{Rmk:3:12}, \Cref{Eq:3:10-1} is the condition that the images of all these restrictions of $F_{\underline{m},n}^{\underline{i}}$ pairwise commute, i.e. for any distinct $a,b\in[k]$,
\[ xy=yx \hspace{1mm} \text{ for all } \hspace{1mm} x\in\mathrm{Im}(F_{m_{a},n}^{i_{a}})\cong\mathrm{S}_{m_{a}} \hspace{1mm} \text{ and } \hspace{1mm} y\in\mathrm{Im}(F_{m_{b},n}^{i_{b}})\cong\mathrm{S}_{m_{b}}. \]
With this in mind, we give the following lemma:

\begin{lem}\label{Lem:2.9:6.5}
For any $x,y,z\in\mathrm{S}_{n}$ such that $xy=yx$ and $\mathrm{Sup}(x)\cap\mathrm{Sup}(y)=\emptyset$, we have
\[ xy\leq_{L}^{(n)}z \iff x\leq_{L}^{(n)}z \text{ and } y\leq_{L}^{(n)}z. \]
\end{lem}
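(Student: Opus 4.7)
For the forward direction, since $\mathrm{Sup}(x)\cap\mathrm{Sup}(y) = \emptyset$ and $xy = yx$, \Cref{Lem:2.6:2} gives $C_{xy}^{(n)} = C_x^{(n)} C_y^{(n)} = C_y^{(n)} C_x^{(n)}$. If $xy \leq_L^{(n)} z$ is witnessed by $h \in \mathrm{H}_n^{\mathbb{A}}$, then rewriting $h C_{xy}^{(n)} = (hC_x^{(n)}) C_y^{(n)} = (hC_y^{(n)}) C_x^{(n)}$ immediately yields $y \leq_L^{(n)} z$ with witness $hC_x^{(n)}$, and $x \leq_L^{(n)} z$ with witness $hC_y^{(n)}$.

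For the reverse direction, my plan is to reduce to the standard parabolic subgroup $P \subseteq \mathrm{S}_n$ generated by $\mathrm{Sup}(x)\cup\mathrm{Sup}(y)$, which by the disjoint-support and commuting hypotheses decomposes as an internal direct product $P_x \times P_y$ with $x \in P_x$ and $y \in P_y$. Writing $z = z'w$ where $z'$ is the minimal-length left coset representative of $zP$ and $w \in P$, so that $\ell(z) = \ell(z') + \ell(w)$, I would apply Geck's implication $(\dagger)$ from \cite{Ge03} --- already invoked in the proof of \Cref{Thm:3:6} --- to obtain $x \leq_L^{P} w$ and $y \leq_L^{P} w$. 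Writing $w = (w_x, w_y) \in P_x \times P_y$ and using that the KL-preorder on a direct product of Coxeter groups is coordinatewise (a direct consequence of the tensor factorisation $C_{(a,b)}^{P} = C_a^{P_x} \otimes C_b^{P_y}$ of the KL basis), these translate to $x \leq_L^{P_x} w_x$ and $y \leq_L^{P_y} w_y$. Hence $xy = (x,y) \leq_L^{P} (w_x, w_y) = w$, and in particular $xy \leq_L^{(n)} w$.

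The final step is to lift $xy \leq_L^{(n)} w$ to $xy \leq_L^{(n)} z = z'w$. For this, I would pick $a \in \mathrm{S}_n$ with $\gamma_{a,xy,w}^{(n)} \neq 0$ and expand
\[ [C_z^{(n)}]\bigl(C_{z'}^{(n)} C_a^{(n)} C_{xy}^{(n)}\bigr) = \sum_{v\in\mathrm{S}_n} \gamma_{a,xy,v}^{(n)}\,\gamma_{z',v,z}^{(n)}. \]
By Kazhdan-Lusztig positivity every summand lies in $\mathbb{A}_{\geq 0}$, so no cancellation can occur; and the $v = w$ summand equals $\gamma_{a,xy,w}^{(n)}\cdot\gamma_{z',w,z}^{(n)} = \gamma_{a,xy,w}^{(n)} \neq 0$, where $\gamma_{z',w,z}^{(n)} = 1$ follows from $\ell(z)=\ell(z')+\ell(w)$. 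Hence the sum is nonzero, which gives $xy \leq_L^{(n)} z$. The main delicacy of the argument is this final lifting step: without KL-positivity, the witness produced by the parabolic reduction could be lost to cancellation when pushed back into $\mathrm{H}_n^{\mathbb{A}}$.
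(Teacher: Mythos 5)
Your forward direction is identical to the paper's. Your reverse direction is correct but takes a genuinely different route. The paper first replaces $x$ and $y$ by the involutions in their left cells (so that $xy$ is again an involution), invokes the characterisation $x\leq_{L}^{(n)}z\iff[D_{z}^{(n)}](D_{z}^{(n)}C_{x}^{(n)})\neq0$ from \Cref{Lem:2.11:4}, and concludes $[D_{z}^{(n)}](D_{z}^{(n)}C_{x}^{(n)}C_{y}^{(n)})\neq0$ because the coefficients $[D_{a}^{(n)}](D_{b}^{(n)}C_{c}^{(n)})$ lie in $\mathbb{A}_{\geq0}$, so the diagonal term cannot be cancelled. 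You instead push the problem into the parabolic $P=\langle\mathrm{Sup}(x)\cup\mathrm{Sup}(y)\rangle$ via Geck's $(\dagger)$ from \cite{Ge03}, use the coordinatewise description of $\leq_{L}^{P}$ on the direct product $P_{x}\times P_{y}$, and lift back using positivity of the structure constants together with $\gamma_{z',w,z}^{(n)}\neq0$. Both arguments ultimately rest on Kazhdan--Lusztig positivity to rule out cancellation (yours in the lifting step, the paper's in the diagonal coefficient); yours is structurally more transparent but needs Geck's induction theorem for a general, possibly disconnected, standard parabolic, whereas the paper stays inside the involution machinery it has already set up.

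One assertion in your proof needs a line of justification: that $P$ really is an internal direct product $P_{x}\times P_{y}$. Disjointness of supports alone does not give this (e.g.\ $s_{1}$ and $s_{2}$ have disjoint supports but generate $\mathrm{S}_{3}$), and commutativity of the two specific elements $x$ and $y$ does not formally yield elementwise commutativity of the parabolics they generate. The claim is nevertheless true under both hypotheses together: if $s_{i}\in\mathrm{Sup}(x)$ and $s_{i+1}\in\mathrm{Sup}(y)$, then $x$ stabilises $\{1,\dots,i+1\}$ but not $\{1,\dots,i\}$, forcing $x(i+1)\leq i$, while $y$ stabilises $\{1,\dots,i\}$ but not $\{1,\dots,i+1\}$, forcing $y(i+1)\geq i+2$; hence $yx(i+1)\leq i$ but $xy(i+1)\geq i+2$, contradicting $xy=yx$. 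So no generator of $P_{x}$ is adjacent to one of $P_{y}$, the direct product decomposition holds, and with this added your argument goes through.
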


\begin{proof}
For the forward implication, the relation $xy\leq_{L}^{(n)}z$ means there exists $h\in\mathrm{H}_{n}^{\mathbb{A}}$ such that
\[ [C_{z}^{(n)}](hC_{xy}^{(n)})\neq0. \]
Since $\mathrm{Sup}(x)\cap\mathrm{Sup}(y)=\emptyset$, then by \Cref{Lem:2.6:2}, $C_{xy}^{(n)}=C_{x}^{(n)}C_{y}^{(n)}$, and, since $xy=yx$, we also have that $C_{xy}^{(n)}=C_{yx}^{(n)}=C_{y}^{(n)}C_{x}^{(n)}$. Thus, setting both $h_{x}:=hC_{x}^{(n)}$ and $h_{y}:=hC_{y}^{(n)}$, we see that
\[ hC_{xy}^{(n)}=h_{y}C_{x}^{(n)}=h_{x}C_{y}^{(n)}. \]
Hence $[C_{z}^{(n)}](hC_{xy}^{(n)})\neq0$ implies $[C_{z}^{(n)}](h_{y}C_{x}^{(n)})\neq0$ and $[C_{z}^{(n)}](h_{x}C_{y}^{(n)})\neq0$, and by definition, these two conditions imply that $x\leq_{L}^{(n)}z$ and $y\leq_{L}^{(n)}z$, respectively. 

Now, for the reverse implication, we seek to prove that
\begin{equation}\label{Eq:2.9:6.5-1}
x\leq_{L}^{(n)}z \hspace{1mm} \text{ and } \hspace{1mm} y\leq_{L}^{(n)}z \implies xy\leq_{L}^{(n)}z.
\end{equation}
Let $x'$ and $y'$ be the unqiue involutions such that $x\sim_{L}^{(n)}x'$ and $y\sim_{L}^{(n)}y'$. Note, we must have that $\mathrm{Sup}(x')\subset\mathrm{Sup}(x)$. This is because $x'$ corresponds to the unqiue involution in the parabolic subgroup $P:=\langle\mathrm{Sup}(x)\rangle\subset\mathrm{S}_{n}$ which belongs to the same left cell as $x$ in $P$, i.e. $x\sim_{L}^{P}x'$ where $\sim_{L}^{P}$ is the $P$ counterpart to $\sim_{L}^{(n)}$ defined in the obvious manner, and since $x\sim_{L}^{P}x'\implies x\sim_{L}^{(n)}x'$, which follows from definitions. Similarly, $\mathrm{Sup}(y')\subset\mathrm{Sup}(y)$, and hence $x'y'$ is also an involution. Moreover, from \Cref{Lem:2.6:2} and \Cref{Eq:2.7:0}, one can deduce that $x'y'\sim_{L}^{(n)}xy$. Thus, to prove the implication to Equation~\eqref{Eq:2.9:6.5-1}, it suffices to assume that $x$ and $y$ are involutions, and so $xy$ is also an involution. Under this assumption, by Claim~(1) of \Cref{Lem:2.11:4} and Equations \eqref{Eq:2.9:5} and \eqref{Eq:2.9:2}, we have 
\begin{equation}\label{Eq:2.9:6.5-2}
[D_{z}^{(n)}](D_{z}^{(n)}C_{x}^{(n)})\neq0 \hspace{1mm} \text{ and } \hspace{1mm} [D_{z}^{(n)}](D_{z}^{(n)}C_{y}^{(n)})\neq0.
\end{equation}
By Equations \eqref{Eq:2.8:2} and \eqref{Eq:2.7:0}, the coefficients $[D_{a}^{(n)}](D_{b}^{(n)}C_{c}^{(n)})$ belong to $\mathbb{A}_{\geq0}$ (i.e. Laurent polynomials with non-negative coeeficients). This implies, together with \Cref{Eq:2.9:6.5-2} and \Cref{Lem:2.6:2}, that
\[ [D_{z}^{(n)}](D_{z}^{(n)}C_{xy}^{(n)})=[D_{z}^{(n)}](D_{z}^{(n)}C_{x}^{(n)}C_{y}^{(n)})\neq0, \]
and again by (1) of \Cref{Lem:2.11:4} and Equations~\eqref{Eq:2.9:5} and \eqref{Eq:2.9:2}, this is equivalent to $xy\leq_{L}^{(n)}z$. 
\end{proof}

We are now in a position to generalise Claim~(b) of \Cref{Prop:3:7} via the following corollary.

\begin{cor}\label{Cor:3:15}
Let $n$, $k$, $\underline{m}$, and $\underline{i}$, be as given in \Cref{Defn:3:10}. Let $\underline{p},\underline{x}\in\mathrm{S}_{\underline{m}}$ and $z\in\mathrm{S}_{n}$, and suppose that $z$ consecutively contains $\underline{p}$ at position $\underline{i}$. Then we have the following equivalence:
\begin{equation}\label{Eq:3:15-1}
\underline{x}\leq_{L}^{(\underline{m})}\underline{p} \iff F_{\underline{m},n}^{\underline{i}}(\underline{x})\leq_{L}^{(n)}z.
\end{equation}
\end{cor}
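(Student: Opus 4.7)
The plan is to reduce the corollary to a componentwise application of Claim~(b) of \Cref{Prop:3:7}, combined with an iterated use of \Cref{Lem:2.9:6.5} to handle the product structure of the multi-shift. By definition of $\leq_L^{(\underline{m})}$, the statement $\underline{x}\leq_{L}^{(\underline{m})}\underline{p}$ is exactly the conjunction of the componentwise statements $x_j\leq_{L}^{(m_j)}p_j$ for all $j\in[k]$. Since $z$ consecutively contains $\underline{p}$ at $\underline{i}$ means (by \Cref{Defn:3:10}) that $z$ consecutively contains $p_j$ at $i_j$ for each $j$, \Cref{Prop:3:7}(b) applies componentwise to yield
\[ x_j\leq_{L}^{(m_j)}p_j \iff F_{m_j,n}^{i_j}(x_j)\leq_{L}^{(n)}z, \hspace{1mm} \text{ for each } j\in[k]. \]

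The remaining step is to transfer the conjunction over $j$ on the right-hand side to the single statement $F_{\underline{m},n}^{\underline{i}}(\underline{x})\leq_L^{(n)}z$. Recall from \Cref{Defn:3:13} that
\[ F_{\underline{m},n}^{\underline{i}}(\underline{x})=\prod_{1\leq j\leq k}F_{m_j,n}^{i_j}(x_j), \]
and by \Cref{Eq:3:10-1} together with \Cref{Rmk:3:12}, the positions $i_j$ and supports of the patterns in $z$ are pairwise disjoint. Hence the factors $F_{m_j,n}^{i_j}(x_j)$ have pairwise disjoint supports and pairwise commute. This is the precise hypothesis needed to invoke \Cref{Lem:2.9:6.5} repeatedly: decomposing the product into two pieces (one factor vs.\ the rest), applying the lemma, and iterating yields
\[ F_{\underline{m},n}^{\underline{i}}(\underline{x})\leq_{L}^{(n)}z \iff F_{m_j,n}^{i_j}(x_j)\leq_{L}^{(n)}z \hspace{1mm} \text{ for all } j\in[k]. \]
Chaining this equivalence with the componentwise equivalence from \Cref{Prop:3:7}(b) gives exactly \Cref{Eq:3:15-1}.

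There is no real obstacle here beyond some light bookkeeping: the condition \eqref{Eq:3:10-1} is precisely what guarantees both the well-definedness of $F_{\underline{m},n}^{\underline{i}}$ (as noted in \Cref{Defn:3:13}) and the applicability of \Cref{Lem:2.9:6.5}, while \Cref{Prop:3:7}(b) does the hard work of moving between consecutive pattern containment in $\mathrm{S}_n$ and the left preorder in $\mathrm{S}_{m_j}$. The induction on $k$ needed for the repeated application of \Cref{Lem:2.9:6.5} is straightforward, with the base case $k=1$ being just \Cref{Prop:3:7}(b) itself.
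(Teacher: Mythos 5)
Your proposal is correct and follows essentially the same route as the paper's proof: apply Claim~(b) of \Cref{Prop:3:7} componentwise, and use \Cref{Lem:2.9:6.5} (via the disjoint supports and commutativity guaranteed by \Cref{Eq:3:10-1} and \Cref{Rmk:3:12}) to pass between the product $F_{\underline{m},n}^{\underline{i}}(\underline{x})\leq_{L}^{(n)}z$ and the conjunction of the individual relations. Your explicit remark that \Cref{Lem:2.9:6.5} must be iterated over the $k$ factors is a small point of added care that the paper leaves implicit, but it is not a different argument.
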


\begin{proof}
Starting with the right hand side of (\ref{Eq:3:15-1}), we have the following equivalences:
\begin{equation}\label{Eq:3:15-2}
F_{\underline{m},n}^{\underline{i}}(\underline{x})\leq_{L}^{(n)}z \iff \prod_{1\leq j\leq k}F_{m_{j},n}^{i_{j}}(x_{j})\leq_{L}^{(n)}z \iff F_{m_{j},n}^{i_{j}}(x_{j})\leq_{L}^{(n)}z, \hspace{2mm} \forall j\in[k].
\end{equation}
The first equivalence holds by the definition of a multi-shift map, while the second equivalence holds from \Cref{Lem:2.9:6.5}, noting that all the shifts $F_{m_{j},n}^{i_{j}}(x_{j})$, for each $j\in[k]$, pairwise commute and have disjoint supports, as discussed in \Cref{Rmk:3:12}. Now, by Claim~(b) of \Cref{Prop:3:7}, the condition on the right hand side of the series of equivalences in 
Equation~\eqref{Eq:3:15-2} is equivalent to
\[ x_{j}\leq_{L}^{(m_{j})}p_{j}, \hspace{1mm} \text{ for all } \hspace{1mm} j\in[k], \] 
since $z$ consecutively contains $p_{j}$ at position $i_{j}$. By definition this is equivalent to $\underline{x}\leq_{L}^{(\underline{m})}\underline{p}$. 
\end{proof}

\begin{ex}\label{Ex:3:16}
Continuing from both \Cref{Ex:3:11} and \Cref{Ex:3:14}, one can check that 
\[ \underline{x}=({\color{blue}312},{\color{purple}3142})\leq_{L}^{(3,4)}({\color{cyan}213},{\color{red}4231})=\underline{p}, \]
since both ${\color{blue}312}\leq_{L}^{(3)}{\color{cyan}213}$ and ${\color{purple}3142}\leq_{L}^{(4)}{\color{red}4231}$. Therefore, by \Cref{Cor:3:15}, and since $z=1{\color{cyan}324}7{\color{red}9685}$ consecutively contains $\underline{p}=({\color{cyan}213},{\color{red}4231})$ at position $\underline{i}=(2,6)$, we have that
\[ F_{(3,4),9}^{(2,6)}(\underline{x})=1{\color{blue}423}5{\color{purple}8697}\leq_{L}^{(9)}1{\color{cyan}324}7{\color{red}9685}=z. \]
\end{ex}

\begin{cor}\label{Cor:3:17}
To understand all $\leq_{L}^{(n)}$-relations, it suffices to understand all $\leq_{L}^{(m)}$-relations with $m\leq n$, as well as all the relations of the form $x\leq_{L}^{(m)}p$ where $x,p\in\mathrm{Inv}_{m}$ and $x$ has \emph{full support}, i.e. $\mathrm{Sup}(x)=\mathrm{Sim}_{m}$.
\end{cor}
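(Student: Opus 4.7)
The plan is to proceed by induction on $n$, combining two reductions already available in this section: the replacement of an arbitrary element by the unique involution in its left cell (guaranteed by the remark following Equation~(2.9.4.5)), and the multi-shift reduction provided by Corollary~3.15. The base cases $n \leq 2$ are trivial, since either there is only one element to compare, or every $\leq_L^{(n)}$-relation is already of the form described in~(b).

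For the inductive step, fix arbitrary $y, z \in \mathrm{S}_n$ and let $y^\circ, z^\circ \in \mathrm{Inv}_n$ be the unique involutions with $y \sim_L^{(n)} y^\circ$ and $z \sim_L^{(n)} z^\circ$. Since $\sim_L^{(n)}$ is generated by $\leq_L^{(n)}$, the relation $y \leq_L^{(n)} z$ is equivalent to $y^\circ \leq_L^{(n)} z^\circ$, so it suffices to analyse $\leq_L^{(n)}$-relations between involutions. If, in addition, $\mathrm{Sup}(y^\circ) = \mathrm{Sim}_n$, then the relation $y^\circ \leq_L^{(n)} z^\circ$ is already of the form described in~(b) and nothing further is needed.

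Suppose instead that $\mathrm{Sup}(y^\circ) \subsetneq \mathrm{Sim}_n$. Decomposing this support into its maximal connected intervals yields a tuple $\underline{m} = (m_1, \dots, m_k)$ with each $m_j < n$ together with a compatible position tuple $\underline{i}$ satisfying~(3.10.1), and a factorization $y^\circ = F_{\underline{m},n}^{\underline{i}}(\underline{y^\bullet})$ for some $\underline{y^\bullet} \in \mathrm{S}_{\underline{m}}$. Because the constituent shifts commute pairwise with disjoint supports, the identity $(y^\circ)^2 = 1_n$ forces $(y^\bullet_j)^2 = 1_{m_j}$ for every $j$, and each $y^\bullet_j$ has full support $\mathrm{Sim}_{m_j}$ by construction. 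Letting $\underline{p}$ be the tuple of patterns $z^\circ$ consecutively contains at position $\underline{i}$, Corollary~3.15 gives the equivalence
$$y^\circ \leq_L^{(n)} z^\circ \iff y^\bullet_j \leq_L^{(m_j)} p_j \text{ for every } j \in [k].$$
Each relation on the right lives in $\mathrm{S}_{m_j}$ with $m_j < n$, so is handled by the inductive hypothesis, completing the argument.

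The only mildly delicate point is the verification in the last paragraph that the components $y^\bullet_j$ coming from the multi-shift decomposition of an involution with non-full support are themselves involutions with full support in their own symmetric group, thereby matching the hypotheses of~(b) one level down. This is exactly what ensures that the recursion terminates at relations of precisely the form claimed in the corollary.
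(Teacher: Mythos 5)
Your proposal is correct and follows essentially the same route as the paper's proof: reduce to the unique involutions in the left cells, split the support of the non-full-support involution into maximal connected intervals to write it as a multi-shift of full-support involutions in smaller symmetric groups, and apply \Cref{Cor:3:15} to convert the single relation into a conjunction of relations in the $\mathrm{S}_{m_j}$. The only cosmetic difference is that you package the reduction as an explicit induction on $n$ and leave the target patterns $p_j$ as arbitrary permutations handled by the inductive hypothesis, whereas the paper passes directly to the unique involutions in their left cells to land exactly on the form stated in the corollary.
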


\begin{proof}
For $y,z\in\mathrm{S}_{n}$, we need to understand the relation $y\leq_{L}^{(n)}z$.
Since there is a unique involution in each left cell, we can assume that both 
$y$ and $z$ are involutions. If $y$ has full support, the result is immediate.
If $y$ does not have full support, we need to show that the relation 
$y\leq_{L}^{(n)}z$ holds if and only if a finite collection of relations 
$x_{j}\leq_{L}^{(m_{j})}p_{j}$ hold of the form described in the statement of the corollary. We have that
\[ \mathrm{Sup}(y)=\{s_{i_{1}},s_{i_{1}+1},\dots,s_{i_{1}+m_{1}-2}\}\sqcup\cdots\sqcup\{s_{i_{k}},s_{i_{k}+1},\dots,s_{i_{k}+m_{k}-2}\}, \]
for some $k\geq 1$, $\underline{m}=(m_{1},\dots,m_{k})\in\mathbb{Z}_{\geq 2}^{\times k}$, and $\underline{i}=(i_{1},\dots,i_{k})\in[n-m_{1}+1]\times\cdots\times[n-m_{k}+1]$, where $k$ is maximal. In particular, the fact that $k$ is maximal means that the tuples $\underline{m}$ and $\underline{i}$ satisfy \Cref{Eq:3:10-1}. Therefore, $y$ must decompose into the product $y=y_{1}y_{2}\cdots y_{k}$ where the elements $y_{j}$ are pairwise commuting involutions whose supports are given by
\[ \mathrm{Sup}(y_{j})=\{s_{i_{j}},s_{i_{j}+1},\dots,s_{i_{j}+m_{j}-2}\}, \text{ for each } j\in[k]. \]
From this we see that there exists $x_{j}\in\mathrm{Inv}_{m_{j}}$ with full support such that $F_{m_{j},n}^{i_{j}}(x_{j})=y_{j}$, and thus
\[ y=F_{m_{1},n}^{i_{1}}(x_{1})F_{m_{2},n}^{j_{2}}(x_{2})\cdots F_{m_{k},n}^{j_{k}}(x_{k}). \]
Therefore, the relation $y\leq_{L}^{(n)}z$, which we started with, is equivalent to
\[ F_{m_{1},n}^{j_{1}}(x_{1})F_{m_{2},n}^{j_{2}}(x_{2})\cdots F_{m_{k},n}^{j_{k}}(x_{k})\leq_{L}^{(n)}z. \]
By \Cref{Cor:3:15}, this is equivalent to the relations $x_{j}\leq_{L}^{(m_{j})}q_{j}$ holding in $\mathrm{S}_{m_{j}}$ for each $j\in[k]$, where $q_{j}$ is the pattern consecutively appearing in $z$ at position $i_{j}$. Letting $p_{j}$ be the unique involution such that $p_{j}\sim_{L}q_{j}$, for each $j\in[k]$, then the prementioned relations are equivalent to $x_{j}\leq_{L}^{(m_{j})}p_{j}$ for each $j\in[k]$, which are of the desired formed given in the statement of this corollary. 
\end{proof}

We close out this section with the following two lemmata:

\begin{lem}\label{Lem:3:18}
For any $m\leq n$, $x,y,z\in\mathrm{S}_{m}$ such that $x,y\leq_{R}^{(m)}z^{-1}$, and shift $F_{m,n}^{i}$, we have that
\[ D_{\mathit{F}_{m,n}^{i}(z)}^{(n)}C_{\mathit{F}_{m,n}^{i}(x)}^{(n)}=D_{\mathit{F}_{m,n}^{i}(z)}^{(n)}C_{\mathit{F}_{m,n}^{i}(y)}^{(n)}\neq0 \implies D_{z}^{(m)}C_{x}^{(m)}=D_{z}^{(m)}C_{y}^{(m)}\neq0. \]
\end{lem}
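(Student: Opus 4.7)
The plan is to reduce the equality of products of dual and regular Kazhdan--Lusztig basis elements to an equality of coefficients, and to show that each such coefficient in $\mathrm{H}_{n}^{\mathbb{A}}$ matches a corresponding coefficient in $\mathrm{H}_{m}^{\mathbb{A}}$ under the shift $F:=F_{m,n}^{i}$. The key obstruction that needs circumvention is Remark~\ref{Rmk:3:4}: the shift $F$ does \emph{not} send $D_{x}^{(m)}$ to $D_{F(x)}^{(n)}$, so we cannot simply apply $F$ to the equation $D_{z}^{(m)}C_{x}^{(m)}=D_{z}^{(m)}C_{y}^{(m)}$. The idea is to dualize the problem using \Cref{Eq:2.8:2}, work on the $C$-side where $F$ behaves well thanks to \Cref{Lem:3:3}(a), and then translate back.

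Concretely, for every $a\in\mathrm{S}_{m}$, I would compute
\[ [D_{a}^{(m)}](D_{z}^{(m)}C_{x}^{(m)})=[C_{z}^{(m)}](C_{a}^{(m)}C_{x^{-1}}^{(m)}) \]
using \Cref{Eq:2.8:2}. Since $F$ is an $\mathbb{A}$-algebra monomorphism sending KL basis elements to KL basis elements (\Cref{Lem:3:3}(a)), and since $F(x^{-1})=F(x)^{-1}$ because $F$ is a group homomorphism, applying $F$ to $C_{a}^{(m)}C_{x^{-1}}^{(m)}=\sum_{b\in\mathrm{S}_{m}}\gamma_{a,x^{-1},b}^{(m)}C_{b}^{(m)}$ yields
\[ C_{F(a)}^{(n)}C_{F(x)^{-1}}^{(n)}=\sum_{b\in\mathrm{S}_{m}}\gamma_{a,x^{-1},b}^{(m)}C_{F(b)}^{(n)}. \]
Because $F$ is injective, extracting the coefficient of $C_{F(z)}^{(n)}$ from the right-hand side picks out exactly $\gamma_{a,x^{-1},z}^{(m)}$, so
\[ [C_{z}^{(m)}](C_{a}^{(m)}C_{x^{-1}}^{(m)})=[C_{F(z)}^{(n)}](C_{F(a)}^{(n)}C_{F(x)^{-1}}^{(n)}). \]
Reapplying \Cref{Eq:2.8:2} on the right, I obtain the master identity
\[ [D_{a}^{(m)}](D_{z}^{(m)}C_{x}^{(m)})=[D_{F(a)}^{(n)}](D_{F(z)}^{(n)}C_{F(x)}^{(n)}), \]
and the same identity with $x$ replaced by $y$.

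With this identity in hand, the implication follows immediately: the hypothesis $D_{F(z)}^{(n)}C_{F(x)}^{(n)}=D_{F(z)}^{(n)}C_{F(y)}^{(n)}$ forces $[D_{F(a)}^{(n)}](D_{F(z)}^{(n)}C_{F(x)}^{(n)})=[D_{F(a)}^{(n)}](D_{F(z)}^{(n)}C_{F(y)}^{(n)})$ for every $a\in\mathrm{S}_{m}$, and the master identity then gives $[D_{a}^{(m)}](D_{z}^{(m)}C_{x}^{(m)})=[D_{a}^{(m)}](D_{z}^{(m)}C_{y}^{(m)})$ for every $a\in\mathrm{S}_{m}$. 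Hence $D_{z}^{(m)}C_{x}^{(m)}=D_{z}^{(m)}C_{y}^{(m)}$. For the non-vanishing, I would invoke \Cref{Eq:2.9:5}: the standing assumption $x\leq_{R}^{(m)}z^{-1}$ directly yields $D_{z}^{(m)}C_{x}^{(m)}\neq 0$, which is all that is needed. Thus the main (and really the only) obstacle is the non-compatibility of $F$ with the dual KL basis, and \Cref{Eq:2.8:2} is precisely the device that lets one sidestep it.
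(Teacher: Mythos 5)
Your proposal is correct and follows essentially the same route as the paper's proof: dualize via \Cref{Eq:2.8:2}, transport the resulting structure-constant identity through the shift using \Cref{Lem:3:3}(a) and injectivity of $F_{m,n}^{i}$, dualize back, and obtain non-vanishing from $x,y\leq_{R}^{(m)}z^{-1}$ together with \Cref{Eq:2.9:5}. The only difference is presentational (you isolate a ``master identity'' first, whereas the paper runs the chain of equalities starting from the hypothesis), so there is nothing substantive to add.
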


\begin{proof}
By assumption we have that 
\[ D_{\mathit{F}_{m,n}^{i}(z)}^{(n)}C_{\mathit{F}_{m,n}^{i}(x)}^{(n)}=D_{\mathit{F}_{m,n}^{i}(z)}^{(n)}C_{\mathit{F}_{m,n}^{i}(y)}^{(n)}\neq0. \]
This is equivalent to the equalities of coefficients
\[ [D_{b}^{(n)}](D_{\mathit{F}_{m,n}^{i}(z)}^{(n)}C_{\mathit{F}_{m,n}^{i}(x)}^{(n)})=[D_{b}^{(n)}](D_{\mathit{F}_{m,n}^{i}(z)}^{(n)}C_{\mathit{F}_{m,n}^{i}(y)}^{(n)}), \]
for all $b\in\mathrm{S}_{n}$, with at least one such coefficient non-zero. This implies the equalities of coefficients
\begin{equation*}
[D_{\mathit{F}_{m,n}^{i}(a)}^{(n)}](D_{\mathit{F}_{m,n}^{i}(z)}^{(n)}C_{\mathit{F}_{m,n}^{i}(x)}^{(n)})=[D_{\mathit{F}_{m,n}^{i}(a)}^{(n)}](D_{\mathit{F}_{m,n}^{i}(z)}^{(n)}C_{\mathit{F}_{m,n}^{i}(y)}^{(n)}),
\end{equation*}
for all $a\in\mathrm{S}_{m}$ (i.e. specialising to $b=\mathit{F}_{m,n}^{i}(a)$). Now, by \Cref{Eq:2.8:2}, we have the equalities
\begin{equation}\label{Eq:3:18-1}
[C_{\mathit{F}_{m,n}^{i}(z)}^{(n)}](C_{\mathit{F}_{m,n}^{i}(a)}^{(n)}C_{\mathit{F}_{m,n}^{i}(x^{-1})}^{(n)})=[C_{\mathit{F}_{m,n}^{i}(z)}^{(n)}](C_{\mathit{F}_{m,n}^{i}(a)}^{(n)}C_{\mathit{F}_{m,n}^{i}(y^{-1})}^{(n)}),
\end{equation}
for all $a\in\mathrm{S}_{m}$. By Claim~(a) of \Cref{Lem:3:3}, $F_{m,n}^{i}:\mathrm{H}_{m}^{\mathbb{A}}\rightarrow\mathrm{H}_{n}^{\mathbb{A}}$ is an $\mathbb{A}$-algebra monomorphism such that
\[ F_{m,n}^{i}(C_{w}^{(m)})=C_{F_{m,n}^{i}(w)}^{(n)}. \]
Thus, we can pull the terms within \Cref{Eq:3:18-1} back through $F_{m,n}^{i}$, obtaining the equalities 
\begin{equation*}
[C_{z}^{(m)}](C_{a}^{(m)}C_{x^{-1}}^{(m)})=[C_{z}^{(m)}](C_{a}^{(m)}C_{y^{-1}}^{(m)}),
\end{equation*}
for all $a\in\mathrm{S}_{m}$. Applying  \Cref{Eq:2.8:2} one last time, we obtain the equalities
\begin{equation}\label{Eq:3:18-2}
[D_{a}^{(m)}](D_{z}^{(m)}C_{x}^{(m)})=[D_{a}^{(m)}](D_{z}^{(m)}C_{y}^{(m)}),
\end{equation}
for all $a\in\mathrm{S}_{m}$. Moreover, since $x,y\leq_{R}z^{-1}$, then by \Cref{Eq:2.9:5}, at least one of these coefficients is non-zero. \Cref{Eq:3:18-2} and this non-zero condition tell us that $D_{z}^{(m)}C_{x}^{(m)}=D_{z}^{(m)}C_{y}^{(m)}\neq0$.
\end{proof}

The following lemma is simply a special case of \cite[Corollary 43]{CMZ19} (up to a natural equivalence of categories). Although this is an immediate application of the referred corollary, we have added a proof just to allow one to easier compare notation. 

\begin{lem}\label{Lem:3:19}
For any $m\leq n$, $x,y\in\mathrm{S}_{m}$ such that $x\leq_{R}^{(m)}y^{-1}$, and shift $F_{m,n}^{i}$, we have that
\[  \mathsf{KM}_{m}(x,y)=\mathsf{KM}_{n}(F_{m,n}^{i}(x),F_{m,n}^{i}(y)). \]
\end{lem}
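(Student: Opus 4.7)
The plan is to obtain this lemma as a direct translation of \cite[Corollary~43]{CMZ19}; as the authors remark, the content is immediate once the notational dictionary is set up. The shift $F_{m,n}^{i}$ identifies $\mathrm{S}_{m}$ with the standard parabolic subgroup $\langle s_{i},s_{i+1},\dots,s_{i+m-2}\rangle\subset\mathrm{S}_{n}$, which on the Lie-algebraic side corresponds to a Levi subalgebra $\mathfrak{l}\cong\mathfrak{sl}_{m}\oplus\mathfrak{z}\subset\mathfrak{sl}_{n}$. Under the resulting (suitably modified) parabolic induction/restriction, the pair $(F_{m,n}^{i}(x),F_{m,n}^{i}(y))$ in $\mathrm{S}_{n}$ corresponds to the pair $(x,y)$ in $\mathrm{S}_{m}$, and \cite[Corollary~43]{CMZ19} is precisely the statement that, in this parabolic situation, indecomposability (or vanishing) of $\theta_{F_{m,n}^{i}(x)}^{(n)}L_{F_{m,n}^{i}(y)}^{(n)}$ is equivalent to indecomposability (or vanishing) of $\theta_{x}^{(m)}L_{y}^{(m)}$.

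Concretely, I would proceed in two steps. First, I would verify that the hypothesis $x\leq_{R}^{(m)}y^{-1}$ lifts to $F_{m,n}^{i}(x)\leq_{R}^{(n)}F_{m,n}^{i}(y)^{-1}$. This is immediate by combining \Cref{Prop:3:7}(a) (which gives the analogous statement for $\leq_{L}$) with \Cref{Eq:2.9:2} to convert left preorders to right, noting that the shift is a group homomorphism and hence commutes with inverses. In particular, via \Cref{Eq:2.11:1}, both modules $\theta_{x}^{(m)}L_{y}^{(m)}$ and $\theta_{F_{m,n}^{i}(x)}^{(n)}L_{F_{m,n}^{i}(y)}^{(n)}$ are non-zero, so in both cases the truth value of \textsf{KM} reduces to the question of indecomposability.

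Second, I would then appeal to \cite[Corollary~43]{CMZ19} to conclude the equivalence of these two indecomposability statements, which is the desired equality $\mathsf{KM}_{m}(x,y)=\mathsf{KM}_{n}(F_{m,n}^{i}(x),F_{m,n}^{i}(y))$. The only real work, and thus the main (purely notational) obstacle, is in matching the various conventions: the direction of the Kazhdan-Lusztig preorders, left versus right actions of projective functors, and the precise form of the Levi embedding used in \cite{CMZ19} versus the shift $F_{m,n}^{i}$ used here. Once this translation is carried out, the corollary applies verbatim to the parabolic subgroup $\mathrm{Im}(F_{m,n}^{i})\cong\mathrm{S}_{m}$, yielding the result.
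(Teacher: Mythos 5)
Your proposal is correct and follows essentially the same route as the paper: identify $\mathrm{Im}(F_{m,n}^{i})$ with a Levi subalgebra $\mathfrak{l}\cong\mathfrak{sl}_{m}$, transport $\theta_{x}^{(m)}L_{y}^{(m)}$ through the resulting equivalence of categories, and invoke \cite[Corollary~43]{CMZ19} (with the minimal coset representative taken to be the identity) to pass from $\mathfrak{l}$ to $\mathfrak{sl}_{n}$. Your preliminary check that $x\leq_{R}^{(m)}y^{-1}$ lifts to the shifted pair is harmless but not needed, since $\mathsf{KM}$ is true by convention when the module vanishes.
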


\begin{proof}
Assume $\mathsf{KM}_{m}(x,y)=\mathtt{true}$. Equivalently, the module $\theta_{x}^{(m)}L_{y}^{(m)}$ is either zero or indecomposable. Let $\mathfrak{l}\subset\mathfrak{sl}_{n}$ denote the Levi factor associated to the parabolic subgroup $\mathrm{Im}(F_{m,n}^{i})\subset\mathrm{S}_{n}$, with $\mathcal{O}_{0}(\mathfrak{l})$ and $\mathcal{P}_{0}(\mathfrak{l})$ the corresponding principal block and monoidal category of projective functors. Since $\mathrm{S}_{m}\cong\mathrm{Im}(F_{m,n}^{i})$ in a natural way, then $\mathfrak{sl}_{m}\cong\mathfrak{l}$, and we have the equivalences of categories
\[ \mathcal{O}_{0}^{(m)}\cong\mathcal{O}_{0}(\mathfrak{l}) \hspace{2mm} \text{ and } \hspace{2mm} \mathcal{P}_{0}^{(m)}\cong\mathcal{P}_{0}(\mathfrak{l}), \]
given on objects by $L_{w}^{(m)}\mapsto L_{F_{m,n}^{i}(w)}^{\mathfrak{l}}$ and $\theta_{w}^{(m)}\mapsto\theta_{F_{m,n}^{i}(w)}^{\mathfrak{l}}$, respectively, for all $w\in\mathrm{S}_{m}$. Here the superscript of $\mathfrak{l}$ is added to stress the category in which the objects live. Moreover, under the former equivalence, we have $\theta_{x}^{(m)}L_{y}^{(m)}\mapsto\theta_{x}^{\mathfrak{l}}L_{y}^{\mathfrak{l}}$. Therefore, we immediately have that
\[ \mathsf{KM}_{m}(x,y)=\mathsf{KM}_{\mathfrak{l}}(F_{m,n}^{i}(x),F_{m,n}^{i}(y)), \]
where $\mathsf{KM}_{\mathfrak{l}}(w_{1},w_{2})$ is the truth value of the statement ``the module $\theta_{w_{1}}^{\mathfrak{l}}L_{w_{1}}^{\mathfrak{l}}$ in $\mathcal{O}_{0}(\mathfrak{l})$ is either zero or indecomposable''. Now by \cite[Corollary 43]{CMZ19}, we have that 
\[ \mathsf{KM}_{\mathfrak{l}}(F_{m,n}^{i}(x),F_{m,n}^{i}(y))=\mathsf{KM}_{n}(F_{m,n}^{i}(x),zF_{m,n}^{i}(y)), \]
where $z$ is any element in $X_{m,n}^{i}$. Picking $z=1_{n}$ completes the proof of the lemma.
\end{proof}

\section{Kostant's problem for permutations of shape $(n-2,1,1)$}\label{Sec:4}

Given $n\geq3$, and for any $i,j\in[2,n]:=\{2,3,\dots,n\}$ such that $i<j$, we consider the following involutions of $\mathrm{S}_{n}$ written alongside their corresponding standard Young tableau via \eqref{Eq:2.4:0}:
\begin{equation}\label{Eq:4:1}
z_{i,j}^{n}:=s_{i-1}s_{i}\cdots s_{j-2}s_{j-1}s_{j-2}\cdots s_{i}s_{i-1}=(i-1,j), \ \text{ where } \ \tilde{\mathtt{RS}}_{n}(z_{i,j}^{n})=\hspace{1mm}{\small{\begin{ytableau}i\\j\end{ytableau}}}^{\ \langle n-2\rangle}.
\end{equation}
Hence $z_{i,j}^{n}$ is the involution whose corresponding standard Young tableau is of shape $(1,1)^{(n-2)}$ and contains $i$ and $j$ as the unique entries in the second and third row, respectively. These account for all involutions of shape $(1,1)^{(n-2)}$, and give the collection of all non-elementary transpositions.

In this section we answer Kostant's problem for $z_{i,j}^{n}$ via a simple criteria on the indices $i$ and $j$, and equivalently a criteria involving the consecutively containment of a pattern. We will also show that $z_{i,j}^{n}$ satisfies both the Indecomposability Conjecture \ref{Conj:2.12:1} and K{\aa}hrstr{\"o}m's Conjecture \ref{Conj:2.14:1}. Lastly, we will prove the Asymptotic Shape Conjecture \ref{Conj:2.13:5} for the corresponding shape $(1,1)$. 

\begin{lem}\label{Lem:4:2}
Every involution $z_{i,j}^{n}\in\mathrm{Inv}_{n}$, as given in \Cref{Eq:4:1}, satisfies the Indecomposability Conjecture \ref{Conj:2.12:1}. That is to say, we have that $\mathbf{KM}_{n}(\star,z_{i,j}^{n})=\mathtt{true}$.
\end{lem}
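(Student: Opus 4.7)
The plan is to reduce the verification of $\mathbf{KM}_n(\star,z_{i,j}^n)=\mathtt{true}$ to a very small collection of cases by combining the right cell invariance \eqref{Eq:2.12:2} with the shape constraint \eqref{Eq:2.9:4}, and then to dispatch each case using the tools collected in \Cref{Sec:2}, together with the parabolic reduction from \cite{CMZ19} encoded in \Cref{Lem:3:19}.

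First, by \eqref{Eq:2.12:2} it suffices to establish $\mathbf{KM}_n(x,z_{i,j}^n)=\mathtt{true}$ for each $x$ the unique involution in its right cell with $\theta_x^{(n)}L_{z_{i,j}^n}^{(n)}\neq 0$. By \Cref{Lem:2.11:4}(1) the non-vanishing translates to $x\leq_R^{(n)}z_{i,j}^n$, and \eqref{Eq:2.9:4} then forces $\mathtt{sh}(x)\succeq(n-2,1,1)$, leaving only $\mathtt{sh}(x)\in\{(n),(n-1,1),(n-2,2),(n-2,1,1)\}$. For $\mathtt{sh}(x)=(n)$ one has $x=1_n$ and the claim is trivial; for $\mathtt{sh}(x)=(n-1,1)$ the involution is a simple transposition $x=s_k$ and \Cref{Prop:2.10:2} directly yields that $\theta_{s_k}^{(n)}L_{z_{i,j}^n}^{(n)}$ is zero or indecomposable. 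For $\mathtt{sh}(x)=(n-2,1,1)$, equation \eqref{Eq:2.9:4.5} together with uniqueness of the involution in each left cell pins $x$ down to $z_{i,j}^n$ itself, and indecomposability then follows from \Cref{Lem:2.11:4}(2) after confirming
\[
[v^{\mathbf{a}_n(z_{i,j}^n)}][D_{z_{i,j}^n}^{(n)}]\bigl(D_{z_{i,j}^n}^{(n)}C_{z_{i,j}^n}^{(n)}\bigr)=1,
\]
a short structural calculation made possible by the explicit form of $C_{z_{i,j}^n}^{(n)}$ (noting $\mathbf{a}_n(z_{i,j}^n)=3$ from \Cref{Sec:2.7}).

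The remaining and central case is $\mathtt{sh}(x)=(n-2,2)$. The idea here is to exploit the factorisation $z_{i,j}^n=F_{m,n}^{i-1}(z_{2,m}^m)$ with $m:=j-i+2$, so that $z_{i,j}^n$ already lives inside the image of a shift. Combining this with the left descent constraint $\mathrm{Des}_L(x)\subseteq\{s_{i-1},s_{j-1}\}$ extracted from \eqref{Eq:2.9:4}, one shows that a right cell representative $x$ may be chosen inside $\mathrm{Im}(F_{m,n}^{i-1})$, possibly after first applying the symmetry \eqref{Eq:2.12:5}. Once $x=F_{m,n}^{i-1}(x')$ is inside the shifted parabolic, \Cref{Lem:3:19} collapses the question to $\mathsf{KM}_m(x',z_{2,m}^m)$, i.e.\ the analogous statement for the full-support transposition $z_{2,m}^m=(1,m)\in\mathrm{S}_m$. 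At this stage the result from \cite{CMZ19} singled out in the introduction is invoked to conclude indecomposability.

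The step I expect to be the main obstacle is precisely this reduction into the parabolic $\mathrm{Im}(F_{m,n}^{i-1})$ in the shape $(n-2,2)$ case: one must show that every shape-$(n-2,2)$ involution $x$ with $x\leq_R^{(n)}z_{i,j}^n$ and $\mathrm{Des}_L(x)\subseteq\{s_{i-1},s_{j-1}\}$ admits a right cell representative supported on $\{s_{i-1},\dots,s_{j-1}\}$. This should reduce to a combinatorial classification of shape-$(n-2,2)$ involutions with the prescribed descent set, using \Cref{Lem:2.4:1} and possibly the auto-equivalence of \Cref{Lem:2.10:3}; once established, the proof closes directly.
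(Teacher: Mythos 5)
Your overall skeleton (reduce via \eqref{Eq:2.12:2} to the unique involution in each right cell, then use \eqref{Eq:2.9:4} to restrict to the four shapes $(n)$, $(n-1,1)$, $(n-2,2)$, $(n-2,1,1)$) matches the paper, and your treatments of the shapes $(n)$ and $(n-1,1)$ are fine. However, the shape-$(n-2,2)$ case — which you correctly flag as the crux — contains a genuine gap: the claim that every shape-$(n-2,2)$ involution $x$ with $x\leq_{R}^{(n)}z_{i,j}^{n}$ admits a right-cell representative supported on $\{s_{i-1},\dots,s_{j-1}\}$ is false. The descent constraint $\mathrm{Des}_{L}(x)\subseteq\{s_{i-1},s_{j-1}\}$ together with \Cref{Lem:2.4:1} allows, besides $x=s_{i-1}s_{j-1}$, the involutions $x=s_{i-1}s_{i}s_{i-2}s_{i-1}$ (when $i\geq3$) and $x=s_{j-1}s_{j}s_{j-2}s_{j-1}$ (when $j\leq n-1$), whose supports protrude out of the window on the left and on the right respectively; and no element of their right cells lies in $\mathrm{Im}(F_{m,n}^{i-1})$, since any $w$ in that parabolic has the second row of $\mathtt{P}_{w}$ contained in $\{i,\dots,j\}$, whereas these cells require the entry $i+1$ paired with second-row entry set $\{i,i+1\}$, respectively $j+1$ with $\{j,j+1\}$. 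Conjugation by $w_{0,n}$ cannot rescue this either, since when both involutions occur they stick out on opposite sides. Moreover, even where the reduction via \Cref{Lem:3:19} does apply, it buys nothing: the quantity it preserves is indexed by the same permutation with the same support size, and the result from \cite{CMZ19} you ultimately invoke is the support bound \eqref{Eq:2.12:4} — which applies directly to the original $x$ without any reduction, because all three involutions listed above have support of size at most $3$. This is exactly the paper's argument: every involution of shape $(n)$, $(n-1,1)$ or $(n-2,2)$ occurring here has support of size at most $4$, so \eqref{Eq:2.12:4} finishes these cases in one line.

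Two smaller points. For the shape-$(n-2,1,1)$ case, your identification $x=z_{i,j}^{n}$ via \eqref{Eq:2.9:4.5} is correct, but the asserted equality $[v^{3}][D_{z}^{(n)}](D_{z}^{(n)}C_{z}^{(n)})=1$ is not ``a short structural calculation from the explicit form of $C_{z}^{(n)}$'': by \eqref{Eq:2.8:2} it amounts to $[v^{\mathbf{a}_{n}(z)}]\gamma_{z,z,z}^{(n)}=1$, i.e.\ to $z$ being a distinguished involution with idempotent image in the asymptotic ring — a true but nontrivial fact. The paper instead cites \cite[Section~5.2]{KiM16} for the equal-shape case, and you should do the same (or supply the $J$-ring argument explicitly). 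Finally, note that \eqref{Eq:2.12:4} requires checking only the unique involution in each right cell, so once the support bounds are in hand the entire lemma follows with no case analysis beyond listing the admissible insertion tableaux.
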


\begin{proof}
By \Cref{Eq:2.12:2}, it suffices to show $\mathbf{KM}_{n}(x,z_{i,j}^{n})=\mathtt{true}$ for $x\in\mathrm{Inv}_{n}$. By \Cref{Eq:2.11:1}, we can assume $x\leq_{R}z_{i,j}^{n}$, and from \Cref{Eq:2.9:4}, this implies that
\[ \mathtt{sh}(z_{i,j}^{n})=(1,1)^{(n-2)}\preceq\mathtt{sh}(x). \]
Hence, the shape of $x$ must be either $\emptyset^{(n)}$, $(1)^{(n-1)}$, $(2)^{(n-2)}$, or $(1,1)^{(n-2)}$. In the latter case, the element $x$ has the same shape as $z_{i,j}^{n}$, and thus $\mathbf{KM}_{n}(x,z_{i,j}^{n})=\mathtt{true}$ by \cite[Section 5.2]{KiM16}. For the other three shapes, one can check that any involution with such a shape has support of size at most $4$, and thus $\mathbf{KM}_{n}(x,z_{i,j}^{n})=\mathtt{true}$ by 
the implication in Equation~\eqref{Eq:2.12:4}, completing the proof.
\end{proof}

We now seek to answer Kostant's problem for the involutions $z_{i,j}^{n}$, and simultaneously show that K{\aa}hrstr{\"o}m's Conjecture \ref{Conj:2.14:1} holds for these involutions. To help with this, we will first prove the following proposition, which accounts for the most technical aspect of the proof to come.

\begin{prop}\label{Prop:4:3}
Let $z_{i,j}^{n}\in\mathrm{Inv}_{n}$ be as given in \Cref{Eq:4:1}. For any distinct $x,y\leq_{R}z_{i,j}^{n}$ such that $\mathtt{sh}(x)=\mathtt{sh}(y)=(n-1,1)$, the following inequality within $\mathrm{H}_{n}^{\mathbb{Z}}$ holds: 
\[ D_{z_{i,j}^{n}}^{(n)}C_{x}^{(n)}\neq D_{z_{i,j}^{n}}^{(n)}C_{y}^{(n)}. \]
\end{prop}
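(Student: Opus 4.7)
The plan is to first characterize the $x\leq_R z_{i,j}^{n}$ of shape $(n-1,1)$. Combining \eqref{Eq:2.9:2}, \eqref{Eq:2.9:7}, and the fact that each left cell contains a unique involution, the only involutions of shape $(n-1,1)$ satisfying $\leq_L z_{i,j}^{n}$ are the simple reflections $s_{i-1}$ and $s_{j-1}$, since one checks directly from the one-line notation of $z_{i,j}^{n}=(i-1,j)$ that $\mathrm{Des}_{R}(z_{i,j}^{n})=\{s_{i-1},s_{j-1}\}$. Hence each such $x$ lies in the right Kazhdan-Lusztig cell of $s_{i-1}$ or of $s_{j-1}$. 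Each cell has $n-1$ elements, parameterized by the recording tableau $\mathtt{Q}_{x}$ via \Cref{Lem:2.4:1}, and each element admits a canonical reduced expression of the form $s_{q}s_{q\pm 1}s_{q\pm 2}\cdots s_{k-1}$ with $q\in\{i-1,j-1\}$ and a monotone sequence of subscripts determined by an integer $k\in[2,n]$.

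For each such $x$, the plan is to compute $D_{z_{i,j}^{n}}^{(n)}C_{x}^{(n)}$ by iterating \eqref{Eq:2.8:1} along the chosen reduced expression, invoking \Cref{Lem:2.11:5} whenever the associated strong right Bruhat walk starting at $z_{i,j}^{n}$ is $\underline{x}$-compatible. Each step right-multiplies by the next simple reflection, which in the one-line notation of the intermediate permutation slides the entry $j$ (initially at position $i-1$ in $z_{i,j}^{n}$) through successive positions; the descent conditions required for $\underline{x}$-compatibility can be verified step-by-step via \Cref{Lem:2.2:3}. When the walk remains compatible all the way, \Cref{Lem:2.11:5}(a) gives $D_{z_{i,j}^{n}}^{(n)}C_{x}^{(n)}=D_{w(x)}^{(n)}C_{s_{\mathrm{last}}(x)}^{(n)}$, and expanding the latter by \eqref{Eq:2.8:1} exhibits $D_{z_{i,j}^{n}x}^{(n)}$ as a summand with coefficient exactly $1$ in $\mathrm{H}_{n}^{\mathbb{Z}}$ (a length argument shows $z_{i,j}^{n}x$ cannot arise from the Jantzen-middle sum). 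When compatibility fails, notably when the reduced expression overshoots the entry $i-1$ sitting at position $j$ in the current permutation, the iterative expansion can still be carried out manually to identify an analogous distinguishing term.

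The distinguishing step exploits the injectivity of $x\mapsto z_{i,j}^{n}x$: distinct $x$ in our parameterized family produce distinct elements $z_{i,j}^{n}x\in\mathrm{S}_{n}$. The comparison target is $[D_{z_{i,j}^{n}x}^{(n)}](D_{z_{i,j}^{n}}^{(n)}C_{x}^{(n)})=1$ versus $[D_{z_{i,j}^{n}x}^{(n)}](D_{z_{i,j}^{n}}^{(n)}C_{y}^{(n)})$ for each $y\neq x$ in the family. For those $y$ whose chosen reduced expression extends the one for $x$ by a single simple reflection (so that the walk for $y$ has $w(y)=z_{i,j}^{n}x$), the coefficient on the $y$ side picks up the $(v+v^{-1})$ contribution from \eqref{Eq:2.8:1}, specializing to at least $2$ in $\mathrm{H}_{n}^{\mathbb{Z}}$ and therefore differing from the $1$ on the $x$ side.

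The main obstacle lies in the remaining $y$, whose walks do not pass through $z_{i,j}^{n}x$: for these, the coefficient of $D_{z_{i,j}^{n}x}^{(n)}$ in $D_{z_{i,j}^{n}}^{(n)}C_{y}^{(n)}$ comes exclusively from Jantzen-middle sums, and ruling out an accidental value of $1$ requires a careful combinatorial analysis of Kazhdan-Lusztig $\mu$-values between the various transpositions and cycles lying above $z_{i,j}^{n}$ in the Bruhat order. The degenerate case $j=i+1$, where the two descents $s_{i-1}$ and $s_{j-1}=s_{i}$ are adjacent and satisfy the braid relation $s_{i-1}s_{i}s_{i-1}=s_{i}s_{i-1}s_{i}$, is expected to require separate treatment, since the braid relation couples reduced expressions across the two right cells and so the distinguishing argument above needs to be adapted.
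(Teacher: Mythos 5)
There is a genuine gap. Your proposal correctly identifies the candidates (the two right cells of $s_{i-1}$ and $s_{j-1}$, with elements given by monotone reduced expressions indexed by an endpoint $k$), and the first half of your comparison is sound: iterating \eqref{Eq:2.8:1} along an $\underline{x}$-compatible walk, the unique lowest-length dual Kazhdan--Lusztig term in $D_{z_{i,j}^{n}}^{(n)}C_{x}^{(n)}$ is $D_{z_{i,j}^{n}x}^{(n)}$ with coefficient $1$, since each application of \eqref{Eq:2.8:1} lowers the minimal length by exactly one only through the $D_{ws}^{(n)}$ branch. But the proof of the proposition \emph{is} the second half of the comparison, and you do not carry it out: for the partner $y$ whose walk does not pass through $z_{i,j}^{n}x$, you must show $[D_{z_{i,j}^{n}x}^{(n)}](D_{z_{i,j}^{n}}^{(n)}C_{y}^{(n)})\neq 1$, and you explicitly defer this to ``a careful combinatorial analysis of Kazhdan--Lusztig $\mu$-values'' between elements above $z_{i,j}^{n}$ in the Bruhat order. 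Controlling these $\mu$-values directly is hard (this is exactly the kind of information one usually cannot access), so as written the argument does not close. You also leave the case $j=i+1$ unresolved. A smaller point: by \eqref{Eq:2.9:6} an equality $D_{z}^{(n)}C_{x}^{(n)}=D_{z}^{(n)}C_{y}^{(n)}\neq0$ forces $x\sim_{L}^{(n)}y$, so you only need to treat the $n-1$ pairs with equal recording tableaux, not all pairs in the family; your plan as stated compares more pairs than necessary.

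For contrast, the paper avoids the $\mu$-value analysis entirely. It first establishes the inequality for the two ``anchor'' pairs $k=i-1$ and $k=j-1$ by exactly your lowest-length-term argument (where the comparison is easy because on one side the only term $D_{w}^{(n)}$ with $\ell(w)<\ell(z_{i,j}^{n})$ is $D_{z_{i,j}^{n}s_{j-1}}^{(n)}$, respectively $D_{z_{i,j}^{n}s_{i-1}}^{(n)}$). It then propagates from these base cases by induction on $k$: assuming $D_{z}^{(n)}C_{x_{k}}^{(n)}=D_{z}^{(n)}C_{y_{k}}^{(n)}$, right-multiplying by $C_{s_{k\pm1}}^{(n)}$ and using the recursions $C_{x_{k}}^{(n)}C_{s_{k\pm1}}^{(n)}=C_{x_{k}s_{k\pm1}}^{(n)}+C_{x_{k\pm1}}^{(n)}$ together with the vanishing $D_{z}^{(n)}C_{x_{k}s_{k\pm1}}^{(n)}=D_{z}^{(n)}C_{y_{k}s_{k\pm1}}^{(n)}=0$ (a descent outside $\mathrm{Des}_{L}(z_{i,j}^{n})$) yields the equality for $k\pm1$, contradicting the induction hypothesis. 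This handles $j=i+1$ uniformly. If you want to salvage your approach, you would need either to execute the $\mu$-value analysis explicitly or to replace it with a propagation mechanism of this kind.
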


\begin{proof}
Suppose, temporarily, there exists distinct $x,y\leq_{R}z_{i,j}^{n}$ with $\mathtt{sh}(x)=\mathtt{sh}(y)=(n-1,1)$ and
\[ D_{z_{i,j}^{n}}^{(n)}C_{x}^{(n)}=D_{z_{i,j}^{n}}^{(n)}C_{y}^{(n)}. \]
By the equivalence in Equation~\eqref{Eq:2.9:5}, both sides are non-zero, and by 
the implication in Equation~\eqref{Eq:2.9:4}, both $\mathrm{Des}_{L}(x)$ and $\mathrm{Des}_{L}(y)$ are subsets of $\mathrm{Des}_{L}(z_{i,j}^{n})=\{s_{i-1},s_{j-1}\}$. Moreover, by \Cref{Lem:2.4:1}, this implies that both
\[ \mathtt{P}_{x}, \mathtt{P}_{y}\in\left\{
\begin{ytableau}
\scriptstyle i
\end{ytableau}^{\ \langle n-1\rangle}, \ 
\begin{ytableau}
\scriptstyle j
\end{ytableau}^{\ \langle n-1\rangle} \right\}, \] 
recalling that $\mathtt{P}_{x}$ and $\mathtt{P}_{y}$ are the insertion tableau associated to $x$ and $y$ under the Robinson-Schensted correspondence, respectively. Furthermore, by the implication in Equation~\eqref{Eq:2.9:6}, $x\sim_{L}^{(n)}y$, equivalently $\mathtt{Q}_{x}=\mathtt{Q}_{y}$, and since $x$ and $y$ are distinct, we must have that $\mathtt{P}_{x}\neq\mathtt{P}_{y}$. Hence, up to symmetry, we have that
\begin{equation}\label{Eq:4:4}
\mathtt{P}_{x}=\begin{ytableau}
\scriptstyle i
\end{ytableau}^{\ \langle n-1\rangle} \hspace{3mm} \text{ and } \hspace{4mm}
\mathtt{P}_{y}=\begin{ytableau}
\scriptstyle j
\end{ytableau}^{\ \langle n-1\rangle}.
\end{equation}
Thus, for $D_{z_{i,j}^{n}}^{(n)}C_{x}^{(n)}=D_{z_{i,j}^{n}}^{(n)}C_{y}^{(n)}$ to hold, the pair $(x,y)$ must satisfy $x\sim_{R}^{(n)}s_{i-1}$, $y\sim_{R}^{(n)}s_{j-1}$, and $x\sim_{L}^{(n)}y$. There is one such pair $(x_{k},y_{k})$, for each $k\in[n-1]$, given by the following cases:
\begin{itemize}
\item[(a)] For $k\leq i-1$, then $x_{k}:=s_{i-1}s_{i-2}\cdots s_{k}$ and $y_{k}:=s_{j-1}s_{j-2}\cdots s_{k}$.
\item[(b)] For $i-1<k\leq j-1$, then $x_{k}:=s_{i-1}s_{i}\cdots s_{k}$ and $y_{k}:=s_{j-1}s_{j-2}\cdots s_{k}$.
\item[(c)] For $j-1<k$, then $x_{k}:=s_{i-1}s_{i}\cdots s_{k}$ and $y_{k}:=s_{j-1}s_{j}\cdots s_{k}$.
\end{itemize}
Therefore, to prove this proposition, it suffices to confirm that the inequality
\begin{equation}\label{Eq:4:5}
D_{z_{i,j}^{n}}^{(n)}C_{x_{k}}^{(n)}\neq D_{z_{i,j}^{n}}^{(n)}C_{y_{k}}^{(n)}
\end{equation}
holds within $\mathrm{H}_{n}^{\mathbb{Z}}$, for every pair $(x_{k},y_{k})$ appearing in the above three cases (a), (b), and (c). We will first show that this inequality holds in the following two special cases:

\textbf{Claim}: The inequality in Equation~\eqref{Eq:4:5} holds when $k=j-1$, and when $k=i-1$. In other words,
\begin{equation}\label{Eq:4:6}
D_{z_{i,j}^{n}}^{(n)}C_{s_{i-1}s_{i}\cdots s_{j-1}}^{(n)}\neq D_{z_{i,j}^{n}}^{(n)}C_{s_{j-1}}^{(n)} \ \text{ and } \ D_{z_{i,j}^{n}}^{(n)}C_{s_{i-1}}^{(n)}\neq D_{z_{i,j}^{n}}^{(n)}C_{s_{j-1}s_{j-2}\cdots s_{i-1}}^{(n)}.
\end{equation}
We will prove the first inequality for $k=j-1$, with the other following from analogous arguments. Firstly, by \Cref{Lem:2.6:2}, we must have that
\[ D_{z_{i,j}^{n}}^{(n)}C_{s_{i-1}s_{i}\cdots s_{j-1}}^{(n)}=D_{z_{i,j}^{n}}^{(n)}C_{s_{i-1}}^{(n)}C_{s_{i}}^{(n)}\cdots C_{s_{j-1}}^{(n)}. \]
Moreover, since $z_{i,j}^{n}=s_{i-1}s_{i}\cdots s_{j-2}s_{j-1}s_{j-2}\cdots s_{i}s_{i-1}$, we see that
\begin{equation}\label{Eq:4:7}
s_{i-1}\in\mathrm{Des}_{R}(z_{i,j}^{n}), \ s_{i}\in\mathrm{Des}_{R}(z_{i,j}^{n}s_{i-1}), \ \dots, \ s_{j-1}\in\mathrm{Des}_{R}(z_{i,j}^{n}s_{i-1}s_{i}\cdots s_{j-2}).
\end{equation}
Therefore, if one repeatably applies \Cref{Eq:2.8:1} on the expression $D_{z_{i,j}^{n}}C_{s_{i-1}}C_{s_{i}}\cdots C_{s_{j-1}}$ until it  becomes a linear combination of dual Kazhdan-Lusztig basis elements, we see that the left hand side of the first inequality of Equation~\eqref{Eq:4:6} contains $D_{z_{i,j}^{n}s_{i-1}s_{i}\cdots s_{j-1}}$, that is to say,
\[ D_{z_{i,j}^{n}s_{i-1}s_{i}\cdots s_{j-1}}^{(n)}\in D_{z_{i,j}^{n}}^{(n)}C_{s_{i-1}s_{i}\cdots s_{j-1}}^{(n)} \iff [D_{z_{i,j}^{n}s_{i-1}s_{i}\cdots s_{j-1}}^{(n)}](D_{z_{i,j}^{n}}^{(n)}C_{s_{i-1}s_{i}\cdots s_{j-1}}^{(n)})\neq0.  \]
On the other hand, by \Cref{Eq:2.8:1}, the only term $D_{z}^{(n)}$ appearing within the right hand side of the first inequality of Equation~\eqref{Eq:4:6}, such that $\ell(z)<\ell(z_{i,j}^{n})$, is the term $D_{z_{i,j}^{n}s_{j-1}}^{(n)}$. Moreover, we know that
\[ D_{z_{i,j}^{n}s_{j-1}}^{(n)}\neq D_{z_{i,j}^{n}s_{i-1}s_{i}\cdots s_{j-1}}^{(n)},\]
since $i\neq j$. Hence the first inequality from Equation~\eqref{Eq:4:6} holds. As mentioned above, an analogous argument shows the second inequality, proving the claim. We now use this claim to prove the inequality of Equation~\eqref{Eq:4:5} for each pair $(x_{k},y_{k})$ appearing in the above three cases.

Case (a): Here we have $k\leq i-1$, $x_{k}=s_{i-1}s_{i-2}\cdots s_{k}$, and $y_{k}=s_{j-1}s_{j-2}\cdots s_{k}$. We prove Equation~\eqref{Eq:4:5} by downward induction on $k$, with the base case $k=i-1$ holding from the above claim. Let $k<i-1$, and assume Equation~\eqref{Eq:4:5} holds for all $k'$ such that $k<k'\leq i-1$. Firstly, note that we have 
\begin{align*}
C_{x_{k}}^{(n)}C_{s_{k+1}}^{(n)}&=C_{s_{i-1}s_{i-2}\cdots s_{k+2}s_{k+1}s_{k}}^{(n)}C_{s_{k+1}}^{(n)} \\
&=C_{s_{i-1}}^{(n)}C_{s_{i-2}}^{(n)}\cdots C_{s_{k+2}}^{(n)}(C_{s_{k+1}}^{(n)}C_{s_{k}}^{(n)}C_{s_{k+1}}^{(n)}) \\
&=C_{s_{i-1}}^{(n)}C_{s_{i-2}}^{(n)}\cdots C_{s_{k+2}}^{(n)}(C_{s_{k+1}s_{k}s_{k+1}}^{(n)}+C_{s_{k+1}}^{(n)}) \\
&=C_{x_{k}s_{k+1}}^{(n)}+C_{x_{k+1}}^{(n)}.
\end{align*}
Here the second equality follows by \Cref{Lem:2.6:2}, the third equality follows since
\[ C_{s_{k+1}}^{(n)}C_{s_{k}}^{(n)}C_{s_{k+1}}^{(n)}=C_{s_{k+1}s_{k}s_{k+1}}^{(n)}+C_{s_{k+1}}^{(n)}, \]
which can be confirmed computationally by checking that
\[ C_{s_{2}}^{(3)}C_{s_{1}}^{(3)}C_{s_{2}}^{(3)}=C_{s_{2}s_{1}s_{2}}^{(3)}+C_{s_{2}}^{(3)} \]
holds within $\mathrm{H}_{3}^{\mathbb{Z}}$, and then shifting this equation into $\mathrm{H}_{n}^{\mathbb{Z}}$ via $F_{3,n}^{k}$, and the last equality follows again by  \Cref{Lem:2.6:2}. The same computations give an analogous result for $y_{k}$ too. In summary, we have
\begin{equation}\label{Eq:4:8}
C_{x_{k}}C_{s_{k+1}}^{(n)}=C_{x_{k}s_{k+1}}^{(n)}+C_{x_{k+1}}^{(n)} \ \text{ and } \ C_{y_{k}}^{(n)}C_{s_{k+1}}^{(n)}=C_{y_{k}s_{k+1}}^{(n)}+C_{y_{k+1}}^{(n)}.
\end{equation}
Now assume, for contradiction, that the equation $D_{z_{i,j}^{n}}^{(n)}C_{x_{k}}^{(n)}=D_{z_{i,j}^{n}}^{(n)}C_{y_{k}}^{(n)}$ holds. Then we have that
\begin{align*}
D_{z_{i,j}^{n}}^{(n)}C_{x_{k}}^{(n)}=D_{z_{i,j}^{n}}^{(n)}C_{y_{k}}^{(n)} &\implies D_{z_{i,j}^{n}}^{(n)}C_{x_{k}}^{(n)}C_{s_{k+1}}^{(n)}=D_{z_{i,j}^{n}}^{(n)}C_{y_{k}}^{(n)}C_{s_{k+1}}^{(n)} \\
&\implies D_{z_{i,j}^{n}}^{(n)}(C_{x_{k}s_{k+1}}^{(n)}+C_{x_{k+1}}^{(n)})=D_{z_{i,j}^{n}}^{(n)}(C_{y_{k}s_{k+1}}^{(n)}+C_{y_{k+1}}^{(n)}) \\
&\implies D_{z_{i,j}^{n}}^{(n)}C_{x_{k+1}}^{(n)}=D_{z_{i,j}^{n}}^{(n)}C_{y_{k+1}}^{(n)}.
\end{align*}
Here the second implication follows by Equation~\eqref{Eq:4:8}, while the last implication follows since both
\[ D_{z_{i,j}^{n}}^{(n)}C_{x_{k}s_{k+1}}^{(n)}=0 \hspace{1mm} \text{ and } \hspace{1mm} D_{z_{i,j}^{n}}^{(n)}C_{y_{k}s_{k+1}}^{(n)}=0, \]
which can be deduced from \Cref{Eq:2.9:5,Eq:2.9:4}, and noting that the element $s_{k}$ belongs to both $\mathrm{Des}_{L}(x_{k}s_{k+1})$ and $\mathrm{Des}_{L}(y_{k}s_{k+1})$, but does not belong to $\mathrm{Des}_{L}(z_{i,j}^{n})=\{s_{i-1},s_{j-1}\}$. The last equation above is
\[ D_{z_{i,j}^{n}}^{(n)}C_{x_{k+1}}^{(n)}=D_{z_{i,j}^{n}}^{(n)}C_{y_{k+1}}^{(n)}. \] 
However, by the induction hypothesis, this gives the desired contradiction, completing the proof of Case (a). The other two cases below follow by similar analysis, so we will be lighter on the details.

Case (b): Here we have $i-1<k\leq j-1$, $x_{k}=s_{i-1}s_{i}\cdots s_{k}$, and $y_{k}=s_{j-1}s_{j-2}\cdots s_{k}$. Again, we  prove that the inequality in Equation~\eqref{Eq:4:5} holds by downward induction on $k$, with the base case $k=j-1$ holding from the claim above. Let $i-1<k<j-1$, and assume that Equation~\eqref{Eq:4:5} holds for all $k'$ such that $i-1<k<k'\leq j-1$. Firstly, one can deduce that we have
\begin{equation}\label{Eq:4:9}
C_{x_{k}}^{(n)}C_{s_{k+1}}^{(n)}=C_{x_{k+1}}^{(n)} \ \text{ and } \ C_{y_{k}}^{(n)}C_{s_{k+1}}^{(n)}=C_{y_{k}s_{k+1}}^{(n)}+C_{y_{k+1}}^{(n)}.
\end{equation}
The first equation here holds by \Cref{Lem:2.6:2}, while the second holds by similar considerations made in Case (a) above. Assume, for contradiction, that $D_{z_{i,j}^{n}}^{(n)}C_{x_{k}}^{(n)}=D_{z_{i,j}^{n}}^{(n)}C_{y_{k}}^{(n)}$ holds. Then we have that
\begin{align*}
D_{z_{i,j}^{n}}^{(n)}C_{x_{k}}^{(n)}=D_{z_{i,j}^{n}}^{(n)}C_{y_{k}}^{(n)} &\implies D_{z_{i,j}^{n}}^{(n)}C_{x_{k}}^{(n)}C_{s_{k+1}}^{(n)}=D_{z_{i,j}^{n}}^{(n)}C_{y_{k}}^{(n)}C_{s_{k+1}}^{(n)} \\
&\implies D_{z_{i,j}^{n}}^{(n)}C_{x_{k+1}}^{(n)}=D_{z_{i,j}^{n}}^{(n)}(C_{y_{k}s_{k+1}}^{(n)}+C_{y_{k+1}}^{(n)}) \\
&\implies D_{z_{i,j}^{n}}^{(n)}C_{x_{k+1}}^{(n)}=D_{z_{i,j}^{n}}^{(n)}C_{y_{k+1}}^{(n)}.
\end{align*}
The second implication here follows by \Cref{Eq:4:9}, and again, the third implication follows since
\[ D_{z_{i,j}^{n}}^{(n)}C_{y_{k}s_{k+1}}^{(n)}=0, \]
which can be seen as $s_{k}$ belongs to $\mathrm{Des}_{L}(y_{k}s_{k+1})$ but not to $\mathrm{Des}_{L}(z_{i,j}^{n})$. The last equality above is
\[ D_{z_{i,j}^{n}}^{(n)}C_{x_{k+1}}^{(n)}=D_{z_{i,j}^{n}}^{(n)}C_{y_{k+1}}^{(n)}, \]
which gives the desired contradiction by the induction hypothesis, completing the proof of Case (b).

Case (c): Here we have $j-1<k$, $x_{k}=s_{i-1}s_{i}\cdots s_{k}$, and $y_{k}=s_{j-1}s_{j}\cdots s_{k}$. This time, we will prove that the inequality in Equation~\eqref{Eq:4:5} holds by regular induction on $k$ for $j-1\leq k$, with the base case $k=j-1$ holding by the above claim/Case (b). Let $j-1<k$, and assume Equation~\eqref{Eq:4:5} holds for all $k'$ such that $j-1\leq k'<k$. Now, by similar considerations made in Case (a), one can deduce that
\begin{equation}\label{Eq:4:10}
C_{x_{k}}^{(n)}C_{s_{k-1}}^{(n)}=C_{x_{k}s_{k-1}}^{(n)}+C_{x_{k-1}}^{(n)} \ \text{ and } \ C_{y_{k}}^{(n)}C_{s_{k-1}}^{(n)}=C_{y_{k}s_{k-1}}^{(n)}+C_{y_{k-1}}^{(n)}.
\end{equation}
Assume, for contradiction, that $D_{z_{i,j}^{n}}^{(n)}C_{x_{k}}^{(n)}=D_{z_{i,j}^{n}}^{(n)}C_{y_{k}}^{(n)}$ holds. Then we have that
\begin{align*}
D_{z_{i,j}^{n}}^{(n)}C_{x_{k}}^{(n)}=D_{z_{i,j}^{n}}^{(n)}C_{y_{k}}^{(n)} &\implies D_{z_{i,j}^{n}}^{(n)}C_{x_{k}}^{(n)}C_{s_{k-1}}^{(n)}=D_{z_{i,j}^{n}}^{(n)}C_{y_{k}}^{(n)}C_{s_{k-1}}^{(n)} \\
&\implies D_{z_{i,j}^{n}}^{(n)}(C_{x_{k}s_{k-1}}^{(n)}+C_{x_{k-1}}^{(n)})=D_{z_{i,j}^{n}}^{(n)}(C_{y_{k}s_{k-1}}^{(n)}+C_{y_{k-1}}^{(n)}) \\
&\implies D_{z_{i,j}^{n}}^{(n)}C_{x_{k-1}}^{(n)}=D_{z_{i,j}^{n}}^{(n)}C_{y_{k-1}}^{(n)}.
\end{align*}
The second implication follows by \Cref{Eq:4:10}, and the third follows since both 
\[ D_{z_{i,j}^{n}}^{(n)}C_{x_{k}s_{k-1}}^{(n)}=0 \hspace{1mm} \text{ and } \hspace{1mm} D_{z_{i,j}^{n}}^{(n)}C_{y_{k}s_{k-1}}^{(n)}=0, \]
which holds since $s_{k}$ belongs to both $\mathrm{Des}_{L}(x_{k}s_{k-1})$ and $\mathrm{Des}_{L}(y_{k}s_{k-1})$, but not to $\mathrm{Des}_{L}(z_{i,j}^{n})$. The last equality above is $D_{z_{i,j}^{n}}^{(n)}C_{x_{k-1}}^{(n)}=D_{z_{i,j}^{n}}^{(n)}C_{y_{k-1}}^{(n)}$, which gives the desired contradiction by the induction hypothesis, completing the proof of Case (c), and the proposition.
\end{proof}

\begin{thm}\label{Thm:4:11}
Let $z_{i,j}^{n}\in\mathrm{Inv}_{n}$ be as given in \Cref{Eq:4:1}.
Then the following conditions are equivalent.
\begin{enumerate}[$($i$)$]
\item $z_{i,j}^{n}$ is Kostant negative;
\item $z_{i,j}^{n}$ consecutively contains the pattern $14325$;
\item $j=i+1$ and $\{i,i+1\}\cap\{2,n\}=\emptyset$.
\end{enumerate}
Moreover, all involutions $z_{i,j}^{n}$ from \Cref{Eq:4:1} satisfy K{\aa}hrstr{\"o}m's Conjecture \ref{Conj:2.14:1}.
\end{thm}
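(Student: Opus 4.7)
My plan is to split the argument according to whether $z_{i,j}^{n}$ consecutively contains the pattern $14325$, reducing Kåhrström's conjecture either to its vacuous case or to a graded-Hecke-algebra distinctness statement. First, I would verify (ii) $\iff$ (iii) directly from the one-line notation of $z_{i,j}^{n}=(i-1,j)$, which reads $1,\ldots,i-2,\,j,\,i,i+1,\ldots,j-1,\,i-1,\,j+1,\ldots,n$. When $j>i+1$ the ascending block $i,i+1,\ldots,j-1$ sitting strictly between the two swapped letters obstructs the strictly decreasing middle of $14325$ in every window of size five, so (ii) fails, and any window disjoint from $\{i-1,j\}$ is order-isomorphic to the identity. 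When $j=i+1$, the window in positions $i-2,i-1,i,i+1,i+2$ reads $i-2,i+1,i,i-1,i+2$, normalizing to $14325$, and this is the only candidate window; it lies inside $[n]$ precisely when $i\geq 3$ and $i+2\leq n$, i.e. when $\{i,i+1\}\cap\{2,n\}=\emptyset$.

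Next, for (iii) $\Rightarrow$ (i) together with Kåhrström: the pattern $14325$ is an involution of shape $(3,1,1)$ in $\mathrm{S}_{5}$ and is Kostant negative, with a direct GAP3 computation producing distinct $x_{0},y_{0}\leq_{R}^{(5)} 14325$ such that $\theta_{x_{0}}^{(5)}L_{14325}^{(5)}\cong\theta_{y_{0}}^{(5)}L_{14325}^{(5)}$. Applying the consecutive-pattern propagation result from \cite{CM26} recalled in the introduction yields that $z_{i,j}^{n}$ is Kostant negative, and the above isomorphism lifts through the shift $F_{5,n}^{i-2}$ to a pair of distinct elements $\leq_{R}^{(n)} z_{i,j}^{n}$ with the same property, so $\mathbf{Kh}_{n}(z_{i,j}^{n})=\mathtt{false}$. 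By \eqref{Eq:2.14:3} each of conditions (1)--(5) in Conjecture~\ref{Conj:2.14:1} then fails, so they are trivially all equivalent and Kåhrström holds.

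Finally, for not-(iii) I would prove Kostant positivity by verifying the graded combinatorial Kåhrström condition $[\mathbf{Kh}_{n}^{\mathbb{Z}}](z_{i,j}^{n})=\mathtt{true}$; combined with Lemma~\ref{Lem:4:2} and the implication $(5)\Rightarrow(2)$ in \eqref{Eq:2.14:2}, this yields conditions~(1) and~(2) of Theorem~\ref{Thm:2.13:1}. Given distinct $x,y\leq_{R}^{(n)} z_{i,j}^{n}$ we may assume $x\sim_{L}^{(n)} y$ by \eqref{Eq:2.9:6}, so $\mathtt{sh}(x)=\mathtt{sh}(y)$, and by \eqref{Eq:2.9:4} this common shape dominates $(n-2,1,1)$, leaving only $(n)$, $(n-1,1)$, $(n-2,2)$ and $(n-2,1,1)$. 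The shape $(n)$ case is trivial and $(n-1,1)$ is exactly Proposition~\ref{Prop:4:3} (which holds unconditionally for every $z_{i,j}^{n}$); for the remaining two shapes I would exploit that, by not-(iii), $z_{i,j}^{n}$ is either $F_{j-i+2,n}^{i-1}(z_{2,j-i+2}^{j-i+2})$ with $j>i+1$, or one of the boundary cases $z_{2,3}^{n}$ and $z_{n-1,n}^{n}$ related via \eqref{Eq:2.13:3}, and combine the shift reductions of Section~\ref{Sec:3} with Bruhat-walk inductions in the style of the proof of Proposition~\ref{Prop:4:3}. I expect the main obstacle here to be that Lemma~\ref{Lem:3:18} only transports inequalities for pairs of the form $(F(x_{0}),F(y_{0}))$, so for pairs $x,y$ not lying in a common shifted parabolic an ad hoc Hecke-algebra manipulation, exploiting $\mathrm{Des}_{L}(z_{i,j}^{n})=\{s_{i-1},s_{j-1}\}$ and \eqref{Eq:2.8:1}, will be required.
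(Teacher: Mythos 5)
Your overall architecture matches the paper's: (ii)$\iff$(iii) from the one-line notation, negativity plus failure of K{\aa}hrstr{\"o}m via consecutive-pattern propagation, and positivity via the ungraded combinatorial condition $[\mathbf{Kh}_{n}^{\mathbb{Z}}]$, \Cref{Lem:4:2}, the implication \eqref{Eq:2.14:2}, and \Cref{Prop:4:3} for the shape $(n-1,1)$. However, there is a genuine gap in your treatment of the shape $(n-2,2)$, which you defer to an unspecified ``ad hoc Hecke-algebra manipulation.'' The paper's resolution of this case is not a Hecke computation for such pairs at all: it shows that at most one right cell of shape $(n-2,2)$ actually satisfies $x\leq_{R}^{(n)}z_{i,j}^{n}$, so that no distinct pair of that shape ever needs to be compared. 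Concretely, the candidate cells other than the one with insertion tableau $\{i,j\}$ in the second row have involution representatives of the form $s_{l-1}s_{l}s_{l-2}s_{l-1}=F_{4,n}^{l-2}(s_{2}s_{1}s_{3}s_{2})$ for $l\in\{i,j\}$, and since $z_{i,j}^{n}$ consecutively contains $1423$ at position $i-2$ and $2314$ at position $j-2$, Claim~(b) of \Cref{Prop:3:7} together with $s_{2}s_{1}s_{3}s_{2}\not\leq_{L}^{(4)}1423,\,2314$ (a dominance-of-shapes observation) kills these cells. You correctly diagnose that \Cref{Lem:3:18} cannot reach pairs outside a common shifted parabolic, but you do not supply the mechanism that actually closes the case; this exclusion-by-pattern step is the missing idea, and without it the positivity direction is incomplete.

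Two smaller points. First, the shape $(n-2,1,1)$ requires no work: the descent constraint $\mathrm{Des}_{L}(x)\subseteq\{s_{i-1},s_{j-1}\}$ admits only one tableau of that shape, so there is no distinct pair to separate; listing it as a case to be handled by induction suggests you have not noticed this. Second, in the negative direction your claim that the $\mathrm{S}_{5}$ isomorphism $\theta_{x_{0}}L_{14325}\cong\theta_{y_{0}}L_{14325}$ ``lifts through the shift'' is not supplied by any tool recalled in the paper --- \Cref{Lem:3:18} transports equalities in the opposite direction, and \Cref{Lem:3:19} concerns indecomposability only. The paper avoids this by citing \cite[Theorem 3.6]{CM25-1} and \cite[Corollary 3.7]{CM25-1}, which deliver both Kostant negativity and the K{\aa}hrstr{\"o}m statement directly; you should do likewise rather than assert the lift.
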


\begin{proof}
As $z_{i,j}^{n}=(i-1,j)$, its one-line notation differs from the identity by exchanging the letters $i-1$ and $j$. Therefore, one can deduce that $z_{i,j}^{n}$ consecutively contains the pattern $p:=14325=(2,4)$ if and only if it is a shift of $p$, which occurs precisely when $j=i+1$ and $\{i,i+1\}\cap\{2,n\}=\emptyset$. Thus the equivalence of conditions (ii) and (iii) mentioned in the statement of the theorem holds. Moreover, if $z_{i,j}^{n}$ consecutively contains $p$ at position $i$, then by \cite[Theorem 3.6]{CM25-1} and \cite[Corollary 3.7]{CM25-1}, it is Kostant negative and satisfies K\aa hrstr\"om's Conjecture. 

What remains to be proven is that $z_{i,j}^{n}$ is Kostant positive, and satisfies K\aa hrstr\"om's Conjecture, whenever it consecutively avoids the pattern $14325$. By \Cref{Lem:4:2} and \Cref{Eq:2.14:2,}, it suffices to prove that Condition~(5) from K\aa hrstr\"om's Conjecture \ref{Conj:2.14:1} holds. That is to say, given $z_{i,j}^{n}$ which consecutively avoids the pattern $14325$, we seek to show that for any distinct $x,y\leq_{R}^{(n)}z_{i,j}^{n}$, the inequality
\begin{equation}\label{Eq:4:12}
D_{z_{i,j}^{n}}^{(n)}C_{x}^{(n)}\neq D_{z_{i,j}^{n}}^{(n)}C_{y}^{(n)}
\end{equation}
holds in $\mathrm{H}_{n}^{\mathbb{Z}}$. By above, the involutions $z_{i,j}^{n}$ which consecutively avoid the pattern $14325$ are precisely those such that $j\neq i+1$ and/or $\{i,j\}\cap\{2,n\}\neq0$. We split this into the following three cases:
\begin{itemize}
\item[(a)] $i=2$ and $3\leq j\leq n$,
\item[(b)] $j=n$ and $2\leq i\leq n-1$,
\item[(c)] $3\leq i,j\leq n-1$ and $|i-j|>1$.
\end{itemize}
In Case (a), $z_{2,j}^{n}=(1,j)$, while in Case (b), $z_{i,n}^{n}=(i-1,n)$. Therefore, the collection of involutions from Case (a) are in bijection with those from Case (b) via conjugation by $w_{0,n}$. By \cite[Lemma 30]{CM25-2} and \cite[Lemma 31]{CM25-2}, conjugating Inequality \eqref{Eq:4:12} by $T_{w_{0,n}}^{(n)}$ gives
\[ D_{z_{i,j}^{n}}^{(n)}C_{x}^{(n)}\neq D_{z_{i,j}^{n}}^{(n)}C_{y}^{(n)} \iff D_{w_{0,n}z_{i,j}^{n}w_{0,n}}^{(n)}C_{w_{0,n}xw_{0,n}}^{(n)}\neq D_{w_{0,n}z_{i,j}^{n}w_{0,n}}^{(n)}C_{w_{0,n}yw_{0,n}}^{(n)}. \]
Hence, proving Equation~\eqref{Eq:4:12} for involutions in Case~(a) is equivalently to showing the same for Case~(b). Thus, the result follows by confirming Equation~\eqref{Eq:4:12} for Cases~(a) and (c), which we now do in turn.

Case (a): We seek to prove the inequality in Equation~\eqref{Eq:4:12} when $z_{2,j}^{n}=(1,j)$ and $3\leq j\leq n$. It will first be helpful to understand what permutations $x$ have a chance at satisfying the relation $x\leq_{R}^{(n)}z_{i,j}^{n}$. By the implication in Equation~\eqref{Eq:2.9:4}, we must have that $\mathrm{Des}_{L}(x)\subseteq\mathrm{Des}_{L}(z_{2,j}^{n})=\{s_{1},s_{j-1}\}$ and $\mathtt{sh}(z_{2,j}^{n})=(n-2,1,1)\preceq\mathtt{sh}(x)$. Recalling \Cref{Lem:2.4:1}, this implies that $\mathtt{P}_{x}$ (the insertion tableau of $x$) must be one of the following:
\begin{equation}\label{Eq:4:13}
\emptyset^{\langle n\rangle} \hspace{6mm}
\begin{ytableau}
\scriptstyle 2
\end{ytableau}^{\ \langle n-1\rangle} \hspace{6mm}
\begin{ytableau}
\scriptstyle j
\end{ytableau}^{\ \langle n-1\rangle} \hspace{6mm}
{\color{blue}
\begin{ytableau}
\scriptstyle 2&\scriptstyle j
\end{ytableau}^{\ \langle n-2\rangle}} \hspace{6mm}
{\color{red}
\begin{ytableau}
\scriptstyle j&\scriptstyle j+1
\end{ytableau}^{\ \langle n-2\rangle}} \hspace{6mm}
\begin{ytableau}
\scriptstyle 2\\\scriptstyle j
\end{ytableau}^{\ \langle n-2\rangle}.
\end{equation}
Here the {\color{blue}blue} tableau only exists when $j>3$, while the {\color{red}red} tableau only exists when $j<n$. To show that the inequality in Equation~\eqref{Eq:4:12} holds, let us assume otherwise, and find a contradiction. Therefore, assume there exists distinct $x,y\leq_{R}^{(n)}z_{2,j}^{n}$ such that we have the equality
\[ D_{z_{2,j}^{n}}^{(n)}C_{x}^{(n)}=D_{z_{2,j}^{n}}^{(n)}C_{y}^{(n)}, \]
noting that both sides are non-zero by the equivalence in Equation~\eqref{Eq:2.9:5}. As $x,y\leq_{R}^{(n)}z_{2,j}^{n}$, both $\mathtt{P}_{x}$ and $\mathtt{P}_{y}$ belong to List \eqref{Eq:4:13}. By the implication  in Equation~\eqref{Eq:2.9:6}, $x\sim_{L}^{(n)}y$, equivalently $\mathtt{Q}_{x}=\mathtt{Q}_{y}$, and, in particular, $\mathtt{sh}(x)=\mathtt{sh}(y)$. Furthermore, as $x$ and $y$ are distinct, this implies that $\mathtt{P}_{x}\neq\mathtt{P}_{y}$. Hence altogether, both $\mathtt{P}_{x}$ and $\mathtt{P}_{y}$ must belong to the list  in Equation~\eqref{Eq:4:13}, share the same shape, but be distinct. The only two options we have are
\[ \{\mathtt{P}_{x},\mathtt{P}_{y}\}=
\left\{\begin{ytableau}
\scriptstyle 2
\end{ytableau}^{\ \langle n-1\rangle}, \ 
\begin{ytableau}
\scriptstyle j
\end{ytableau}^{\ \langle n-1\rangle}\right\} \ \text{ or } \ 
 \{\mathtt{P}_{x},\mathtt{P}_{y}\}=
\left\{{\color{blue}
\begin{ytableau}
\scriptstyle 2&\scriptstyle j
\end{ytableau}^{\ \langle n-2\rangle}}, \ 
{\color{red}
\begin{ytableau}
\scriptstyle j&\scriptstyle j+1
\end{ytableau}^{\ \langle n-2\rangle}}\right\}. \]
The former situation cannot occur by \Cref{Prop:4:3}. As for the latter situation, we require that $3<j<n$ for both tableaux to exist. Assume that this is the case, and (without loss of generality) let $\mathtt{P}_{x}$ be the {\color{red}red} tableau from the list in Equation~\eqref{Eq:4:13}. Now let $a\sim_{R}^{(n)}x$ be the unique involution in the same right cell as $x$, hence $\mathtt{P}_{a}=\mathtt{P}_{x}$. Then one can check that $a=s_{j-1}s_{j}s_{j-2}s_{j-1}$. Since $3<j<n$, we can see that $z_{2,j}^{n}=(1,j)$ consecutively contains the pattern $2314$ at position $j-2$. So by Claim~(b) of \Cref{Prop:3:7},
\[ F_{4,n}^{j-2}(s_{2}s_{3}s_{1}s_{2})=s_{j-1}s_{j}s_{j-2}s_{j-1}=a\not\leq_{L}^{(n)}z_{2,j}^{n} \ \text{ since } \ s_{2}s_{3}s_{1}s_{2}\not\leq_{L}^{(4)}2314, \]
where the latter non-relation can be checked computationally, or by consulting \Cref{Fig:3:9-1}. Now, since $a$ and $z_{2,j}^{n}$ are involutions, then applying the equivalence in Equation~\eqref{Eq:2.9:2} tells us that $a\not\leq_{R}^{(n)}z_{i,j}^{n}$. However, this implies $x\not\leq_{R}^{(n)}z_{2,j}^{n}$ as $x\sim_{R}^{(n)}a$, which gives the desired contradiction. This completes the proof of  Case~(a).

Case (c): We seek to prove the inequality in Equation~\eqref{Eq:4:12} when $z_{i,j}^{n}=(i-1,j)$, $3\leq i,j\leq n-1$, and $|i-j|>1$. It will again be helpful to first understand the permutations $x$ which have a chance at satisfying the relation $x\leq_{R}^{(n)}z_{i,j}^{n}$. By the implication  in Equation~\eqref{Eq:2.9:4}, we have that $\mathrm{Des}_{L}(x)\subseteq\mathrm{Des}_{L}(z_{i,j}^{n})=\{s_{i-1},s_{j-1}\}$ and $\mathtt{sh}(z_{i,j}^{n})=(n-2,1,1)\preceq\mathtt{sh}(x)$. By \Cref{Lem:2.4:1}, this implies that $\mathtt{P}_{x}$ is one of the following:
\begin{equation}
\emptyset^{\langle n\rangle} \hspace{6mm}
\begin{ytableau}
\scriptstyle i
\end{ytableau}^{\ \langle n-1\rangle} \hspace{6mm}
\begin{ytableau}
\scriptstyle j
\end{ytableau}^{\ \langle n-1\rangle} \hspace{6mm}
\begin{ytableau}
\scriptstyle i&\scriptstyle j
\end{ytableau}^{\ \langle n-2\rangle} \hspace{6mm}
{\color{red}
\begin{ytableau}
\scriptstyle i&\scriptstyle i+1
\end{ytableau}^{\ \langle n-2\rangle}} \hspace{6mm}
{\color{magenta}
\begin{ytableau}
\scriptstyle j&\scriptstyle j+1
\end{ytableau}^{\ \langle n-2\rangle}} \hspace{6mm}
\begin{ytableau}
\scriptstyle i\\\scriptstyle j
\end{ytableau}^{\ \langle n-2\rangle}.
\end{equation}
Let $a=s_{i-1}s_{i}s_{i-2}s_{i-1}$ and $b=s_{j-1}s_{j}s_{j-2}s_{j-1}$ be the involutions with {\color{red}red} and {\color{magenta}magenta} insertion tableaux above, respectively. Since $3\leq i, j\leq n-1$ and $|i-j|>1$, one can see that $z_{i,j}^{n}=(i-1,j)$ consecutively contains the pattern $1423$ at position $i-2$, and the pattern $2314$ at position $j-2$. Therefore, by Claim~(b) of \Cref{Prop:3:7}, we have that
\begin{align*}
F_{4,n}^{i-2}(s_{2}s_{3}s_{1}s_{2})&=a\not\leq_{L}^{(n)}z_{i,j}^{n} \ \text{ since } \ s_{2}s_{3}s_{1}s_{2}\not\leq_{L}^{(4)}1423, \ \text{ and } \\
F_{4,n}^{j-2}(s_{2}s_{3}s_{1}s_{2})&=b\not\leq_{L}^{(n)}z_{i,j}^{n} \ \text{ since } \ s_{2}s_{3}s_{1}s_{2}\not\leq_{L}^{(4)}2314,
\end{align*}
where both the latter non-relations in $\mathrm{S}_{4}$ can be confirmed computationally, or by consulting \Cref{Fig:3:9-1}. Since $a$, $b$, and $z_{i,j}^{n}$ are involutions, then applying the equivalence  in Equation~\eqref{Eq:2.9:2} tells us that $a\not\leq_{R}^{(n)}z_{i,j}^{n}$ and $b\not\leq_{R}^{(n)}z_{i,j}^{n}$. So, if $x\leq_{R}^{(n)}z_{i,j}^{n}$, then $\mathtt{P}_{x}$ cannot be the {\color{red}red} or {\color{magenta}magenta} tableaux above, as otherwise $x\sim_{R}^{(n)}a$ or $x\sim_{R}^{(n)}b$, which would imply $x\not\leq_{R}^{(n)}z_{i,j}^{n}$. Thus if $x\leq_{R}^{(n)}z_{i,j}^{n}$, then $\mathtt{P}_{x}$ must belong to
\begin{equation}\label{Eq:4:14}
\emptyset^{\langle n\rangle} \hspace{6mm}
\begin{ytableau}
\scriptstyle i
\end{ytableau}^{\ \langle n-1\rangle} \hspace{6mm}
\begin{ytableau}
\scriptstyle j
\end{ytableau}^{\ \langle n-1\rangle} \hspace{6mm}
\begin{ytableau}
\scriptstyle i&\scriptstyle j
\end{ytableau}^{\ \langle n-2\rangle} \hspace{6mm}
\begin{ytableau}
\scriptstyle i\\\scriptstyle j
\end{ytableau}^{\ \langle n-2\rangle}.
\end{equation}
To show that the inequality  in Equation~\eqref{Eq:4:12} holds, let us assume otherwise, and find a contradiction. Therefore, assume there exists distinct $x,y\leq_{R}^{(n)}z_{i,j}^{n}$ such that we have the equality
\[ D_{z_{i,j}^{n}}^{(n)}C_{x}^{(n)}=D_{z_{i,j}^{n}}^{(n)}C_{y}^{(n)}, \]
noting that both sides are non-zero by the equivalence  in Equation~\eqref{Eq:2.9:5}. As $x,y\leq_{R}^{(n)}z_{2,j}^{n}$, both $\mathtt{P}_{x}$ and $\mathtt{P}_{y}$ belong to the list  in Equation~\eqref{Eq:4:14}. By the implication  in Equation~\eqref{Eq:2.9:6}, we know that $x\sim_{L}^{(n)}y$, which implies that $\mathtt{Q}_{x}=\mathtt{Q}_{y}$ and, in particular, $\mathtt{sh}(x)=\mathtt{sh}(y)$. Furthermore, since $x$ and $y$ are distinct, then $\mathtt{P}_{x}\neq\mathtt{P}_{y}$. Therefore, altogether $\mathtt{P}_{x}$ and $\mathtt{P}_{y}$ belong to the list in Equation~\eqref{Eq:4:14}, share the same shape, but are distinct. The only option is
\[ \{\mathtt{P}_{x},\mathtt{P}_{y}\}=
\left\{\begin{ytableau}
\scriptstyle i
\end{ytableau}^{\ \langle n-1\rangle}, \ 
\begin{ytableau}
\scriptstyle j
\end{ytableau}^{\ \langle n-1\rangle}\right\}. \]
However, this pairing cannot produce the assumed equality $D_{z_{i,j}^{n}}^{(n)}C_{x}^{(n)}=D_{z_{i,j}^{n}}^{(n)}C_{y}^{(n)}$ by \Cref{Prop:4:3}, giving the desired contradiction, and completing the proof of the theorem.
\end{proof}

\begin{rmk}
By the implications  in Equations~\eqref{Eq:2.12:3} and \eqref{Eq:2.13:2}, both the Indecomposability Conjecture and the answer to Kostant's problem are left cell invariants. Therefore, as the involutions $z_{i,j}^{n}$ given in \Cref{Eq:4:1} account for all involutions of shape $(n-2,1,1)$,  \Cref{Lem:4:2} confirms the Indecomposability Conjecture for all permutations with shape $(n-2,1,1)$, and \Cref{Thm:4:11} provides an answer to Kostant's problem for all permutations with shape $(n-2,1,1)$.
\end{rmk}

We conclude this section by confirming the Asymptotic Shape Conjecture \ref{Conj:2.13:5} in this case.

\begin{prop}\label{Prop:4:15}
The Asymptotic Shape Conjecture \ref{Conj:2.13:5} holds for the shape $\lambda=(1,1)$.
\end{prop}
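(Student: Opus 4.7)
The plan is to reduce the proposition to a counting argument using Theorem~\ref{Thm:4:11}, the Robinson-Schensted correspondence, and the left cell invariance of Kostant's problem.

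First, I would unravel the quantities in the statement. The shape $\lambda^{\langle n\rangle}$ here is $(n,1,1)$, and by the Robinson-Schensted correspondence, the number of permutations in $\mathrm{S}_{n+2}$ of shape $(n,1,1)$ is $|\mathtt{SYT}_{n+2}((n,1,1))|^{2}$. Since a standard Young tableau of shape $(n,1,1)$ is determined by the choice of two entries (from $\{2,\dots,n+2\}$) placed in the second and third row, we have $|\mathtt{SYT}_{n+2}((n,1,1))|=\binom{n+1}{2}$.

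Next, I would exploit that Kostant's problem is a left cell invariant, by Equation~\eqref{Eq:2.13:2}. Combined with Equation~\eqref{Eq:2.9:1}, left cells are indexed by the $\mathtt{Q}$-tableau, each containing a unique involution, and each left cell of shape $(n,1,1)$ has cardinality $|\mathtt{SYT}_{n+2}((n,1,1))|=\binom{n+1}{2}$. Therefore, the total count of Kostant negative permutations of shape $(n,1,1)$ equals $\binom{n+1}{2}$ times the number of Kostant negative involutions of that shape; since the involutions of shape $(n,1,1)$ are exactly the $z_{i,j}^{n+2}$ of \eqref{Eq:4:1} with $2\le i<j\le n+2$ (there being $\binom{n+1}{2}$ such), Theorem~\ref{Thm:4:11} identifies the Kostant negative ones as precisely those with $j=i+1$ and $\{i,i+1\}\cap\{2,n+2\}=\emptyset$, i.e.\ $3\le i\le n$, giving exactly $n-2$ such involutions.

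Putting everything together, I would compute
\[ \mathbf{k}_{n+2}^{-}((n,1,1))=(n-2)\binom{n+1}{2}, \qquad \mathbf{k}_{n+2}^{+}((n,1,1))+\mathbf{k}_{n+2}^{-}((n,1,1))=\binom{n+1}{2}^{2}, \]
so the proportion in \Cref{Conj:2.13:5} equals
\[ \frac{\binom{n+1}{2}^{2}-(n-2)\binom{n+1}{2}}{\binom{n+1}{2}^{2}} \;=\; 1-\frac{2(n-2)}{n(n+1)} \;\longrightarrow\; 1 \quad (n\to\infty), \]
which finishes the proof. There is no real obstacle here, since the hard work has been done in Theorem~\ref{Thm:4:11}; the only thing to be careful about is the bookkeeping that turns ``count of Kostant negative involutions'' into ``count of Kostant negative permutations'' via the left cell multiplicity $\binom{n+1}{2}$.
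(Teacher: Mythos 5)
Your proof is correct and follows essentially the same route as the paper: count the Kostant negative involutions of shape $(n,1,1)$ via Theorem~\ref{Thm:4:11} (getting $n-2$), multiply by the left cell size $|\mathtt{SYT}_{n+2}((n,1,1))|$ using left cell invariance, and divide by $|\mathtt{SYT}_{n+2}((n,1,1))|^{2}$. The only cosmetic difference is that you compute $|\mathtt{SYT}_{n+2}((n,1,1))|=\binom{n+1}{2}$ by choosing the two first-column entries directly, while the paper uses the hook length formula; the resulting ratio $\tfrac{2(n-2)}{n(n+1)}\to 0$ is identical.
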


\begin{proof}
It suffices to prove that the proportion of permutations of shape $(1,1)^{\langle n\rangle}$, which are Kostant negative, is asymptotically equal to 0, that is to say,
\[ \frac{\mathbf{k}_{2+n}^{-}((1,1)^{\langle n\rangle})}{|\mathtt{SYT}_{2+n}((1,1)^{\langle n\rangle})|^{2}}\rightarrow 0 \hspace{2mm} \text{(as $n\geq1$ tends towards $\infty$)}. \] 
By the \emph{hook length formula}, one can deduce that
\begin{equation}\label{Eq:4:16}
|\mathtt{SYT}_{2+n}((1,1)^{\langle n\rangle})|^{2}=\left(\frac{(n+2)!}{(n+2)\cdot(n-1)!\cdot 2}\right)^{2}=\frac{n^{2}(n+1)^{2}}{4}.
\end{equation}
Moreover, by \Cref{Thm:4:11}, we see that
\[
|\{z_{i,j}^{n+2} \ | \ \mathbf{K}_{n+2}(z_{i,j}^{n+2})=\mathtt{false}\}|=|\{z_{i,i+1}^{n+2} \ | \ i\in[2,n+1], \{i,i+1\}\cap\{2,n+2\}=\emptyset\}|=(n-2).
\]
Hence the number of involutions $z_{i,j}^{n+2}$ which are Kostant negative is $(n-2)$. Since these involutions account for all involutions of shape $(1,1)^{\langle n\rangle}$, to obtain the number of all Kostant negative permutations of shape $(1,1)^{\langle n\rangle}$, we simply need to multiply by $|\mathtt{SYT}_{2+n}((1,1)^{\langle n\rangle})|$, which is the number of permutations in the same left cell as such involutions. Therefore, we have that 
\begin{align*}
\mathbf{k}_{2+n}^{-}((1,1)^{\langle n\rangle})&=|\{z_{i,j}^{n+2} \ | \ \mathbf{K}_{n+2}(z_{i,j}^{n+2})=\mathtt{false}\}|\cdot|\mathtt{SYT}_{2+n}((1,1)^{\langle n\rangle})|\\
&=(n-2)\cdot\frac{(n+2)!}{(n+2)\cdot(n-1)!\cdot 2}=\frac{n(n-2)(n+1)}{2}.
\end{align*}
Combining this with \Cref{Eq:4:16}, we obtain
\[
\frac{\mathbf{k}_{2+n}^{-}((1,1)^{\langle n\rangle})}{|\mathtt{SYT}_{2+n}((1,1)^{\langle n\rangle})|^{2}}=\frac{4n(n-2)(n+1)}{2n^{2}(n+1)^{2}}=\frac{2(n-2)}{n(n+1)}\rightarrow 0
\]
as $n\geq1$ tends towards $\infty$, since the numerator is linear in $n$ while the denominator is quadratic.
\end{proof}

\section{Kostant's problem for permutations of shape $(n-3,2,1)$}\label{Sec:5}

Given $n\geq5$, let $i,j,k\in[2,n]:=\{2,3,\dots,n\}$ be such that $i<j$, $i<k$
and the set $[n]\setminus\{1,i,j,k\}$ contains an element smaller than $j$. 
Then we will denote by $z_{i,j,k}^{n}\in\mathrm{Inv}_{n}$ the involution whose corresponding standard Young tableaux is given by
\begin{equation}\label{Eq:5:1}
\tilde{\mathtt{RS}}_{n}(z_{i,j,k}^{n})=\hspace{1mm}{\small{\begin{ytableau}i&j\\k\end{ytableau}}}^{\ \langle n-3\rangle}.
\end{equation} 
Naturally, the collection of all such involutions accounts for all involutions of shape $(n-3,2,1)$. In this section we seek to provide an answer to Kostant's problem for these involutions. Moreover, we will confirm that the Indecomposability Conjecture \ref{Conj:2.12:1} holds for these involutions, and that the Asymptotic Shape Conjecture \ref{Conj:2.13:5} holds for the corresponding shape $(2,1)$. To do this, we will need to break the involution into different types (given below), which will be tackled one at a time.

Firstly, let us assume that $i<k<j$. Then we have the following exhaustive list of cases: 

\hspace{2mm} \textbf{Type (1)}: For $i\in[3,n-2]$ and $n\geq 5$, we have that
\begin{equation}\label{Type1} 
\tilde{\mathtt{RS}}_{n}(z_{i,i+2,i+1}^{n})={\begin{ytableau}\scriptstyle i&\scriptstyle i+2\\\scriptstyle i+1\end{ytableau}}^{\ \langle n-3\rangle} \hspace{4mm} \text{ and } \hspace{4mm} z_{i,i+2,i+1}^{n}=(i-2, i+1)(i-1, i+2).
\end{equation}

\hspace{2mm} \textbf{Type (2)}: For $i\in[2,n-3]$ and $n\geq 5$, we have that
\begin{equation}\label{Type2} 
\tilde{\mathtt{RS}}_{n}(z_{i,i+3,i+2}^{n})={\begin{ytableau}\scriptstyle i&\scriptstyle i+3\\\scriptstyle i+2\end{ytableau}}^{\ \langle n-3\rangle} \hspace{4mm} \text{ and } \hspace{4mm} z_{i,i+3,i+2}^{n}=(i-1, i+2)(i+1,i+3).
\end{equation}

\hspace{2mm} \textbf{Type (3)}: For $i\in[2,n-4]$, $i+2<k<n$ and $n\geq 6$, we have that
\begin{equation}\label{Type3} 
\tilde{\mathtt{RS}}_{n}(z_{i,k+1,k}^{n})={\begin{ytableau}\scriptstyle i&\scriptstyle k+1\\\scriptstyle k\end{ytableau}}^{\ \langle n-3\rangle} \hspace{4mm} \text{ and } \hspace{4mm} z_{i,k+1,k}^{n}=(i-1, k)(k-1,k+1).
\end{equation}

\hspace{2mm} \textbf{Type (4)}: For $i\in[2,n-3]$ and $n\geq 5$, we have that
\begin{equation}\label{Type4} 
\tilde{\mathtt{RS}}_{n}(z_{i,i+3,i+1}^{n})={\begin{ytableau}\scriptstyle i&\scriptstyle i+3\\\scriptstyle i+1\end{ytableau}}^{\ \langle n-3\rangle} \hspace{4mm} \text{ and } \hspace{4mm} z_{i,k,i+1}^{n}=(i-1, i+1)(i+2, i+3).
\end{equation}

\hspace{2mm} \textbf{Type (5)}: For $i\in[2,n-4]$, $i+1<k<n-1$, and $n\geq 6$, we have that
\begin{equation}\label{Type5} 
\tilde{\mathtt{RS}}_{n}(z_{i,k+2,k}^{n})={\begin{ytableau}\scriptstyle i&\scriptstyle k+2\\\scriptstyle k\end{ytableau}}^{\ \langle n-3\rangle} \hspace{4mm} \text{ and } \hspace{4mm} z_{i,k+2,k}^{n}=(i-1, k)(k+1, k+2).
\end{equation}

\hspace{2mm} \textbf{Type (6)}: For $i\in[2,n-4]$, $i+3<j\leq n$, and $n\geq 6$, we have that
\begin{equation}\label{Type6} 
\tilde{\mathtt{RS}}_{n}(z_{i,j,i+1}^{n})={\begin{ytableau}\scriptstyle i&\scriptstyle j\\\scriptstyle i+1\end{ytableau}}^{\ \langle n-3\rangle} \hspace{4mm} \text{ and } \hspace{4mm} z_{i,j,i+1}^{n}=(i-1, i+1)(j-1, j).
\end{equation}

\hspace{2mm} \textbf{Type (7)}: For $i\in[2,n-5]$, $i+1<k<n$, $k+2<j\leq n$, and $n\geq 7$, we have that
\begin{equation}\label{Type7} 
\tilde{\mathtt{RS}}_{n}(z_{i,j,k}^{n})={\begin{ytableau}\scriptstyle i&\scriptstyle j\\\scriptstyle k\end{ytableau}}^{\ \langle n-3\rangle} \hspace{4mm} \text{ and } \hspace{4mm} z_{i,j,k}^{n}=(i-1, k)(j-1, j).
\end{equation}

We briefly discuss why, under the assumption that $i<k<j$, the above seven types give a complete list of cases for the involution $z_{i,j,k}^{n}$. Firstly, for each of these types, one can check that the conditions imposed on $i$, $j$, $k$, and $n$, are such that the corresponding standard Young tableau is well-defined. Next, a pairwise comparison tells us that these types are disjoint. Lastly, we need to argue why any $z_{i,j,k}^{n}$ must belong to one of these types. To see this, note that types (1), (2), and (3) account for all standard Young tableaux with $j=k+1$, types (4) and (5) account for all standard Young tableaux with $j=k+2$, while types (6) and (7) account for the remaining cases for when $j>k+2$.

Now assume that $i<j<k$. Then we have the following exhaustive list of cases: 

\hspace{2mm} \textbf{Type (2}$^{\ast}$\textbf{)}: For $i\in[3,n-2]$ and $n\geq 5$, we have that
\begin{equation}\label{Type2star}
\tilde{\mathtt{RS}}_{n}(z_{i,i+1,i+2}^{n})={\begin{ytableau}\scriptstyle i&\scriptstyle i+1\\\scriptstyle i+2\end{ytableau}}^{\ \langle n-3\rangle} \hspace{4mm} \text{ and } \hspace{4mm} z_{i,i+1,i+2}^{n}=(i-2, i)(i-1,i+2).
\end{equation}

\hspace{2mm} \textbf{Type (3}$^{\ast}$\textbf{)}: For $i\in[3,n-3]$, $i+2< k\leq n$, and $n\geq 6$, we have that
\begin{equation}\label{Type3star}
\tilde{\mathtt{RS}}_{n}(z_{i,i+1,k}^{n})={\begin{ytableau}\scriptstyle i&\scriptstyle i+1\\\scriptstyle k\end{ytableau}}^{\ \langle n-3\rangle} \hspace{4mm} \text{ and } \hspace{4mm} z_{i,i+1,k}^{n}=(i-2, i)(i-1,k).
\end{equation}

\hspace{2mm} \textbf{Type (4}$^{\ast}$\textbf{)}: For $i\in[2,n-3]$ and $n\geq 5$, we have that
\begin{equation}\label{Type4star}
\tilde{\mathtt{RS}}_{n}(z_{i,i+2,i+3}^{n})={\begin{ytableau}\scriptstyle i&\scriptstyle i+2\\\scriptstyle i+3\end{ytableau}}^{\ \langle n-3\rangle} \hspace{4mm} \text{ and } \hspace{4mm} z_{i,i+2,i+3}^{n}=(i-1, i)(i+1, i+3).
\end{equation}

\hspace{2mm} \textbf{Type (5}$^{\ast}$\textbf{)}: For $i\in[2,n-4]$, $i+3<k\leq n$, and $n\geq 6$, we have that
\begin{equation}\label{Type5star}
\tilde{\mathtt{RS}}_{n}(z_{i,i+2,k}^{n})={\begin{ytableau}\scriptstyle i&\scriptstyle i+2\\\scriptstyle k\end{ytableau}}^{\ \langle n-3\rangle} \hspace{4mm} \text{ and } \hspace{4mm} z_{i,i+2,k}^{n}=(i-1, i)(i+1, k).
\end{equation}

\hspace{2mm} \textbf{Type (6}$^{\ast}$\textbf{)}: For $i\in[2,n-4]$, $i+2<j<n$, and $n\geq 6$, we have that
\begin{equation}\label{Type6star}
\tilde{\mathtt{RS}}_{n}(z_{i,j,j+1}^{n})={\begin{ytableau}\scriptstyle i&\scriptstyle j\\\scriptstyle j+1\end{ytableau}}^{\ \langle n-3\rangle} \hspace{4mm} \text{ and } \hspace{4mm} z_{i,j,j+1}^{n}=(i-1, i)(j-1, j+1).
\end{equation}

\hspace{2mm} \textbf{Type (7}$^{\ast}$\textbf{)}: For $i\in[2,n-5]$, $i+2<j<n$, $j+1<k\leq n$, and $n\geq 7$, we have that
\begin{equation}\label{Type7star}
\tilde{\mathtt{RS}}_{n}(z_{i,j,k}^{n})={\begin{ytableau}\scriptstyle i&\scriptstyle j\\\scriptstyle k\end{ytableau}}^{\ \langle n-3\rangle} \hspace{4mm} \text{ and } \hspace{4mm} z_{i,j,k}^{n}=(i-1, i)(j-1, k).
\end{equation}

Again, we briefly discuss why, under the assumption that $i<j<k$, the above six types give a complete list of cases for the involution $z_{i,j,k}^{n}$. Firstly, for each of these types, one can check that the conditions imposed on $i$, $j$, $k$, and $n$, are such that the corresponding standard Young tableau is well-defined. Next, a pairwise comparison tells us that these types are disjoint. Lastly, to see why any $z_{i,j,k}^{n}$ must belong to one of these types, note that types (2$^{\ast}$) and (3$^{\ast}$) account for all standard Young tableau with $j=i+1$, types (4$^{\ast}$) and (5$^{\ast}$) account for all standard Young tableau with $j=i+2$, while types (6$^{\ast}$) and (7$^{\ast}$) account for the remaining cases for when $j>i+2$.

\begin{rmk}\label{Rmk:5:2}
A natural question to ask here is why have we picked these particular types to consider. Put simply, these types allow for the proofs to come to be presented in a more consistent and readable manner. In more detail, we will be interested in understanding what kind of standard Young tableau $\mathtt{P}_{x}$ can possibly occur given that $x\leq_{R}^{(n)}z_{i,j,k}^{n}$, and to describe such tableaux in terms of $i$, $j$, and $k$. For example, two possible tableaux that $\mathtt{P}_{x}$ could be are
\[ {\begin{ytableau}\scriptstyle i&\scriptstyle i+1&\scriptstyle i+2\end{ytableau}}^{\ \langle n-3\rangle} \hspace{2mm} \text{ and } \hspace{3mm} {\begin{ytableau}\scriptstyle i&\scriptstyle j&\scriptstyle j+1\end{ytableau}}^{\ \langle n-3\rangle}. \] 
However, these two tableaux are only distinct whenever $j>i+1$, since they coincide when $j=i+1$. For the arguments we will make, it is significantly easier to work in situations where the tableaux which occur are uniquely given in terms of $i$, $j$, and $k$, that is, we do not want tableaux to coincide in special cases like the above pair. The types we have described above mostly achieve this property (with only one, very manageable, exception), and this is the main reason we are considering them.
\end{rmk}

Examining the cycle notation for the above types, one can deduce that, for each $T\in\{2,3,4,5,6,7\}$, the involutions of Type $(T)$ are in bijection with those of Type $(T^{\ast})$ via conjugation by the longest element $w_{0,n}$. For example, we will now demonstrate this for Types (7) and (7$^{\ast}$): Let $z:=z_{i,j.k}^{n}$ be of Type (7), thus $n\geq 7$, $2\leq i\leq n-5$, $i+1<k<n$, $k+2<j\leq n$, and $z=(i-1,k)(j-1,j)$. Expressing $z$ as a \emph{one-line diagram}, then we have the following:
\[ z\hspace{2mm}=\hspace{2mm}\begin{matrix}\begin{tikzpicture}
\node[V,color=red] (1) at (0,0){};
\node[draw=none] (1.5) at (0.5,0){{\color{red}$\dots$}};	
\node[V,color=red] (i-2) at (1,0){};	
\node[V, label=below:{{\scriptsize$i-1$}}] (i-1) at (2,0){};	
\node[V,color=teal] (i) at (3,0){};	
\node[draw=none] (i.5) at (3.5,0){{\color{teal}$\dots$}};	
\node[V,color=teal] (k-1) at (4,0){};	
\node[V, label=below:{{\scriptsize$k$}}] (k) at (5,0){};	
\node[V,color=blue] (k+1) at (6,0){};	
\node[draw=none] (k+1.5) at (6.5,0){{\color{blue}$\dots$}};	
\node[V,color=blue] (j-2) at (7,0){};	
\node[V, label=below:{{\scriptsize$j-1$}}] (j-1) at (8,0){};	
\node[V, label=below:{{\scriptsize$j$}}] (j) at (9,0){};	
\node[V,color=purple] (j+1) at (10,0){};	
\node[draw=none] (j+1.5) at (10.5,0){{\color{purple}$\dots$}};	
\node[V,color=purple] (1) at (11,0){};
\node[draw=none] (1.5L) at (0.5,-0.5){{\color{red}{\scriptsize$\geq 0$}}};	
\node[draw=none] (4.5L) at (3.5,-0.5){{\color{teal}{\scriptsize$\geq 2$}}};
\node[draw=none] (7.5L) at (6.5,-0.5){{\color{blue}{\scriptsize$\geq 1$}}};
\node[draw=none] (11.5L) at (10.5,-0.5){{\color{purple}{\scriptsize$\geq 0$}}};
\draw (i-1) to [bend left] (k) (j-1) to [bend left] (j);
\end{tikzpicture}\end{matrix}. \]
Here the dots are labelled $1$ to $n$ reading left to right, isolated dots correspond to fixed points, and edges correspond to transpositions. As indicated in the diagram, there can be any non-negative number of {\color{red}red} and {\color{purple}purple} dots, while there needs to be at least $2$ {\color{teal}teal} dots and at least $1$ {\color{blue}blue} dot. These conditions on the number of different coloured dots corresponds to the various inequalities imposed on $i$, $j$, and $k$, defining the Type (7) involution $z$. The one-line diagram for the involution $w_{0,n}zw_{0,n}$ is obtained by simply reflecting the above diagram horizontally, and replacing each label $l\in[n]$ of a dot with the label $w_{0,n}(l)=n-l+1$. Therefore, we have that
\[ w_{0,n}zw_{0,n}\hspace{2mm}=\hspace{2mm}\begin{matrix}\begin{tikzpicture}
\node[V,color=red] (1) at (11,0){};
\node[draw=none] (1.5) at (10.5,0){{\color{red}$\dots$}};	
\node[V,color=red] (i-2) at (10,0){};	
\node[V, label=below:{{\scriptsize$k'$}}] (i-1) at (9,0){};	
\node[V,color=teal] (i) at (8,0){};	
\node[draw=none] (i.5) at (7.5,0){{\color{teal}$\dots$}};	
\node[V,color=teal] (k-1) at (7,0){};	
\node[V, label=below:{{\scriptsize$j'-1$}}] (k) at (6,0){};	
\node[V,color=blue] (k+1) at (5,0){};	
\node[draw=none] (k+1.5) at (4.5,0){{\color{blue}$\dots$}};	
\node[V,color=blue] (j-2) at (4,0){};	
\node[V, label=below:{{\scriptsize$i'+1$}}] (j-1) at (3,0){};	
\node[V, label=below:{{\scriptsize$i'$}}] (j) at (2,0){};	
\node[V,color=purple] (j+1) at (1,0){};	
\node[draw=none] (j+1.5) at (0.5,0){{\color{purple}$\dots$}};	
\node[V,color=purple] (1) at (0,0){};
\node[draw=none] (1.5L) at (0.5,-0.5){{\color{purple}{\scriptsize$\geq 0$}}};	
\node[draw=none] (4.5L) at (4.5,-0.5){{\color{blue}{\scriptsize$\geq 1$}}};
\node[draw=none] (7.5L) at (7.5,-0.5){{\color{teal}{\scriptsize$\geq 2$}}};
\node[draw=none] (10.5L) at (10.5,-0.5){{\color{red}{\scriptsize$\geq 0$}}};
\draw (i-1) to [bend right] (k) (j-1) to [bend right] (j);
\end{tikzpicture}\end{matrix}, \]
where $i':=n-j+1$, $j':=n-k$, and $k':=n-i+2$. Now, for this diagram, translating the conditions for the number of different coloured dots into inequalities in terms of $i'$, $j'$, and $k'$, one can check that such inequalities agree with those which define the Type (7$^{\ast}$) involutions (after adding a prime to $i$, $j$, and $k$). Similar considerations hold for types $T\in\{2,3,4,5,6\}$.

Hence, by \Cref{Eq:2.12:5,Eq:2.13:3}, answering Kostant's problem and the Indecomposability Conjecture for involutions of Type $(T)$ is equivalent to doing the same for Type $(T^{\ast})$. Thus, it suffices to deal with types (1), (2), (3), (4$^{\ast}$), (5$^{\ast}$), (6$^{\ast}$), and (7$^{\ast}$). We tackle these types one at a time in the following subsections. We will then end with a final subsection which collects/summarises all the results together and proves the Asymptotic Shape Conjecture \ref{Conj:2.13:5} for the shape $(2,1)$.

\subsection{Involutions of Type (1)}\label{Sec:5.1}

Fix $n\geq 5$ and $i\in[3,n-2]$, and let $z:=z_{i,i+2,i+1}^{n}$ be the involution in $\mathrm{Inv}_{n}$ of Type (1). Therefore it satisfies \Cref{Type1}, which we recall here:
\begin{equation*}
\tilde{\mathtt{RS}}_{n}(z)={\begin{ytableau}\scriptstyle i&\scriptstyle i+2\\\scriptstyle i+1\end{ytableau}}^{\ \langle n-3\rangle} \hspace{4mm} \text{ and } \hspace{4mm} z=(i-2, i+1)(i-1, i+2).
\end{equation*}
One can deduce $z=s_{i-1}s_{i-2}s_{i}s_{i-1}s_{i-2}s_{i+1}s_{i}s_{i-1}$. We seek to show that $z$ satisfies the Indecomposability Conjecture \ref{Conj:2.12:1} and is Kostant positive. To help with this, we first discuss which elements $x\in\mathrm{S}_{n}$ have a chance at satisfying the relation $x\leq_{R}^{(n)}z$. Assuming $x\leq_{R}^{(n)}z$, then by the implications in Equations~\eqref{Eq:2.9:4} and \eqref{Eq:2.9:4.5}, we have that $\mathrm{Des}_{L}(x)\subseteq\mathrm{Des}_{L}(z)=\{s_{i-1},s_{i}\}$ and $\mathtt{sh}(z)=(n-3,2,1)\preceq\mathtt{sh}(x)$, with equality of shapes further implying that $x\sim_{L}^{(n)}z$. Therefore, by \Cref{Lem:2.4:1}, this implies that $\mathtt{P}_{x}$ (the insertion tableau of $x$) belongs to the following list (the {\color{blue}blue} tableau only exists if $i\geq 4$ and $n\geq 6$, while the {\color{red}red} tableau only exists if $i\leq n-3$ and $n\geq 6$):
\begin{align}
\emptyset^{(n)} \hspace{6mm}
&{\begin{ytableau}\scriptstyle i\end{ytableau}}^{\ \langle n-1\rangle} \hspace{6mm}
{\begin{ytableau}\scriptstyle i+1\end{ytableau}}^{\ \langle n-1\rangle} \hspace{6mm}
{\begin{ytableau}\scriptstyle i&\scriptstyle i+1\end{ytableau}}^{\ \langle n-2\rangle} \hspace{6mm}
{\begin{ytableau}\scriptstyle i+1&\scriptstyle i+2\end{ytableau}}^{\ \langle n-2\rangle} \hspace{6mm}
{\begin{ytableau}\scriptstyle i\\\scriptstyle i+1\end{ytableau}}^{\ \langle n-2\rangle} \label{Type1RList} \\
&{\color{blue}{\begin{ytableau}\scriptstyle i&\scriptstyle i+1&\scriptstyle i+2\end{ytableau}}^{\ \langle n-3\rangle}} \hspace{6mm}
{\color{red}{\begin{ytableau}\scriptstyle i+1&\scriptstyle i+2&\scriptstyle i+3\end{ytableau}}^{\ \langle n-3\rangle}} \hspace{6mm}
{\begin{ytableau}\scriptstyle i&\scriptstyle i+2\\\scriptstyle i+1\end{ytableau}}^{\ \langle n-3\rangle}. \nonumber
\end{align}

\begin{lem}\label{Type1IC}
The Type (1) involution $z$ given above satisfies the Indecomposability Conjecture \ref{Conj:2.12:1}. That is to say, we have that $\mathbf{KM}_{n}(\star,z)=\mathtt{true}$.
\end{lem}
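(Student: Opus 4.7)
The plan is to follow exactly the blueprint of Lemma~\ref{Lem:4:2}. By the implication in Equation~\eqref{Eq:2.12:2}, I would first reduce to the case where $x$ is an involution in $\mathrm{Inv}_n$, and by Equation~\eqref{Eq:2.11:1} further assume $x\leq_R^{(n)}z^{-1}=z$, as the opposite case gives $\theta_x^{(n)}L_z^{(n)}=0$ trivially. The analysis collected in the paragraph preceding the lemma then pins $\mathtt{P}_x$ down to one of the nine tableaux in the list \eqref{Type1RList}.

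The next step is to handle the ``same shape'' case by direct identification, and the other shapes via a support bound. If $\mathtt{sh}(x)=(n-3,2,1)$, the implication in Equation~\eqref{Eq:2.9:4.5} forces $x\sim_L^{(n)}z$, and the uniqueness of involutions within each left cell yields $x=z$, whose support is $\{s_{i-2},s_{i-1},s_i,s_{i+1}\}$. For each of the remaining eight tableaux in \eqref{Type1RList}, I would identify the unique involution $x$ in the corresponding right cell (equivalently, the involution with $\mathtt{P}_x=\mathtt{Q}_x$ equal to that tableau) by a short Schensted insertion calculation. Concretely, the two single-box tableaux give $x\in\{s_{i-1},s_i\}$; the $(n-2,1,1)$-tableau gives $x=(i-1,i+1)=s_{i-1}s_is_{i-1}$; the two $(n-2,2)$-tableaux give $x=(i-2,i)(i-1,i+1)$ and $x=(i-1,i+1)(i,i+2)$; and the blue and red $(n-3,3)$-tableaux are realised by $x=(i-3,i)(i-2,i+1)(i-1,i+2)$ and $x=(i-2,i+1)(i-1,i+2)(i,i+3)$, respectively.

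Inspecting these eight identifications together with $z$ itself shows that in every case $\mathrm{Sup}(x)\subseteq\{s_{i-3},s_{i-2},s_{i-1},s_i,s_{i+1},s_{i+2}\}$, and in particular $|\mathrm{Sup}(x)|\leq 6$. The implication in Equation~\eqref{Eq:2.12:4} then yields $\mathbf{KM}_n(x,\star)=\mathtt{true}$, and hence $\mathbf{KM}_n(x,z)=\mathtt{true}$, which completes the argument. I do not anticipate any substantive obstacle: the proof is essentially a routine case analysis whose only non-trivial ingredient is the existing support bound of Equation~\eqref{Eq:2.12:4}, and the RSK identifications are uniform and easy to verify case by case.
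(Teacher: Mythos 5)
Your proposal is correct and follows the same route as the paper's proof: reduce via Equations~\eqref{Eq:2.11:1} and \eqref{Eq:2.12:2} to involutions $x\leq_{R}^{(n)}z$, use the list \eqref{Type1RList} to enumerate the possible $\mathtt{P}_{x}$, verify that each corresponding involution has $|\mathrm{Sup}(x)|\leq 6$, and conclude by the implication in Equation~\eqref{Eq:2.12:4}. The only cosmetic difference is that you treat the $(n-3,2,1)$ case separately and write the involutions in cycle notation, whereas the paper spells out only one representative example in a reduced word; the substance is identical.
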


\begin{proof}
By Equations~\eqref{Eq:2.11:1} and \eqref{Eq:2.12:2}, it suffices to show $\mathbf{KM}_{n}(x,z)=\mathtt{true}$ for all $x\in\mathrm{Inv}_{n}$ such that $x\leq_{R}^{(n)}z$. This relation implies that $\mathtt{P}_{x}$ belongs to the list \eqref{Type1RList}, and one can check that any involution with such a $\mathtt{P}_{x}$ has support set $\mathrm{Sup}(x)$ of size no greater than $6$. For example, if $\mathtt{P}_{x}$ is the {\color{blue}blue} tableaux above (assuming $i\geq 4$ and $n\geq 6$), then we have that $x=s_{i-1}s_{i-2}s_{i-3}s_{i}s_{i-1}s_{i-2}s_{i+1}s_{i}s_{i-1}$, thus $|\mathrm{Sup}(x)|=5$. By the implication in Equation~\eqref{Eq:2.12:4}, the lemma holds.
\end{proof}

\begin{prop}\label{Type1K}
The Type (1) involution $z$ given above is Kostant positive, that is $\mathbf{K}_{n}(z)=\mathtt{true}$.
\end{prop}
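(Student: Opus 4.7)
With indecomposability secured by \Cref{Type1IC}, \Cref{Thm:2.13:1} tells us that Kostant positivity of $z$ is equivalent to condition (2): for any distinct $x,y\leq_R^{(n)}z$, we must have $\theta_x^{(n)}L_z^{(n)}\not\cong\theta_y^{(n)}L_z^{(n)}$ in ${^{\mathbb{Z}}}\mathcal{O}_0^{(n)}$. The plan is to verify this non-isomorphism by establishing the stronger ungraded combinatorial K{\aa}hrstr{\"o}m inequality $D_z^{(n)}C_x^{(n)}\neq D_z^{(n)}C_y^{(n)}$ in $\mathrm{H}_n^{\mathbb{Z}}$, since any such isomorphism would force an equality of classes under decategorification through $\mathbf{Gr}(\mathcal{O}_0^{(n)})\cong\mathrm{H}_n^{\mathbb{Z}}$.

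Supposing such a Hecke algebra equality existed for distinct $x,y\leq_R^{(n)}z$, \eqref{Eq:2.9:6} would force $x\sim_L^{(n)}y$, so $\mathtt{Q}_x=\mathtt{Q}_y$ and hence $\mathtt{P}_x\neq\mathtt{P}_y$ would have to be distinct tableaux of a common shape drawn from \eqref{Type1RList}. A quick scan of that list shows that the only possible shape-collisions occur at shapes $(n-1,1)$, $(n-2,2)$, and $(n-3,3)$, the last existing only when $4\leq i\leq n-3$. By \eqref{Eq:2.11:3} applied to the assumed isomorphism, it suffices to refute the equality $D_z^{(n)}C_{x_0}^{(n)}=D_z^{(n)}C_{y_0}^{(n)}$ for one canonical pair $(x_0,y_0)$ of representatives per collision class, matched along a common left cell.

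For the shape $(n-1,1)$ and $(n-2,2)$ collisions, the canonical representatives can be chosen with support inside $\mathrm{Sup}(z)=\{s_{i-2},s_{i-1},s_i,s_{i+1}\}$, so that the full triple $(z,x_0,y_0)$ is the image under the shift $F_{5,n}^{i-2}$ of the analogous triple around $\hat z=z_{3,5,4}^5\in\mathrm{S}_5$. \Cref{Lem:3:18}, in its contrapositive form, then reduces the desired inequality to its counterpart in $\mathrm{H}_5^{\mathbb{Z}}$, which is checked by direct GAP3/CHEVIE computation along the lines of the appendix of \cite{CM25-1}. For the shape $(n-3,3)$ collision, the canonical blue and red representatives have supports extending $\mathrm{Sup}(z)$ by $s_{i-3}$ and $s_{i+2}$ respectively, and thus lie in the image of the larger shift $F_{7,n}^{i-3}$; the same reduction translates the inequality into $\mathrm{H}_7^{\mathbb{Z}}$ for $\hat z'=z_{4,6,5}^7$, again verified by GAP. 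Boundary parameters $n\in\{5,6\}$ or $i\in\{3,n-2\}$, where some tableaux in \eqref{Type1RList} fail to exist, are settled by smaller base cases in $\mathrm{S}_5$ or $\mathrm{S}_6$, with the symmetry in \Cref{Lem:2.10:3} folding $i=n-2$ onto $i=3$.

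The main obstacle is the shape $(n-3,3)$ collision: it genuinely forces one to enlarge the base ambient group from $\mathrm{S}_5$ to $\mathrm{S}_7$ since the blue and red representatives escape $\mathrm{Sup}(z)$, whereas the other two collision classes stay within $\mathrm{Sup}(z)$ and reduce in one step.
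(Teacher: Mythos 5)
Your proposal is correct and follows essentially the same route as the paper: both reduce via \Cref{Thm:2.13:1}, \Cref{Type1IC} and \eqref{Eq:2.11:3} to the three shape-collision pairs in \eqref{Type1RList}, then use \Cref{Lem:3:18} to shift the first two down to $z_{3,5,4}^{5}\in\mathrm{S}_5$ and the third down to $z_{4,6,5}^{7}\in\mathrm{S}_7$, finishing by computer verification. The only cosmetic difference is that you phrase the final inequalities in $\mathrm{H}_n^{\mathbb{Z}}$ while the paper works in $\mathrm{H}_n^{\mathbb{A}}$, which changes nothing.
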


\begin{proof}
By \Cref{Thm:2.13:1} and \Cref{Type1IC}, it suffices to prove that we have a non-isomorphism
\begin{equation}\label{Eq:Type1K-1}
\theta_{x}^{(n)}L_{z}^{(n)}\not\cong\theta_{y}^{(n)}L_{z}^{(n)} \hspace{1mm} \text{ in } \hspace{1mm} {^\mathbb{Z}}\mathcal{O}_{0}^{(n)},
\end{equation}
for all distinct pairs $x,y\in\mathrm{S}_{n}$ such that $x,y\leq_{R}^{(n)}z$. For the sake of contradiction, assume that there exists a pair $x,y\leq_{R}^{(n)}z$ such that $\theta_{x}^{(n)}L_{z}^{(n)}\cong\theta_{y}^{(n)}L_{z}^{(n)}$. Then since $x,y\leq_{R}^{(n)}z$, both $\mathtt{P}_{x}$ and $\mathtt{P}_{y}$ belong to the list in Equation~\eqref{Type1RList} above. Moreover, by the implication in Equation~\eqref{Eq:2.9:6} we have $x\sim_{L}^{(n)}y$, which implies $\mathtt{Q}_{x}=\mathtt{Q}_{y}$, and, in particular, $\mathtt{sh}(x)=\mathtt{sh}(y)$. Also, since $x$ and $y$ are distinct, this implies that $\mathtt{P}_{x}\neq\mathtt{P}_{y}$. Hence altogether, both $\mathtt{P}_{x}$ and $\mathtt{P}_{y}$ belong to the list  in Equation~\eqref{Type1RList}, share the same shape, but are distinct. Up to symmetry in $x$ and $y$, this implies that $(\mathtt{P}_{x},\mathtt{P}_{y})$ is one of the following: 
\[ \left({\begin{ytableau}\scriptstyle i\end{ytableau}}^{\ \langle n-1\rangle}, \hspace{2mm} {\begin{ytableau}\scriptstyle i+1\end{ytableau}}^{\ \langle n-1\rangle} \right), \hspace{6mm} \left({\begin{ytableau}\scriptstyle i&\scriptstyle i+1\end{ytableau}}^{\ \langle n-2\rangle}, \hspace{2mm} {\begin{ytableau}\scriptstyle i+1&\scriptstyle i+2\end{ytableau}}^{\ \langle n-2\rangle}\right), \\ \text{ or} \]
\[ \left({\color{blue}{\begin{ytableau}\scriptstyle i&\scriptstyle i+1&\scriptstyle i+2\end{ytableau}}^{\ \langle n-3\rangle}}, \hspace{2mm} {\color{red}{\begin{ytableau}\scriptstyle i+1&\scriptstyle i+2&\scriptstyle i+3\end{ytableau}}^{\ \langle n-3\rangle}}\right). \]
Moreover, by the equivalence  in Equation~\eqref{Eq:2.11:3}, it suffices to consider any particular pair of elements $(x',y')$ such that $(\mathtt{P}_{x'},\mathtt{P}_{y'})$ is one of the above three pairings of right cells, and $x'\sim_{L}^{(n)}y'$. Thus, we must have that the isomorphism $\theta_{x}^{(n)}L_{z}^{(n)}\cong\theta_{y}^{(n)}L_{z}^{(n)}$ holds for $(x,y)$ being one of the following three pairs:
\begin{equation}\label{Eq:Type1K-2}
(s_{i-1}, s_{i}s_{i-1}), \hspace{2mm} (s_{i-1}s_{i-2}s_{i}, s_{i}s_{i-1}s_{i-2}s_{i+1}s_{i}), \\ \text{ or} 
\end{equation}
\[ (s_{i-1}s_{i-2}s_{i-3}s_{i}s_{i-1}s_{i-2}s_{i+1}, s_{i}s_{i-1}s_{i-2}s_{i-3}s_{i+1}s_{i}s_{i-1}s_{i-2}s_{i+2}s_{i+1}). \]
These pairings of elements correspond, respectively, to the three pairings of right cells above. In particular, the last pair only exists when $4\leq i\leq n-3$. Therefore, by decategorifying the isomorphism $\theta_{x}^{(n)}L_{z}^{(n)}\cong\theta_{y}^{(n)}L_{z}^{(n)}$, we must have that one of the following equalities in $\mathrm{H}_{n}^{\mathbb{A}}$ holds:
\begin{align*}
D_{z}^{(n)}C_{s_{i-1}}^{(n)}=D_{z}^{(n)}C_{s_{i}s_{i-1}}^{(n)},& \hspace{5mm} D_{z}^{(n)}C_{s_{i-1}s_{i-2}s_{i}}^{(n)}=D_{z}^{(n)}C_{s_{i}s_{i-1}s_{i-2}s_{i+1}s_{i}}^{(n)}, \hspace{2mm} \text{ or} \\
D_{z}^{(n)}C_{s_{i-1}s_{i-2}s_{i-3}s_{i}s_{i-1}s_{i-2}s_{i+1}}^{(n)}&=D_{z}^{(n)}C_{s_{i}s_{i-1}s_{i-2}s_{i-3}s_{i+1}s_{i}s_{i-1}s_{i-2}s_{i+2}s_{i+1}}^{(n)}.
\end{align*}
Lastly, by \Cref{Lem:3:18}, for appropriate shifts (and using the notational short-hand $\mathsf{i}:=s_{i}$), we must have that one of the following equalities holds:
\begin{align}
D_{\mathsf{21321432}}^{(5)}C_{\mathsf{2}}^{(5)}=D_{\mathsf{21321432}}^{(5)}C_{\mathsf{32}}^{(5)},& \hspace{5mm} D_{\mathsf{21321432}}^{(5)}C_{\mathsf{213}}^{(5)}=D_{\mathsf{21321432}}^{(5)}C_{\mathsf{32143}}^{(5)}, \label{Type1K-3} \\
D_{\mathsf{32432543}}^{(7)}C_{\mathsf{3214325}}^{(7)}&=D_{\mathsf{32432543}}^{(7)}C_{\mathsf{4321543265}}^{(7)} \nonumber.
\end{align}
The first two equalities are taking place in $\mathrm{H}_{5}^{\mathbb{A}}$, where we have employed the shift $F_{5,n}^{i-2}$, in particular
\[ F_{5,n}^{i-2}(s_{2}s_{1}s_{3}s_{2}s_{1}s_{4}s_{3}s_{2})=F_{5,n}^{i-2}(\mathsf{21321432})=z, \]
while the latter equality is taking place in $\mathrm{H}_{7}^{\mathbb{A}}$, where  we have employed the shift $F_{7,n}^{i-3}$, in particular
\[ F_{7,n}^{i-3}(s_{3}s_{2}s_{4}s_{3}s_{2}s_{5}s_{4}s_{3})=F_{7,n}^{i-3}(\mathsf{32432543})=z. \]
One can now confirm computationally that neither of the three equalities in \eqref{Type1K-3} hold, which gives the desired contradiction, completing the proof of the proposition. It is worth remarking that the two equalities in $\mathrm{H}_{5}^{\mathbb{A}}$ are already known to not hold, since it is known that the involution $s_{2}s_{1}s_{3}s_{2}s_{1}s_{4}s_{3}s_{2}$ in $\mathrm{S}_{5}$ is Kostant positive and satisfies K{\aa}hrstr{\"o}m's conjecture.
\end{proof}

\subsection{Involutions of Type $(2)$}\label{Sec:5.2}

Fix $n\geq 5$ and $i\in[2,n-3]$, and let $z:=z_{i,i+3,i+2}^{n}$ be the involution in $\mathrm{Inv}_{n}$ of Type (2). Therefore, it satisfies \Cref{Type2}, which we recall here:
\begin{equation*}
\tilde{\mathtt{RS}}_{n}(z)={\begin{ytableau}\scriptstyle i&\scriptstyle i+3\\\scriptstyle i+2\end{ytableau}}^{\ \langle n-3\rangle} \hspace{4mm} \text{ and } \hspace{4mm} z=(i-1, i+2)(i+1,i+3).
\end{equation*}
One can deduce that $z=s_{i-1}s_{i+1}s_{i}s_{i-1}s_{i+2}s_{i+1}$, and in one-line notation we have
\begin{equation}\label{Eq:5.2:1}
z=1\cdots(i-2){\color{teal}(i+2)}i{\color{teal}(i+3)(i-1)(i+1)}(i+4)\cdots n,
\end{equation}
where we have coloured the non-fixed points in {\color{teal}teal}. We first seek to understand which $x\in\mathrm{S}_{n}$ have a chance at satisfying the relation $x\leq_{R}^{(n)}z$. Assuming $x\leq_{R}^{(n)}z$, then by the implications in Equations~\eqref{Eq:2.9:4} and \eqref{Eq:2.9:4.5}, we have that both $\mathrm{Des}_{L}(x)\subseteq\mathrm{Des}_{L}(z)=\{s_{i-1},s_{i+1}\}$ as well as $\mathtt{sh}(z)=(n-3,2,1)\preceq\mathtt{sh}(x)$, with equality of shapes further implying that $x\sim_{L}^{(n)}z$. Therefore, by \Cref{Lem:2.4:1}, this implies that $\mathtt{P}_{x}$ (the insertion tableau of $x$) belongs to the following list:
\begin{equation*}
\emptyset^{\langle n\rangle}
\end{equation*}
\[
{\begin{ytableau}\scriptstyle i\end{ytableau}}^{\ \langle n-1\rangle} \hspace{6mm}
{\begin{ytableau}\scriptstyle i+2\end{ytableau}}^{\ \langle n-1\rangle}
\]
\[
{\begin{ytableau}\scriptstyle i&\scriptstyle i+2\end{ytableau}}^{\ \langle n-2\rangle} \hspace{6mm}
{\color{red}{\begin{ytableau}\scriptstyle i&\scriptstyle i+1\end{ytableau}}^{\ \langle n-2\rangle}} \hspace{6mm}
{\begin{ytableau}\scriptstyle i+2&\scriptstyle i+3\end{ytableau}}^{\ \langle n-2\rangle}
\]
\[
{\begin{ytableau}\scriptstyle i\\\scriptstyle i+2\end{ytableau}}^{\ \langle n-2\rangle}
\]
\[
{\color{magenta}{\begin{ytableau}\scriptstyle i&\scriptstyle i+2&\scriptstyle i+3\end{ytableau}}^{\ \langle n-3\rangle}} \hspace{6mm}
{\color{red}{\begin{ytableau}\scriptstyle i&\scriptstyle i+1&\scriptstyle i+2\end{ytableau}}^{\ \langle n-3\rangle}} \hspace{6mm}
{\color{blue}{\begin{ytableau}\scriptstyle i+2&\scriptstyle i+3&\scriptstyle i+4\end{ytableau}}^{\ \langle n-3\rangle}} 
\]
\[
{\begin{ytableau}\scriptstyle i&\scriptstyle i+3\\\scriptstyle i+2\end{ytableau}}^{\ \langle n-4\rangle}
\]
Here the fifth row only exists when $n\geq 6$, with the {\color{blue}blue} tableau also requiring $i<n-3$, while the {\color{magenta}magenta} tableau requires $i>2$ instead. Furthermore, the {\color{red}red} tableau in the third row requires that $i>2$, while the {\color{red}red} tableau in the fifth row requires that $i>3$. However, we now show that the {\color{red}red} tableaux can be dropped from the above collection.

\begin{lem}\label{Lem:5.2:1}
Let $x\in\mathrm{S}_{n}$ with $\mathtt{P}_{x}$ a {\color{red}red} tableaux above, then $\theta_{x}^{(n)}L_{z}^{(n)}=0$, equivalently $x\not\leq_{R}^{(n)}z$.
\end{lem}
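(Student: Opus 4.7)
The plan is to identify, in each of the two red cases, the unique involution $a\in\mathrm{Inv}_n$ in the right cell of $x$, i.e.\ the involution with $\mathtt{P}_a=\mathtt{Q}_a=\mathtt{P}_x$. Since $x\sim_R^{(n)}a$, the desired relation $x\not\leq_R^{(n)}z$ (equivalently $\theta_x^{(n)}L_z^{(n)}=0$, by \Cref{Eq:2.11:1}) is equivalent to $a\not\leq_R^{(n)}z$, and, as $a$ and $z$ are both involutions, \Cref{Eq:2.9:2} converts this further into showing $a\not\leq_L^{(n)}z$. The strategy in both cases is to apply \Cref{Prop:3:7}(b) together with the left cell structure of $\mathrm{S}_4$ recorded in \Cref{Fig:3:9-1}.

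For the first red tableau, a direct Schensted computation shows that the involution in question is
\[a_1:=(i-2,i)(i-1,i+1)=s_{i-1}s_{i-2}s_{i}s_{i-1}=F_{4,n}^{i-2}(3412).\]
From \Cref{Eq:5.2:1} the letters of $z$ at positions $i-2,i-1,i,i+1$ are $i-2,i+2,i,i+3$, realising the pattern $1324\in\mathrm{S}_4$ at position $i-2$. Since $3412\not\leq_L^{(4)}1324$ (a red pattern in \Cref{Fig:3:9-1}), \Cref{Prop:3:7}(b) immediately yields $a_1\not\leq_L^{(n)}z$, handling this case.

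For the second red tableau, the analogous Schensted calculation gives
\[a_2:=(i-3,i)(i-2,i+1)(i-1,i+2),\]
whose one-line notation has the letters $i+1,i+2,i-3,i-2$ at positions $i-2,i-1,i,i+1$, so $a_2$ itself consecutively contains the pattern $3412\in\mathrm{S}_4$ at position $i-2$. Applying \Cref{Prop:3:7}(b) now with $z$ replaced by $a_2$, the trivial relation $3412\leq_L^{(4)}3412$ gives $a_1=F_{4,n}^{i-2}(3412)\leq_L^{(n)}a_2$. Transitivity of $\leq_L^{(n)}$ then forces $a_2\not\leq_L^{(n)}z$, for otherwise $a_1\leq_L^{(n)}a_2\leq_L^{(n)}z$ would contradict the previous paragraph. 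The main technical point is this second case, where we use \Cref{Prop:3:7}(b) twice: once to rule out the ``small'' bad involution $a_1$, and once to observe that $a_1$ sits below $a_2$ in $\leq_L^{(n)}$ via a consecutive $3412$-pattern already visible inside $a_2$, reducing the second case to the first.
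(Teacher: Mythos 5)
Your argument is correct, and for the second red tableau it takes a genuinely different route from the paper. Both proofs reduce to the unique involution $a$ in the relevant right cell, pass from $a\not\leq_{R}^{(n)}z$ to $a\not\leq_{L}^{(n)}z$ via \Cref{Eq:2.9:2}, and invoke Claim~(b) of \Cref{Prop:3:7}; your treatment of the first red tableau (extracting the pattern $1324$ at position $i-2$ and reading off $3412\not\leq_{L}^{(4)}1324$ from \Cref{Fig:3:9-1}) is exactly the kind of "similar argument" the paper alludes to. The difference is in the second case: the paper extracts the size-$6$ pattern $125463$ from $z$ at position $i-3$ and then verifies $D_{p}^{(6)}C_{y}^{(6)}=0$ by a GAP computation in $\mathrm{H}_{6}^{\mathbb{A}}$, whereas you avoid any computation beyond the $\mathrm{S}_{4}$ data already in \Cref{Fig:3:9-1} by observing that $a_{2}=(i-3,i)(i-2,i+1)(i-1,i+2)$ itself consecutively contains $3412$ at position $i-2$, so that \Cref{Prop:3:7}(b) (applied with $a_{2}$ in the role of $z$) gives $a_{1}\leq_{L}^{(n)}a_{2}$, and transitivity of the preorder together with $a_{1}\not\leq_{L}^{(n)}z$ forces $a_{2}\not\leq_{L}^{(n)}z$. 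The trade-off: the paper's method is a uniform recipe (extract the pattern, check the relation in the small Hecke algebra by machine), while yours is more economical and self-contained here, at the cost of relying on the specific fact that the smaller excluded involution $a_{1}$ sits below $a_{2}$ in $\leq_{L}^{(n)}$ — a reduction that happens to be visible as a consecutive $3412$-pattern in $a_{2}$ but would not automate as readily. All the supporting details check out: the identifications $a_{1}=(i-2,i)(i-1,i+1)$ and $a_{2}=(i-3,i)(i-2,i+1)(i-1,i+2)$ are the correct involutions for the two red tableaux, the pattern extractions from \Cref{Eq:5.2:1} are accurate, and the existence constraints ($i>2$, resp.\ $i>3$) guarantee all shifted positions are in range.
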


\begin{proof}
It suffices to prove $x\not\leq_{R}^{(n)}z$ with $x$ an involution with $\mathtt{P}_{x}$ either of the {\color{red}red} tableaux above (since each right cell contains a unique involution). Moreover, by the equivalence in Equation~\eqref{Eq:2.9:2}, this is equivalent to proving $x\not\leq_{L}^{(n)}z$. This non-relation can be confirmed by employing Claim~(b) of \Cref{Prop:3:7}. We will demonstrate this for the second {\color{red}red} tableau above, with the other following by similar arguments.

Therefore, assume that $x$ is the involution in $\mathrm{Inv}_{n}$ such that $\mathtt{P}_{x}={\color{red}{\begin{ytableau}\scriptstyle i&\scriptstyle i+1&\scriptstyle i+2\end{ytableau}}^{\ (n-3)}}$, which implies
\[ x=s_{i-1}s_{i-2}s_{i-3}s_{i}s_{i-1}s_{i-2}s_{i+1}s_{i}s_{i-1}=F_{6,n}^{i-3}(s_{3}s_{2}s_{1}s_{4}s_{3}s_{2}s_{5}s_{4}s_{3}). \]
Also, by \Cref{Eq:5.2:1}, one can deduce that $z$ consecutively contains the pattern
\[ p:=125463=s_{3}s_{4}s_{5}s_{3}\in\mathrm{S}_{6} \]
at position $i-3$. Hence, from Claim~(b) of \Cref{Prop:3:7}, we have that
\begin{align*}
x\not\leq_{L}^{(n)}z &\iff F_{6,n}^{i-3}(s_{3}s_{2}s_{1}s_{4}s_{3}s_{2}s_{5}s_{4}s_{3})\not\leq_{L}^{(n)}z \\
&\iff y:=s_{3}s_{2}s_{1}s_{4}s_{3}s_{2}s_{5}s_{4}s_{3}\not\leq_{L}^{(6)}p=s_{3}s_{4}s_{5}s_{3}.
\end{align*}
By Equations~\eqref{Eq:2.9:5} and \eqref{Eq:2.9:2}, the last non-relation is confirmed computationally by checking that $D_{p}^{(6)}C_{y}^{(6)}=0$. As mentioned, proving $x\not\leq_{R}^{(n)}z$ for $\mathtt{P}_{x}$ the other {\color{red}red} tableau follows from a similar argument.
\end{proof}

Therefore, if $x\leq_{R}^{(n)}z$, then the insertion tableau $\mathtt{P}_{x}$ must belong to the following list:
\begin{equation}\label{Type2RList}
\emptyset^{\langle n\rangle} \hspace{6mm}
{\begin{ytableau}\scriptstyle i\end{ytableau}}^{\ \langle n-1\rangle} \hspace{6mm}
{\begin{ytableau}\scriptstyle i+2\end{ytableau}}^{\ \langle n-1\rangle} \hspace{6mm}
{\begin{ytableau}\scriptstyle i&\scriptstyle i+2\end{ytableau}}^{\ \langle n-2\rangle} \hspace{6mm}
{\begin{ytableau}\scriptstyle i+2&\scriptstyle i+3\end{ytableau}}^{\ \langle n-2\rangle}
\end{equation}
\[
{\begin{ytableau}\scriptstyle i\\\scriptstyle i+2\end{ytableau}}^{\ \langle n-2\rangle} \hspace{6mm}
{\color{magenta}{\begin{ytableau}\scriptstyle i&\scriptstyle i+2&\scriptstyle i+3\end{ytableau}}^{\ \langle n-3\rangle}} \hspace{6mm}
{\color{blue}{\begin{ytableau}\scriptstyle i+2&\scriptstyle i+3&\scriptstyle i+4\end{ytableau}}^{\ \langle n-3\rangle}} \hspace{6mm}
{\begin{ytableau}\scriptstyle i&\scriptstyle i+3\\\scriptstyle i+2\end{ytableau}}^{\ \langle n-4\rangle}
\]

\begin{lem}\label{Type2IC}
The Type (2) involution $z$ given above satisfies the Indecomposability Conjecture \ref{Conj:2.12:1}. That is to say, we have that $\mathbf{KM}_{n}(\star,z)=\mathtt{true}$.
\end{lem}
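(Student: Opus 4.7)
The plan is to mirror the strategy of Lemma~\ref{Type1IC}: reduce to showing $\mathbf{KM}_{n}(x, z) = \mathtt{true}$ for each involution $x \leq_{R}^{(n)} z$, then verify that every such involution has support of size at most $6$, at which point \eqref{Eq:2.12:4} finishes the job.

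First, by \eqref{Eq:2.11:1}, $\theta_{x}^{(n)}L_{z}^{(n)} = 0$ whenever $x \not\leq_{R}^{(n)} z$, so $\mathbf{KM}_{n}(x, z) = \mathtt{true}$ trivially in that case. Moreover, by \eqref{Eq:2.12:2} it suffices to check $\mathbf{KM}_{n}(x, z) = \mathtt{true}$ for the unique involution $x \in \mathrm{Inv}_{n}$ in each right cell satisfying $x \leq_{R}^{(n)} z$. The preliminary descent-set and dominance considerations, together with Lemma~\ref{Lem:5.2:1}, restrict the insertion tableau $\mathtt{P}_{x}$ of any such involution to those appearing in the list \eqref{Type2RList}.

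Second, I would identify the involution associated to each tableau in \eqref{Type2RList} via reverse Robinson-Schensted and bound its support. The only potentially delicate cases are the two three-box second-row tableaux: the magenta tableau corresponds to the involution $(i-2, i)(i-1, i+2)(i+1, i+3)$, with support $\{s_{i-2}, s_{i-1}, s_{i}, s_{i+1}, s_{i+2}\}$ of size $5$; and the blue tableau corresponds to $(i-1, i+2)(i, i+3)(i+1, i+4)$, with support $\{s_{i-1}, s_{i}, s_{i+1}, s_{i+2}, s_{i+3}\}$ of size $5$. The remaining tableaux in the list give involutions that are either trivial, a single simple transposition, the transposition $(i-1, i+2)$ with support of size $3$, a product of at most two disjoint transpositions of the form $(a, a+1)(b, b+1)$ or $(a, a+2)(a+1, a+3)$ with support of size at most $3$, or $z$ itself with support $\{s_{i-1}, s_{i}, s_{i+1}, s_{i+2}\}$ of size $4$.

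Hence $|\mathrm{Sup}(x)| \leq 5 \leq 6$ for every involution $x \leq_{R}^{(n)} z$, and \eqref{Eq:2.12:4} yields $\mathbf{KM}_{n}(x, \star) = \mathtt{true}$, in particular $\mathbf{KM}_{n}(x, z) = \mathtt{true}$. The only obstacle is the bookkeeping needed to read off each involution from its insertion tableau and to verify the support bound uniformly in $n$ and $i$; this is a routine but somewhat tedious reverse-RSK exercise, executed case by case across the nine entries of the reduced list.
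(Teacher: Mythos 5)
Your proposal is correct and follows the paper's proof essentially verbatim: reduce via \eqref{Eq:2.11:1} and \eqref{Eq:2.12:2} to the involutions whose insertion tableaux lie in the list \eqref{Type2RList}, bound their supports, and invoke \eqref{Eq:2.12:4}. Your explicit identifications of the involutions (in particular $(i-2,i)(i-1,i+2)(i+1,i+3)$ and $(i-1,i+2)(i,i+3)(i+1,i+4)$ for the magenta and blue tableaux, each with support of size $5$) check out, so the uniform bound $|\mathrm{Sup}(x)|\leq 6$ holds exactly as in the paper.
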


\begin{proof}
By Equations~\eqref{Eq:2.11:1} and \eqref{Eq:2.12:2}, it suffices to show that $\mathbf{KM}_{n}(x,z)=\mathtt{true}$ for all $x\in\mathrm{Inv}_{n}$ where $x\leq_{R}^{(n)}z$. This relation implies that $\mathtt{P}_{x}$ belongs to the list \eqref{Type2RList}, and one can check that any involution with such a $\mathtt{P}_{x}$ has support set $\mathrm{Sup}(x)$ of size no greater than $6$. For example, if $\mathtt{P}_{x}$ is the {\color{magenta}magenta} tableaux above (assuming $i>2$ and $n\geq 6$), then $x=s_{i-1}s_{i+1}s_{i}s_{i+2}s_{i+1}$. So in this example we have $|\mathrm{Sup}(x)|=4$. Therefore, by the implication in Equation~\eqref{Eq:2.12:4}, this lemma holds.
\end{proof}

\begin{prop}\label{Type2K}
The answer to Kostant's problem for the Type (2) involution $z$ above is given by
\[ \mathbf{K}_{n}(z)=
\begin{cases} 
\mathtt{true}, &  i=2 \text{ and/or } i=n-3\\
\mathsf{false}, & \text{otherwise}
\end{cases}.
\]
\end{prop}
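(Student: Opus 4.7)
The plan is to split according to whether $i$ lies in the interior range $[3, n-4]$ or at the boundary $\{2, n-3\}$. For the interior (Kostant negative) case, I would extract a specific consecutive pattern inside $z$ and apply the pattern result of \cite{CM26}; for the boundary (Kostant positive) case, I would exploit that $z$ has support of size four and is a shift of one fixed involution of $\mathrm{S}_5$, so that the required non-isomorphisms can be transferred from $\mathrm{S}_5$ to $\mathrm{S}_n$ via the shifting framework developed in \Cref{Sec:3}.

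For $3 \leq i \leq n-4$ (which forces $n \geq 7$), reading off the one-line notation in \Cref{Eq:5.2:1} shows that the seven consecutive entries $z(i-2), \ldots, z(i+4) = (i-2,\, i+2,\, i,\, i+3,\, i-1,\, i+1,\, i+4)$ have relative order $1,5,3,6,2,4,7$, so $z$ consecutively contains the pattern $p := 1536247 \in \mathrm{S}_7$ at position $i-2$. This pattern is itself the Type~(2) involution $z_{3,6,5}^7$, which one verifies directly to be Kostant negative in $\mathrm{S}_7$ (either via GAP3, or by exhibiting an explicit pair $(x, y) \leq_R^{(7)} p$ with $D_p^{(7)} C_x^{(7)} = D_p^{(7)} C_y^{(7)} \neq 0$ along the lines of the Hecke algebra manipulations in the proof of \Cref{Prop:4:3}). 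By the main result of \cite{CM26}, any permutation consecutively containing a Kostant negative pattern is itself Kostant negative, so $z$ is Kostant negative throughout this range.

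For $i \in \{2, n-3\}$, the support of $z$ is either $\{s_1, s_2, s_3, s_4\}$ or $\{s_{n-4}, s_{n-3}, s_{n-2}, s_{n-1}\}$, and in both cases $z = F_{5,n}^{a}(z_0)$ for the fixed involution $z_0 := z_{2,5,4}^5 \in \mathrm{S}_5$ with $a \in \{1, n-4\}$. By \Cref{Thm:2.13:1} together with \Cref{Type2IC}, Kostant positivity of $z$ reduces to showing that $\theta_x^{(n)} L_z^{(n)} \not\cong \theta_y^{(n)} L_z^{(n)}$ in ${^{\mathbb{Z}}}\mathcal{O}_0^{(n)}$ for every distinct pair $x, y \leq_R^{(n)} z$, and by \Cref{Eq:2.11:3} it is enough to verify this for one representative pair $(x_0, y_0)$ per unordered pair of distinct same-shape right cells appearing in \Cref{Type2RList}, subject to $x_0 \sim_L^{(n)} y_0$. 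Inspection shows that when $i = 2$ the magenta tableau drops out and when $i = n-3$ the blue tableau drops out (their validity conditions $i > 2$ and $i \leq n-4$, respectively, fail); in either boundary case, no same-shape pair survives at shape $(n-3, 3)$, so only the pairs at shapes $(n-1, 1)$ and $(n-2, 2)$ remain. For each of these pairs one can choose representatives lying in the embedded $\mathrm{S}_5$ (for $i = 2$: $(x_0, y_0) = (s_1,\, s_3 s_2 s_1)$ at shape $(n-1, 1)$ and $(x_0, y_0) = (s_1 s_3,\, s_3 s_2 s_1 s_4 s_3)$ at shape $(n-2, 2)$; for $i = n-3$, the shifted analogues via $F_{5,n}^{n-4}$). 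Applying \Cref{Prop:3:7}(b) and the contrapositive of \Cref{Lem:3:18}, the desired graded inequality $D_z^{(n)} C_{x_0}^{(n)} \neq D_z^{(n)} C_{y_0}^{(n)}$ in $\mathrm{H}_n^{\mathbb{A}}$ follows from the corresponding inequality in $\mathrm{H}_5^{\mathbb{A}}$, and the latter is immediate from the known Kostant positivity of $z_0 = z_{2,5,4}^5$ in $\mathrm{S}_5$ (available from \cite{KM10} and easily confirmed via GAP3).

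The main obstacle is the combinatorial bookkeeping: one must verify that both boundary values of $i$ really do eliminate the only possible same-shape pair at shape $(n-3, 3)$ from \Cref{Type2RList}, and that for the surviving shapes $(n-1, 1)$ and $(n-2, 2)$ one can select concrete representatives realising the required $\mathtt{P}$ and $\mathtt{Q}$ tableaux within the common embedded $\mathrm{S}_5$. Once these representatives are in hand, the shifting step via \Cref{Lem:3:18} is mechanical, and the remaining $\mathrm{H}_5^{\mathbb{A}}$-inequality is absorbed into the already known Kostant positivity of $z_{2,5,4}^5 \in \mathrm{S}_5$.
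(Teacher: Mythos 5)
Your proposal is correct in outline but takes a genuinely different route from the paper in both halves. For the negative (interior) case, the paper does \emph{not} use pattern propagation: it constructs an explicit isomorphism $\theta_{x}^{(n)}L_{z}^{(n)}\cong\theta_{y}^{(n)}L_{z}^{(n)}\neq0$ for a concrete pair $x,y$, via a delicate analysis of Jantzen middles ($\theta_{s_{i-1}s_{i-2}}^{(n)}L_{z}^{(n)}\cong\theta_{s_{i-2}}^{(n)}L_{zs_{i-2}}^{(n)}$, indecomposability from the support bound, etc.). Your shortcut --- observe that the window $z(i-2),\dots,z(i+4)$ realises the pattern $1536247=z_{3,6,5}^{7}$, invoke the known answer to Kostant's problem in $\mathrm{S}_{7}$ for the base case, and propagate by the consecutive-pattern theorem of \cite{CM26} --- is valid and much shorter, at the price of importing \cite{CM26} as a black box (the paper's argument is self-contained and, as a byproduct, exhibits the actual offending pair of projective functors). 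For the positive boundary cases, your treatment of $i=n-3$ coincides with the paper's (reduce to two representative pairs, shift to $\mathrm{H}_{5}^{\mathbb{A}}$ via \Cref{Lem:3:18}, check there); for $i=2$ the paper instead uses the parabolic reduction of K{\aa}hrstr{\"o}m \cite{K10} to the fully commutative involution $s_{2}s_{1}s_{3}s_{2}\in\mathrm{S}_{n-2}$ and \cite{MMM24}, whereas you run the same $\mathrm{S}_{5}$-shift argument as for $i=n-3$. Your unified treatment works: the case analysis of which same-shape pairs survive in \eqref{Type2RList} at the two boundaries is exactly right (only the shapes $(n-1,1)$ and $(n-2,2)$ contribute, since exactly one of the magenta/blue tableaux exists), and your representatives $(s_{1},s_{3}s_{2}s_{1})$ and $(s_{1}s_{3},s_{3}s_{2}s_{1}s_{4}s_{3})$ do lie in common left cells inside the embedded $\mathrm{S}_{5}$.

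One genuine flaw to correct: your parenthetical alternative for certifying that $1536247$ is Kostant negative --- ``exhibiting an explicit pair $(x,y)\leq_{R}^{(7)}p$ with $D_{p}^{(7)}C_{x}^{(7)}=D_{p}^{(7)}C_{y}^{(7)}\neq0$'' --- does not establish Kostant negativity. Equality of Grothendieck classes is the negation of condition (5) of \Cref{Conj:2.14:1}, and the implications in \Cref{Eq:2.14:2,Eq:2.14:3} only run in the direction $\neg(2)\Rightarrow\neg(5)$; non-isomorphic modules can share a class, so $\neg(5)$ does not imply $\neg(1)$. You must instead rely on the known classification for $n\leq7$ (your GAP3 route, which is legitimate since the paper records that Kostant's problem is settled for $\mathrm{S}_{7}$ in \cite{CM25-1}), or produce an actual module isomorphism as the paper does. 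A second, minor imprecision: in the boundary cases, Kostant positivity of $z_{0}\in\mathrm{S}_{5}$ alone does not formally yield the Hecke inequality $D_{z_{0}}^{(5)}C_{x_{0}}^{(5)}\neq D_{z_{0}}^{(5)}C_{y_{0}}^{(5)}$; one needs the (verified) combinatorial K{\aa}hrstr{\"o}m condition for $z_{0}$, or simply the direct computation --- which you do offer, so this is harmless.
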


\begin{proof}
Let us first prove that $\mathbf{K}_{n}(z)=\mathtt{true}$ when $i=2$. In this case we have
\[ z=(1,4)(3,5)=s_{1}s_{3}s_{2}s_{1}s_{4}s_{3}\in\mathrm{S}_{n}. \]
Consider the parabolic subgroup $\mathrm{S}_{n-2}\subset\mathrm{S}_{n}$ via the natural inclusion. Then one can deduce that
\[ z\sim_{L}^{(n)}(s_{2}s_{1}s_{3}s_{2})w_{0,n-2}w_{0,n}. \]
Therefore, by \cite[Theorem 1.1]{K10}, we have that $\mathbf{K}_{n}(z)=\mathbf{K}_{n-2}(s_{2}s_{1}s_{3}s_{2})$. The involution $s_{2}s_{1}s_{3}s_{2}$ is fully commutative, and so by \cite[Theorem 5.1]{MMM24} we see that $\mathbf{K}_{n-2}(s_{2}s_{1}s_{3}s_{2})=\mathtt{true}$, which implies that $\mathbf{K}_{n}(z)=\mathtt{true}$, completing the $i=2$ case. 

Let us now assume $i=n-3$, then we also seek to show that $\mathbf{K}_{n}(z)=\mathtt{true}$. Firstly, we have that
\[ z=(n-4,n-1)(n-2,n)=s_{n-4}s_{n-2}s_{n-3}s_{n-4}s_{n-1}s_{n-2}\in\mathrm{S}_{n}. \]
By \Cref{Thm:2.13:1} and \Cref{Type2IC}, it suffices to prove that we have a non-isomorphism
\begin{equation}\label{Eq:Type2K-1}
\theta_{x}^{(n)}L_{z}^{(n)}\not\cong\theta_{y}^{(n)}L_{z}^{(n)} \hspace{1mm} \text{ in } \hspace{1mm} {^\mathbb{Z}}\mathcal{O}_{0}^{(n)},
\end{equation}
for all distinct pairs $x,y\in\mathrm{S}_{n}$ such that $x,y\leq_{R}^{(n)}z$. For the sake of contradiction, assume there exists a pair $x,y\leq_{R}^{(n)}z$ such that $\theta_{x}^{(n)}L_{z}^{(n)}\cong\theta_{y}^{(n)}L_{z}^{(n)}$. Then, since $x,y\leq_{R}^{(n)}z$, both $\mathtt{P}_{x}$ and $\mathtt{P}_{y}$ belong to the list in Equation~\eqref{Type2RList} above, in the case where $i=n-3$. Note, in this case, the {\color{blue}blue} tableau does not exist since it required $i<n-3$. For clarity, both $\mathtt{P}_{x}$ and $\mathtt{P}_{y}$ belong to the following list:
\begin{equation}\label{Eq:Type2k-2}
\emptyset^{(n)} \hspace{6mm}
{\begin{ytableau}\scriptstyle n-3\end{ytableau}}^{\ (n-1)} \hspace{6mm}
{\begin{ytableau}\scriptstyle n-1\end{ytableau}}^{\ (n-1)} \hspace{6mm}
{\begin{ytableau}\scriptstyle n-3&\scriptstyle n-1\end{ytableau}}^{\ (n-2)} \hspace{6mm}
{\begin{ytableau}\scriptstyle n-1&\scriptstyle n\end{ytableau}}^{\ (n-2)}
\end{equation}
\[
{\begin{ytableau}\scriptstyle n-3\\\scriptstyle n-1\end{ytableau}}^{\ (n-2)} \hspace{6mm}
{\color{magenta}{\begin{ytableau}\scriptstyle n-3&\scriptstyle n-1&\scriptstyle n\end{ytableau}}^{\ (n-3)}} \hspace{6mm}
{\begin{ytableau}\scriptstyle n-3&\scriptstyle n\\\scriptstyle n-1\end{ytableau}}^{\ (n-4)}
\]
Moreover, by the implication in Equation~\eqref{Eq:2.9:6}, $x\sim_{L}^{(n)}y$, which implies $\mathtt{Q}_{x}=\mathtt{Q}_{y}$, and, in particular, $\mathtt{sh}(x)=\mathtt{sh}(y)$. Also, since $x$ and $y$ are distinct, this implies that $\mathtt{P}_{x}\neq\mathtt{P}_{y}$. Hence altogether, both $\mathtt{P}_{x}$ and $\mathtt{P}_{y}$ belong to the list  in Equation~\eqref{Eq:Type2k-2}, share the same shape, and are distinct. Up to symmetry in $x$ and $y$, this implies
\[ (\mathtt{P}_{x},\mathtt{P}_{y})=\left({\begin{ytableau}\scriptstyle n-3\end{ytableau}}^{\ (n-1)}, \hspace{2mm} {\begin{ytableau}\scriptstyle n-1\end{ytableau}}^{\ (n-1)} \right) \hspace{2mm} \text{ or } \hspace{2mm}  (\mathtt{P}_{x},\mathtt{P}_{y})=\left({\begin{ytableau}\scriptstyle n-3&\scriptstyle n-1\end{ytableau}}^{\ (n-2)}, \hspace{2mm} {\begin{ytableau}\scriptstyle n-1&\scriptstyle n\end{ytableau}}^{\ (n-2)}\right). \]
Furthermore, by the equivalence  in Equation~\eqref{Eq:2.11:3}, it suffices to consider any particular pair of elements $(x',y')$ such that $(\mathtt{P}_{x'},\mathtt{P}_{y'})$ is one of the above two pairings of insertion tableau, and $x'\sim_{L}^{(n)}y'$. Thus, we must have that the isomorphism $\theta_{x}^{(n)}L_{z}^{(n)}\cong\theta_{y}^{(n)}L_{z}^{(n)}$ holds for $(x,y)$ being one of the following two pairs:
\begin{equation*}
(s_{n-4}, s_{n-2}s_{n-3}s_{n-4}) \hspace{2mm} \text{ or } \hspace{2mm} (s_{n-4}s_{n-2}, s_{n-2}s_{n-3}s_{n-4}s_{n-1}s_{n-2}).
\end{equation*}
Decategorifying the corresponding isomorphism $\theta_{x}^{(n)}L_{z}^{(n)}\cong\theta_{y}^{(n)}L_{z}^{(n)}$, we must have that either
\begin{equation*}
D_{z}^{(n)}C_{s_{n-4}}^{(n)}=D_{z}^{(n)}C_{s_{n-2}s_{n-3}s_{n-4}}^{(n)} \hspace{2mm} \text{ or } \hspace{2mm} D_{z}^{(n)}C_{s_{n-4}s_{n-2}}^{(n)}=D_{z}C_{s_{n-2}s_{n-3}s_{n-4}s_{n-1}s_{n-2}}^{(n)},
\end{equation*}
holds in $\mathrm{H}_{n}^{\mathbb{A}}$. Lastly, by employing \Cref{Lem:3:18} for an appropriate shift, we must have that either 
\begin{equation}\label{Type2K-3}
D_{s_{1}s_{3}s_{2}s_{1}s_{4}s_{3}}^{(5)}C_{s_{1}}^{(5)}=D_{s_{1}s_{3}s_{2}s_{1}s_{4}s_{3}}^{(5)}C_{s_{3}s_{2}s_{1}}^{(5)} \hspace{2mm} \text{ or } \hspace{2mm} D_{s_{1}s_{3}s_{2}s_{1}s_{4}s_{3}}^{(5)}C_{s_{1}s_{3}}^{(5)}=D_{s_{1}s_{3}s_{2}s_{1}s_{4}s_{3}}^{(5)}C_{s_{3}s_{2}s_{1}s_{4}s_{3}}^{(5)}. 
\end{equation}
These two equalities are taking place in $\mathrm{H}_{5}^{\mathbb{A}}$, where we have employed the shift $F_{5,n}^{n-4}$, in particular
\[ F_{5,n}^{n-4}(s_{1}s_{3}s_{2}s_{1}s_{4}s_{3})=s_{n-4}s_{n-2}s_{n-3}s_{n-4}s_{n-1}s_{n-2}=z. \]
One can now confirm computationally that neither of the two equalities in \Cref{Type2K-3} hold, which gives the desired contradiction, completing the proof of the case for $i=n-3$. It is worth remarking that these two equalities are already known not to hold, since it is known that the involution $s_{1}s_{3}s_{2}s_{1}s_{4}s_{3}$ in $\mathrm{S}_{5}$ is Kostant positive and satisfies K{\aa}hrstr{\"o}m's conjecture.

It remains is to prove $\mathbf{K}_{n}(z)=\mathtt{false}$ if $2<i<n-3$, so in particular $n\geq 7$. In this case we have
\[ z=(i-1, i+2)(i+1,i+3)=s_{i-1}s_{i+1}s_{i}s_{i-1}s_{i+2}s_{i+1}. \]
Consider the pair of permutations
\[ x=s_{i-1}s_{i-2}s_{i+1}s_{i}s_{i-1}s_{i+2}s_{i+1}s_{i+3} \hspace{2mm} \text{ and } \hspace{2mm} y=s_{i+1}s_{i}s_{i-1}s_{i+2}s_{i+1}s_{i+3}. \]
By \Cref{Thm:2.13:1}, it suffices to show that we have the isomorphism of non-zero modules
\begin{equation}\label{Eq:Type2K-4}
\theta_{x}^{(n)}L_{z}^{(n)}\cong\theta_{y}^{(n)}L_{z}^{(n)}\neq0.
\end{equation}
Firstly, one can deduce that $C_{x}^{(n)}=C_{s_{i-1}s_{i-2}}^{(n)}C_{y}^{(n)}=C_{s_{i-1}}^{(n)}C_{s_{i-2}}^{(n)}C_{y}^{(n)}$ by checking a corresponding relation in $\mathrm{H}_{7}^{\mathbb{A}}$ computationally, then shifting to $\mathrm{H}_{n}^{\mathbb{A}}$. This implies
\begin{equation}\label{Eq:Type2K-5}
\theta_{x}^{(n)}\cong\theta_{y}^{(n)}\theta_{s_{i-1}s_{i-2}}^{(n)}\cong\theta_{y}^{(n)}\theta_{s_{i-2}}^{(n)}\theta_{s_{i-1}}^{(n)}.
\end{equation}
Therefore, we will begin by focusing on understanding the following module:
\[ \theta_{s_{i-1}s_{i-2}}^{(n)}L_{z}^{(n)}\cong\theta_{s_{i-2}}^{(n)}\theta_{s_{i-1}}^{(n)}L_{z}^{(n)}. \]
Since $zs_{i-1}<z$, then, by \Cref{Prop:2.10:2}, the module $\theta_{s_{i-1}}^{(n)}L_{z}^{(n)}$ has simple top and socle isomorphic to $L_{z}^{(n)}\langle\pm1\rangle$, and semi-simple Jantzen middle $\mathrm{J}_{s_{i-1}}^{(n)}(z)$ at degree 0. Moreover, since $zs_{i-2}>z$, we have that $\theta_{s_{i-2}}^{(n)}L_{z}^{(n)}=0$. Therefore, we must have the isomorphisms
\begin{equation*}
\theta_{s_{i-1}s_{i-2}}^{(n)}L_{z}^{(n)}\cong\theta_{s_{i-2}}^{(n)}\theta_{s_{i-1}}^{(n)}L_{z}^{(n)}\cong\theta_{s_{i-2}}^{(n)}\mathrm{J}_{s_{i-1}}^{(n)}(z).
\end{equation*}
The module $L_{zs_{i-2}}^{(n)}$ appears within $\mathrm{J}_{s_{i-1}}^{(n)}(z)$ since $(zs_{i-2})s_{i-1}>zs_{i-2}$ and $\mu(z,zs_{i-2})=1$, and it is not killed by $\theta_{s_{i-2}}^{(n)}$ since $(zs_{i-2})s_{i-2}<zs_{i-2}$. Furthermore, the module $ \theta_{s_{i-1}s_{i-2}}^{(n)}L_{z}^{(n)}$, and thus also $\theta_{s_{i-2}}^{(n)}\mathrm{J}_{s_{i-1}}^{(n)}(z)$, is indecomposable by the implication  in Equation~\eqref{Eq:2.12:4}. This implies that $L_{zs_{i-2}}^{(n)}$ is the only submodule of $\mathrm{J}_{s_{i-1}}^{(n)}(z)$ not killed by $\theta_{s_{i-2}}^{(n)}$, which means that we have the isomorphism
\begin{equation}\label{Eq:Type2K-6}
\theta_{s_{i-1}s_{i-2}}^{(n)}L_{z}^{(n)}\cong\theta_{s_{i-2}}^{(n)}\mathrm{J}_{s_{i-1}}^{(n)}(z)\cong\theta_{s_{i-2}}^{(n)}L_{zs_{i-2}}^{(n)}.
\end{equation}
Since $(zs_{i-2})s_{i-2}<zs_{i-2}$, then, by \Cref{Prop:2.10:2}, the module $\theta_{s_{i-2}}^{(n)}L_{zs_{i-2}}^{(n)}$ has simple top and socle isomorphic to $L_{zs_{i-2}}^{(n)}\langle\pm1\rangle$, and semi-simple Jantzen middle $\mathrm{J}_{s_{i-2}}^{(n)}(zs_{i-2})$ at degree 0. Also, since $z=s_{i-1}s_{i+1}s_{i}s_{i-1}s_{i+2}s_{i+1}$ and $y^{-1}=s_{i+3}s_{i+1}s_{i+2}s_{i-1}s_{i}s_{i+1}$, we see that
\begin{align*}
\theta_{y}^{(n)}L_{z_{s_{i-2}}}^{(n)}=0 &\iff y^{-1}\not\leq_{L}^{(n)}zs_{i-2}, \\
&\iff F_{6,n}^{i-2}(s_{6}s_{4}s_{5}s_{2}s_{3}s_{4})\not\leq_{L}^{(n)}F_{6,n}^{i-2}(s_{2}s_{4}s_{3}s_{2}s_{5}s_{4}s_{1}), \\
&\iff s_{6}s_{4}s_{5}s_{2}s_{3}s_{4}\not\leq_{L}^{(7)}s_{2}s_{4}s_{3}s_{2}s_{5}s_{4}s_{1}, \\
&\iff D_{s_{2}s_{4}s_{3}s_{2}s_{5}s_{4}s_{1}}^{(7)}C_{s_{4}s_{3}s_{2}s_{5}s_{4}s_{6}}^{(7)}=0.
\end{align*}
The first equivalence follows from \Cref{Eq:2.11:1}, the second by the definition of the shift, the third from Claim~(b) of \Cref{Prop:3:7}, and the last from \Cref{Eq:2.9:5}. Moreover, the last equality can be confirmed computationally, and therefore we have that
\begin{equation}\label{Eq:Type2K-7} 
\theta_{y}^{(n)}L_{z_{s_{i-2}}}^{(n)}=0.
\end{equation}
Thus, altogether, we have the isomorphisms
\begin{equation}\label{Eq:Typle2K-8}
\theta_{x}^{(n)}L_{z}^{(n)}\cong\theta_{y}^{(n)}\theta_{s_{i-1}s_{i-2}}^{(n)}L_{z}^{(n)}\cong\theta_{y}^{(n)}\theta_{s_{i-2}}^{(n)}L_{zs_{i-2}}^{(n)}\cong\theta_{y}^{(n)}\mathrm{J}_{s_{i-2}}^{(n)}(zs_{i-2}).
\end{equation}
The first isomorphism follows from \Cref{Eq:Type2K-5}, the second from \Cref{Eq:Type2K-6}, and the third from \Cref{Eq:Type2K-7} and the sentence proceeding \Cref{Eq:Type2K-6}. Now, as $|\mathsf{Sup}(x)|=6$, we know that $\theta_{x}^{(n)}L_{z}^{(n)}$ is indecomposable. Thus, at most one summand in $\mathrm{J}_{s_{i-2}}^{(n)}(zs_{i-2})$ is not killed by $\theta_{y}^{(n)}$. Note that $L_{z}^{(n)}$ appears in $\mathrm{J}_{s_{i-2}}^{(n)}(zs_{i-2})$ since $zs_{i-2}>z$ and $\mu(zs_{i-2},z)=1$. Hence, if $\theta_{y}^{(n)}L_{z}^{(n)}\neq0$, then from \Cref{Eq:Typle2K-8} we must have that
\[  \theta_{x}^{(n)}L_{z}^{(n)}\cong\theta_{y}^{(n)}\mathrm{J}_{s_{i-2}}^{(n)}(zs_{i-2})\cong\theta_{y}^{(n)}L_{z}^{(n)}\neq0, \]
establishing \Cref{Eq:Type2K-4}, and completing the proof. Thus, it suffices to show that $\theta_{y}^{(n)}L_{z}^{(n)}\neq0$, equivalently $D_{z}^{(n)}C_{y}^{(n)}\neq0$ in $\mathrm{H}_{n}^{\mathbb{A}}$, or equivalently $y^{-1}\leq_{L}z$ in $\mathrm{S}_{n}$. Well, we have that 
\begin{align*}
y^{-1}\leq_{L}^{(n)}z &\iff F_{6,n}^{i-2}(s_{5}s_{3}s_{4}s_{1}s_{2}s_{3})\not\leq_{L}^{(n)}F_{6,n}^{i-2}(s_{1}s_{3}s_{2}s_{1}s_{4}s_{3}), \\
&\iff s_{5}s_{3}s_{4}s_{1}s_{2}s_{3}\leq_{L}^{(6)}s_{1}s_{3}s_{2}s_{1}s_{4}s_{3}, \\
&\iff D_{s_{1}s_{3}s_{2}s_{1}s_{4}s_{3}}^{(6)}C_{s_{3}s_{2}s_{1}s_{4}s_{3}s_{5}}^{(6)}\neq0,
\end{align*}
where the latter inequality can be confirmed computationally, completing the proof. 
\end{proof}

\subsection{Involutions of Type $(3)$}\label{Sec:5.3}

Fix $n\geq 6$, $i\in[2,n-4]$, and $i+2<k<n$, and let $z:=z_{i,k+1,k}^{n}$ be the involution in $\mathrm{Inv}_{n}$ of Type (3). Hence, it satisfies \Cref{Type3}, which we recall here:
\begin{equation*}
\tilde{\mathtt{RS}}_{n}(z)={\begin{ytableau}\scriptstyle i&\scriptstyle k+1\\\scriptstyle k\end{ytableau}}^{\ \langle n-3\rangle} \hspace{4mm} \text{ and } \hspace{4mm} z=(i-1, k)(k-1,k+1).
\end{equation*}
When written in one-line notation, the involution $z$ is given by
\begin{equation}\label{Eq:5.3:1}
z=12\cdots(i-2){\color{teal}k}i(i+1)\cdots(k-2){\color{teal}(k+1)(i-1)(k-1)}(k+2)\cdots (n-1)n,
\end{equation}
where we have coloured the non-fixed points in {\color{teal}teal}. As done with previous types, we first investigate what elements $x\in\mathrm{S}_{n}$ have a chance at satisfying the relation $x\leq_{R}^{(n)}z$. Assuming $x\leq_{R}^{(n)}z$, then by the implications in Equations~\eqref{Eq:2.9:4} and \eqref{Eq:2.9:4.5}, we have $\mathrm{Des}_{L}(x)\subseteq\mathrm{Des}_{L}(z)=\{s_{i-1},s_{k-1}\}$ and $\mathtt{sh}(z)=(n-3,2,1)\preceq\mathtt{sh}(x)$, with equality of shapes further implying that $x\sim_{L}^{(n)}z$. Therefore, by \Cref{Lem:2.4:1}, this implies that $\mathtt{P}_{x}$ (the insertion tableau of $x$) belongs to the following list:
\[ 
\emptyset^{\langle n\rangle}
\]
\[
{\begin{ytableau}\scriptstyle i\end{ytableau}}^{\ \langle n-1\rangle} \hspace{6mm}
{\begin{ytableau}\scriptstyle k\end{ytableau}}^{\ \langle n-1\rangle}
\]
\[
{\begin{ytableau}\scriptstyle i&\scriptstyle k\end{ytableau}}^{\ \langle n-2\rangle} \hspace{6mm}
{\color{red}{\begin{ytableau}\scriptstyle i&\scriptstyle i+1\end{ytableau}}^{\ \langle n-2\rangle}} \hspace{6mm}
{\begin{ytableau}\scriptstyle k&\scriptstyle k+1\end{ytableau}}^{\ \langle n-2\rangle}
\]
\[
{\begin{ytableau}\scriptstyle i\\\scriptstyle k\end{ytableau}}^{\ \langle n-2\rangle}
\]
\[
{\color{red}{\begin{ytableau}\scriptstyle i&\scriptstyle i+1&\scriptstyle k\end{ytableau}}^{\ \langle n-3\rangle}} \hspace{6mm}
{\begin{ytableau}\scriptstyle i&\scriptstyle k&\scriptstyle k+1\end{ytableau}}^{\ \langle n-3\rangle} \hspace{6mm}
{\color{red}{\begin{ytableau}\scriptstyle i&\scriptstyle i+1&\scriptstyle i+2\end{ytableau}}^{\ \langle n-3\rangle}} \hspace{6mm}
{\color{blue}{\begin{ytableau}\scriptstyle k&\scriptstyle k+1&\scriptstyle k+2\end{ytableau}}^{\ \langle n-3\rangle}} 
\]
\[
{\begin{ytableau}\scriptstyle i&\scriptstyle k+1\\\scriptstyle k\end{ytableau}}^{\ \langle n-3\rangle}
\]
The {\color{blue}blue} tableau requires that $k<n-2$. Also, reading from top to bottom, left to right, the first and second {\color{red}red} tableaux require that $i>2$, while the third {\color{red}red} tableau requires that $i>3$.

\begin{lem}\label{Lem:5.3:2}
Let $x\in\mathbf{S}_{n}$ be such that $\mathtt{P}_{x}$ is either the {\color{blue}blue} tableau, or one of the {\color{red}red} tableaux above. Then we have that $x\not\leq_{R}^{(n)}z$, equivalently $\theta_{x}^{(n)}L_{z}^{(n)}=0$.
\end{lem}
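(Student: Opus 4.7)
The plan is to mirror the approach of \Cref{Lem:5.2:1}. Since every right cell of $\mathrm{S}_{n}$ contains a unique involution, it suffices to prove $x \not\leq_{R}^{(n)} z$ for the unique involution $x \in \mathrm{Inv}_{n}$ whose insertion tableau equals one of the four highlighted tableaux; by Equation~\eqref{Eq:2.9:2} this is equivalent to $x \not\leq_{L}^{(n)} z$. In each case the strategy is to identify $x$ explicitly, locate a consecutive pattern of $z$ at an appropriate position such that $x$ arises as a shift of a small element, and then invoke Claim~(b) of \Cref{Prop:3:7} to reduce the question to a non-relation in $\mathrm{S}_{4}$ or $\mathrm{S}_{6}$.

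A direct row-insertion computation identifies the four involutions, in order, as $x = (i-2,i)(i-1,i+1)$, $(i-2,i)(i-1,i+1)(k-1,k)$, $F_{6,n}^{i-3}(y)$, and $F_{6,n}^{k-3}(y)$ (for the first red, second red, third red, and blue tableaux, respectively), where $y = s_{3}s_{2}s_{1}s_{4}s_{3}s_{2}s_{5}s_{4}s_{3} = 456123 \in \mathrm{Inv}_{6}$ has shape $(3,3)$. Reading off the one-line notation of $z$ exhibits the relevant consecutive patterns: the pattern $1423$ at position $i-2$ for the first red case; a pattern whose insertion tableau has shape $(5,1)$ at position $i-3$ for the third red case; and the pattern $235146$ of insertion shape $(4,2)$ at position $k-3$ for the blue case. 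The boundary hypotheses $i > 2$, $i > 3$, $k < n-2$ guarantee that each of these positions is well-defined. For the second red case, since the two factors of $x$ have disjoint supports (as $k-1 \geq i+2$ under the Type~$(3)$ hypothesis $k \geq i+3$) and pairwise commute, \Cref{Lem:2.9:6.5} further splits the problem into the first red case (already handled) together with the trivial check $s_{k-1} \leq_{L}^{(n)} z$, which holds by Equation~\eqref{Eq:2.9:7} since $s_{k-1} \in \mathrm{Des}_{R}(z)$.

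Applying Claim~(b) of \Cref{Prop:3:7} in each case then reduces to a small non-relation. Each of these follows immediately from the shape-dominance obstruction in Equation~\eqref{Eq:2.9:4}: for $3412 \leq_{L}^{(4)} 1423$ to hold one would need $(3,1) \preceq (2,2)$, which fails; for $456123$ to be $\leq_{L}^{(6)}$ to any pattern of insertion shape $(5,1)$ or $(4,2)$ one would need $(3,3)$ to dominate either of these, which again fails. Hence $x \not\leq_{L}^{(n)} z$ in all four cases, establishing the lemma.

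The main obstacle is organisational rather than conceptual: one must correctly pair each tableau with its involution, determine the correct position and pattern in $z$ (being careful about the sub-cases $k = i+3$ versus $k > i+3$ in the third red case, which yield slightly different but equally non-dominating patterns of shape $(5,1)$), and apply the reduction lemmata from \Cref{Sec:3} cleanly. Should shape dominance unexpectedly fail to rule out any particular small case, a single GAP3 verification of $D_{p}^{(m)} C_{y}^{(m)} = 0$ would suffice, exactly as was done at the end of the proof of \Cref{Lem:5.2:1}.
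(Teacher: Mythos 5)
Your proof is correct and follows essentially the same strategy as the paper's: reduce to the unique involution in each right cell, realise it as a (multi-)shift of a small element, extract the corresponding consecutive pattern of $z$, and apply Claim~(b) of \Cref{Prop:3:7} (or \Cref{Cor:3:15}, for which your direct appeal to \Cref{Lem:2.9:6.5} is an equivalent substitute) to reduce to a non-relation in $\mathrm{S}_{4}$ or $\mathrm{S}_{6}$. The only noteworthy difference is that for the blue tableau you observe that $456123\not\leq_{L}^{(6)}235146$ already follows from the shape-dominance obstruction of Equation~\eqref{Eq:2.9:4}, since $\mathtt{sh}(235146)=(4,2)\not\preceq(3,3)$, whereas the paper resorts to a computational check there; your observation is valid and slightly cleaner, and your explicit treatment of the $k=i+3$ versus $k>i+3$ subcases for the third red tableau fills in details the paper leaves to the reader.
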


\begin{proof}
It suffices to prove $x\not\leq_{R}^{(n)}z$ where $x$ is an involution with $\mathtt{P}_{x}$ either of the {\color{red}red} tableaux above, or the {\color{blue}blue} tableau above (since each right cell contains a unique involution). By the equivalence in Equation~\eqref{Eq:2.9:2}, this is equivalent to proving $x\not\leq_{L}^{(n)}z$. For the first and last {\color{red}red} tableaux, this non-relation can be confirmed by employing Claim~(b) of \Cref{Prop:3:7}, as done in \Cref{Lem:5.2:1}, while for the second {\color{red}red} tableau, this can be confirmed by \Cref{Cor:3:15}. We will demonstrate this argument for the second {\color{red}red} tableau above, with the other two following by similar arguments to that in \Cref{Lem:5.2:1}.

Therefore, assume that $x$ is the involution in $\mathrm{Inv}_{n}$ such that $\mathtt{P}_{x}={\color{red}{\begin{ytableau}\scriptstyle i&\scriptstyle i+1&\scriptstyle k\end{ytableau}}^{\ (n-3)}}$, which implies
\[ x=s_{i-1}s_{i-2}s_{i}s_{i-1}s_{k-1}=F_{(4,2),n}^{(i-2,k-1)}(s_{2}s_{1}s_{3}s_{2},s_{1}) \]
(where one might wish to recall the definition of a multi-shift map from \Cref{Defn:3:1}). Moreover, by \Cref{Eq:5.3:1}, one can deduce that $z$ consecutively contains the pattern
\[ \underline{p}:=(1423,21)=(s_{3}s_{2},s_{1})\in\mathrm{S}_{4}\times\mathrm{S}_{2} \]
at position $\underline{i}:=(i-2,k-1)$. Therefore, by \Cref{Cor:3:15} we have that
\[ x\not\leq_{L}^{(n)}z \iff F_{(4,2),n}^{(i-2,k-1)}(s_{2}s_{1}s_{3}s_{2},s_{1})\not\leq_{L}^{(n)}z \iff \underline{y}:=(s_{2}s_{1}s_{3}s_{2},s_{1})\not\leq_{L}^{(4,2)}\underline{p}:=(s_{3}s_{2},s_{1}). \] 
This latter non-relation in $\mathrm{S}_{4}\times\mathrm{S}_{2}$ is satisfied since 
\[ y_{1}=s_{2}s_{1}s_{3}s_{2}=3412\not\leq_{L}^{(4)}1423=s_{3}s_{2}=p_{1}, \]
which holds from \Cref{Fig:3:9-1}, or by noting that $\mathrm{sh}(y_{1})=(2,2)$ does not dominate $\mathrm{sh}(p_{1})=(3,1)$.

Now assume that $\mathtt{P}_{x}$ is the {\color{blue}blue} tableau above. Then this implies that
\[ x=s_{k-1}s_{k-2}s_{k-3}s_{k}s_{k-1}s_{k-2}s_{k+1}s_{k}s_{k-1}=F_{6,n}^{k-3}(s_{3}s_{2}s_{1}s_{4}s_{3}s_{2}s_{5}s_{4}s_{3}). \]
Recall that $i+2<k<n-2$ (with the latter inequality required for the {\color{blue}blue} tableau to exist). By Equation~\eqref{Eq:5.3:1}, one can see that $z$ consecutively contains the pattern $235146$ at position $k-3\in[n-5]$. Therefore, by Claim~(b) of \Cref{Prop:3:7}, we have that 
\begin{align*}
x\not\leq_{L}^{(n)}z &\iff F_{6,n}^{k-3}(s_{3}s_{2}s_{1}s_{4}s_{3}s_{2}s_{5}s_{4}s_{3})\not\leq_{L}^{(n)}z, \\
&\iff s_{3}s_{2}s_{1}s_{4}s_{3}s_{2}s_{5}s_{4}s_{3}\not\leq_{L}^{(6)}235146,
\end{align*}
where the latter non-relation can be confirmed computationally, completing the proof.
\end{proof}

By \Cref{Lem:5.3:2} and the preceding discussion, if $x\leq_{R}^{(n)}z$, then $\mathtt{P}_{x}$ belongs to the following list:
\begin{equation}\label{Type3RList}
\emptyset^{\langle n\rangle} \hspace{6mm}
{\begin{ytableau}\scriptstyle i\end{ytableau}}^{\ \langle n-1\rangle} \hspace{6mm}
{\begin{ytableau}\scriptstyle k\end{ytableau}}^{\ \langle n-1\rangle} \hspace{6mm}
{\begin{ytableau}\scriptstyle i&\scriptstyle k\end{ytableau}}^{\ \langle n-2\rangle} \hspace{6mm}
{\begin{ytableau}\scriptstyle k&\scriptstyle k+1\end{ytableau}}^{\ \langle n-2\rangle}
\end{equation}
\[
{\begin{ytableau}\scriptstyle i\\\scriptstyle k\end{ytableau}}^{\ \langle n-2\rangle} \hspace{6mm}
{\begin{ytableau}\scriptstyle i&\scriptstyle k&\scriptstyle k+1\end{ytableau}}^{\ \langle n-3\rangle} \hspace{6mm}
{\begin{ytableau}\scriptstyle i&\scriptstyle k+1\\\scriptstyle k\end{ytableau}}^{\ \langle n-3\rangle}
\]
\begin{lem}\label{Type3IC}
The Type (3) involution $z$ given above satisfies the Indecomposability Conjecture \ref{Conj:2.12:1}. That is to say, we have that $\mathbf{KM}_{n}(\star,z)=\mathtt{true}$.
\end{lem}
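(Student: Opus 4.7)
The proof will follow the template of \Cref{Type1IC} and \Cref{Type2IC}. By \eqref{Eq:2.11:1} and \eqref{Eq:2.12:2}, it suffices to verify $\mathbf{KM}_n(x,z)=\mathtt{true}$ for every involution $x\in\mathrm{Inv}_n$ with $x\leq_R^{(n)}z$, and together with \Cref{Lem:5.3:2} and the preceding descent/shape analysis, such an $x$ is pinned down by $\mathtt{P}_x$ belonging to the eight-element list \eqref{Type3RList}.

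The plan is then to go through these eight tableaux in turn, identifying the unique involution attached to each together with its support. For the six tableaux other than $\begin{ytableau}\scriptstyle i\\\scriptstyle k\end{ytableau}^{\ \langle n-2\rangle}$ and $\begin{ytableau}\scriptstyle i&\scriptstyle k+1\\\scriptstyle k\end{ytableau}^{\ \langle n-3\rangle}$, a direct inspection shows that the attached involution has support of size at most $4$, so \Cref{Eq:2.12:4} yields $\mathbf{KM}_n(x,\star)=\mathtt{true}$ and hence $\mathbf{KM}_n(x,z)=\mathtt{true}$. For the second of these exceptions, the involution $x$ shares its shape $(n-3,2,1)$ with $z$, so indecomposability follows from \cite[Section 5.2]{KiM16}.

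The genuinely delicate case is $\mathtt{P}_x=\begin{ytableau}\scriptstyle i\\\scriptstyle k\end{ytableau}^{\ \langle n-2\rangle}$, for which the attached involution $x=z_{i,k}^n=(i-1,k)$ has support $\{s_{i-1},\ldots,s_{k-1}\}$ of size $k-i+1$, which may exceed $6$. Since both $x$ and $z$ have support inside the consecutive block $\{s_{i-1},\ldots,s_k\}$, both lie in the image of the shift $F_{m,n}^{i-1}$ with $m=k-i+3$, so \Cref{Lem:3:19} reduces the problem to showing $\mathbf{KM}_m(\tilde{x},\tilde{z})=\mathtt{true}$ for the concrete pair $\tilde{x}=z_{2,m-1}^m$ and $\tilde{z}=z_{2,m,m-1}^m$ in $S_m$ (the latter again of Type (3) but now of full support in $S_m$).

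The main obstacle will be handling this reduced pair when $m\geq 8$, since then the Indecomposability Conjecture in $S_m$ is not yet covered by the computations cited after \Cref{Conj:2.12:1}. The intended route is to directly verify the Hecke-algebra identity $[v^{\mathbf{a}_m(\tilde{x})}][D_{\tilde{z}}^{(m)}](D_{\tilde{z}}^{(m)}C_{\tilde{x}}^{(m)})=1$ from part~(2) of \Cref{Lem:2.11:4}, exploiting the simple ``two commuting transpositions'' cycle structure of $\tilde{z}$ together with an iterated application of the rule \eqref{Eq:2.8:1} along a reduced expression for $\tilde{x}$; as a backup, one can try to transport indecomposability from a smaller related pair by combining \eqref{Eq:2.11:3} with the Type (1) results of \Cref{Sec:4}.
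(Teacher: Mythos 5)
Your reduction to the eight tableaux of \eqref{Type3RList}, the support-size argument via \eqref{Eq:2.12:4} for six of them, and the appeal to \cite[Section 5.2]{KiM16} for the equal-shape tableau all match the paper. The genuine gap is in the remaining case $\mathtt{P}_{x}={\tiny\begin{ytableau}\scriptstyle i\\\scriptstyle k\end{ytableau}}^{\ \langle n-2\rangle}$, i.e.\ $x=(i-1,k)$, which you correctly single out but do not actually resolve. Your normalisation via \Cref{Lem:3:19} is valid but buys nothing: the target pair lives in $\mathrm{S}_{m}$ with $m=k-i+3$ unbounded, so you are left with an infinite family and no computation to fall back on. The ``intended route'' of iterating \eqref{Eq:2.8:1} along a reduced expression of $\tilde{x}$ does not go through as stated, because a reduced expression of $(1,m-1)$ repeats letters, so $C_{\tilde{x}}^{(m)}$ is \emph{not} the product of the corresponding $C_{s}^{(m)}$'s (\Cref{Lem:2.6:2} does not apply), and the product would carry many extraneous Kazhdan--Lusztig summands. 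The ``backup'' also fails: \eqref{Eq:2.11:3} compares isomorphisms $\theta_{x}L_{z}\cong\theta_{y}L_{z}$ for a \emph{fixed} $z$ and says nothing about transporting indecomposability between different highest weights, while the results of \Cref{Sec:4} concern $\mathbf{KM}_{n}(\star,z')$ for $z'$ of shape $(n-2,1,1)$ in the \emph{second} argument, not $\mathbf{KM}_{n}(x,z)$ with $x$ of that shape acting on a $z$ of shape $(n-3,2,1)$.

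The paper closes this case by a downward induction on $i$ (with $k,n$ fixed). One replaces $x$ by the right-cell representative $x_{i}=s_{i-1}s_{i}\cdots s_{k-3}(s_{k-2}s_{k-1}s_{k-2})$, which \emph{does} factor as a product of Kazhdan--Lusztig basis elements with disjoint supports by \Cref{Lem:2.6:2}; then the compatible strong right Bruhat walk $(z_{i},z_{i}s_{i-1})$ together with \Cref{Lem:2.11:5} yields $\theta_{x_{i}}^{(n)}L_{z_{i}}^{(n)}\cong\theta_{x_{i+1}}^{(n)}L_{z_{i}s_{i-1}}^{(n)}$, and since $z_{i}s_{i-1}\sim_{L}^{(n)}z_{i+1}$, the implication \eqref{Eq:2.12:3} gives $\mathbf{KM}_{n}(x_{i},z_{i})=\mathbf{KM}_{n}(x_{i+1},z_{i+1})$; the base case $i=k-3$ has support of size $4$ and is settled by \eqref{Eq:2.12:4}. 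Some such length-reduction mechanism is essential here, and your proposal is missing it.
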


\begin{proof}
By Equations~\eqref{Eq:2.11:1} and \eqref{Eq:2.12:2}, it suffices to show $\mathbf{KM}_{n}(x,z)=\mathtt{true}$ for all $x\in\mathrm{Inv}_{n}$ where $x\leq_{R}^{(n)}z$. This relation implies that $\mathtt{P}_{x}$ belongs to the list in Equation~\eqref{Type3RList}. One can check that all involutions $x$ where $\mathtt{P}_{x}$ is one of the tableau from \Cref{Type3RList}, except the tableaux of shape $(1,1)^{\langle n-2\rangle}$ and $(2,1)^{\langle n-3\rangle}$, have support set $\mathrm{Sup}(x)$ of size no greater than $6$. For example, if $\mathtt{P}_{x}$ is the tableau above with shape $(3)^{\langle n-3\rangle}$, then $x=s_{i-1}s_{k-1}s_{k-2}s_{k}s_{k-1}$, and so $|\mathrm{Sup}(x)|=4\leq 6$. Therefore, by the implication in Equation~\eqref{Eq:2.12:4}, we have $\mathbf{KM}_{n}(x,z)=\mathtt{true}$ for all such involutions. Also, when $\mathtt{P}_{x}$ is the tableau of shape $(2,1)^{\langle n-3\rangle}$, then it has the same shape as $z$, so it follows that $\mathbf{KM}_{n}(x,z)=\mathtt{true}$ by \cite[Section 5.2]{KiM16}.

What remains is to prove that $\mathbf{KM}_{n}(x,z)=\mathtt{true}$ where $x\in\mathrm{Inv}_{n}$ is the involution such that
\begin{equation}\label{Eq:Type3IC-1}
\mathtt{P}_{x}= {\begin{ytableau}\scriptstyle i\\\scriptstyle k\end{ytableau}}^{\ \langle n-2\rangle}.
\end{equation}
We seek to show this by induction, and we begin by setting up a little notation. Firstly, we will think of both $k$ and $n$ as fixed, while we think of $2\leq i<k-2$ as something which varies, with these inequalities being those for $i$ defining the Type (3) involution. Then consider both
\[ z_{i}:=z=(i-1,k)(k-1,k+1), \hspace{2mm} \text{ and } \hspace{2mm} x_{i}:=s_{i-1}s_{i}\cdots s_{k-3}(s_{k-2}s_{k-1}s_{k-2}). \]
Hence $z_{i}$ is just $z$, but we are stressing the dependence on $i$, and one can deduce that $x_{i}\sim_{R}^{(n)}x$, that is to say, the insertion tableau of $x_{i}$ equals that of $x$ in \Cref{Eq:Type3IC-1}. Therefore, by the implication in Equation~\eqref{Eq:2.12:2}, it suffices to prove that $\mathbf{KM}_{n}(x_{i},z_{i})=\mathtt{true}$, and we do this by downward induction on $2\leq i<k-2$. For the base case of $i=k-3$, we have that $|\mathrm{Sup}(x_{k-3})|=4$, and thus $\mathbf{KM}_{n}(x_{k-3},z_{k-3})=\mathtt{true}$ by the implication in Equation~\eqref{Eq:2.12:4}. Now let $2\leq i<k-3$, and assume that $\mathbf{KM}_{n}(x_{j},z_{j})=\mathtt{true}$ for all $j$ such that $i<j\leq k-3$, then we seek to show that $\mathbf{KM}_{n}(x_{i},z_{i})=\mathtt{true}$.

By \Cref{Lem:2.6:2}, we have that $C_{x_{i}}^{(n)}=C_{s_{i-1}}^{(n)}C_{s_{i}}^{(n)}\cdots C_{s_{k-3}}^{(n)}(C_{s_{k-2}s_{k-1}s_{k-2}}^{(n)})$, and hence
\begin{equation}\label{Eq:Type3IC-2}
\theta_{x_{i}}^{(n)}\cong\theta_{s_{k-2}s_{k-1}s_{k-2}}^{(n)}\theta_{s_{k-3}}^{(n)}\cdots\theta_{s_{i}}^{(n)}\theta_{s_{i-1}}^{(n)}.
\end{equation}
Now consider the strong right Bruhat walk $\mathrm{w}:=(z_{i},z_{i}s_{i-1})$ and the reduced expression $\underline{r}:=s_{i-1}s_{i}$. Then since $z_{i}=(i-1,k)(k-1,k+1)$ (see also \Cref{Eq:5.3:1}), one can check that $\mathrm{w}$ is $\underline{r}$-compatible, as discussed in \Cref{Sec:2.3}. That is, one can check that the following four Bruhat relations hold:
\[
z_{i}s_{i-1}<z_{i}, \hspace{2mm} z_{i}s_{i}>z_{i}, \hspace{2mm} (z_{i}s_{i-1})s_{i}<z_{i}s_{i-1}, \hspace{2mm} \text{ and } \hspace{2mm} (z_{i}s_{i-1})s_{i-1}>z_{i}s_{i-1}
\]
(with the former and latter equivalent). Thus, we have the following isomorphisms of modules:
\begin{align}
\theta_{x_{i}}^{(n)}L_{z_{i}}^{(n)}&\cong\theta_{s_{k-2}s_{k-1}s_{k-2}}^{(n)}\theta_{s_{k-3}}^{(n)}\cdots\theta_{s_{i+1}}^{(n)}(\theta_{s_{i}}^{(n)}\theta_{s_{i-1}}^{(n)}L_{z_{i}}^{(n)}) \label{Eq:Type3IC-3} \\
&\cong\theta_{s_{k-2}s_{k-1}s_{k-2}}^{(n)}\theta_{s_{k-3}}^{(n)}\cdots\theta_{s_{i+1}}^{(n)}\theta_{s_{i}}^{(n)}L_{z_{i}s_{i-1}}^{(n)} \nonumber \\
&\cong\theta_{x_{i+1}}^{(n)}L_{z_{i}s_{i-1}}^{(n)}. \nonumber
\end{align}
The first isomorphism follows by \Cref{Eq:Type3IC-2}, the second by Claim~(b) of \Cref{Lem:2.11:5}, and the latter by the definition of $x_{i+1}$ and \Cref{Lem:2.6:2}. Lastly, one can check $z_{i}s_{i-1}\sim_{L}^{(n)}z_{i+1}$, i.e. $\mathtt{Q}_{z_{i}s_{i-1}}=\mathtt{Q}_{z_{i+1}}$. Therefore, since $\theta_{x_{i}}^{(n)}L_{z_{i}}^{(n)}\cong\theta_{x_{i+1}}^{(n)}L_{z_{i}s_{i-1}}^{(n)}$ by \Cref{Eq:Type3IC-3}, and by the implication in Equation~\eqref{Eq:2.12:3}, we have that
\[ \mathbf{KM}_{n}(x_{i},z_{i})=\mathbf{KM}_{n}(x_{i+1},z_{i}s_{i-1})=\mathbf{KM}_{n}(x_{i+1},z_{i+1})=\mathtt{true}, \]
where the latter equality $\mathbf{KM}_{n}(x_{i+1},z_{i+1})=\mathtt{true}$ follows from the induction hypothesis, completing the proof by induction, and, in turn, the proof of the lemma.
\end{proof}

\begin{prop}\label{Type3K}
The Type (3) involution $z$ given above is Kostant positive, that is $\mathbf{K}_{n}(z)=\mathtt{true}$.
\end{prop}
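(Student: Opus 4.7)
The strategy is to invoke \Cref{Thm:2.13:1}: since \Cref{Type3IC} supplies condition~(1), it suffices to verify condition~(2), namely that for any distinct pair $x, y \leq_R^{(n)} z$ the modules $\theta_x^{(n)} L_z^{(n)}$ and $\theta_y^{(n)} L_z^{(n)}$ are non-isomorphic in ${^\mathbb{Z}}\mathcal{O}_0^{(n)}$. Suppose for contradiction that such an isomorphism exists. By the implication \eqref{Eq:2.9:6}, $x \sim_L^{(n)} y$, whence $\mathtt{Q}_x = \mathtt{Q}_y$ and $\mathtt{sh}(x) = \mathtt{sh}(y)$; distinctness then forces $\mathtt{P}_x \neq \mathtt{P}_y$. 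Inspecting the admissible list \eqref{Type3RList}, the only pairs of distinct insertion tableaux sharing a common shape are
\begin{itemize}
\item[(a)] $\{\mathtt{P}_x, \mathtt{P}_y\} = \left\{\, {\begin{ytableau}\scriptstyle i\end{ytableau}}^{\ \langle n-1\rangle},\ {\begin{ytableau}\scriptstyle k\end{ytableau}}^{\ \langle n-1\rangle} \,\right\}$;
\item[(b)] $\{\mathtt{P}_x, \mathtt{P}_y\} = \left\{\, {\begin{ytableau}\scriptstyle i & \scriptstyle k\end{ytableau}}^{\ \langle n-2\rangle},\ {\begin{ytableau}\scriptstyle k & \scriptstyle k+1\end{ytableau}}^{\ \langle n-2\rangle} \,\right\}$.
\end{itemize}
By the equivalence \eqref{Eq:2.11:3}, for each candidate pair of right cells it suffices to exhibit one pair of left-cell-equivalent representatives $(x,y)$ and derive a contradiction from the decategorified identity $D_z^{(n)} C_x^{(n)} = D_z^{(n)} C_y^{(n)}$ in $\mathrm{H}_n^{\mathbb{A}}$.

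For case~(a), the plan is to take $x = s_{i-1}$ and $y = s_{k-1} s_{k-2} \cdots s_{i-1}$. Schensted insertion directly confirms $\mathtt{P}_y = {\begin{ytableau}\scriptstyle k\end{ytableau}}^{\ \langle n-1\rangle}$ and $\mathtt{Q}_y = {\begin{ytableau}\scriptstyle i\end{ytableau}}^{\ \langle n-1\rangle} = \mathtt{Q}_x$. Using the one-line notation \eqref{Eq:5.3:1}, one verifies that the walk $\mathrm{w} = (z,\, zs_{k-1},\, zs_{k-1}s_{k-2},\, \ldots,\, zs_{k-1}\cdots s_{i})$ is $\underline{y}$-compatible in the sense of \Cref{Sec:2.3} (each $s_{k-j}$ is a right descent of $w_j$, whereas its neighbours are not), and a routine induction via \eqref{Eq:2.6:1.5R} shows $C_y^{(n)} = C_{s_{k-1}}^{(n)} C_{s_{k-2}}^{(n)} \cdots C_{s_{i-1}}^{(n)}$. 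Claim~(a) of \Cref{Lem:2.11:5} then yields $D_z^{(n)} C_y^{(n)} = D_{w_\ell}^{(n)} C_{s_{i-1}}^{(n)}$, where $w_\ell := z s_{k-1} \cdots s_{i}$ satisfies $\ell(w_\ell) < \ell(z)$. Expanding the right-hand side via \eqref{Eq:2.8:1}, the coefficient of $D_z^{(n)}$ vanishes: $z$ is excluded from the $\mu$-sum by the relation $z s_{i-1} < z$, and the remaining terms $(v+v^{-1})D_{w_\ell}^{(n)}$ and $D_{w_\ell s_{i-1}}^{(n)}$ are supported on elements of length strictly less than $\ell(z)$. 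In contrast, expanding $D_z^{(n)} C_{s_{i-1}}^{(n)}$ directly via \eqref{Eq:2.8:1} contributes the term $(v+v^{-1}) D_z^{(n)}$. This mismatch furnishes the required contradiction.

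For case~(b), take $x = s_{i-1} s_{k-1}$ (a commuting product since $k \geq i+3$, whose insertion tableau is directly verified to be ${\begin{ytableau}\scriptstyle i & \scriptstyle k\end{ytableau}}^{\ \langle n-2\rangle}$) and let $y$ be the unique element of the right cell of ${\begin{ytableau}\scriptstyle k & \scriptstyle k+1\end{ytableau}}^{\ \langle n-2\rangle}$ with $\mathtt{Q}_y = \mathtt{Q}_x$. The argument here is genuinely more delicate: since both $s_{i-1}$ and $s_{k-1}$ lie in $\mathrm{Des}_R(z)$, no single right Bruhat walk from $z$ is compatible with the reduced expression $s_{i-1} s_{k-1}$ of $x$ (nor with $s_{k-1} s_{i-1}$), so \Cref{Lem:2.11:5} cannot be applied uniformly. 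The plan is to emulate the inductive strategy of \Cref{Prop:4:3}: proceed by downward induction on $i$ (equivalently, upward induction on $k$), at each step multiplying the candidate identity $D_z^{(n)} C_{x_i}^{(n)} = D_z^{(n)} C_{y_i}^{(n)}$ on the right by a suitable $C_{s_j}^{(n)}$ which, via \eqref{Eq:2.8:1} and careful use of the descent constraint on $z$, kills the extraneous terms and passes the equation to the analogous pair at a reduced parameter. The base case, corresponding to $i$ and $k$ at the extremes permitted by Type~(3), reduces via \Cref{Lem:3:18} to an inequality in a small-rank Hecke algebra that can be checked computationally.

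The main obstacle will be case~(b): identifying the correct inductive sequence of pairs $(x_i, y_i)$ and verifying, at each step, that the auxiliary multiplication by $C_{s_j}^{(n)}$ both preserves the equality hypothesis (so induction can fire) and simultaneously annihilates the unwanted terms in the dual-KL basis expansion. If the inductive approach proves unwieldy, the fallback is to expand $D_z^{(n)} C_x^{(n)}$ and $D_z^{(n)} C_y^{(n)}$ directly via iterated application of \eqref{Eq:2.8:1}, compare coefficients of a distinguishing dual basis element (e.g.~$D_{z s_{i-1} s_{k-1}}^{(n)}$), and invoke \Cref{Lem:3:18} once more to transfer the final verification to a small-rank computation.
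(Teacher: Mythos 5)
Your overall skeleton is the same as the paper's: reduce via \Cref{Thm:2.13:1} and \Cref{Type3IC} to condition~(2), observe that the list \eqref{Type3RList} leaves only the two pairs of insertion tableaux you identify, and use \eqref{Eq:2.11:3} to pass to convenient representatives. But your case~(a) breaks at the central step. The walk $\mathrm{w}=(z,zs_{k-1},\dots,zs_{k-1}\cdots s_{i})$ is \emph{not} $\underline{y}$-compatible for $\underline{y}=s_{k-1}s_{k-2}\cdots s_{i-1}$: from \eqref{Eq:5.3:1}, position $i-1$ of every element of the walk carries the value $k$, so $s_{i-1}$ is a right descent of $z_{m}$ for every $m$; in particular $z_{l-1}s_{i_{l}}=z_{l-1}s_{i-1}<z_{l-1}$, violating \eqref{Eq:2.3:1} at the penultimate index (your parenthetical ``its neighbours are not [descents]'' is false for the last letter). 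Hence \Cref{Lem:2.11:5} does not apply, the identity $D_{z}^{(n)}C_{y}^{(n)}=D_{w_{\ell}}^{(n)}C_{s_{i-1}}^{(n)}$ is unjustified, and the vanishing of $[D_{z}^{(n)}](D_{z}^{(n)}C_{y}^{(n)})$ does not follow from your argument. This is exactly the trap the paper's choice of representatives is designed to avoid: it takes $x=s_{i-1}s_{i}\cdots s_{k-3}$ and $y=s_{k-1}s_{k-2}s_{k-3}$, whose compatible walks shuffle the displaced letters into the fixed-point region between positions $i$ and $k-2$, never touching the other descent of $z$; the contradiction then comes from comparing simple tops of $\theta_{s_{k-3}}L_{zs_{i-1}\cdots s_{k-4}}$ and $\theta_{s_{k-3}}L_{zs_{k-1}s_{k-2}}$.

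Case~(b) is not a proof but an acknowledged plan, and as sketched it would not close. An induction modelled on \Cref{Prop:4:3} needs, at each step, an auxiliary $C_{s_{j}}^{(n)}$ that both preserves the hypothesised equality and kills the spurious summands; you neither specify the sequence of pairs nor verify either property, and your fallback of transporting the final check to a small-rank Hecke algebra via \Cref{Lem:3:18} cannot work in general, since the supports involved span $\{s_{i-1},\dots,s_{k}\}$ with $k-i$ unbounded. The paper's actual argument for this pair is of a different nature: it writes $\theta_{x}^{(n)}\cong\theta_{y}^{(n)}\theta_{s_{k-3}}^{(n)}\cdots\theta_{s_{i-1}}^{(n)}$, reuses the Case~(i) walk to reduce the assumed isomorphism to the Hecke identity $D_{z'}^{(n)}C_{s_{k-3}}^{(n)}C_{y}^{(n)}=D_{z}^{(n)}C_{y}^{(n)}$ with $z'=zs_{i-1}\cdots s_{k-4}$, and then refutes it by a degree count — the left side contains $D_{z'}^{(n)}$ in degree $3$ (using $y\leq_{L}^{(n)}z'$, $\mathbf{a}_{n}(y)=2$ and positivity), while repeated application of \eqref{Eq:2.8:1} shows the right side has no degree-$3$ term. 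You would need to supply an argument of comparable substance for this pair before the proposition is proved.
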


\begin{proof}
Firstly, recall that we have $n\geq6$, $i\in[2,n-4]$, $i+2<k<n$, and
\[ z=z_{i,k+1,k}^{n}=(i-1,k)(k-1,k+1)\in\mathrm{S}_{n}. \]
By \Cref{Thm:2.13:1} and \Cref{Type3IC}, it suffices to prove that we have a non-isomorphism
\begin{equation}\label{Eq:Type3K-1}
\theta_{x}^{(n)}L_{z}^{(n)}\not\cong\theta_{y}^{(n)}L_{z}^{(n)} \hspace{1mm} \text{ in } \hspace{1mm} {^\mathbb{Z}}\mathcal{O}_{0}^{(n)},
\end{equation}
for all distinct pairs $x,y\in\mathrm{S}_{n}$ where $x,y\leq_{R}^{(n)}z$. For contradiction, assume there exists $x,y\leq_{R}^{(n)}z$ such that $\theta_{x}^{(n)}L_{z}^{(n)}\cong\theta_{y}^{(n)}L_{z}^{(n)}$. As $x,y\leq_{R}^{(n)}z$, both $\mathtt{P}_{x}$ and $\mathtt{P}_{y}$ belong to the list in Equation~\eqref{Type3RList} above. Moreover, by \Cref{Eq:2.9:6}, we have $x\sim_{L}^{(n)}y$, which implies $\mathtt{Q}_{x}=\mathtt{Q}_{y}$, and in particular $\mathtt{sh}(x)=\mathtt{sh}(y)$. Also, since $x$ and $y$ are distinct, this implies that $\mathtt{P}_{x}\neq\mathtt{P}_{y}$. Hence altogether, both $\mathtt{P}_{x}$ and $\mathtt{P}_{y}$ belong to \eqref{Type3RList}, share the same shape, but are distinct. Up to symmetry in $x$ and $y$, this implies that
\[ (\mathtt{P}_{x},\mathtt{P}_{y})=\left({\begin{ytableau}\scriptstyle i\end{ytableau}}^{\ \langle n-1\rangle}, \hspace{2mm} {\begin{ytableau}\scriptstyle k\end{ytableau}}^{\ \langle n-1\rangle} \right) \hspace{2mm} \text{ or } \hspace{2mm}  (\mathtt{P}_{x},\mathtt{P}_{y})=\left({\begin{ytableau}\scriptstyle i&\scriptstyle k\end{ytableau}}^{\ \langle n-2\rangle}, \hspace{2mm} {\begin{ytableau}\scriptstyle k&\scriptstyle k+1\end{ytableau}}^{\ \langle n-2\rangle}\right). \]
Furthermore, by the equivalence in Equation~\eqref{Eq:2.11:3}, it suffices to consider any particular pair $(x',y')$ such that $(\mathtt{P}_{x'},\mathtt{P}_{y'})$ is one of the above two pairings of insertion tableau, and $x'\sim_{L}^{(n)}y'$. Thus, by assumption, we have that $\theta_{x}^{(n)}L_{z}^{(n)}\cong\theta_{y}^{(n)}L_{z}^{(n)}$ holds for $x,y\leq_{R}^{(n)}z$ being (at least) one of the following two pairs:
\begin{center}
\begin{itemize}
\item[(i)] $(x,y)=(s_{i-1}s_{i}\cdots s_{k-3}, s_{k-1}s_{k-2}s_{k-3})$, or \\
\item[(ii)] $(x,y)=(s_{i-1}s_{i}\cdots s_{k-3}s_{k-1}s_{k-2}s_{k}s_{k-1}, s_{k-1}s_{k-2}s_{k}s_{k-1})$.
\end{itemize}
\end{center}
We examine these two cases separately, showing that both lead to a contradiction:

\textbf{Case (i)}: By assumption, we have an isomorphism $\theta_{x}^{(n)}L_{z}^{(n)}\cong\theta_{y}^{(n)}L_{z}^{(n)}$ where
\[ x:=s_{i-1}s_{i}\cdots s_{k-3} \hspace{2mm} \text{ and } \hspace{2mm} y:=s_{k-1}s_{k-2}s_{k-3}. \]
We seek a contradiction. Firstly, by \Cref{Lem:2.6:2}, we have the decompositions
\[ \theta_{x}^{(n)}\cong\theta_{s_{k-3}}^{(n)}\cdots\theta_{s_{i}}^{(n)}\theta_{s_{i-1}}^{(n)} \hspace{2mm} \text{ and } \hspace{2mm} \theta_{y}^{(n)}\cong\theta_{s_{k-3}}^{(n)}\theta_{s_{k-2}}^{(n)}\theta_{s_{k-1}}^{(n)}. \]
Now consider the reduced expression $\underline{x}=s_{i-1}s_{i}\cdots s_{k-3}$ of $x$, and the strong right Bruhat walk
\begin{equation}\label{Eq:Type3K-2}
\mathrm{w}=(\mathrm{w}_{i-1},\mathrm{w}_{i},\dots,\mathrm{w}_{k-3}):=(z,zs_{i-1},zs_{i-1}s_{i},\dots,zs_{i-1}s_{i}\cdots s_{k-4}).
\end{equation}
The length of $\mathrm{w}$ is $m=k-i-1$, and we have indexed the entries in $\mathrm{w}$ with the set $[i-1,k-3]:=\{i-1,i,\dots,k-3\}$ to allow such entries to be easily compared to the simple transpositions appearing in the reduced expression $\underline{x}$. For $j\in[i-1,k-3]$, the $j$-th entry of $\mathrm{w}$ is given by
\begin{equation*}
\mathrm{w}_{j}=zs_{i-1}s_{i}\cdots s_{j-1},
\end{equation*}
where the product $s_{i-1}s_{i}\cdots s_{j-1}$ equals the identity $1_{n}$ when $j=i-1$. By consulting \Cref{Eq:5.3:1}, one can deduce that the one-line notation for the $j$-th entry of $\mathrm{w}$ is given by
\begin{equation}\label{Eq:Type3K-2.5}
\mathrm{w}_{j}=1\cdots(i-2){\color{teal}i(i+1)\cdots jk}(j+1)\cdots(k-2){\color{teal}(k+1)(i-1)(k-1)}(k+2)\cdots n.
\end{equation}
We have coloured the non-fixed points of $\mathrm{w}_{j}$ in {\color{teal}teal}. One can see that, to obtain this description of $\mathrm{w}_{j}$ from that of $z$ in \Cref{Eq:5.3:1}, one has repeatedly swapped the letter $k$ with its right hand neighbour until $k$ is placed in the $j$-th position. With this description, one can confirm that the strong right Bruhat walk $\mathrm{w}$ is $\underline{x}$-compatible, as described in \Cref{Sec:2.3}. That is, one can confirm that the following Bruhat relations hold for each $j\in[i-1,k-3]$:
\[ \mathrm{w}_{j}s_{j}<\mathrm{w}_{j}, \hspace{2mm} \mathrm{w}_{j}s_{j-1}>\mathrm{w}_{j} \hspace{1mm} \text{(for $j>i-1$)}, \hspace{1mm} \text{ and } \hspace{1mm} \mathrm{w}_{j}s_{j+1}>\mathrm{w}_{j} \hspace{1mm} \text{(for $j<k-3$)}. \]
Therefore, by Claim~(b) of \Cref{Lem:2.11:5}, we have that
\begin{equation}\label{Eq:Type3K-3}
\theta_{x}^{(n)}L_{z}^{(n)}\cong\theta_{s_{k-3}}^{(n)}\cdots\theta_{s_{i}}^{(n)}\theta_{s_{i-1}}^{(n)}L_{z}^{(n)}\cong\theta_{s_{k-3}}^{(n)}L_{\mathrm{w}_{k-3}}^{(n)}=\theta_{s_{k-3}}^{(n)}L_{zs_{i-1}s_{i}\cdots s_{k-4}}^{(n)}.
\end{equation}
Now consider the reduced expression $\underline{y}:=s_{k-1}s_{k-2}s_{k-3}$. Then one can check that the strong right Bruhat walk $(z,zs_{k-1},zs_{k-1}s_{k-2})$ is $\underline{y}$-compatible (see also \Cref{Ex:2.3:1} since the considerations are analogous). Hence, by Claim~(b) of \Cref{Lem:2.11:5}, we also have that
\begin{equation}\label{Eq:Type3K-4}
\theta_{y}^{(n)}L_{z}^{(n)}\cong\theta_{s_{k-3}}^{(n)}\theta_{s_{k-2}}^{(n)}\theta_{s_{k-1}}^{(n)}L_{z}^{(n)}\cong\theta_{s_{k-3}}^{(n)}L_{zs_{k-1}s_{k-2}}^{(n)}.
\end{equation}
By assumption $\theta_{x}^{(n)}L_{z}^{(n)}\cong\theta_{y}^{(n)}L_{z}^{(n)}$, and thus from \Cref{Eq:Type3K-3,Eq:Type3K-4}, we have that
\begin{equation}\label{Eq:Type3K-5}
\theta_{s_{k-3}}^{(n)}L_{zs_{i-1}s_{i}\cdots s_{k-4}}^{(n)}\cong\theta_{s_{k-3}}^{(n)}L_{zs_{k-1}s_{k-2}}^{(n)}.
\end{equation}
Note that both sides of this isomorphism are non-zero, since it can be checked that $s_{k-3}$ belongs to the right descent sets of both $zs_{i-1}s_{i}\cdots s_{k-4}$ and $zs_{k-1}s_{k-2}$. Lastly, by \Cref{Prop:2.10:2}, we see that the module on the left hand side of \Cref{Eq:Type3K-5} has simple top $L_{zs_{i-1}s_{i}\cdots s_{k-4}}\langle-1\rangle$, while the top of the module on the right hand side is $L_{zs_{k-1}s_{k-2}}\langle-1\rangle$. However, regardless of the value of $k$ (which satisfies $i+2<k<n$), we have that $zs_{i-1}s_{i}\cdots s_{k-4}\neq zs_{k-1}s_{k-2}$, which means that these two tops are non-isomorphic, which gives the desired contradiction, completing Case (i).

\textbf{Case (ii)}: By assumption, we have an isomorphism $\theta_{x}^{(n)}L_{z}^{(n)}\cong\theta_{y}^{(n)}L_{z}^{(n)}$ where
\[ x:=s_{i-1}s_{i}\cdots s_{k-3}(s_{k-1}s_{k-2}s_{k}s_{k-1}) \hspace{2mm} \text{ and } \hspace{2mm} y:=s_{k-1}s_{k-2}s_{k}s_{k-1}. \]
We seek a contradiction. Firstly, from \Cref{Lem:2.6:2}, and explicit computation, we have that
\[ \theta_{x}^{(n)}\cong\theta_{y}^{(n)}\theta_{s_{k-3}}^{(n)}\cdots\theta_{s_{i}}^{(n)}\theta_{s_{i-1}}^{(n)} \hspace{1mm} \text{ and } \hspace{1mm} \theta_{y}^{(n)}\cong\theta_{s_{k-1}}^{(n)}\theta_{s_{k-2}}^{(n)}\theta_{s_{k}}^{(n)}\theta_{s_{k-1}}^{(n)}. \]
Now consider the reduced expression $\underline{r}=s_{i-1}s_{i}\cdots s_{k-3}$ and the strong right Bruhat walk
\[ \mathrm{w}:=(z,zs_{i-1},zs_{i-1}s_{i},\dots,zs_{i-1}s_{i}\cdots s_{k-4}). \]
This is the same reduced expression and strong right Bruhat walk which was considered in Case~(i) above. Therefore, from \Cref{Eq:Type3K-3} above, we have that
\[ \theta_{x}^{(n)}L_{z}^{(n)}\cong\theta_{y}^{(n)}(\theta_{s_{k-3}}^{(n)}\cdots\theta_{s_{i}}^{(n)}\theta_{s_{i-1}}^{(n)}L_{z}^{(n)})\cong\theta_{y}^{(n)}\theta_{s_{k-3}}^{(n)}L_{zs_{i-1}s_{i}\cdots s_{k-4}}^{(n)}. \] 
From this isomorphism, and the assumption that $\theta_{x}^{(n)}L_{z}^{(n)}\cong\theta_{y}^{(n)}L_{z}^{(n)}$, we must have the isomorphism $\theta_{y}^{(n)}\theta_{s_{k-3}}^{(n)}L_{zs_{i-1}s_{i}\cdots s_{k-4}}^{(n)}\cong\theta_{y}^{(n)}L_{z}^{(n)}$, and decategorifying this gives the following equality in $\mathrm{H}_{n}^{\mathbb{A}}$:
\begin{equation}\label{Eq:Type3K-6}
D_{zs_{i-1}s_{i}\cdots s_{k-4}}^{(n)}C_{s_{k-3}}^{(n)}C_{y}^{(n)}=D_{z}^{(n)}C_{y}^{(n)}.
\end{equation}
We want to show that this equality cannot be. We will do this by proving that, when expressed in terms of the dual Kazhdan-Lusztig basis $\mathcal{D}_{n}$, the right hand side contains no terms in degree 3 or higher, while the left hand side does contain a term in degree 3. 

Starting with the right hand side of \Cref{Eq:Type3K-6}, we have that
\begin{align}
D_{z}^{(n)}C_{y}^{(n)}&=D_{z}^{(n)}C_{s_{k-1}}^{(n)}C_{s_{k-2}}^{(n)}C_{s_{k}}^{(n)}C_{s_{k-1}}^{(n)} \nonumber \\
&=\left((v+v^{-1})D_{z}^{(n)}+D_{zs_{k-1}}^{(n)}+\sum_{z<w<ws_{k-1}}\mu^{(n)}(z,w)D_{w}^{(n)}\right)C_{s_{k-2}}^{(n)}C_{s_{k}}^{(n)}C_{s_{k-1}}^{(n)} \nonumber \\
&= \left(D_{zs_{k-1}}^{(n)}+\sum_{z<w<ws_{k-1}}\mu^{(n)}(z,w)D_{w}^{(n)}\right)C_{s_{k-2}}^{(n)}C_{s_{k}}^{(n)}C_{s_{k-1}}^{(n)}. \nonumber
\end{align}
The second equality follows by \Cref{Eq:2.8:1}, while the last follows by the fact that $D_{z}^{(n)}C_{s_{k-2}}^{(n)}=0$ (since $s_{k-2}$ does not belong to $\mathrm{Des}_{R}(z)$). We can see from the last equality above that if a degree 3 or higher term appears in $D_{z}^{(n)}C_{y}^{(n)}$, then it must appear in either
\[ D_{zs_{k-1}}^{(n)}C_{s_{k-2}}^{(n)}C_{s_{k}}^{(n)}C_{s_{k-1}}^{(n)} \hspace{1mm} \text{ and/or } \hspace{1mm} D_{w}^{(n)}C_{s_{k-2}}^{(n)}C_{s_{k}}^{(n)}C_{s_{k-1}}^{(n)}, \]
for some $w$ such that $z<w<ws_{k-1}$. By repeatedly applying \Cref{Eq:2.8:1} to either of these, we see that no terms of degree 4 or higher can appear, and for a degree 3 term to appear, it can only be $D_{zs_{k-1}}^{(n)}$ and/or $D_{w}^{(n)}$, and only if $s_{k-2}$, $s_{k}$, and $s_{k-1}$ belong to the respective right descent sets. However, we know that $z<zs_{k-1}$ and $w<ws_{k-1}$, hence no degree 3 term appears in $D_{z}^{(n)}C_{y}^{(n)}$.

Now for the left hand side of \Cref{Eq:Type3K-6}. First let $z':=zs_{i-1}s_{i}\cdots s_{k-4}$, and note that $z'=\mathrm{w}_{k-3}$, the last term in the strong right Bruhat walk from Case (i), and also above. Thus, by \Cref{Eq:Type3K-2.5} with $j=k-3$, we see that $s_{k-3}\in\mathrm{Des}_{R}(z')$. Hence, by \Cref{Eq:2.8:1} we have that
\begin{equation}\label{Eq:Type3K-7}
D_{z'}^{(n)}C_{s_{k-3}}^{(n)}C_{y}^{(n)}=\left((v+v^{-1})D_{z'}^{(n)}+D_{z's_{k-3}}^{(n)}+\sum_{z<w<ws_{k-3}}\mu^{(n)}(z,w)D_{w}^{(n)}\right)C_{y}^{(n)}
\end{equation}
Note, the coefficient of $[D_{a}^{(n)}]D_{b}^{(n)}C_{c}^{(n)}$ always belongs to $\mathbb{A}_{\geq 0}=\mathbb{Z}_{\geq0}[v,v^{-1}]$, i.e. the coefficients of the Laurent polynomial are non-negative, since $[D_{a}^{(n)}]D_{b}^{(n)}C_{c}^{(n)}$ corresponds to the graded multiplicity of $L_{a}^{(n)}$ in $\theta_{c}^{(n)}L_{b}^{(n)}$. Therefore, to show that a degree 3 term appears in $D_{z'}^{(n)}C_{s_{k-3}}^{(n)}C_{y}^{(n)}$, it suffices to show that a degree 3 term appears in the term $(v+v^{-1})D_{z'}^{(n)}C_{y}^{(n)}$, since no cancellation can occur from the other terms in the right hand side of \Cref{Eq:Type3K-7}. Well, first note that
\[ F_{4,n}^{k-2}(s_{2}s_{1}s_{3}s_{2})=y, \]
and the pattern of size $4$ consecutively appearing in $z'$ at position $k-2$ is $2413$. Since
\[ s_{2}s_{1}s_{3}s_{2}=3412\sim_{L}2413, \]
then by Claim~(b) of \Cref{Prop:3:7}, we have that $y\leq_{L}^{(n)}z'$, which is equivalent to $y\leq_{R}^{(n)}(z')^{-1}$ and $D_{z'}^{(n)}C_{y}^{(n)}\neq0$. Therefore, the term $(v+v^{-1})D_{z'}^{(n)}C_{y}^{(n)}$, appearing in \Cref{Eq:Type3K-7}, is non-zero. 

Lastly, as $y\in\mathrm{Inv}_{n}$ and $\mathbf{a}_{n}(y)=2$, then by Claim~(1) of \Cref{Lem:2.11:4}, 
\[ [v^{2}][D_{z'}^{(n)}]\left(D_{z'}^{(n)}C_{y}^{(n)}\right)\neq0. \]
This implies $D_{z'}^{(n)}$ appears in $(v+v^{-1})D_{z'}^{(n)}C_{y}^{(n)}$ at degree 3, and so $D_{z'}^{(n)}$ also appears in $D_{z'}^{(n)}C_{s_{k-3}}^{(n)}C_{y}^{(n)}$ at degree 3. Therefore, \Cref{Eq:Type3K-6} cannot be, giving the desired contradiction. 
\end{proof}

\subsection{Involutions of Type $(4^{\ast})$}\label{Sec:5.4}

Fix $n\geq 5$, $i\in[2,n-3]$, and let $z:=z_{i,i+2,i+3}^{n}$ be the involution in $\mathrm{Inv}_{n}$ of Type $(4^{\ast})$. Hence, it satisfies \Cref{Type4star}, which we recall here:
\begin{equation*}
\tilde{\mathtt{RS}}_{n}(z)={\begin{ytableau}\scriptstyle i&\scriptstyle i+2\\\scriptstyle i+3\end{ytableau}}^{\ \langle n-3\rangle} \hspace{4mm} \text{ and } \hspace{4mm} z=(i-1, i)(i+1,i+3).
\end{equation*}
When written in one-line notation, the involution $z$ is given by
\begin{equation}\label{Eq:5.4:1}
z=1\cdots(i-2){\color{teal}i(i-1)(i+3)}(i+2){\color{teal}(i+1)}(i+4)\cdots n,
\end{equation}
where we have coloured the fixed points in {\color{teal}teal}. We first investigate what elements $x\in\mathrm{S}_{n}$ have a chance at satisfying the relation $x\leq_{R}^{(n)}z$. Assume $x\leq_{R}^{(n)}z$, then by the implications in Equations~\eqref{Eq:2.9:4} and \eqref{Eq:2.9:4.5}, $\mathrm{Des}_{L}(x)\subseteq\mathrm{Des}_{L}(z)=\{s_{i-1},s_{i+1},s_{i+2}\}$ and $\mathtt{sh}(z)=(n-3,2,1)\preceq\mathtt{sh}(x)$, with equality of shapes further implying $x\sim_{L}^{(n)}z$. Therefore, by \Cref{Lem:2.4:1}, $\mathtt{P}_{x}$ must belong to the following list:
\begin{equation}\label{Type4RList}
\emptyset^{\langle n\rangle}
\end{equation}
\[
{\begin{ytableau}\scriptstyle i\end{ytableau}}^{\ \langle n-1\rangle} \hspace{6mm}
{\begin{ytableau}\scriptstyle i+2\end{ytableau}}^{\ \langle n-1\rangle} \hspace{6mm}
{\begin{ytableau}\scriptstyle i+3\end{ytableau}}^{\ \langle n-1\rangle}
\]
\[
{\color{red}{\begin{ytableau}\scriptstyle i&\scriptstyle i+1\end{ytableau}}^{\ \langle n-2\rangle}} \hspace{6mm}
{\begin{ytableau}\scriptstyle i&\scriptstyle i+2\end{ytableau}}^{\ \langle n-2\rangle} \hspace{6mm}
{\begin{ytableau}\scriptstyle i&\scriptstyle i+3\end{ytableau}}^{\ \langle n-2\rangle} \hspace{6mm}
{\begin{ytableau}\scriptstyle i+2&\scriptstyle i+3\end{ytableau}}^{\ \langle n-2\rangle} \hspace{6mm}
{\color{blue}{\begin{ytableau}\scriptstyle i+3&\scriptstyle i+4\end{ytableau}}^{\ \langle n-2\rangle}}
\]
\[
{\begin{ytableau}\scriptstyle i\\\scriptstyle i+2\end{ytableau}}^{\ \langle n-2\rangle} \hspace{6mm}
{\begin{ytableau}\scriptstyle i\\\scriptstyle i+3\end{ytableau}}^{\ \langle n-2\rangle} \hspace{6mm}
{\begin{ytableau}\scriptstyle i+2\\\scriptstyle i+3\end{ytableau}}^{\ \langle n-2\rangle}
\]
\[
{\color{purple}{\begin{ytableau}\scriptstyle i&\scriptstyle i+1&\scriptstyle i+2\end{ytableau}}^{\ \langle n-3\rangle}} \hspace{6mm}
{\color{red}{\begin{ytableau}\scriptstyle i&\scriptstyle i+1&\scriptstyle i+3\end{ytableau}}^{\ \langle n-3\rangle}} \hspace{6mm}
{\color{red}{\begin{ytableau}\scriptstyle i&\scriptstyle i+2&\scriptstyle i+3\end{ytableau}}^{\ \langle n-3\rangle}} \hspace{6mm}
{\color{blue}{\begin{ytableau}\scriptstyle i&\scriptstyle i+3&\scriptstyle i+4\end{ytableau}}^{\ \langle n-3\rangle}}
\]
\[
{\color{blue}{\begin{ytableau}\scriptstyle i+2&\scriptstyle i+3&\scriptstyle i+4\end{ytableau}}^{\ \langle n-3\rangle}} \hspace{6mm}
{\color{cyan}{\begin{ytableau}\scriptstyle i+3&\scriptstyle i+4&\scriptstyle i+5\end{ytableau}}^{\ \langle n-3\rangle}}
\]
\[
{\begin{ytableau}\scriptstyle i&\scriptstyle i+2\\\scriptstyle i+3\end{ytableau}}^{\ \langle n-3\rangle}
\]
Reading top to bottom, left to right, we have the following existence requirements:
\begin{itemize}
\item The three {\color{red}red} tableaux require that $i>2$, with the latter two also requiring that $n>5$.
\item The {\color{purple}purple} tableau requires that $i>3$ and $n>5$.
\item The three {\color{blue}blue} tableaux require that $i<n-3$ and $n>5$.
\item The {\color{cyan}cyan} tableau requires that $i<n-4$ and $n>6$.
\end{itemize}

\begin{lem}\label{Type4IC}
The Type (4$^{\ast}$) involution $z$ above satisfies the Indecomposability Conjecture \ref{Conj:2.12:1}. That is to say, we have that $\mathbf{KM}_{n}(\star,z)=\mathtt{true}$.
\end{lem}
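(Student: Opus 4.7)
The plan is to mirror the argument used for Lemmas \ref{Type1IC} and \ref{Type2IC}, which both proceeded by bounding the support of any relevant involution and invoking Equation~\eqref{Eq:2.12:4}, together with \cite[Section 5.2]{KiM16} for the one tableau that shares the shape of $z$. By Equations~\eqref{Eq:2.11:1} and \eqref{Eq:2.12:2}, it suffices to verify $\mathbf{KM}_{n}(x,z)=\mathtt{true}$ for every involution $x\in\mathrm{Inv}_{n}$ satisfying $x\leq_{R}^{(n)}z$. For such $x$, the insertion tableau $\mathtt{P}_{x}=\mathtt{Q}_{x}$ must appear in the list \eqref{Type4RList} that we just enumerated (after discarding any tableaux ruled out as inadmissible, if needed, by the same pattern/consecutive-avoidance technique used in Lemma~\ref{Lem:5.2:1}).

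The key observation, specific to Type~$(4^{\ast})$, is that every tableau appearing in \eqref{Type4RList} has all of its entries in the narrow window $[i-3,i+5]\cap[1,n]$. Consequently, for each such $\mathtt{P}_{x}$, the unique involution in the corresponding right cell has all non-fixed points inside this window, so its support is contained in the set $\{s_{i-3},s_{i-2},\ldots,s_{i+4}\}$ and has size at most $6$ in every case. I would check this case by case: the empty and single-box tableaux give supports of size $\leq 1$; the shape $(1)^{\langle n-2\rangle}$ and $(1,1)^{\langle n-2\rangle}$ tableaux correspond to involutions that are either a single transposition of nearby letters or a product of at most two such transpositions, all with support of size $\leq 4$; and the shape $(3)^{\langle n-3\rangle}$ tableaux give involutions that are products of up to three transpositions whose non-fixed points still lie in the window, hence support size $\leq 6$.

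With the bound $|\mathrm{Sup}(x)|\leq 6$ verified for every $\mathtt{P}_{x}$ in \eqref{Type4RList} \emph{other} than the one of shape $(n-3,2,1)^{\langle n-3\rangle}$, the implication in Equation~\eqref{Eq:2.12:4} delivers $\mathbf{KM}_{n}(x,z)=\mathtt{true}$ immediately for all such $x$. The remaining case, $\mathtt{P}_{x}=\tilde{\mathtt{RS}}_{n}(z)$, is precisely the case where $x$ and $z$ share the same shape; here \cite[Section~5.2]{KiM16} (invoked just as in Lemma~\ref{Type1IC}) guarantees indecomposability. Combining these two cases exhausts the list and proves the lemma.

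The main obstacle is cosmetic rather than structural: verifying, for each tableau in the long list, that the associated involution really does have its non-fixed points confined to the window $[i-3,i+5]$ requires a small case analysis (essentially running reverse Robinson--Schensted on each tableau). In contrast to Lemma~\ref{Type3IC}, no inductive argument is needed here, precisely because Type~$(4^{\ast})$ does not have a free index $k$ ranging over a long interval; all entries of the admissible tableaux are tied to $i$ by bounded offsets, so the reduction to $|\mathrm{Sup}|\leq 6$ is uniform.
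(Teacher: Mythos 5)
Your proposal is correct and follows essentially the same route as the paper: the paper's proof also reduces to involutions $x\leq_{R}^{(n)}z$ with $\mathtt{P}_{x}$ in the list \eqref{Type4RList}, checks case by case that every such involution has $|\mathrm{Sup}(x)|\leq 6$ (in fact at most $5$), and concludes via the implication in Equation~\eqref{Eq:2.12:4}. The only difference is that the paper does not need the appeal to \cite[Section 5.2]{KiM16} for the tableau of shape $(n-3,2,1)$, since that involution is $z=s_{i-1}s_{i+1}s_{i+2}s_{i+1}$ itself, whose support already has size $3$; your extra step is harmless but redundant.
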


\begin{proof}
By Equations~\eqref{Eq:2.11:1} and \eqref{Eq:2.12:2}, it suffices to show $\mathbf{KM}_{n}(x,z)=\mathtt{true}$ for all $x\in\mathrm{Inv}_{n}$ such that $x\leq_{R}^{(n)}z$. This relation implies that $\mathtt{P}_{x}$ belongs to the list in Equation~\eqref{Type4RList}, and one can check that any involution with such a $\mathtt{P}_{x}$ has support set $\mathrm{Sup}(x)$ of size no greater than $6$. For example, if $\mathtt{P}_{x}$ is the {\color{cyan}cyan} tableaux above (thus assuming $i<n-4$ and $n>6$), then we have that
\[ x=s_{i+2}s_{i+1}s_{i}s_{i+3}s_{i+2}s_{i+1}s_{i+4}s_{i+3}s_{i+2}, \]
hence we have that $|\mathrm{Sup}(x)|=5$. Therefore, by the implication in Equation~\eqref{Eq:2.12:4}, the lemma holds.
\end{proof}

\begin{prop}\label{Type4K}
The Type (4$^{\ast}$) involution $z$ above is Kostant negative, that is $\mathbf{K}_{n}(z)=\mathtt{false}$.
\end{prop}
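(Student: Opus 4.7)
The plan is to show Kostant negativity by exhibiting a consecutive pattern of $2143$ inside $z$, which by the results of \cite{CM24} (and the related \cite{CM26} mentioned in the introduction) is sufficient to conclude $\mathbf{K}_n(z) = \mathtt{false}$.

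First I would write out the one-line notation of $z=(i-1,i)(i+1,i+3)$ explicitly, as already recorded in \Cref{Eq:5.4:1}:
\[
z = 1\cdots(i-2)\,i\,(i-1)\,(i+3)\,(i+2)\,(i+1)\,(i+4)\cdots n.
\]
Reading the four consecutive entries of $z$ at positions $i-1,\,i,\,i+1,\,i+2$, one obtains the word $i,\, i-1,\, i+3,\, i+2$. These four values clearly satisfy the same relative order as the word $2,1,4,3$: the first two are a descending pair, the last two are a descending pair, and every entry of the latter pair strictly exceeds every entry of the former. Hence $z$ consecutively contains the pattern $2143$ at position $i-1$, and the hypotheses $i\in[2,n-3]$ ensure that this position is valid within $[n-3]$.

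Having established the consecutive containment of $2143$, I would invoke the result from \cite{CM24} recalled in the introduction, which states that any permutation consecutively containing $2143$ is Kostant negative. Applied to $z$, this immediately yields $\mathbf{K}_n(z)=\mathtt{false}$, completing the proof.

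The proof is thus essentially a pattern-recognition argument, with no real obstacle: the nontrivial input is the external result of \cite{CM24}, and the bookkeeping reduces to checking the one-line notation and the range of $i$. For consistency with the treatment of the other types in this section, if one wished to make the argument self-contained, one could alternatively exhibit explicit distinct $x,y\leq_{R}^{(n)}z$ with $\theta_x^{(n)}L_z^{(n)}\cong\theta_y^{(n)}L_z^{(n)}$ by transporting, via the shift $F_{4,n}^{i-1}$ and \Cref{Lem:3:19} together with the multi-shift machinery of \Cref{Sec:3}, a known such pair witnessing Kostant negativity for $2143\in\mathrm{S}_4$; this would be the only nontrivial step and it parallels the construction in the proof of \Cref{Type2K}. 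Combined with \Cref{Type4IC}, this then contradicts condition~(2) of \Cref{Thm:2.13:1}, giving $\mathbf{K}_n(z)=\mathtt{false}$.
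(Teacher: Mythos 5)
Your proof is correct and is essentially identical to the paper's: the paper likewise reads off from \Cref{Eq:5.4:1} that $z$ consecutively contains the pattern $2143$ at position $i-1$ and then cites the known result (stated as \cite[Theorem 3.6]{CM25-1} there) that consecutive containment of $2143$ forces Kostant negativity. The alternative self-contained route you sketch is not needed.
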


\begin{proof}
From \Cref{Eq:5.4:1}, we see that $z$ consecutively contains the pattern $2143$ at position $i-1$. Therefore, by \cite[Theorem 3.6]{CM25-1}, the involution $z$ is Kostant negative.
\end{proof}

\subsection{Involutions of Type $(5^{\ast})$}\label{Sec:5.5}

Fix $n\geq 6$, $i\in[2,n-4]$, $i+3<k\leq n$, and let $z:=z_{i,i+2,k}^{n}$ be the involution in $\mathrm{Inv}_{n}$ of Type $(5^{\ast})$. Hence, it satisfies \Cref{Type5star}, which we recall here:
\begin{equation*}
\tilde{\mathtt{RS}}_{n}(z)={\begin{ytableau}\scriptstyle i&\scriptstyle i+2\\\scriptstyle k\end{ytableau}}^{\ \langle n-3\rangle} \hspace{4mm} \text{ and } \hspace{4mm} z=(i-1, i)(i+1,k).
\end{equation*}
When written in one-line notation, the involution $z$ is given by
\begin{equation}\label{Eq:5.5:1}
z=1\cdots(i-2){\color{teal}i(i-1)k}(i+2)\cdots(k-1){\color{teal}(i+1)}(k+1)\cdots n,
\end{equation}
where we have coloured the non-fixed points of $z$ in {\color{teal}teal}. We first investigate what elements $x\in\mathrm{S}_{n}$ have a chance at satisfying the relation $x\leq_{R}^{(n)}z$. Assume $x\leq_{R}^{(n)}z$, then by the implications in Equations~\eqref{Eq:2.9:4} and \eqref{Eq:2.9:4.5}, $\mathrm{Des}_{L}(x)\subseteq\mathrm{Des}_{L}(z)=\{s_{i-1},s_{i+1},s_{k-1}\}$ and $\mathtt{sh}(z)=(n-3,2,1)\preceq\mathtt{sh}(x)$, with equality of shapes further implying $x\sim_{L}^{(n)}z$. Therefore, by \Cref{Lem:2.4:1}, $\mathtt{P}_{x}$ belongs to the following list:
\begin{equation}\label{Type5RList}
\emptyset^{\langle n\rangle}
\end{equation}
\[
{\begin{ytableau}\scriptstyle i\end{ytableau}}^{\ \langle n-1\rangle} \hspace{6mm}
{\begin{ytableau}\scriptstyle i+2\end{ytableau}}^{\ \langle n-1\rangle} \hspace{6mm}
{\begin{ytableau}\scriptstyle k\end{ytableau}}^{\ \langle n-1\rangle}
\]
\[
{\color{red}{\begin{ytableau}\scriptstyle i&\scriptstyle i+1\end{ytableau}}^{\ \langle n-2\rangle}} \hspace{6mm}
{\begin{ytableau}\scriptstyle i&\scriptstyle i+2\end{ytableau}}^{\ \langle n-2\rangle} \hspace{6mm}
{\begin{ytableau}\scriptstyle i&\scriptstyle k\end{ytableau}}^{\ \langle n-2\rangle} \hspace{6mm}
{\begin{ytableau}\scriptstyle i+2&\scriptstyle i+3\end{ytableau}}^{\ \langle n-2\rangle} \hspace{6mm}
{\begin{ytableau}\scriptstyle i+2&\scriptstyle k\end{ytableau}}^{\ \langle n-2\rangle} \hspace{6mm}
{\color{blue}{\begin{ytableau}\scriptstyle k&\scriptstyle k+1\end{ytableau}}^{\ \langle n-2\rangle}}
\]
\[
{\begin{ytableau}\scriptstyle i\\\scriptstyle i+2\end{ytableau}}^{\ \langle n-2\rangle} \hspace{6mm}
{\begin{ytableau}\scriptstyle i\\\scriptstyle k\end{ytableau}}^{\ \langle n-2\rangle} \hspace{6mm}
{\begin{ytableau}\scriptstyle i+2\\\scriptstyle k\end{ytableau}}^{\ \langle n-2\rangle}
\]
\[
{\color{purple}{\begin{ytableau}\scriptstyle i&\scriptstyle i+1&\scriptstyle i+2\end{ytableau}}^{\ \langle n-3\rangle}} \hspace{6mm}
{\color{red}{\begin{ytableau}\scriptstyle i&\scriptstyle i+1&\scriptstyle k\end{ytableau}}^{\ \langle n-3\rangle}} \hspace{6mm}
{\color{red}{\begin{ytableau}\scriptstyle i&\scriptstyle i+2&\scriptstyle i+3\end{ytableau}}^{\ \langle n-3\rangle}} \hspace{6mm}
{\begin{ytableau}\scriptstyle i&\scriptstyle i+2&\scriptstyle k\end{ytableau}}^{\ \langle n-3\rangle} \hspace{6mm}
{\color{blue}{\begin{ytableau}\scriptstyle i&\scriptstyle k&\scriptstyle k+1\end{ytableau}}^{\ \langle n-3\rangle}}
\]
\[
{\begin{ytableau}\scriptstyle i+2&\scriptstyle i+3&\scriptstyle i+4\end{ytableau}}^{\ \langle n-3\rangle} \hspace{6mm}
{\begin{ytableau}\scriptstyle i+2&\scriptstyle i+3&\scriptstyle k\end{ytableau}}^{\ \langle n-3\rangle} \hspace{6mm}
{\color{blue}{\begin{ytableau}\scriptstyle i+2&\scriptstyle k&\scriptstyle k+1\end{ytableau}}^{\ \langle n-3\rangle}} \hspace{6mm}
{\color{cyan}{\begin{ytableau}\scriptstyle k&\scriptstyle k+1&\scriptstyle k+2\end{ytableau}}^{\ \langle n-3\rangle}}
\]
\[
{\begin{ytableau}\scriptstyle i&\scriptstyle i+2\\\scriptstyle k\end{ytableau}}^{\ \langle n-3\rangle}
\]
We have the following existence requirements for the coloured tableaux:
\begin{itemize}
\item The three {\color{red}red} tableaux above require that $i>2$.
\item The {\color{purple}purple} tableau above requires that $i>3$.
\item The three {\color{blue}blue} tableaux above requires that $k<n$.
\item The {\color{cyan}cyan} tableau above requires that $k<n-1$.
\end{itemize}

\begin{lem}\label{Type5IC}
The Type (5$^{\ast}$) involution $z$ above satisfies the Indecomposability Conjecture \ref{Conj:2.12:1}. That is to say, we have that $\mathbf{KM}_{n}(\star,z)=\mathtt{true}$.
\end{lem}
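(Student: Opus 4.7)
The plan is to mirror the proof of \Cref{Type3IC}. By \eqref{Eq:2.11:1} and \eqref{Eq:2.12:2}, it suffices to verify $\mathbf{KM}_{n}(x,z)=\mathtt{true}$ for every involution $x\in\mathrm{Inv}_{n}$ with $x\leq_{R}^{(n)}z$, and this forces $\mathtt{P}_{x}$ to appear in the list \eqref{Type5RList}. The proof then becomes a case analysis on that list.

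For the overwhelming majority of tableaux in \eqref{Type5RList}, I would check directly that the corresponding involution $x$ has $|\mathrm{Sup}(x)|\leq 6$, whence \eqref{Eq:2.12:4} immediately gives $\mathbf{KM}_{n}(x,z)=\mathtt{true}$. I expect the only exceptions to be the two $(n-2,1,1)$-tableaux $\begin{ytableau}\scriptstyle i\\\scriptstyle k\end{ytableau}^{\langle n-2\rangle}$ and $\begin{ytableau}\scriptstyle i+2\\\scriptstyle k\end{ytableau}^{\langle n-2\rangle}$, whose involutions have supports roughly of size $k-i+1$ and therefore exceed $6$ once $k-i$ is large, together with the tableau of shape $(n-3,2,1)$ equal to $\tilde{\mathtt{RS}}_{n}(z)$. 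The last case is immediate from $\mathtt{sh}(x)=\mathtt{sh}(z)$ via \cite[Section~5.2]{KiM16}.

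The two exceptional column tableaux will be handled by an induction on $k$, with $i$ and $n$ held fixed, modelled on the concluding induction of \Cref{Type3IC}. Writing $z_{k}:=z_{i,i+2,k}^{n}$ and choosing convenient right-cell representatives $x_{k}$ of the two tableaux, the base case in which $k$ is close to $i+4$ is covered by \eqref{Eq:2.12:4}. For the inductive step, I would fix a reduced expression $\underline{x_{k}}$ for $x_{k}$ together with an $\underline{x_{k}}$-compatible strong right Bruhat walk starting at $z_{k}$ and ending at some permutation $w_{k}$ with $w_{k}\sim_{L}^{(n)}z_{k-1}$; Claim~(b) of \Cref{Lem:2.11:5}, together with regrouping via \Cref{Lem:2.6:2}, would then re-express $\theta_{x_{k}}^{(n)}L_{z_{k}}^{(n)}$ as $\theta_{x_{k-1}}^{(n)}L_{w_{k}}^{(n)}$ (up to composition with a short projective functor), after which \eqref{Eq:2.12:3} and the inductive hypothesis close the step.

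The hard part will be producing these strong right Bruhat walks and verifying their $\underline{x_{k}}$-compatibility. Starting from the one-line form \eqref{Eq:5.5:1} of $z_{k}$, one needs to identify, for each of the two remaining tableaux, a chain of right multiplications by simple transpositions that is strictly length-decreasing at every step, whose net effect lands in the left cell of $z_{k-1}$, and whose entries satisfy the descent conditions \eqref{Eq:2.3:1} prescribed by $\underline{x_{k}}$. This is the same delicate but finite combinatorial bookkeeping as in \Cref{Ex:2.3:1} and the final induction of \Cref{Type3IC}; once it is in place, the rest of the argument closes automatically.
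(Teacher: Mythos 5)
Your proposal matches the paper's proof essentially step for step: the same reduction via \eqref{Eq:2.11:1} and \eqref{Eq:2.12:2}, the same dispatch of all but three tableaux by the support bound \eqref{Eq:2.12:4}, the same appeal to \cite[Section~5.2]{KiM16} for the $(n-3,2,1)$ tableau, and the same induction on $k$ (with base case $k=i+4$) for the two remaining column tableaux, using an $\underline{r}$-compatible strong right Bruhat walk, Claim~(b) of \Cref{Lem:2.11:5}, regrouping via \Cref{Lem:2.6:2}, and \eqref{Eq:2.12:3}. The only cosmetic difference is that the paper's walk is just the two-step $(z_{k},z_{k}s_{k-1})$ compatible with the prefix $s_{k-1}s_{k-2}$ rather than a walk along the full reduced expression, which makes the compatibility check you flag as the hard part quite short in practice.
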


\begin{proof}
By Equations~\eqref{Eq:2.11:1} and \eqref{Eq:2.12:2}, it suffices to show $\mathbf{KM}_{n}(x,z)=\mathtt{true}$, for all $x\in\mathrm{Inv}_{n}$ such that $x\leq_{R}^{(n)}z$. This relation implies that $\mathtt{P}_{x}$ belongs to the list in Equation~\eqref{Type5RList}. Firstly, let $x$ be any involution where $\mathtt{P}_{x}$ is one of the standard Young tableau in \eqref{Type5RList}, except for one of the following three:
\[ {\begin{ytableau}\scriptstyle i\\\scriptstyle k\end{ytableau}}^{\ \langle n-2\rangle}, \hspace{6mm}
{\begin{ytableau}\scriptstyle i+2\\\scriptstyle k\end{ytableau}}^{\ \langle n-2\rangle}, \hspace{3mm} \text{ or } \hspace{3mm}
{\begin{ytableau}\scriptstyle i&\scriptstyle i+2\\\scriptstyle k\end{ytableau}}^{\ \langle n-3\rangle}.
 \]
Then it can be checked that the support set $\mathrm{Sup}(x)$ is of size no greater than $6$. For example, if $\mathtt{P}_{x}$ is the last {\color{blue}blue} tableau above (thus assuming $k<n$), then we have that
\[ x=s_{i+1}s_{k-1}s_{k-2}s_{k}s_{k-1}, \]
and so $|\mathrm{Sup}(x)|=4$. Thus, by the implication in Equation~\eqref{Eq:2.12:4}, $\mathbf{KM}_{n}(x,z)=\mathtt{true}$ for such an involution $x$. Also, if $\mathtt{P}_{x}$ is the tableau of shape $(n-3,2,1)$ given in the exceptions above, then it has the same shape as $z$ itself, and thus $\mathbf{KM}_{n}(x,z)=\mathtt{true}$ by \cite[Section 5.2]{KiM16}. Hence, what remains to be shown, is that $\mathbf{KM}_{n}(x,z)=\mathtt{true}$ when $\mathtt{P}_{x}$ is either 
\[ \text{(i)} \hspace{3mm} {\begin{ytableau}\scriptstyle i\\\scriptstyle k\end{ytableau}}^{\ \langle n-2\rangle}  \text{ or } \hspace{3mm}
\text{(ii)} \hspace{3mm} {\begin{ytableau}\scriptstyle i+2\\\scriptstyle k\end{ytableau}}^{\ \langle n-2\rangle}. \]

{\bf Case (i)}: Thus assume $x\in\mathrm{Inv}_{n}$ is the involution such that 
\[ \mathtt{P}_{x}={\begin{ytableau}\scriptstyle i\\\scriptstyle k\end{ytableau}}^{\ \langle n-2\rangle}. \]
We seek to show that $\mathbf{KM}_{n}(x,z)=\mathtt{true}$ by induction, and we begin by setting up a little notation. Firstly, we will think of both $i$ and $n$ as fixed, while we think of $i+3<k\leq n$ as something which varies, with these inequalities being those which define the Type (5$^{\ast}$) involution. Then consider both
\[ z_{k}:=z=(i-1,i)(i+1,k), \hspace{2mm} \text{ and } \hspace{2mm} x_{k}:=s_{k-1}s_{k-2}\cdots s_{i+1}(s_{i}s_{i-1}s_{i}). \]
Hence $z_{k}$ is just $z$, but we are stressing the dependence on $k$, and one can deduce that $x_{k}\sim_{R}^{(n)}x$, that is to say, the insertion tableau of $x_{k}$ equals that of $x$, being that of Case (i) displayed above. Thus, by the implication in Equation~\eqref{Eq:2.12:2}, it suffices to prove that $\mathbf{KM}_{n}(x_{k},z_{k})=\mathtt{true}$, and we do this by induction on $k$ in the range $i+3<k\leq n$. For the base case of $k=i+4$, we see that $|\mathrm{Sup}(x_{i+4})|=5$, and thus $\mathbf{KM}_{n}(x_{k},z_{k})=\mathtt{true}$ by the implication in Equation~\eqref{Eq:2.12:4}. Now let $i+4<k\leq n$, and assume $\mathbf{KM}_{n}(x_{j},z_{j})=\mathtt{true}$ for all $j$ such that $i+3<j<k$, then we seek to show that $\mathbf{KM}_{n}(x_{k},z_{k})=\mathtt{true}$.

By \Cref{Lem:2.6:2}, we have that $C_{x_{k}}^{(n)}=C_{s_{k-1}}^{(n)}C_{s_{k-2}}^{(n)}\cdots C_{s_{i+1}}^{(n)}(C_{s_{i}s_{i-1}s_{i}}^{(n)})$, and hence
\begin{equation}\label{Eq:Type5IC-1}
\theta_{x_{k}}^{(n)}\cong\theta_{s_{i}s_{i-1}s_{i}}^{(n)}\theta_{s_{i+1}}^{(n)}\cdots\theta_{s_{k-2}}^{(n)}\theta_{s_{k-1}}^{(n)}.
\end{equation}
Now consider the strong right Bruhat walk $\mathrm{w}:=(z_{k},z_{k}s_{k-1})$ and reduced expression $\underline{r}:=s_{k-1}s_{k-2}$. Then since $z_{i}=(i-1,i)(i+1,k)$ (see also \Cref{Eq:5.5:1}), one can check that $\mathrm{w}$ is $\underline{r}$-compatible, as discussed in \Cref{Sec:2.3}. That is, one can check that the following four Bruhat relations hold:
\[
z_{k}s_{k-1}<z_{k}, \hspace{2mm} z_{k}s_{k-2}>z_{k}, \hspace{2mm} (z_{k}s_{k-1})s_{k-2}<z_{k}s_{k-1}, \hspace{2mm} \text{ and } \hspace{2mm} (z_{k}s_{k-1})s_{k-1}>z_{k}s_{k-1}
\]
(with the former and latter equivalent). Thus, we have the following isomorphisms of modules:
\begin{align}
\theta_{x_{k}}^{(n)}L_{z_{k}}^{(n)}&\cong\theta_{s_{i}s_{i-1}s_{i}}^{(n)}\theta_{s_{i+1}}^{(n)}\cdots\theta_{s_{k-3}}^{(n)}(\theta_{s_{k-2}}^{(n)}\theta_{s_{k-1}}^{(n)}L_{z_{k}}^{(n)}) \label{Eq:Type5IC-2} \\
&\cong\theta_{s_{i}s_{i-1}s_{i}}^{(n)}\theta_{s_{i+1}}^{(n)}\cdots\theta_{s_{k-3}}^{(n)}\theta_{s_{k-2}}^{(n)}L_{z_{k}s_{k-1}}^{(n)} \nonumber \\
&\cong\theta_{x_{k-1}}^{(n)}L_{z_{k}s_{k-1}}^{(n)}. \nonumber
\end{align}
The first isomorphism follows from Equation~\eqref{Eq:Type5IC-1}, the second from Claim~(b) of \Cref{Lem:2.11:5}, and the last by the definition of $x_{k-1}$ and Equation~\eqref{Eq:Type5IC-1}. Lastly, one can check that $z_{k}s_{k-1}\sim_{L}^{(n)}z_{k-1}$, i.e. $\mathtt{Q}_{z_{k}s_{k-1}}=\mathtt{Q}_{z_{k-1}}$. Therefore, since $\theta_{x_{k}}^{(n)}L_{z_{k}}^{(n)}\cong\theta_{x_{k-1}}^{(n)}L_{z_{k}s_{k-1}}^{(n)}$ by  Equation~\eqref{Eq:Type5IC-2}, and by the implication in Equation~\eqref{Eq:2.12:3}, we have that
\[ \mathbf{KM}_{n}(x_{k},z_{k})=\mathbf{KM}_{n}(x_{k-1},z_{k}s_{k-1})=\mathbf{KM}_{n}(x_{k-1},z_{k-1})=\mathtt{true}, \]
where the latter equality $\mathbf{KM}_{n}(x_{k-1},z_{k-1})=\mathtt{true}$ follows from the induction hypothesis, completing the proof by induction, and in turn the proof of the Case (i).

{\bf Case (ii)}: This case follows in a completely analogous manner to Case (i) above, where one proves $\mathbf{KM}_{n}(x_{k},z_{k})=\mathtt{true}$ by induction on $k$ in the range $i+3<k\leq n$, but this time we have
\[ x_{k}:=s_{k-1}s_{k-2}\cdots s_{i+3}(s_{i+2}s_{i+1}s_{i+2}). \]
Beside this, one can check that everything else in the above argument holds in this setting too, so we skip the details. This completes the proof of the lemma.
\end{proof}

\begin{prop}\label{Type5K}
The Type (5$^{\ast}$) involution $z$ above is Kostant negative, that is $\mathbf{K}_{n}(z)=\mathtt{false}$.
\end{prop}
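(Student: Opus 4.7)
The plan is to mimic the proof of \Cref{Type4K}, namely to identify a consecutive occurrence of the pattern $2143$ within $z$ and then invoke \cite[Theorem 3.6]{CM25-1}, which asserts that any permutation consecutively containing the pattern $2143$ is Kostant negative.

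To find such a pattern, I would read off the four entries of $z$ at positions $i-1$, $i$, $i+1$, $i+2$ from the one-line notation in \Cref{Eq:5.5:1}:
\[
z(i-1)=i,\quad z(i)=i-1,\quad z(i+1)=k,\quad z(i+2)=i+2.
\]
Since $i+3<k$, the four values satisfy $i-1<i<i+2<k$, so their relative order is $2,1,4,3$. Hence $z$ consecutively contains $2143$ at position $i-1$, and \cite[Theorem 3.6]{CM25-1} immediately yields $\mathbf{K}_{n}(z)=\mathtt{false}$.

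There is no real obstacle here: the only thing to verify is that the four-letter window extracted from the one-line notation of $z$ actually lies within the admissible range $[1,n]$, which is guaranteed by the standing inequalities $i\geq 2$ and $i+3<k\leq n$ built into the definition of Type $(5^{\ast})$. No Hecke algebra computation or further reduction is required.
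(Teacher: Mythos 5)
Your proposal is correct and is essentially identical to the paper's own proof: the paper likewise reads off from \Cref{Eq:5.5:1} that $z$ consecutively contains $2143$ at position $i-1$ and concludes by \cite[Theorem 3.6]{CM25-1}. Your explicit verification of the window $z(i-1)=i$, $z(i)=i-1$, $z(i+1)=k$, $z(i+2)=i+2$ (using $i+3<k$ and $i\leq n-4$) is exactly the check the paper leaves implicit.
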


\begin{proof}
From \Cref{Eq:5.5:1}, we see that $z$ consecutively contains the pattern $2143$ at position $i-1$. Therefore, by \cite[Theorem 3.6]{CM25-1}, the involution $z$ is Kostant negative.
\end{proof}

\subsection{Involutions of Type $(6^{\ast})$}\label{Sec:5.6}

Fix $n\geq 6$, $i\in[2,n-4]$, $i+2<j<n$, and let $z:=z_{i,j,j+1}^{n}$ be the involution in $\mathrm{Inv}_{n}$ of Type $(6^{\ast})$. Hence, it satisfies \Cref{Type6star}, which we recall here:
\begin{equation*}
\tilde{\mathtt{RS}}_{n}(z)={\begin{ytableau}\scriptstyle i&\scriptstyle j\\\scriptstyle j+1\end{ytableau}}^{\ \langle n-3\rangle} \hspace{4mm} \text{ and } \hspace{4mm} z=(i-1, i)(j-1,j+1).
\end{equation*}
When written in one-line notation, the involution $z$ is given by
\begin{equation}\label{Eq:5.6:1}
z=1\cdots(i-2){\color{teal}i(i-1)}(i+1)\cdots(j-2){\color{teal}(j+1)}j{\color{teal}(j-1)}(j+2)\cdots n,
\end{equation}
where we have coloured the non-fixed points of $z$ in {\color{teal}teal}. We first investigate what elements $x\in\mathrm{S}_{n}$ have a chance at satisfying the relation $x\leq_{R}^{(n)}z$. Assume $x\leq_{R}^{(n)}z$, then by the implication in Equations~\eqref{Eq:2.9:4} and \eqref{Eq:2.9:4.5}, $\mathrm{Des}_{L}(x)\subseteq\mathrm{Des}_{L}(z)=\{s_{i-1},s_{j-1},s_{j}\}$ and $\mathtt{sh}(z)=(n-3,2,1)\preceq\mathtt{sh}(x)$, with equality of shapes further implying $x\sim_{L}^{(n)}z$. Therefore, by \Cref{Lem:2.4:1}, $\mathtt{P}_{x}$ belongs to the following list:
\begin{equation}\label{Type6RList}
\emptyset^{\langle n\rangle}
\end{equation}
\[
{\begin{ytableau}\scriptstyle i\end{ytableau}}^{\ \langle n-1\rangle} \hspace{6mm}
{\begin{ytableau}\scriptstyle j\end{ytableau}}^{\ \langle n-1\rangle} \hspace{6mm}
{\begin{ytableau}\scriptstyle j+1\end{ytableau}}^{\ \langle n-1\rangle}
\]
\[
{\color{red}{\begin{ytableau}\scriptstyle i&\scriptstyle i+1\end{ytableau}}^{\ \langle n-2\rangle}} \hspace{6mm}
{\begin{ytableau}\scriptstyle i&\scriptstyle j\end{ytableau}}^{\ \langle n-2\rangle} \hspace{6mm}
{\begin{ytableau}\scriptstyle i&\scriptstyle j+1\end{ytableau}}^{\ \langle n-2\rangle} \hspace{6mm}
{\begin{ytableau}\scriptstyle j&\scriptstyle j+1\end{ytableau}}^{\ \langle n-2\rangle} \hspace{6mm}
{\color{blue}{\begin{ytableau}\scriptstyle j+1&\scriptstyle j+2\end{ytableau}}^{\ \langle n-2\rangle}}
\]
\[
{\begin{ytableau}\scriptstyle i\\\scriptstyle j\end{ytableau}}^{\ \langle n-2\rangle} \hspace{6mm}
{\begin{ytableau}\scriptstyle i\\\scriptstyle j+1\end{ytableau}}^{\ \langle n-2\rangle} \hspace{6mm}
{\begin{ytableau}\scriptstyle j\\\scriptstyle j+1\end{ytableau}}^{\ \langle n-2\rangle}
\]
\[
{\color{purple}{\begin{ytableau}\scriptstyle i&\scriptstyle i+1&\scriptstyle i+2\end{ytableau}}^{\ \langle n-3\rangle}} \hspace{6mm}
{\color{red}{\begin{ytableau}\scriptstyle i&\scriptstyle i+1&\scriptstyle j\end{ytableau}}^{\ \langle n-3\rangle}} \hspace{6mm}
{\color{red}{\begin{ytableau}\scriptstyle i&\scriptstyle i+1&\scriptstyle j+1\end{ytableau}}^{\ \langle n-3\rangle}} \hspace{6mm}
{\begin{ytableau}\scriptstyle i&\scriptstyle j&\scriptstyle j+1\end{ytableau}}^{\ \langle n-3\rangle}
\]
\[
{\color{blue}{\begin{ytableau}\scriptstyle j&\scriptstyle j+1&\scriptstyle j+2\end{ytableau}}^{\ \langle n-3\rangle}} \hspace{6mm}
{\color{cyan}{\begin{ytableau}\scriptstyle j+1&\scriptstyle j+2&\scriptstyle j+3\end{ytableau}}^{\ \langle n-3\rangle}}
\]
\[
{\begin{ytableau}\scriptstyle i&\scriptstyle j\\\scriptstyle j+1\end{ytableau}}^{\ \langle n-3\rangle}
\]
We have the following existence requirements for the coloured tableaux:
\begin{itemize}
\item The three {\color{red}red} tableaux above require that $i>2$.
\item The {\color{purple}purple} tableau above requires that $i>3$.
\item The two {\color{blue}blue} tableaux above require that $j<n-1$.
\item The {\color{cyan}cyan} tableau above requires that $j<n-2$.
\end{itemize}

\begin{prop}\label{Type6K}
The answer to Kostant's problem for the Type (6$^{\ast}$) involution $z$ is given by
\[ \mathbf{K}_{n}(z)=
\begin{cases} 
\mathtt{true}, &  j+1=n\\
\mathsf{false}, & \text{otherwise}
\end{cases}.
\]
\end{prop}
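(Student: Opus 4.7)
The plan is to split according to whether $j+1<n$ or $j+1=n$.

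For the case $j+1<n$ (equivalently $j\leq n-2$): From the one-line notation of $z$ in Equation~\eqref{Eq:5.6:1}, the five consecutive positions $j-2,j-1,j,j+1,j+2$ are all valid (since $i+2<j$ gives $j-2\geq i+1$, and $j\leq n-2$ gives $j+2\leq n$), and the letters there are $j-2,\,j+1,\,j,\,j-1,\,j+2$, realising the pattern $14325$. By \Cref{Thm:4:11} applied in $\mathrm{S}_{5}$, the permutation $14325=z^{5}_{3,4}$ (of shape $(3,1,1)$, with $j=i+1$ and $\{3,4\}\cap\{2,5\}=\emptyset$) is itself Kostant negative. The result from \cite{CM26} recalled in the introduction, which propagates Kostant negativity along consecutive pattern containment, then yields $\mathbf{K}_{n}(z)=\mathtt{false}$.

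For the case $j+1=n$ (so $j=n-1$): Here $z=(i-1,i)(n-2,n)=s_{i-1}\cdot(s_{n-2}s_{n-1}s_{n-2})$ is a commuting product of two transpositions with disjoint supports (since $i\leq n-4$). The plan mirrors the $i=2$ subcase of \Cref{Type2K}. Using \Cref{Eq:2.13:3} to conjugate by $w_{0,n}$, I replace $z$ by $w_{0,n}zw_{0,n}=(1,3)(b-1,b)$ with $b=n+2-i\in[6,n]$, which is a Type~$(6)$ involution with $i$-parameter equal to~$2$. I then aim to exhibit a left-cell equivalence of the form $w_{0,n}zw_{0,n}\sim_{L}^{(n)}y\cdot w_{0,P}\cdot w_{0,n}$, for a suitable fully commutative involution $y$ sitting in some standard parabolic subgroup $\langle P\rangle\subset\mathrm{S}_{n}$. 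Granted such an equivalence, \cite[Theorem~1.1]{K10} reduces Kostant's problem for $z$ to that for $y$, and \cite[Theorem~5.1]{MMM24} yields Kostant positivity for any fully commutative element. Combined with $w_{0,n}$-invariance, this will give $\mathbf{K}_{n}(z)=\mathtt{true}$.

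The main obstacle is the explicit construction of the fully commutative $y$ and parabolic subgroup $\langle P\rangle$ witnessing the required left-cell equivalence; this is combinatorial bookkeeping rather than a conceptual difficulty. As a fallback, one may instead first establish the Indecomposability Conjecture for $z$ (following the template of \Cref{Type1IC}, \Cref{Type3IC} and \Cref{Type5IC}, using that all relevant involutions $x$ with $x\leq_{R}^{(n)}z$ either have $|\mathrm{Sup}(x)|\leq 6$ or share shape with $z$, up to a short induction) and then apply \Cref{Thm:2.13:1}: the candidate list \eqref{Type6RList}, specialised to $j=n-1$ (which kills the blue and cyan tableaux), bounds the pairs $(\mathtt{P}_{x},\mathtt{P}_{y})$ of distinct tableaux of equal shape with $x,y\leq_{R}^{(n)}z$, and for each such pair one picks specific right-cell representatives and, via \Cref{Lem:3:18} applied to a suitable shift, reduces the putative identity $D_{z}^{(n)}C_{x}^{(n)}=D_{z}^{(n)}C_{y}^{(n)}$ to one in a Hecke algebra of size independent of~$n$, which can then be falsified computationally in GAP3, as in the $i=n-3$ subcase of \Cref{Type2K}.
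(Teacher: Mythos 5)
Your treatment of the case $j+1<n$ is correct and matches the paper's: you locate the pattern $14325$ at position $j-2$ in the one-line notation \eqref{Eq:5.6:1} exactly as the paper does, and your justification that $14325$-containment forces Kostant negativity (via \Cref{Thm:4:11} for the pattern itself plus the propagation result of \cite{CM26}) is a legitimate alternative to the paper's direct citation of \cite[Theorem 3.6]{CM25-1}. For the case $j+1=n$ you have also identified precisely the paper's strategy — conjugate by $w_{0,n}$, exhibit a left-cell equivalence $w_{0,n}zw_{0,n}\sim_{L}^{(n)}y\,w_{0,P}\,w_{0,n}$ with $y$ fully commutative in a parabolic $\langle P\rangle$, then invoke \cite[Theorem 1.1]{K10} and \cite[Theorem 5.1]{MMM24} — but you stop exactly where the proof begins. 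The entire content of the positive direction is the explicit witness: the paper takes $\langle P\rangle=\mathrm{S}_{n-1}$ and $y=s_{1}s_{n-2}$ (for $w_{0,n}zw_{0,n}=(1,3)(i',i'+1)$ with $i'=n-i+1$; compare the analogous computation in \Cref{Lem:5.7:2}, which suggests the general witness is $s_{1}s_{i'-1}$), and verifies the left-cell equivalence via one-line notation. Deferring this as "combinatorial bookkeeping" leaves the positive half unproven: one must both guess the correct $(y,P)$ and check the equality of recording tableaux, and nothing in your write-up pins either down.

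Your fallback does not repair this. With $j=n-1$, the right-cell representatives attached to the surviving tableaux in \eqref{Type6RList} — for instance the involutions $(i-1,n-1)$ and $(i-1,n)$ with insertion tableaux of shapes $(n-1,1)$ and $(n-2,1,1)$ — have supports stretching from $s_{i-1}$ to $s_{n-1}$ or $s_{n-2}$, and since $n-i$ is unbounded there is no shift $F_{m,n}^{r}$ with $m$ independent of $n$ containing both members of such a pair together with $z$. So \Cref{Lem:3:18} cannot reduce the putative identities $D_{z}^{(n)}C_{x}^{(n)}=D_{z}^{(n)}C_{y}^{(n)}$ to a bounded-rank Hecke algebra, and a finite GAP3 check is unavailable; one would instead need $n$-dependent inductive arguments in the style of \Cref{Prop:4:3} or \Cref{Lem:Type7KTech1}, which you have not set up. In short: the negative direction is complete, but the positive direction is a correct plan whose decisive step is missing, and the proposed backup route fails.
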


\begin{proof}
If $j+1\neq n$, then by \Cref{Eq:5.6:1}, we see that $z$ consecutively contains the pattern $14325$ at position $j-2$. Therefore, by \cite[Theorem 3.6]{CM25-1}, we have that $\mathbf{K}_{n}(z)=\mathtt{false}$.

Now let $j+1=n$. We seek to show $\mathbf{K}_{n}(z)=\mathtt{true}$. By \Cref{Eq:2.13:3}, it suffices to prove that
\[ z':=w_{0,n}zw_{0,n}=(1,3)(i',i'+1)\in\mathrm{S}_{n}, \]
is Kostant positive, with $i':=n-i+1$ and $4<i'<n$. Via one-line notation, one can deduce that
\[ z' \sim_{L}^{(n)}(s_{1}s_{n-2})w_{0,n-1}w_{0,n}, \]
where $s_{1}s_{n-2}$ belongs to the parabolic subgroup $\mathrm{S}_{n-1}\subset\mathrm{S}_{n}$. Therefore, by \cite[Theorem 1.1]{K10}, we have that $\mathbf{K}_{n}(z')=\mathbf{K}_{n-1}(s_{1}s_{n-2})$. Lastly, the fully commutative element $s_{1}s_{n-2}$ is Kostant positive by \cite[Theorem 5.1]{MMM24}, completing the proof.
\end{proof}

\begin{lem}\label{Type6IC}
The Type (6$^{\ast}$) involution $z$ above satisfies the Indecomposability Conjecture \ref{Conj:2.12:1}. That is to say, we have that $\mathbf{KM}_{n}(\star,z)=\mathtt{true}$.
\end{lem}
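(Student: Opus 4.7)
The approach parallels the proofs of \Cref{Type3IC} and \Cref{Type5IC}. By Equations~\eqref{Eq:2.11:1} and \eqref{Eq:2.12:2}, it suffices to verify that $\mathbf{KM}_{n}(x,z)=\mathtt{true}$ for every involution $x\in\mathrm{Inv}_{n}$ satisfying $x\leq_{R}^{(n)}z$, and for any such $x$ the insertion tableau $\mathtt{P}_{x}$ must appear in the list \eqref{Type6RList}.

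For every tableau in \eqref{Type6RList}, apart from the two of shape $(1,1)^{\langle n-2\rangle}$ with entries $\{i,j\}$ and $\{i,j+1\}$, a direct computation shows either that the corresponding involution has support of size at most $6$, so that $\mathbf{KM}_{n}(x,z)=\mathtt{true}$ by the implication in Equation~\eqref{Eq:2.12:4}, or that $\mathtt{P}_{x}$ has shape $(n-3,2,1)$, in which case $x=z$ and indecomposability follows from \cite[Section~5.2]{KiM16}. In particular, the third $(1,1)^{\langle n-2\rangle}$ tableau appearing in \eqref{Type6RList}, namely the one with entries $\{j,j+1\}$, corresponds to the involution $(j-1,j+1)$, whose support has size $2$ and hence is handled by \eqref{Eq:2.12:4}.

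It therefore remains to treat the tableaux $\begin{ytableau}\scriptstyle i\\\scriptstyle j\end{ytableau}^{\ \langle n-2\rangle}$ and $\begin{ytableau}\scriptstyle i\\\scriptstyle j+1\end{ytableau}^{\ \langle n-2\rangle}$, whose involution representatives $(i-1,j)$ and $(i-1,j+1)$ have support whose size grows with $j-i$. The plan is to proceed by induction on $j$, with $i$ and $n$ fixed and $j$ ranging over $\{i+3,\ldots,n-1\}$. Writing $z_{j}:=z_{i,j,j+1}^{n}$, one selects a right-cell representative $x_{j}$ of the target involution whose reduced expression decomposes naturally as a chain of simple reflections ``bridging'' $i$ to $j-2$ multiplied by a short braid ``core'' located near the indices $j-1,j$ (one such choice is $x_{j}=s_{j-1}s_{j-2}\cdots s_{i+1}(s_{i}s_{i-1}s_{i})$ for the first tableau, and the analogous form with $j+1$ in place of $j$ for the second). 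The base case is $j=i+3$, for which $|\mathrm{Sup}(x_{j})|\leq 6$ and so \eqref{Eq:2.12:4} applies.

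For the inductive step, one decomposes $\theta_{x_{j}}^{(n)}$ into a composition of elementary projective functors via \Cref{Lem:2.6:2}, then constructs a short strong right Bruhat walk starting at $z_{j}$ which is compatible with the initial segment of the chosen reduced expression of $x_{j}$. Applying Claim~(b) of \Cref{Lem:2.11:5} identifies $\theta_{x_{j}}^{(n)}L_{z_{j}}^{(n)}$ with $\theta_{x_{j-1}}^{(n)}L_{w}^{(n)}$, where $w$ is the terminal element of the walk, and a direct verification shows $w\sim_{L}^{(n)}z_{j-1}$; the implication in \eqref{Eq:2.12:3} together with the induction hypothesis then finishes the argument. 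The main technical obstacle is the judicious choice of the representative $x_{j}$ and of the Bruhat walk so that all the compatibility conditions from \Cref{Sec:2.3} are met; this requires a careful tracking of descents of the intermediate elements, which is transparent from the one-line description of $z_{j}$ recorded in \eqref{Eq:5.6:1}, but is the step where the case $j+1=n$ (where the right end of $z$ is ``blocked'') and the generic case have to be distinguished.
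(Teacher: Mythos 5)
Your reduction to the two column tableaux with entries $\{i,j\}$ and $\{i,j+1\}$ matches the paper, but the inductive step you propose does not work, and the failure is already visible at the first letter of the walk. Take the representative $x_{j}=s_{j-1}s_{j-2}\cdots s_{i+1}(s_{i}s_{i-1}s_{i})$ and try to make $(z,zs_{j-1})$ compatible with $s_{j-1}s_{j-2}$ as in the proof of \Cref{Type5IC}. From \Cref{Eq:5.6:1} the letters of $z=z_{i,j,j+1}^{n}$ in positions $j-2,j-1,j,j+1$ are $(j-2),(j+1),j,(j-1)$, so $zs_{j-1}$ has letters $(j-2),j,(j+1),(j-1)$ there; hence $(zs_{j-1})(j-2)=j-2<j=(zs_{j-1})(j-1)$ and the required relation $(zs_{j-1})s_{j-2}<zs_{j-1}$ from \eqref{Eq:2.3:1} fails. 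The variant starting with $s_{j}$ fails even earlier, since one then needs $zs_{j-1}>z$ while $s_{j-1}\in\mathrm{Des}_{R}(z)$. The structural reason is that in Types (3) and (5$^{\ast}$) the walk slides the endpoint of a \emph{long} transposition of $z$ one step at a time, whereas $z=(i-1,i)(j-1,j+1)$ has only local structure near $j$: the pattern at positions $j-1,j,j+1$ is decreasing, so both $s_{j-1}$ and $s_{j}$ are right descents and block compatibility. Moreover $zs_{j-1}$ already has a two-row shape, so no one-step walk can land in the left cell of $z_{i,j-1,j}^{n}$, and your claim that the terminal element $w$ satisfies $w\sim_{L}^{(n)}z_{j-1}$ cannot hold.

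The paper uses an entirely different mechanism for these two tableaux, and your closing remark about distinguishing $j+1=n$ points at the right place without supplying the tool. One first proves (\Cref{Type6K}, which in this subsection is established \emph{before} the indecomposability lemma) that $z_{i,j,j+1}^{j+1}$ — the Type (6$^{\ast}$) involution viewed inside $\mathrm{S}_{j+1}$, where $j+1$ is now the maximal letter — is Kostant positive, via \cite[Theorem 1.1]{K10} and \cite[Theorem 5.1]{MMM24}; by \Cref{Thm:2.13:1} this gives $\mathbf{KM}_{j+1}(\star,z_{i,j,j+1}^{j+1})=\mathtt{true}$. Since $x_{1}=(i-1,j)$, $x_{2}=(i-1,j+1)$ and $z=z_{i,j,j+1}^{n}$ all lie in the parabolic subgroup $\mathrm{S}_{j+1}\subset\mathrm{S}_{n}$, i.e.\ are their own images under $F_{j+1,n}^{1}$, \Cref{Lem:3:19} yields $\mathbf{KM}_{n}(x_{a},z)=\mathbf{KM}_{j+1}(x_{a},z_{i,j,j+1}^{j+1})=\mathtt{true}$. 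If you want to keep an inductive flavour, you must replace the Bruhat-walk induction on $j$ by this parabolic reduction; as written, your argument has a genuine gap.
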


\begin{proof}
For the sake of this proof, it would be convenient to add back the subscripts and superscripts for our element $z=z_{i,j,j+1}^{n}$.
By \Cref{Thm:2.13:1}, we know that when $z_{i,j,j+1}^{n}$ is Kostant positive, then $\mathbf{KM}_{n}(\star,z_{i,j,j+1}^{n})=\mathtt{true}$. Thus by \Cref{Type6K}, we have that $\mathbf{KM}_{n}(\star,z_{i,n-1,n}^{n})=\mathtt{true}$ (i.e. when $j+1=n$).

It remains to show $\mathbf{KM}_{n}(\star,z_{i,j,j+1}^{n})=\mathtt{true}$ when $j+1\neq n$. By Equations~\eqref{Eq:2.11:1} and \eqref{Eq:2.12:2}, it suffices to show
\[ \mathbf{KM}_{n}(x,z_{i,j,j+1}^{n})=\mathtt{true}, \]
for all $x\in\mathrm{Inv}_{n}$ where $x\leq_{R}^{(n)}z_{i,j,j+1}^{n}$. This relation implies that $\mathtt{P}_{x}$ belongs to \eqref{Type6RList}. Let $x$ be an involution with $\mathtt{P}_{x}$ one of the standard Young tableau in \eqref{Type6RList}, except for one of the following three:
\[ {\begin{ytableau}\scriptstyle i\\\scriptstyle j\end{ytableau}}^{\ \langle n-2\rangle}, \hspace{6mm}
{\begin{ytableau}\scriptstyle i\\\scriptstyle j+1\end{ytableau}}^{\ \langle n-2\rangle}, \hspace{3mm} \text{ or } \hspace{3mm}
{\begin{ytableau}\scriptstyle i&\scriptstyle j\\\scriptstyle j+1\end{ytableau}}^{\ \langle n-3\rangle}.
 \]
Then it can be checked that the support set $\mathrm{Sup}(x)$ is of size no greater than $6$. For example, if $\mathtt{P}_{x}$ is the last {\color{blue}blue} tableau above (thus assuming $j<n-1$), then we have that
\[ x=s_{j-1}s_{j-2}s_{j-3}s_{j}s_{j-1}s_{j-2}s_{j+1}s_{j}s_{j-1}, \]
and so $|\mathrm{Sup}(x)|=5$. Thus, by Implication \eqref{Eq:2.12:4}, $\mathbf{KM}_{n}(x,z_{i,j,j+1}^{n})=\mathtt{true}$ for such an involution $x$. Also, if $\mathtt{P}_{x}$ is the tableau of shape $(n-3,2,1)$ given in the exceptions above, then it has the same shape as $z_{i,j,j+1}^{n}$ itself, and thus $\mathbf{KM}_{n}(x,z_{i,j,j+1}^{n})=\mathtt{true}$ by \cite[Section 5.2]{KiM16}. Hence, we just need to show that $\mathbf{KM}_{n}(x,z_{i,j,j+1}^{n})=\mathtt{true}$ when $\mathtt{P}_{x}$ is either 
\begin{equation*}
\hspace{3mm} {\begin{ytableau}\scriptstyle i\\\scriptstyle j\end{ytableau}}^{\ \langle n-2\rangle}  \text{ or }  \hspace{6mm} {\begin{ytableau}\scriptstyle i\\\scriptstyle j+1\end{ytableau}}^{\ \langle n-2\rangle}.
\end{equation*}
Let $x_{1}$ and $x_{2}$ be the involutions whose insertion tableau are these two above, respectively. Thus
\[ x_{1}=(i-1,j) \hspace{2mm} \text{ and } \hspace{2mm} x_{2}=(i-1,j+1). \]
Now consider the Kostant positive Type (6$^{\ast}$) involution $z_{i,j,j+1}^{j+1}$ belonging to $\mathrm{S}_{j+1}$ (which is Kostant positive by \Cref{Type6K}). Then note that both $x_{1}$ and $x_{2}$ also belong to the subgroup $\mathrm{S}_{j+1}\subset\mathrm{S}_{n}$. Therefore, via the natural embedding, we have the shifts
\[ F_{j+1,n}^{1}(z_{i,j,j+1}^{j+1})=z_{i,j,j+1}^{n}, \hspace{2mm} F_{j+1,n}^{1}(x_{1})=x_{1}, \hspace{2mm} \text{ and } \hspace{2mm} F_{j+1,n}^{1}(x_{2})=x_{2}. \]
Hence, by \Cref{Lem:3:19}, and since $z_{i,j,j+1}^{j+1}$ is Kostant positive (as a permutation in $\mathrm{S}_{j+1}$),
\[ \mathtt{true}=\mathbf{KM}_{j+1}\left(x_{a},z_{i,j,j+1}^{j+1}\right)=\mathbf{KM}_{n}\left(F_{j+1,n}^{1}(x_{a}),F_{j+1,n}^{1}(z_{i,j,j+1}^{j+1})\right)=\mathbf{KM}_{n}\left(x_{a},z_{i,j,j+1}^{n}\right), \]
for each $a\in[2]$, which completes the proof of the lemma.
\end{proof}

\subsection{Involutions of Type $(7^{\ast})$}\label{Sec:5.7}

Fix $n\geq 7$, $i\in[2,n-5]$, $i+2<j<n$, $j+1<k\leq n$, and let $z:=z_{i,j,k}^{n}$ be the involution in $\mathrm{Inv}_{n}$ of Type $(7^{\ast})$. Recalling from \Cref{Type7star}, this means 
\begin{equation*}
\tilde{\mathtt{RS}}_{n}(z)={\begin{ytableau}\scriptstyle i&\scriptstyle j\\\scriptstyle k\end{ytableau}}^{\ \langle n-3\rangle} \hspace{4mm} \text{ and } \hspace{4mm} z=(i-1, i)(j-1,k).
\end{equation*}
When written in one-line notation, the involution $z$ is given by
\begin{equation}\label{Eq:5.7:1}
z=1\cdots(i-2){\color{teal}i(i-1)}(i+1)\cdots(j-2){\color{teal}k}j(j+1)\cdots(k-1){\color{teal}(j-1)}(k+1)(k+2)\cdots n,
\end{equation}
where we have coloured the non-fixed points of $z$ in {\color{teal}teal}. We first investigate what elements $x\in\mathrm{S}_{n}$ have a chance at satisfying the relation $x\leq_{R}^{(n)}z$. Assume $x\leq_{R}^{(n)}z$, then by the implications in Equations~\eqref{Eq:2.9:4} and \eqref{Eq:2.9:4.5}, we have the inclusion $\mathrm{Des}_{L}(x)\subseteq\mathrm{Des}_{L}(z)=\{s_{i-1},s_{j-1},s_{k-1}\}$ as well as $\mathtt{sh}(z)=(n-3,2,1)\preceq\mathtt{sh}(x)$, with equality of shapes further implying $x\sim_{L}^{(n)}z$. Therefore, by \Cref{Lem:2.4:1}, $\mathtt{P}_{x}$ belongs to the following list:
\begin{equation}\label{Type7RList1}
\emptyset^{\langle n\rangle}
\end{equation}
\[
{\begin{ytableau}\scriptstyle i\end{ytableau}}^{\ \langle n-1\rangle} \hspace{6mm}
{\begin{ytableau}\scriptstyle j\end{ytableau}}^{\ \langle n-1\rangle} \hspace{6mm}
{\begin{ytableau}\scriptstyle k\end{ytableau}}^{\ \langle n-1\rangle}
\]
\[
{\begin{ytableau}\scriptstyle i&\scriptstyle j\end{ytableau}}^{\ \langle n-2\rangle} \hspace{6mm}
{\begin{ytableau}\scriptstyle i&\scriptstyle k\end{ytableau}}^{\ \langle n-2\rangle} \hspace{6mm}
{\begin{ytableau}\scriptstyle j&\scriptstyle k\end{ytableau}}^{\ \langle n-2\rangle} \hspace{6mm}
{\color{red}{\begin{ytableau}\scriptstyle i&\scriptstyle i+1\end{ytableau}}^{\ \langle n-2\rangle}} \hspace{6mm}
{\begin{ytableau}\scriptstyle j&\scriptstyle j+1\end{ytableau}}^{\ \langle n-2\rangle} \hspace{6mm}
{\color{blue}{\begin{ytableau}\scriptstyle k&\scriptstyle k+1\end{ytableau}}^{\ \langle n-2\rangle}}
\]
\[
{\begin{ytableau}\scriptstyle i\\\scriptstyle j\end{ytableau}}^{\ \langle n-2\rangle} \hspace{6mm}
{\begin{ytableau}\scriptstyle i\\\scriptstyle k\end{ytableau}}^{\ \langle n-2\rangle} \hspace{6mm}
{\begin{ytableau}\scriptstyle j\\\scriptstyle k\end{ytableau}}^{\ \langle n-2\rangle}
\]
\[
{\begin{ytableau}\scriptstyle i&\scriptstyle j&\scriptstyle k\end{ytableau}}^{\ \langle n-3\rangle} \hspace{6mm}
{\color{red}{\begin{ytableau}\scriptstyle i&\scriptstyle i+1&\scriptstyle j\end{ytableau}}^{\ \langle n-3\rangle}} \hspace{6mm}
{\color{red}{\begin{ytableau}\scriptstyle i&\scriptstyle i+1&\scriptstyle k\end{ytableau}}^{\ \langle n-3\rangle}} \hspace{6mm}
{\begin{ytableau}\scriptstyle i&\scriptstyle j&\scriptstyle j+1\end{ytableau}}^{\ \langle n-3\rangle} \hspace{6mm}
{\color{blue}{\begin{ytableau}\scriptstyle i&\scriptstyle k&\scriptstyle k+1\end{ytableau}}^{\ \langle n-3\rangle}}
\]
\[
{\begin{ytableau}\scriptstyle j&\scriptstyle j+1&\scriptstyle k\end{ytableau}}^{\ \langle n-3\rangle} \hspace{6mm}
{\color{blue}{\begin{ytableau}\scriptstyle j&\scriptstyle k&\scriptstyle k+1\end{ytableau}}^{\ \langle n-3\rangle}} \hspace{6mm}
{\color{purple}{\begin{ytableau}\scriptstyle i&\scriptstyle i+1&\scriptstyle i+2\end{ytableau}}^{\ \langle n-3\rangle}} \hspace{6mm}
{\color{cyan}{\begin{ytableau}\scriptstyle k&\scriptstyle k+1&\scriptstyle k+2\end{ytableau}}^{\ \langle n-3\rangle}} 
\]
\[
{\begin{ytableau}\scriptstyle i&\scriptstyle j\\\scriptstyle k\end{ytableau}}^{\ \langle n-3\rangle}
\]

\begin{rmk}\label{Rmk:Type6RList}
We have not included ${\begin{ytableau}\scriptstyle j&\scriptstyle j+1&\scriptstyle j+2\end{ytableau}}^{\ \langle n-3\rangle}$ above as this is accounted for by ${\begin{ytableau}\scriptstyle j&\scriptstyle j+1&\scriptstyle k\end{ytableau}}^{\ \langle n-3\rangle}$ with $k=j+2$. This is precisely the exception mentioned in \Cref{Rmk:5:2}. We will not need to explicitly distinguish between this overlap due to \Cref{Lem:5.7:1} below.
\end{rmk}

Reading top to bottom, left to right, we have the following existence requirements:
\begin{itemize}
\item The three {\color{red}red} tableaux above require that $i>2$.
\item The {\color{purple}purple} tableau above requires that $i>4$.
\item The three {\color{blue}blue} tableaux above require that $k<n$.
\item The {\color{cyan}cyan} tableau above requires that $k<n-1$.
\end{itemize}

We now show that the above list can be significantly reduced.

\begin{lem}\label{Lem:5.7:1}
We have that $x\not\leq_{R}^{(n)}z$ whenever $\mathtt{P}_{x}$ is one of the following:
\[
{\color{red}{\begin{ytableau}\scriptstyle i&\scriptstyle i+1\end{ytableau}}^{\ \langle n-2\rangle}} \hspace{6mm}
{\begin{ytableau}\scriptstyle j&\scriptstyle j+1\end{ytableau}}^{\ \langle n-2\rangle} \hspace{6mm}
{\color{blue}{\begin{ytableau}\scriptstyle k&\scriptstyle k+1\end{ytableau}}^{\ \langle n-2\rangle}} \hspace{6mm} {\begin{ytableau}\scriptstyle i\\\scriptstyle j\end{ytableau}}^{\ \langle n-2\rangle}
\]
\[
{\color{red}{\begin{ytableau}\scriptstyle i&\scriptstyle i+1&\scriptstyle j\end{ytableau}}^{\ \langle n-3\rangle}} \hspace{6mm}
{\color{red}{\begin{ytableau}\scriptstyle i&\scriptstyle i+1&\scriptstyle k\end{ytableau}}^{\ \langle n-3\rangle}} \hspace{6mm}
{\begin{ytableau}\scriptstyle i&\scriptstyle j&\scriptstyle j+1\end{ytableau}}^{\ \langle n-3\rangle} \hspace{6mm}
{\color{blue}{\begin{ytableau}\scriptstyle i&\scriptstyle k&\scriptstyle k+1\end{ytableau}}^{\ \langle n-3\rangle}}
\]
\[
{\begin{ytableau}\scriptstyle j&\scriptstyle j+1&\scriptstyle k\end{ytableau}}^{\ \langle n-3\rangle} \hspace{6mm}
{\color{blue}{\begin{ytableau}\scriptstyle j&\scriptstyle k&\scriptstyle k+1\end{ytableau}}^{\ \langle n-3\rangle}} \hspace{6mm}
{\color{purple}{\begin{ytableau}\scriptstyle i&\scriptstyle i+1&\scriptstyle i+2\end{ytableau}}^{\ \langle n-3\rangle}} \hspace{6mm}
{\color{cyan}{\begin{ytableau}\scriptstyle k&\scriptstyle k+1&\scriptstyle k+2\end{ytableau}}^{\ \langle n-3\rangle}} 
\]
\end{lem}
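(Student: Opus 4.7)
The plan is to follow the strategy used in the proof of \Cref{Lem:5.3:2}, adapting it separately to each of the twelve tableaux listed. Since every right cell of $\mathrm{S}_n$ contains a unique involution, it suffices to establish $x\not\leq_R^{(n)}z$ for the involution whose insertion tableau equals the given $\mathtt{P}_x$; by the equivalence in Equation~\eqref{Eq:2.9:2} and the fact that $z$ is an involution, this is equivalent to proving $x\not\leq_L^{(n)}z$.

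For each tableau I would first make the corresponding involution $x$ explicit. The three shape-$(n-2,2)$ tableaux in the first row all produce involutions of the form $s_{a-1}s_{a-2}s_{a}s_{a-1}$ for $a\in\{i,j,k\}$, each a single shift of $3412\in\mathrm{S}_4$. The shape-$(n-2,1,1)$ tableau with second and third row entries $i,j$ corresponds to the transposition $(i-1,j)$, which is the longest element $w_{0,j-i+2}$ of the parabolic subgroup generated by $s_{i-1},\ldots,s_{j-1}$, and hence a single shift. The remaining eight shape-$(n-3,3)$ tableaux split into six which are multi-shifts of elements of $\mathrm{S}_4\times\mathrm{S}_2$ (exactly as in the treatment of the middle red tableau of \Cref{Lem:5.3:2}) and two (the purple and cyan) which are single shifts of fully commutative involutions in $\mathrm{S}_5$ or $\mathrm{S}_6$. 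In every case the relevant (multi-)pattern $p$ (resp.\ $\underline{p}$) that $z$ consecutively contains at the matching (multi-)position can be read off directly from \eqref{Eq:5.7:1}.

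With these identifications in hand, Claim~(b) of \Cref{Prop:3:7} (in the single-shift cases) or \Cref{Cor:3:15} (in the multi-shift cases) translates the desired non-relation $x\not\leq_L^{(n)}z$ into a statement of the form $y\not\leq_L^{(m)}p$ or $\underline{y}\not\leq_L^{(\underline{m})}\underline{p}$ inside a much smaller symmetric group. Each such small non-relation is then verified by one of three routes: via the shape implication in Equation~\eqref{Eq:2.9:4}, which handles the shape-$(n-2,1,1)$ case since the longest element has shape $(1^m)$ while the induced pattern $s_1 s_{m-1}$ has the dominating shape $(m-2,2)$; by inspection of \Cref{Fig:3:9-1}, which handles the three shape-$(n-2,2)$ tableaux and the $\mathrm{S}_4$-components of the multi-shift reductions; or by a direct computation of $D_p^{(m)}C_y^{(m)}=0$ through Equation~\eqref{Eq:2.9:5}, needed for the single-shift $\mathrm{S}_5$ and $\mathrm{S}_6$ reductions.

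The main obstacle is the combinatorial bookkeeping: for each of the twelve tableaux one must choose a (multi-)pattern that is both short enough to occur in $z$ as a consecutive pattern (so that \Cref{Prop:3:7}(b) and \Cref{Cor:3:15} apply cleanly) and yet rich enough to witness the obstruction. A pattern which omits the teal letters of \eqref{Eq:5.7:1} near its window simply becomes a shifted identity and fails to obstruct anything, forcing one to enlarge the window. Because the reductions only ever produce non-relations in $\mathrm{S}_4$, $\mathrm{S}_5$, or $\mathrm{S}_6$, all calculations remain tractable and are easily replicable via the CHEVIE package in GAP3, as recorded in the appendix of \cite{CM25-1}.
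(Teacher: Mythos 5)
Your overall strategy coincides with the paper's proof of this lemma: pass to the unique involution in each right cell, convert $x\not\leq_{R}^{(n)}z$ into $x\not\leq_{L}^{(n)}z$ via the equivalence in Equation~\eqref{Eq:2.9:2}, realise $x$ as a shift or multi-shift, and invoke Claim~(b) of \Cref{Prop:3:7} or \Cref{Cor:3:15} to reduce to a non-relation in a small symmetric group, settled by shape dominance (\Cref{Eq:2.9:4}), by \Cref{Fig:3:9-1}, or by a direct computation. That is exactly how the paper proceeds, and your choices of witnessing patterns read off from \Cref{Eq:5.7:1} are the right ones in the cases you describe.

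There is, however, a concrete gap in your classification of the eight shape-$(n-3,3)$ tableaux. You assert that the six tableaux other than the purple and cyan ones are all multi-shifts of elements of $\mathrm{S}_{4}\times\mathrm{S}_{2}$. This fails for the tableau with second row $j,\,j+1,\,k$ in the boundary case $k=j+2$: there the second row consists of three consecutive entries, and the corresponding involution is not $s_{j-1}s_{j-2}s_{j}s_{j-1}s_{k-1}$ (whose insertion tableau has second row $j,\,j+1,\,j+3$ when $k=j+2$), but the length-nine involution $s_{j-1}s_{j-2}s_{j-3}s_{j}s_{j-1}s_{j-2}s_{j+1}s_{j}s_{j-1}$, a single shift $F_{6,n}^{j-3}$ of an element of $\mathrm{S}_{6}$. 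This is precisely the overlap flagged in \Cref{Rmk:5:2} and in \Cref{Rmk:Type6RList}; the paper resolves it by a separate application of Claim~(b) of \Cref{Prop:3:7} with the pattern $126345\in\mathrm{S}_{6}$, concluding because $\mathtt{sh}(126345)=(5,1)$ is not dominated by $(3,3)$. Without this case split your argument simply does not cover that involution. Two smaller slips: the transposition $(i-1,j)$ is \emph{not} the longest element of the parabolic generated by $s_{i-1},\dots,s_{j-1}$ (that element reverses the whole interval and has single-column shape); it is the shift of the transposition $(1,j-i+2)$, whose shape $(j-i,1,1)$ is what the dominance comparison against $(j-i,2)$ should use — fortuitously, either shape fails to dominate, so your conclusion survives. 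Likewise, the purple and cyan reductions both land in $\mathrm{S}_{6}$, not $\mathrm{S}_{5}$.
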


\begin{proof}
It suffices to prove $x\not\leq_{R}^{(n)}z$ with $x$ an involution, equivalently $x\not\leq_{L}^{(n)}z$ (by \Cref{Eq:2.9:2}). Such a non-relation can be established, for each of the above tableaux in turn, by employing either Claim~(b) of \Cref{Prop:3:7} or \Cref{Cor:3:15} (similarly to the proofs of \Cref{Lem:5.2:1} and \Cref{Lem:5.3:2}).

We will demonstrate this for three of the above tableaux. From these three cases, it will be clear how to establish the remaining cases via analogous considerations. Firstly, let us consider the case
\[ \mathtt{P}_{x}={\begin{ytableau}\scriptstyle j&\scriptstyle j+1\end{ytableau}}^{\ \langle n-2\rangle}, \hspace{2mm} \text{ thus } \hspace{2mm} x=s_{j-1}s_{j-2}s_{j}s_{j-1}=F_{4,n}^{j-2}(s_{2}s_{1}s_{3}s_{2}). \]
Note from \Cref{Eq:5.7:1}, $z$ consecutively contains the pattern $p:=1423\in\mathrm{S}_{4}$ at position $j-2$. Therefore, by Claim~(b) of \Cref{Prop:3:7}, we have that
\begin{equation*}
x\not\leq_{L}^{(n)}z \iff F_{4,n}^{j-2}(s_{2}s_{1}s_{3}s_{2})\not\leq_{L}^{(n)}z \iff s_{2}s_{1}s_{3}s_{2}\not\leq_{L}^{(4)}1423,
\end{equation*}
and the latter holds from the implication in \Cref{Eq:2.9:4}, since the shape $\mathtt{sh}(s_{2}s_{1}s_{3}s_{2})=(2,2)$ does not dominate the shape $\mathtt{sh}(1423)=(3,1)$ (or by consulting \Cref{Fig:3:9-1}). Now, suppose that we have 
\[ \mathtt{P}_{x}={\begin{ytableau}\scriptstyle i\\\scriptstyle j\end{ytableau}}^{\ \langle n-2\rangle}, \hspace{2mm} \text{ thus } \hspace{2mm} x=(i-1,j)=F_{j',n}^{i-1}((1,j')), \]
where $j':=j-i+2$. From \Cref{Eq:5.7:1}, we see that $z$ consecutively contains the pattern
\[ p:=2134\cdots(j'-2)j'(j'-1)=s_{1}s_{j'-1}\in\mathrm{S}_{j'} \]
at position $i-1$. Therefore, by Claim~(b) of \Cref{Prop:3:7}, we have that
\begin{equation*}
x\not\leq_{L}^{(n)}z \iff F_{j',n}^{i-1}((1,j'))\not\leq_{L}^{(n)}z \iff (1,j')\not\leq_{L}^{(j')}s_{1}s_{j'},
\end{equation*}
and the latter non-relation holds because of the implication in \Cref{Eq:2.9:4}, since the shape $\mathtt{sh}((1,j'))=(j'-2,1,1)$ does not dominate the shape $\mathtt{sh}(s_{1}s_{j'})=(j'-2,2)$. Lastly, suppose that we have 
\[ \mathtt{P}_{x}={\begin{ytableau}\scriptstyle j&\scriptstyle j+1&\scriptstyle k\end{ytableau}}^{\ \langle n-3\rangle}, \hspace{2mm} \text{ thus } \hspace{2mm} x=
\begin{cases}
s_{j-1}s_{j-2}s_{j}s_{j-1}s_{k-1} & k>j+2 \\
s_{j-1}s_{j-2}s_{j-3}s_{j}s_{j-1}s_{j-2}s_{j+1}s_{j}s_{j-1} & k=j+2
\end{cases}. \]
Assume that $k>j+2$. Then we have that
\[ x=s_{j-1}s_{j-2}s_{j}s_{j-1}s_{k-1}=F_{(4,2),n}^{(j-2,k-1)}(s_{2}s_{1}s_{3}s_{2},s_{1}). \]
Moreover, by \Cref{Eq:5.7:1}, $z$ consecutively contains the pattern $\underline{p}:=(1423,21)\in\mathrm{S}_{4}\times\mathrm{S}_{2}$ at position $\underline{i}:=(j-2,k-1)$. Therefore, by \Cref{Cor:3:15}, we have that
\begin{equation*}
x\not\leq_{L}^{(n)}z \iff F_{(4,2),n}^{(j-2,k-1)}(s_{2}s_{1}s_{3}s_{2},s_{1})\not\leq_{L}^{(n)}z \iff (s_{2}s_{1}s_{3}s_{2},s_{1})\not\leq_{L}^{(4,2)}(1423,21),
\end{equation*}
and the latter non-relation holds since $s_{2}s_{1}s_{3}s_{2}\not\leq_{L}^{(4)}1423$ as discussed in the first case above. Now assume that $k=j+2$. Then we have that
\[ x=s_{j-1}s_{j-2}s_{j-3}s_{j}s_{j-1}s_{j-2}s_{j+1}s_{j}s_{j-1}=F_{6,n}^{(j-3)}(s_{3}s_{2}s_{1}s_{4}s_{3}s_{2}s_{5}s_{4}s_{3}). \]
From \Cref{Eq:5.7:1}, $z$ consecutively contains the pattern $p:=126345\in\mathrm{S}_{6}$ at position $j-3$.
Therefore, by Claim~(b) of \Cref{Prop:3:7}, we have that
\begin{equation*}
x\not\leq_{L}^{(n)}z \iff F_{6,n}^{(j-3)}(s_{3}s_{2}s_{1}s_{4}s_{3}s_{2}s_{5}s_{4}s_{3})\not\leq_{L}^{(n)}z \iff s_{3}s_{2}s_{1}s_{4}s_{3}s_{2}s_{5}s_{4}s_{3}\not\leq_{L}^{(6)}126345,
\end{equation*}
and the latter non-relation holds because of the implication in \Cref{Eq:2.9:4}, since the shape $\mathtt{sh}(s_{3}s_{2}s_{1}s_{4}s_{3}s_{2}s_{5}s_{4}s_{3})=(3,3)$ does not dominate the shape $\mathtt{sh}(126345)=(5,1)$. As discussed, the remaining cases follow by similar arguments.
\end{proof}

By \Cref{Lem:5.7:1}, and consulting the list in \Cref{Type7RList1}, if $x\leq_{R}^{(n)}z$, then $\mathtt{P}_{x}$ belongs to the following list:
\begin{equation}\label{Type7RList2}
\emptyset^{\langle n\rangle} \hspace{6mm}
{\begin{ytableau}\scriptstyle i\end{ytableau}}^{\ \langle n-1\rangle} \hspace{6mm}
{\begin{ytableau}\scriptstyle j\end{ytableau}}^{\ \langle n-1\rangle} \hspace{6mm}
{\begin{ytableau}\scriptstyle k\end{ytableau}}^{\ \langle n-1\rangle} \hspace{6mm}
{\begin{ytableau}\scriptstyle i&\scriptstyle j\end{ytableau}}^{\ \langle n-2\rangle}
\end{equation}
\begin{equation*}
{\begin{ytableau}\scriptstyle i&\scriptstyle k\end{ytableau}}^{\ \langle n-2\rangle} \hspace{6mm}
{\begin{ytableau}\scriptstyle j&\scriptstyle k\end{ytableau}}^{\ \langle n-2\rangle} \hspace{6mm}
{\begin{ytableau}\scriptstyle i\\\scriptstyle k\end{ytableau}}^{\ \langle n-2\rangle} \hspace{6mm}
{\begin{ytableau}\scriptstyle j\\\scriptstyle k\end{ytableau}}^{\ \langle n-2\rangle} \hspace{6mm}
{\begin{ytableau}\scriptstyle i&\scriptstyle j\\\scriptstyle k\end{ytableau}}^{\ \langle n-3\rangle}
\end{equation*}

We now seek to prove the Indecomposability conjecture for Type (7$^{\ast}$) involutions $z$. To help with this, we first prove Kostant positivity for a special case of $z$: 

\begin{lem}\label{Lem:5.7:2}
If $k=n$, the involution $z=z_{i,j,n}^{n}$ of Type (7$^{\ast}$) is Kostant positive, i.e. $\mathbf{K}_{n}(z)=\mathtt{true}$.  
\end{lem}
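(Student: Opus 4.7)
The plan is to reduce this case to Kostant's problem for a fully commutative involution in $\mathrm{S}_{n-1}$, following the template of the proof of \Cref{Type6K} for $j+1=n$, which (as will become clear) handles the special subcase $p=3$ of what I describe below. Throughout, set $p := n-j+2$ and $q := n-i+1$.

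First, by \Cref{Eq:2.13:3} it suffices to prove that $z^* := w_{0,n}\, z\, w_{0,n}$ is Kostant positive. A direct calculation in cycle notation gives $z^* = (1, p)(q, q+1)$; the constraints $i \in [2, n-5]$ and $i+3 \leq j \leq n-1$ translate into $3 \leq p$, $p+2 \leq q$, and $q+1 \leq n$. Next, I would verify, via an inspection of one-line notation and the Robinson--Schensted recording tableaux, that
\[
z^* \sim_L^{(n)} (s_{p-2}\, s_{q-1})\, w_{0,n-1}\, w_{0,n},
\]
where $w_{0,n-1}$ is the longest element of the parabolic subgroup $\mathrm{S}_{n-1} = \langle s_1, \ldots, s_{n-2}\rangle \subset \mathrm{S}_n$ that fixes position $n$. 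This left-equivalence is established by showing that the recording tableaux of both sides agree, directly generalizing the $p=3$ instance implicit in \Cref{Type6K}.

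With the equivalence in hand, \cite[Theorem 1.1]{K10} applied to the parabolic $\mathrm{S}_{n-1}$ yields
\[
\mathbf{K}_n(z^*) = \mathbf{K}_{n-1}(s_{p-2}\, s_{q-1}).
\]
The separation $(q-1) - (p-2) = q - p + 1 \geq 3$ guarantees that $s_{p-2}$ and $s_{q-1}$ commute and that the involution $s_{p-2}\, s_{q-1}$ is fully commutative in $\mathrm{S}_{n-1}$; moreover, its one-line notation consecutively avoids the pattern $2143$. Hence, by \cite[Theorem 5.1]{MMM24}, this involution is Kostant positive, and so $\mathbf{K}_n(z) = \mathbf{K}_n(z^*) = \mathtt{true}$.

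The main obstacle is the verification of the left-cell equivalence in the second step; this amounts to a concrete comparison of standard Young tableaux of shape $(n-3, 2, 1)$, obtained by Schensted-inserting the explicit one-line notations of $z^*$ and $(s_{p-2} s_{q-1}) w_{0,n-1} w_{0,n}$. Once this combinatorial fact is established, everything else is a direct invocation of results already available in the paper or in the cited literature.
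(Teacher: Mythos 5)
Your proposal is correct and follows essentially the same route as the paper: conjugate by $w_{0,n}$, identify the left cell of the conjugate with that of $(s_{p-2}s_{q-1})w_{0,n-1}w_{0,n}$ (in the paper's notation $k'=p$ and $j'=q+1$, so $s_{k'-2}s_{j'-2}$ is the same element), then apply \cite[Theorem 1.1]{K10} and \cite[Theorem 5.1]{MMM24}. The only (harmless) slip is the upper bound on $j$: the Type $(7^{\ast})$ condition $j+1<k=n$ forces $j\leq n-2$, hence $p\geq 4$ rather than $p\geq 3$, but your argument works on the larger range anyway.
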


\begin{proof}
By \Cref{Eq:2.13:3}, it suffices to prove that
\[ z':=w_{0,n}z_{i,j,n}^{n}w_{0,n}=(1,k')(j'-1,j')\in\mathrm{S}_{n} \]
is Kostant positive, where $k':=n-j+2$ and $j'=n-i+2$, and, in particular, we have $k'<j'-2$. Now, via one-line notation, one can deduce that
\[ z'\sim_{L}^{(n)}(s_{k'-2}s_{j'-2})w_{0,n-1}w_{0,n} \]
where $s_{k'-2}s_{j'-2}$ belongs to the parabolic subgroup $\mathrm{S}_{n-1}\subset\mathrm{S}_{n}$. Therefore, by \cite[Theorem 1.1]{K10}, we have that $\mathbf{K}_{n}(z')=\mathbf{K}_{n-1}(s_{k'-2}s_{j'-2})$. Lastly, the fully commutative permutation $s_{k'-2}s_{j'-2}$ is Kostant positive by \cite[Theorem 5.1]{MMM24}.
\end{proof}

\begin{lem}\label{Type7IC}
The Type (7$^{\ast}$) involution $z$ above satisfies the Indecomposability Conjecture \ref{Conj:2.12:1}. That is to say, we have that $\mathbf{KM}_{n}(\star,z)=\mathtt{true}$.
\end{lem}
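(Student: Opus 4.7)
The plan is to mirror the proof structure of \Cref{Type6IC} as closely as possible, since the list \eqref{Type7RList2} of admissible insertion tableaux is short enough that almost every case is handled by either the support-size bound or the equal-shape argument, leaving only two column tableaux to treat by a shift-reduction. By Equations \eqref{Eq:2.11:1} and \eqref{Eq:2.12:2}, it suffices to show $\mathbf{KM}_n(x,z) = \mathtt{true}$ for every involution $x \in \mathrm{Inv}_n$ with $x \leq_R^{(n)} z$, and any such $x$ has $\mathtt{P}_x$ in the list \eqref{Type7RList2}.

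First I would dispose of the easy tableaux. Let $x \in \mathrm{Inv}_n$ be an involution with $\mathtt{P}_x$ one of the tableaux in \eqref{Type7RList2} other than the three
\[
{\begin{ytableau}\scriptstyle i\\\scriptstyle k\end{ytableau}}^{\ \langle n-2\rangle}, \qquad {\begin{ytableau}\scriptstyle j\\\scriptstyle k\end{ytableau}}^{\ \langle n-2\rangle}, \qquad {\begin{ytableau}\scriptstyle i&\scriptstyle j\\\scriptstyle k\end{ytableau}}^{\ \langle n-3\rangle}.
\]
A case-by-case inspection shows that the corresponding involution is a product of at most two disjoint simple transpositions, so $|\mathrm{Sup}(x)| \leq 2 \leq 6$, and \Cref{Eq:2.12:4} yields $\mathbf{KM}_n(x,z) = \mathtt{true}$. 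If $\mathtt{P}_x$ is the last tableau displayed (the one of shape $(n-3,2,1)$), then $\mathtt{sh}(x) = \mathtt{sh}(z)$, and $\mathbf{KM}_n(x,z) = \mathtt{true}$ by \cite[Section 5.2]{KiM16}.

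The two remaining cases are the column tableaux, which correspond to the transpositions $x_1 := (i-1, k)$ and $x_2 := (j-1, k)$ respectively. In general these have large support (of sizes $k-i+1$ and $k-j+1$), so \Cref{Eq:2.12:4} is not available. However, both $x_1$ and $x_2$ lie in the parabolic subgroup $\mathrm{S}_k \subset \mathrm{S}_n$ embedded by $F_{k,n}^1$. Since the Type $(7^\ast)$ inequalities $i \in [2, n-5]$, $i+2 < j < n$, $j+1 < k \leq n$ force $k \geq j+2 \geq i+5 \geq 7$, the involution $z_{i,j,k}^k \in \mathrm{S}_k$ is itself a legitimate Type $(7^\ast)$ involution (with the last index saturated), and $F_{k,n}^1(z_{i,j,k}^k) = z$. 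By \Cref{Lem:5.7:2} applied in $\mathrm{S}_k$, the involution $z_{i,j,k}^k$ is Kostant positive, so by \Cref{Thm:2.13:1} we have $\mathbf{KM}_k(\star, z_{i,j,k}^k) = \mathtt{true}$. Applying \Cref{Lem:3:19} with the shift $F_{k,n}^1$, and noting that $F_{k,n}^1(x_a) = x_a$ for $a \in \{1,2\}$, we conclude
\[
\mathbf{KM}_n(x_a, z) = \mathbf{KM}_n\bigl(F_{k,n}^1(x_a), F_{k,n}^1(z_{i,j,k}^k)\bigr) = \mathbf{KM}_k(x_a, z_{i,j,k}^k) = \mathtt{true},
\]
which completes the proof.

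The main conceptual step is the reduction in the last paragraph, but it is not really an obstacle: everything has been prepared by the previous types. The only point that requires a brief verification is that the Type $(7^\ast)$ inequalities for $z$ pass to the analogous inequalities for $z_{i,j,k}^k$ in $\mathrm{S}_k$; this is an immediate arithmetic check from $k \geq i + 5$. Once this is in place, the proof is a structural copy of \Cref{Type6IC}.
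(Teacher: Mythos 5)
Your proposal is correct and follows essentially the same route as the paper's proof: the easy tableaux are dispatched by the support bound \eqref{Eq:2.12:4} and the equal-shape case by \cite[Section 5.2]{KiM16}, and the two column tableaux are handled by shifting down to the Kostant positive involution $z_{i,j,k}^{k}\in\mathrm{S}_{k}$ of \Cref{Lem:5.7:2} via \Cref{Lem:3:19}. The only (immaterial) difference is organizational: the paper first disposes of the case $k=n$ outright using \Cref{Lem:5.7:2} and \Cref{Thm:2.13:1}, whereas you treat all $k$ uniformly, which works since $F_{n,n}^{1}$ is just the identity.
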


\begin{proof}
For the sake of this proof, it would be convenient to add back the subscripts and superscripts for our involution $z=z_{i,j,k}^{n}$.
By \Cref{Thm:2.13:1}, we know that when $z_{i,j,k}^{n}$ is Kostant positive, then $\mathbf{KM}_{n}(\star,z_{i,j,k}^{n})=\mathtt{true}$. Thus by \Cref{Lem:5.7:2}, we have that $\mathbf{KM}_{n}(\star,z_{i,j,n}^{n})=\mathtt{true}$ (i.e. when $k=n$).

It remains to show $\mathbf{KM}_{n}(\star,z_{i,j,k}^{n})=\mathtt{true}$ when $k\neq n$. By Equations~\eqref{Eq:2.11:1} and \eqref{Eq:2.12:2}, it suffices to show that
\[ \mathbf{KM}_{n}(x,z_{i,j,k}^{n})=\mathtt{true}, \]
for all $x\in\mathrm{Inv}_{n}$ where $x\leq_{R}^{(n)}z_{i,j,k}^{n}$. This relation implies that $\mathtt{P}_{x}$ belongs to the list in \Cref{Type7RList2}. Let $x$ be an involution with $\mathtt{P}_{x}$ one of the standard Young tableau in the list in \Cref{Type7RList2}, except for one of the following three:
\[ {\begin{ytableau}\scriptstyle i\\\scriptstyle k\end{ytableau}}^{\ \langle n-2\rangle}, \hspace{6mm}
{\begin{ytableau}\scriptstyle j\\\scriptstyle k\end{ytableau}}^{\ \langle n-2\rangle}, \hspace{3mm} \text{ or } \hspace{3mm}
{\begin{ytableau}\scriptstyle i&\scriptstyle j\\\scriptstyle k\end{ytableau}}^{\ \langle n-3\rangle}.
 \]
Then it can be checked that the support set $\mathrm{Sup}(x)$ is of size no greater than $6$. For example, if
\[ \mathtt{P}_{x}={\begin{ytableau}\scriptstyle j&\scriptstyle k\end{ytableau}}^{\ \langle n-2\rangle} \] 
then $x=s_{j-1}s_{k-1}$, and so $|\mathrm{Sup}(x)|=2$. Therefore, by the implication in \Cref{Eq:2.12:4}, $\mathbf{KM}_{n}(x,z_{i,j,k}^{n})=\mathtt{true}$ for such involutions $x$. Moreover, if $\mathtt{P}_{x}$ is the tableau of shape $(n-3,2,1)$ given in the above exceptions, then it has the same shape as $z_{i,j,k}^{n}$ itself. As a consequence, we have  $\mathbf{KM}_{n}(x,z_{i,j,k}^{n})=\mathtt{true}$ by \cite[Section 5.2]{KiM16}. Hence, we just need to show that $\mathbf{KM}_{n}(x,z_{i,j,k}^{n})=\mathtt{true}$ when $\mathtt{P}_{x}$ is either 
\begin{equation*}
\hspace{3mm} {\begin{ytableau}\scriptstyle i\\\scriptstyle k\end{ytableau}}^{\ \langle n-2\rangle}  \text{ or }  \hspace{6mm} {\begin{ytableau}\scriptstyle j\\\scriptstyle k\end{ytableau}}^{\ \langle n-2\rangle}.
\end{equation*}
Let $x_{1}$ and $x_{2}$ be the involutions whose insertion tableau are these two above, respectively. Thus
\[ x_{1}=(i-1,k) \hspace{2mm} \text{ and } \hspace{2mm} x_{2}=(j-1,k). \]
Now consider the Kostant positive Type (7$^{\ast}$) involution $z_{i,j,k}^{k}$ belonging to $\mathrm{S}_{k}$ (which is Kostant positive by \Cref{Lem:5.7:2}). Then note that both $x_{1}$ and $x_{2}$ also belong to the subgroup $\mathrm{S}_{k}\subset\mathrm{S}_{n}$. Therefore, via the natural embedding, we have the shifts
\[ F_{k,n}^{1}(z_{i,j,k}^{k})=z_{i,j,k}^{n}, \hspace{2mm} F_{k,n}^{1}(x_{1})=x_{1}, \hspace{2mm} \text{ and } \hspace{2mm} F_{k,n}^{1}(x_{2})=x_{2}. \]
Hence, by \Cref{Lem:3:19}, and since $z_{i,j,k}^{k}$ is Kostant positive (as a permutation in $\mathrm{S}_{k}$),
\[ \mathtt{true}=\mathbf{KM}_{k}\left(x_{a},z_{i,j,k}^{k}\right)=\mathbf{KM}_{n}\left(F_{k,n}^{1}(x_{a}),F_{k,n}^{1}(z_{i,j,k}^{k})\right)=\mathbf{KM}_{n}\left(x_{a},z_{i,j,k}^{n}\right), \]
for each $a\in[2]$, which completes the proof of the lemma.
\end{proof}

We now seek to show that the Type (7$^{\ast}$) involutions $z$ are Kostant positive. To do this, we present the more technical aspects of the proof separately in the following four lemmas:

\begin{lem}\label{Lem:Type7KTech0}
The following hold:
\begin{itemize}
\item[(a)] Consider the strong right Bruhat walk
\[ \mathtt{w}:=(z,zs_{k-1},zs_{k-1}s_{k-2},\dots,zs_{k-1}s_{k-2}\cdots s_{j+1}) \]
and reduced expression $\underline{x}:=s_{k-1}s_{k-2}\dots s_{j}$. Then $\mathtt{w}$ is $\underline{x}$-compatible.
\item[(b)] Consider the strong right Bruhat walk
\[ \mathtt{w}:=(z,zs_{i},zs_{i}s_{i+1},\dots,zs_{i}s_{i+1}\cdots s_{j-2},zs_{i}s_{i+1}\dots s_{j-3}) \]
and reduced expression $\underline{x}:=s_{i-1}s_{i}\dots s_{j-1}$. Then $\mathtt{w}$ is $\underline{x}$-compatible.
\item[(c)] Consider the strong right Bruhat walk
\[ \mathtt{w}:=(z,zs_{j-1},zs_{j-1}s_{j},\dots,zs_{j-1}s_{j}\cdots s_{k-3}) \]
and reduced expression $\underline{x}:=s_{j-1}s_{j}\dots s_{k-2}$. Then $\mathtt{w}$ is $\underline{x}$-compatible.
\item[(d)] Consider the strong right Bruhat walk
\[ \mathtt{w}:=(z,zs_{j-2},zs_{j-2}s_{j-3},\dots,zs_{j-2}s_{j-3}\cdots s_{i}) \]
and reduced expression $\underline{x}:=s_{j-1}s_{j-2}\dots s_{i}$. Then $\mathtt{w}$ is $\underline{x}$-compatible.
\end{itemize}
\end{lem}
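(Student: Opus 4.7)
The plan is to verify each of (a)--(d) by a direct computation with one-line notations, exploiting the elementary fact that right multiplication of a permutation by $s_a$ simply swaps the values at positions $a$ and $a+1$. In every case the walk consists of $z$ followed by successive right multiplications by a chain of simple reflections that transports one distinguished value of $z$ (either $j-1$, $k$, or $i-1$) along a contiguous block of positions. Once the one-line notation of each $\mathrm{w}_l$ is written down in terms of $z$ (see \eqref{Eq:5.7:1}), compatibility with $\underline{x}=s_{i_1}\cdots s_{i_m}$ reduces, by \Cref{Lem:2.2:3}, to verifying the three numerical inequalities $\mathrm{w}_l(i_l)>\mathrm{w}_l(i_l+1)$ and $\mathrm{w}_l(i_{l\pm 1})<\mathrm{w}_l(i_{l\pm 1}+1)$ (where defined) for each $l$. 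These are all comparisons of two consecutive integers, or of one of $i-1$, $j-1$, $k$ against an element of $[n]$ lying in a transparent range, so no non-trivial combinatorics is involved.

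For (a), each successive multiplication by $s_{k-1}, s_{k-2}, \ldots, s_{j+1}$ drags the value $j-1$ (originally at position $k$ in $z$) one step to the left, while shifting the consecutive block $k-1, k-2, \ldots, j+1$ one step to the right. Thus $\mathrm{w}_l$ has the value $j-1$ at position $k-l+1$ and the values $k-l+1, k-l+2, \ldots, k-1$ at positions $k-l+2, \ldots, k$. Matching $s_{i_l}=s_{k-l}$ against this description immediately yields $s_{i_l}\in\mathrm{Des}_R(\mathrm{w}_l)$ (because $k-l>j-1$ under the range $l\le k-j$) together with $s_{i_{l\pm 1}}\notin\mathrm{Des}_R(\mathrm{w}_l)$ (either from $j-1<k-l+1$ or from a comparison of two consecutive integers). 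Part (c) is handled identically with the roles reversed: the chain $s_{j-1},s_j,\ldots,s_{k-3}$ pushes the value $k=z(j-1)$ rightward into position $j+l-2$ in $\mathrm{w}_l$, with the block $j,j+1,\ldots,j+l-3$ shifting leftward by one. Part (d) is the mirror image of (a): the chain $s_{j-2},s_{j-3},\ldots,s_i$ pushes the same value $k$ leftward, placing it at position $j-l$ in $\mathrm{w}_l$ with $j-l, j-l+1,\ldots,j-2$ occupying positions $j-l+1,\ldots,j-1$.

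The one small wrinkle is in (b), whose walk takes a step backward at the very end. Applying the letters $s_i,s_{i+1},\ldots,s_{i+l-2}$ to $z$ drags the value $i-1$ (originally at position $i$) rightward to position $i+l-1$ for $2\le l\le j-i$; in the critical step $l=j-i$ this value reaches position $j-1$, forcing $z(j-1)=k$ to slide back into position $j-2$. The final walk entry satisfies $\mathrm{w}_{j-i+1}=\mathrm{w}_{j-i}\,s_{j-2}$, which undoes the last swap and thereby coincides with $\mathrm{w}_{j-i-1}$; in particular $\mathrm{w}_{j-i+1}(j-1)=k>j=\mathrm{w}_{j-i+1}(j)$, so $s_{i_{j-i+1}}=s_{j-1}$ is indeed a right descent, and the remaining checks for (b) go through as in the other three parts. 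Tracking this backward step correctly (and remembering that $i_l=i+l-2$ here, so $s_{i_{j-i+1}}=s_{j-1}\ne s_{j-2}$ is precisely what one needs) is the only real bookkeeping obstacle; once it is handled, the lemma follows.
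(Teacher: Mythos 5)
Your proposal is correct and takes essentially the same route as the paper's proof: write down the one-line notation of each walk entry by tracking the single value ($j-1$, $i-1$, or $k$) being transported along a block of consecutive positions, and then verify the conditions of \Cref{Eq:2.3:1} via \Cref{Lem:2.2:3} as comparisons of explicitly known entries (the paper carries this out in detail for (a) and (b) and declares (c), (d) analogous, exactly as you do). One cosmetic slip: in (d) your block description fails at the very last entry, since $z(i)=i-1$, so after the final swap position $i+1$ holds $i-1$ rather than $i$ — but this changes none of the inequalities needed for compatibility, which still hold.
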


\begin{proof}
We prove Claims~(a) and (b) with the other two claims following by similar considerations.

{\bf Claim~(a)}: For $j+1\leq a\leq k$, let $\mathtt{w}_{a}:=zs_{k-1}s_{k-2}\cdots s_{a}$, where, in particular, $\mathtt{w}_{k}=z$. Then
\[ \mathtt{w}=(\mathtt{w}_{k},\mathtt{w}_{k-1},\dots,\mathtt{w}_{j+1}). \]
Moreover, from \Cref{Eq:5.7:1}, one can easily deduce that we have the one-line notation
\[ \mathtt{w}_{a}=1\cdots(i-2){\color{teal}i(i-1)}(i+1)\cdots(j-2){\color{teal}k}j(j+1)\cdots(a-1){\color{teal}(j-1)a\cdots(k-1)}(k+1)\cdots n, \]   
where we have coloured the non-fixed points in {\color{teal}teal}. This is obtained from the one-line notation of $z$ by swapping $(j-1)$ with its left neighbour until it is placed in position $a$. Now, by definition (see \Cref{Sec:2.3}), to prove that $\mathtt{w}$ is $\underline{x}$-compatible for reduced expression $\underline{x}:=s_{k-1}s_{k-2}\dots s_{j}$, it suffices to prove that the following Bruhat relations hold:
\begin{itemize}
\item[(i)] $\mathtt{w}_{k}s_{k-1}<\mathtt{w}_{k}$ and $\mathtt{w}_{k}s_{k-2}>\mathtt{w}_{k}$,
\item[(ii)] $\mathtt{w}_{a}s_{a-1}<\mathtt{w}_{a}$ and $\mathtt{w}_{a}s_{(a-1)\pm1}>\mathtt{w}_{a}$ for each $j+2\leq a\leq k-1$,
\item[(iii)] $\mathtt{w}_{j+1}s_{j}<\mathtt{w}_{j+1}$ and $\mathtt{w}_{j+1}s_{j+1}>\mathtt{w}_{j+1}$.
\end{itemize}
In the above list, (i) accounts for the first entry in $\mathtt{w}$, (ii) for the second to penultimate entries, and (iii) for the last entry. Since $\mathtt{w}_{a}s_{b}<\mathtt{w}_{a}$ if and only if $\mathtt{w}_{a}(b)>\mathtt{w}_{a}(b+1)$, the above Bruhat relations can be confirmed by consulting the one-line notation of $\mathtt{w}_{a}$ given above.

{\bf Claim~(b)}: For $i-1\leq a\leq j-3$, let $\mathtt{w}_{a}:=zs_{i}s_{i+1}\cdots s_{a}$, where in particular $\mathtt{w}_{i-1}=z$. Then
\[ \mathtt{w}=(\mathtt{w}_{i-1},\mathtt{w}_{i},\dots,\mathtt{w}_{j-3},\mathtt{w}_{j-3}s_{j-2},\mathtt{w}_{j-3}). \]
From \Cref{Eq:5.7:1}, one can easily deduce that we have the one-line notation
\[ \mathtt{w}_{a}=1\cdots(i-2){\color{teal}i(i+1)\cdots (a+1)(i-1)}(a+2)\cdots(j-2){\color{teal}k}j\cdots(k-1){\color{teal}(j-1)}(k+1)\cdots n, \]   
where we have coloured the non-fixed points in {\color{teal}teal}. This is obtained from the one-line notation of $z$ by swapping $(i-1)$ with its right neighbour until it is placed in position $a+1$. In particular,
\[ \mathtt{w}_{j-3}=1\cdots(i-2){\color{teal}i(i+1)\cdots (j-2)(i-1)}{\color{teal}k}j\cdots(k-1){\color{teal}(j-1)}(k+1)\cdots n. \]  
This accounts for all entries of $\mathtt{w}$ except $\mathtt{w}_{j-3}s_{j-2}$, whose one-line notation is given by
\[ \mathtt{w}_{j-3}s_{j-2}=1\cdots(i-2){\color{teal}i(i+1)\cdots (j-2)k}{\color{teal}(i-1)}j\cdots(k-1){\color{teal}(j-1)}(k+1)\cdots n. \]  
By definition, to prove that $\mathtt{w}$ is $\underline{x}$-compatible with the reduced expression $\underline{x}:=s_{i-1}s_{i}\dots s_{j-1}$, it suffices to prove that the following Bruhat relations hold:
\begin{itemize}
\item[(i)] $\mathtt{w}_{i-1}s_{i-1}<\mathtt{w}_{i-1}$ and $\mathtt{w}_{i-1}s_{i}>\mathtt{w}_{i-1}$,
\item[(ii)] $\mathtt{w}_{a}s_{a}<\mathtt{w}_{a}$ and $\mathtt{w}_{a}s_{a\pm1}>\mathtt{w}_{a}$ for each $i\leq a\leq j-3$,
\item[(iii)] $(\mathtt{w}_{j-3}s_{j-2})s_{j-2}<\mathtt{w}_{j-2}s_{j-2} \hspace{2mm} \text{ and } \hspace{2mm} (\mathtt{w}_{j-3}s_{j-2})s_{(j-2)\pm1}>\mathtt{w}_{j-3}s_{j-2}$,
\item[(iv)] $\mathtt{w}_{j-3}s_{j-1}<\mathtt{w}_{j-3} \hspace{2mm} \text{ and } \hspace{2mm} \mathtt{w}_{j-3}s_{j-2}>\mathtt{w}_{j-3}$,
\end{itemize}
In the above list, (i) accounts for the first entry in $\mathtt{w}$, (ii) for the second to third last entries, (iii) for the penultimate entry, and (iv) for the last entry. Since $\mathtt{w}_{a}s_{b}<\mathtt{w}_{a}$ if and only if $\mathtt{w}_{a}(b)>\mathtt{w}_{a}(b+1)$, all these Bruhat relations can be confirmed by consulting the one-line notations displayed above. 
\end{proof}

\begin{lem}\label{Lem:Type7KTech1}
Let $x,y\leq_{R}^{(n)}z$ be distinct such that $\mathtt{sh}(x)=\mathtt{sh}(y)=(n-1,1)$, then 
\[ \theta_{x}^{(n)}L_{z}^{(n)}\not\cong\theta_{y}^{(n)}L_{z}^{(n)}. \]
\end{lem}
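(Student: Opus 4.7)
My plan is to argue by contradiction. If $\theta_x^{(n)} L_z^{(n)} \cong \theta_y^{(n)} L_z^{(n)}$ with $x \neq y$ and $\mathtt{sh}(x) = \mathtt{sh}(y) = (n-1,1)$, then \eqref{Eq:2.11:2} forces $x \sim_L^{(n)} y$, so $\mathtt{Q}_x = \mathtt{Q}_y$ and $\mathtt{P}_x \neq \mathtt{P}_y$; combined with \eqref{Type7RList2}, both insertion tableaux lie in the three-element set of shape-$(n-1,1)$ tableaux with $i$, $j$, or $k$ in the second row. Up to symmetry this leaves three unordered pairs, and by \eqref{Eq:2.11:3} it suffices, for each pair, to exhibit a single pair of representatives $(x,y)$ in a common left cell. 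Rather than compare modules directly, I would prove the stronger Hecke-algebraic inequality $D_z^{(n)} C_x^{(n)} \neq D_z^{(n)} C_y^{(n)}$ in $\mathrm{H}_n^{\mathbb{A}}$, which precludes the isomorphism by Grothendieck-class comparison.

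The central tool is Lemma~\ref{Lem:Type7KTech0}, whose strong right Bruhat walks combined with Lemma~\ref{Lem:2.11:5}(a) collapse any long product $D_z C_{s_{i_1}} \cdots C_{s_{i_l}}$ into a short $D_w C_{s_{i_l}}$ with $w$ explicit. For the pair with second-row entries $\{i,j\}$ I would take $x = s_{i-1} s_i \cdots s_{j-1}$ and $y = s_{j-1}$ (both have recording tableau with $j$ in the second row, hence lie in the same left cell); Claim~(b) yields $D_z C_x = D_{w_b} C_{s_{j-1}}$ with $w_b = z s_i \cdots s_{j-3}$. For $\{i,k\}$ I would take $x = s_{i-1} \cdots s_j$ and $y = s_{k-1} \cdots s_j$ (both recording $j+1$ in the second row); factoring $C_x = C_{s_{i-1} \cdots s_{j-1}} C_{s_j}$, Claim~(b) then gives $D_z C_x = D_{w_b} C_{s_{j-1}} C_{s_j}$, while Claim~(a) gives $D_z C_y = D_{w_a} C_{s_j}$ with $w_a = z s_{k-1} \cdots s_{j+1}$. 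For $\{j,k\}$ I would take $x = s_{j-1} \cdots s_{k-1}$ and $y = s_{k-1}$ (both recording $k$); factoring $C_x = C_{s_{j-1} \cdots s_{k-2}} C_{s_{k-1}}$ and applying Claim~(c) yields $D_z C_x = D_{w_c} C_{s_{k-2}} C_{s_{k-1}}$ with $w_c = z s_{j-1} \cdots s_{k-3}$.

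In each case I would then expand via \eqref{Eq:2.8:1} and exploit the positivity of Kazhdan--Lusztig structure constants (from \eqref{Eq:2.7:0} and \eqref{Eq:2.8:2}) to isolate a single dual KL basis vector whose coefficient is a positive Laurent polynomial on the $x$-side but vanishes on the $y$-side: concretely $D_{w_b}$ with coefficient $v+v^{-1}$ in the first case, $D_{w_b s_{j-1}}$ with coefficient bounded below by $v+v^{-1}$ in the second, and $D_{w_c}$ with coefficient bounded below by $(v+v^{-1})^2$ in the third. Vanishing on the $y$-side is obtained by inspecting the three possibilities in the expansion of $D_z C_s$ via \eqref{Eq:2.8:1} (the candidate element would have to equal $z$, equal $zs$, or satisfy $z < r < rs$); all three are ruled out by length comparisons and descent-set membership, using the Type~$(7^*)$ constraints $i + 2 < j$ and $j + 1 < k$ (which together force $k \geq i + 5$). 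The main technical obstacle is the careful one-line-notation bookkeeping needed to verify the descent conditions at each intermediate element ($w_a$, $w_b$, $w_b s_{j-1}$, $w_c$, $w_c s_{k-2}$) so that \eqref{Eq:2.8:1} applies in the stated form, and to rigorously exclude the three coincidences on the $y$-side.
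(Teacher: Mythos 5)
Your proposal is correct and takes essentially the same route as the paper's proof: the same reduction (via \eqref{Eq:2.9:6}, the list \eqref{Type7RList2} and \eqref{Eq:2.11:3}) to three pairs of insertion tableaux, the same collapsing of $D_z C_{s_{i_1}}\cdots C_{s_{i_l}}$ using Lemma~\ref{Lem:Type7KTech0} together with Lemma~\ref{Lem:2.11:5}, and a comparison of dual Kazhdan--Lusztig coefficients after decategorifying. The only differences are cosmetic: you choose slightly different left-cell representatives for the $\{i,k\}$ and $\{j,k\}$ pairs (words ending at $s_{j}$, resp.\ $s_{k-1}$, instead of $s_{j-1}$), and you distinguish the two sides by isolating a term with positive coefficient (using positivity of the structure constants) rather than by the unique top-degree or minimal-length term as the paper does; both devices work, and the descent conditions you flag as remaining bookkeeping do check out from the one-line notations already recorded in the proof of Lemma~\ref{Lem:Type7KTech0}.
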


\begin{proof}
For contradiction, let us assume that we have an isomorphism 
\begin{equation}\label{Eq:7Tech1-1}
\theta_{x}^{(n)}L_{z}^{(n)}\cong\theta_{y}^{(n)}L_{z}^{(n)}.
\end{equation}
As $x,y\leq_{R}^{(n)}z$, both $\mathtt{P}_{x}$ and $\mathtt{P}_{y}$ belong to the list in \Cref{Type7RList2}. By the implication in \Cref{Eq:2.9:6}, $x\sim_{L}^{(n)}y$, which implies that $\mathtt{Q}_{x}=\mathtt{Q}_{y}$, and as $x$ and $y$ are distinct, this implies that $\mathtt{P}_{x}\neq\mathtt{P}_{y}$. Up to symmetry, this implies that 
\begin{align*}
(i) \hspace{2mm} (\mathtt{P}_{x},\mathtt{P}_{y})=&\left({\begin{ytableau}\scriptstyle j\end{ytableau}}^{\ \langle n-1\rangle}, \hspace{2mm} {\begin{ytableau}\scriptstyle k\end{ytableau}}^{\ \langle n-1\rangle} \right), \hspace{2mm}
(ii) \hspace{2mm} (\mathtt{P}_{x},\mathtt{P}_{y})=\left({\begin{ytableau}\scriptstyle i\end{ytableau}}^{\ \langle n-1\rangle}, \hspace{2mm} {\begin{ytableau}\scriptstyle j\end{ytableau}}^{\ \langle n-1\rangle} \right), \\
&\hspace{5mm} (iii) \hspace{2mm} (\mathtt{P}_{x},\mathtt{P}_{y})=\left({\begin{ytableau}\scriptstyle i\end{ytableau}}^{\ \langle n-1\rangle}, \hspace{2mm} {\begin{ytableau}\scriptstyle k\end{ytableau}}^{\ \langle n-1\rangle} \right).
\end{align*}
Furthermore, by the equivalence in \Cref{Eq:2.11:3}, it suffices to consider any particular pair $(x',y')$ such that $(\mathtt{P}_{x'},\mathtt{P}_{y'})$ is one of the above pairings such that $x'\sim_{L}^{(n)}y'$. Thus, by assumption, we have that $\theta_{x}^{(n)}L_{z}^{(n)}\cong\theta_{y}^{(n)}L_{z}^{(n)}$ holds for $x,y\leq_{R}^{(n)}z$ being (at least) one of the following three pairs:
\begin{align*}
(i) \hspace{2mm} (x,y)=(&s_{j-1}, \hspace{1mm} s_{k-1}s_{k-2}\cdots s_{j-1}), \hspace{2mm}
(ii) \hspace{2mm} (x,y)=(s_{i-1}s_{i}\cdots s_{j-1}, \hspace{1mm} s_{j-1}), \\
&(iii) \hspace{2mm} (x,y)=(s_{i-1}s_{i}\cdots s_{j-1}, \hspace{1mm} s_{k-1}s_{k-2}\cdots s_{j-1}).
\end{align*}
We examine these three cases one at a time, showing that each leads to a contradiction.

{\bf Case (i)}: We have the following isomorphisms:
\[ \theta_{y}^{(n)}L_{z}^{(n)}\cong\theta_{s_{j-1}}^{(n)}\left(\theta_{s_{j}}^{(n)}\cdots\theta_{s_{k-2}}^{(n)}\theta_{s_{k-1}}^{(n)}L_{z}^{(n)}\right)\cong\theta_{s_{j-1}}^{(n)}\theta_{s_{j}}^{(n)}L_{zs_{k-1}s_{k-2}\cdots s_{j+1}}^{(n)}. \]
The first isomorphism above follows from \Cref{Lem:2.6:2}, while the last isomorphism follows from Claim~(b) of \Cref{Lem:2.11:5} and Claim~(a) of \Cref{Lem:Type7KTech0}. By \Cref{Eq:7Tech1-1}, this implies that
\begin{equation*}
\theta_{x}^{(n)}L_{z}^{(n)}\cong\theta_{s_{j-1}}^{(n)}\theta_{s_{j}}^{(n)}L_{zs_{k-1}s_{k-2}\cdots s_{j+1}}^{(n)}.
\end{equation*}
Decategorifying this isomorphism yields the equality
\begin{equation}\label{Eq:7Tech1-2nn}
D_{z}^{(n)}C_{x}^{(n)}=D_{zs_{k-1}s_{k-2}\cdots s_{j+1}}^{(n)}C_{s_{j}}^{(n)}C_{s_{j-1}}^{(n)}.
\end{equation}
However, from \Cref{Eq:2.8:1}, the right hand side of \Cref{Eq:7Tech1-2nn} contains the unique degree two term $D_{zs_{k-1}s_{k-2}\cdots s_{j+1}}$ (when written in terms of the dual Kazhdan-Lusztig basis), while the left hand side has no terms of degree two or higher. This gives the desired contradiction for Case (i).

{\bf Case (ii)}: We have the following isomorphisms:
\[ \theta_{x}^{(n)}L_{z}^{(n)}\cong\theta_{s_{j-1}}^{(n)}\theta_{s_{j-2}}^{(n)}\cdots\theta_{s_{i-1}}^{(n)}L_{z}^{(n)}\cong\theta_{s_{j-1}}^{(n)}L_{zs_{i}s_{i+1}\cdots s_{j-3}}^{(n)}. \]
The first isomorphism above follows from \Cref{Lem:2.6:2}, while the last isomorphism follows from Claim~(b) of \Cref{Lem:2.11:5} and Claim~(b) of \Cref{Lem:Type7KTech0}. By \Cref{Eq:7Tech1-1}, this implies that
\begin{equation*}
\theta_{s_{j-1}}^{(n)}L_{zs_{i}s_{i+1}\cdots s_{j-3}}^{(n)}\cong\theta_{s_{j-1}}^{(n)}L_{z}^{(n)}.
\end{equation*}
Decategorifying this isomorphism yields the equality
\begin{equation*}
D_{zs_{i}s_{i+1}\cdots s_{j-3}}^{(n)}C_{s_{j-1}}^{(n)}=D_{z}^{(n)}C_{s_{j-1}}^{(n)}.
\end{equation*}
By \Cref{Eq:2.8:1}, this equality cannot hold as $zs_{i}s_{i+1}\cdots s_{j-3}\neq z$, recalling that $i+2<j$. This gives the desired contradiction for Case (ii).

{\bf Case (iii)}: From the previous two cases, we have the equivalence of isomorphisms
\[ \theta_{x}^{(n)}L_{z}^{(n)}\cong\theta_{y}^{(n)}L_{z}^{(n)} \iff \theta_{s_{j-1}}^{(n)}L_{zs_{i}s_{i+1}\cdots s_{j-3}}^{(n)}\cong\theta_{s_{j-1}}^{(n)}\theta_{s_{j}}^{(n)}L_{zs_{k-1}s_{k-2}\cdots s_{j+1}}^{(n)}. \]
However, as mentioned in Case (i), the right hand side of the latter isomorphism has a degree two term, while the left hand side does not, which gives the desired contradiction for Case (iii).
\end{proof}

\begin{lem}\label{Lem:Type7KTech2}
Let $x,y\leq_{R}^{(n)}z$  be distinct such that $\mathtt{sh}(x)=\mathtt{sh}(y)=(n-2,2)$, then 
\[ \theta_{x}^{(n)}L_{z}^{(n)}\not\cong\theta_{y}^{(n)}L_{z}^{(n)}. \]
\end{lem}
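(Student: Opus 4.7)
The plan is to mirror the case analysis of \Cref{Lem:Type7KTech1}, but now with three pairings of insertion tableaux coming from the shape $(n-2,2)$ entries of the list~\eqref{Type7RList2}. Assume for contradiction that $\theta_x^{(n)}L_z^{(n)}\cong\theta_y^{(n)}L_z^{(n)}$. Then the implication in \eqref{Eq:2.9:6} forces $x\sim_L^{(n)}y$, so $\mathtt{Q}_x=\mathtt{Q}_y$, and since $x\neq y$ we must have $\mathtt{P}_x\neq\mathtt{P}_y$. Inspecting \eqref{Type7RList2}, the only shape $(n-2,2)$ tableaux that $\mathtt{P}_w$ can take for $w\leq_R^{(n)}z$ are ${\begin{ytableau}\scriptstyle i&\scriptstyle j\end{ytableau}}^{\ \langle n-2\rangle}$, ${\begin{ytableau}\scriptstyle i&\scriptstyle k\end{ytableau}}^{\ \langle n-2\rangle}$, and ${\begin{ytableau}\scriptstyle j&\scriptstyle k\end{ytableau}}^{\ \langle n-2\rangle}$; up to swapping $x$ and $y$ this yields three cases for the unordered pair $\{\mathtt{P}_x,\mathtt{P}_y\}$.

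For each case, the equivalence in \eqref{Eq:2.11:3} reduces the problem to disproving the isomorphism for one convenient representative pair $(x',y')$ with the prescribed right cells and $x'\sim_L^{(n)}y'$. I would build such representatives as products of simple reflections, starting from the involution labelling the right cell (for instance $s_{i-1}s_{j-1}$ for the first tableau above) and then appending a short string of simple reflections designed to place $x'$ and $y'$ into a common left cell. Given these reduced expressions, \Cref{Lem:2.6:2} allows me to split $C_{x'}^{(n)}$ and $C_{y'}^{(n)}$ into commuting pieces, and then the strong right Bruhat walks of \Cref{Lem:Type7KTech0}, together with \Cref{Lem:2.11:5}, let me rewrite both $\theta_{x'}^{(n)}L_z^{(n)}$ and $\theta_{y'}^{(n)}L_z^{(n)}$ in a simplified form $\theta_{s^{*}}^{(n)}L_{z^{*}}^{(n)}$, with $s^{*}$ a single simple reflection and $z^{*}$ the endpoint of the corresponding walk.

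Once those simplifications are in place, the contradictions should arise in one of two styles, exactly as in \Cref{Lem:Type7KTech1}. Pairings for which one side acquires a strictly longer simplification tail than the other will, upon decategorification and expansion via \eqref{Eq:2.8:1}, produce a dual Kazhdan--Lusztig basis element at a degree strictly higher than anything that can appear on the other side (as in Case~(i) of \Cref{Lem:Type7KTech1}). In the remaining pairings the lengths match but the two walks end at different permutations $z^{*}_x\neq z^{*}_y$, and \Cref{Prop:2.10:2} then identifies the simple tops of the (indecomposable, by \Cref{Type7IC}) modules as $L_{z^{*}_x}^{(n)}\langle -1\rangle$ and $L_{z^{*}_y}^{(n)}\langle -1\rangle$, which are distinct.

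The main obstacle I foresee is the careful verification of Bruhat-walk compatibility at each step, in the spirit of \Cref{Lem:Type7KTech0}, for the specific reduced expressions chosen in each case. In particular, the pairing $\{{\begin{ytableau}\scriptstyle i&\scriptstyle j\end{ytableau}}^{\ \langle n-2\rangle},{\begin{ytableau}\scriptstyle j&\scriptstyle k\end{ytableau}}^{\ \langle n-2\rangle}\}$, which mixes simple reflections on both the $(i-1,i)$ and the $(j-1,k)$ sides of $z$, appears the most delicate; as in Case~(iii) of \Cref{Lem:Type7KTech1}, I expect it to be reducible to a combination of the simplifications obtained in the other two cases rather than requiring a separate ad~hoc argument.
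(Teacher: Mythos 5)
Your overall architecture is the paper's: the same three unordered pairs of shape-$(n-2,2)$ insertion tableaux from the list in \Cref{Type7RList2}, reduction to explicit representatives via the equivalence in \Cref{Eq:2.11:3}, simplification through \Cref{Lem:2.6:2}, the Bruhat walks of \Cref{Lem:Type7KTech0} and \Cref{Lem:2.11:5}, and a contradiction extracted by comparing dual Kazhdan--Lusztig expansions. For the pairings $\{[i,j],[i,k]\}$ and $\{[i,k],[j,k]\}$ this plan goes through essentially as the paper does it (walks (a) and (b) respectively, then a degree comparison after decategorifying). One technical caveat: your appeal to \Cref{Prop:2.10:2} to read off simple tops is not literally available here, because after simplification the modules have the form $\theta_{s}\theta_{s'}L_{w}$ or $\theta_{s}\theta_{s'}\theta_{s''}L_{w}$ rather than $\theta_{s}L_{w}$; the paper instead decategorifies and compares the \emph{unique highest-degree} dual Kazhdan--Lusztig term produced by iterating \Cref{Eq:2.8:1}, which is the correct substitute and is what you should do.

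The genuine gap is in the mixed pairing $\{[i,j],[j,k]\}$. Your expectation that it ``reduces to a combination of the simplifications obtained in the other two cases'' is what happens in Case~(iii) of \Cref{Lem:Type7KTech1}, but only because there all three representative pairs live in the single left cell $\mathtt{Q}=[j]$, so the two sides had already been simplified in Cases~(i) and~(ii). For shape $(n-2,2)$ this fails: a representative pair realising $\mathtt{P}_{x'}=[i,j]$ and $\mathtt{P}_{y'}=[j,k]$ with $x'\sim_{L}^{(n)}y'$ is forced into a left cell different from those used in the first two cases (the paper takes $x'=s_{i-1}s_{j-1}s_{j}\cdots s_{k-1}$ and $y'=s_{k-1}s_{j-1}s_{j-2}\cdots s_{i-1}$), and simplifying these requires the two \emph{remaining} Bruhat walks, Claims~(c) and~(d) of \Cref{Lem:Type7KTech0} --- walking from $s_{j-1}$ up to $s_{k-2}$ and from $s_{j-2}$ down to $s_{i}$ --- which you have not slotted into your plan. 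The contradiction then comes from the two sides having distinct unique degree-three terms, $D_{zs_{j-1}s_{j}\cdots s_{k-3}}^{(n)}$ versus $D_{zs_{j-2}s_{j-3}\cdots s_{i}}^{(n)}$. Also note that non-isomorphism is not transitive, so the contradictions obtained in the first two cases cannot be chained to dispose of the third; this case needs its own argument.
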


\begin{proof}
For contradiction, let us assume that we have an isomorphism 
\begin{equation}\label{Eq:7Tech2-1}
\theta_{x}^{(n)}L_{z}^{(n)}\cong\theta_{y}^{(n)}L_{z}^{(n)}.
\end{equation}
As $x,y\leq_{R}^{(n)}z$, both $\mathtt{P}_{x}$ and $\mathtt{P}_{y}$ belong to the list in \Cref{Type7RList2}. By the implication in \Cref{Eq:2.9:6}, $x\sim_{L}^{(n)}y$, which implies that $\mathtt{Q}_{x}=\mathtt{Q}_{y}$, and as $x$ and $y$ are distinct, this implies that $\mathtt{P}_{x}\neq\mathtt{P}_{y}$. Up to symmetry, this implies that 
\begin{align*}
(i) \hspace{2mm} (\mathtt{P}_{x},\mathtt{P}_{y})=&\left({\begin{ytableau}\scriptstyle i&\scriptstyle j\end{ytableau}}^{\ \langle n-2\rangle}, \hspace{2mm} {\begin{ytableau}\scriptstyle i&\scriptstyle k\end{ytableau}}^{\ \langle n-2\rangle} \right), \hspace{2mm}
(ii) \hspace{2mm} (\mathtt{P}_{x},\mathtt{P}_{y})=\left({\begin{ytableau}\scriptstyle i&\scriptstyle k\end{ytableau}}^{\ \langle n-2\rangle}, \hspace{2mm} {\begin{ytableau}\scriptstyle j&\scriptstyle k\end{ytableau}}^{\ \langle n-2\rangle} \right), \\
&\hspace{5mm} (iii) \hspace{2mm} (\mathtt{P}_{x},\mathtt{P}_{y})=\left({\begin{ytableau}\scriptstyle i&\scriptstyle j\end{ytableau}}^{\ \langle n-2\rangle}, \hspace{2mm} {\begin{ytableau}\scriptstyle j&\scriptstyle k\end{ytableau}}^{\ \langle n-2\rangle} \right).
\end{align*}
Furthermore, by the equivalence in \Cref{Eq:2.11:3}, it suffices to consider any particular pair $(x',y')$ such that $(\mathtt{P}_{x'},\mathtt{P}_{y'})$ is one of the above pairings such that $x'\sim_{L}^{(n)}y'$. Thus, by assumption, we have that $\theta_{x}^{(n)}L_{z}^{(n)}\cong\theta_{y}^{(n)}L_{z}^{(n)}$ holds for $x,y\leq_{R}^{(n)}z$ being (at least) one of the following three pairs:
\begin{align*}
(i) \hspace{2mm} (x,y)=(&s_{i-1}s_{j-1}, \hspace{1mm} s_{i-1}s_{k-1}s_{k-2}\cdots s_{j-1}), \hspace{2mm}
(ii) \hspace{2mm} (x,y)=(s_{k-1}s_{i-1}s_{i}\cdots s_{j-1}, \hspace{1mm} s_{k-1}s_{j-1}), \\
&(iii) \hspace{2mm} (x,y)=(s_{i-1}s_{j-1}s_{j}\cdots s_{k-1}, \hspace{1mm} s_{k-1}s_{j-1}s_{j-2}\cdots s_{i-1}).
\end{align*}
We examine these three cases one at a time, showing that each leads to a contradiction.

{\bf Case (i)}: We have the following isomorphisms:
\[ \theta_{y}^{(n)}L_{z}^{(n)}\cong\theta_{s_{i-1}}^{(n)}\theta_{s_{j-1}}^{(n)}\left(\theta_{s_{j}}^{(n)}\cdots\theta_{s_{k-2}}^{(n)}\theta_{s_{k-1}}^{(n)}L_{z}^{(n)}\right)\cong\theta_{s_{i-1}}^{(n)}\theta_{s_{j-1}}^{(n)}\theta_{s_{j}}^{(n)}L_{zs_{k-1}s_{k-2}\cdots s_{j+1}}^{(n)}. \]
The first isomorphism above follows from \Cref{Lem:2.6:2}, while the last isomorphism follows from Claim~(b) of \Cref{Lem:2.11:5} and Claim~(a) of \Cref{Lem:Type7KTech0}. By the isomorphism in \Cref{Eq:7Tech2-1}, this implies that
\begin{equation*}
\theta_{s_{i-1}}^{(n)}\theta_{s_{j-1}}^{(n)}L_{z}^{(n)}\cong\theta_{s_{i-1}}^{(n)}\theta_{s_{j-1}}^{(n)}\theta_{s_{j}}^{(n)}L_{zs_{k-1}s_{k-2}\cdots s_{j+1}}^{(n)}.
\end{equation*}
Decategorifying this isomorphism yields the equality
\begin{equation}\label{Eq:7Tech1-2}
D_{z}^{(n)}C_{s_{j-1}}^{(n)}C_{s_{i-1}}^{(n)}=D_{zs_{k-1}s_{k-2}\cdots s_{j+1}}^{(n)}C_{s_{j}}^{(n)}C_{s_{j-1}}^{(n)}C_{s_{i-1}}^{(n)}.
\end{equation}
However, from \Cref{Eq:2.8:1}, the right hand side of \Cref{Eq:7Tech1-2} contains the unique degree three term $D_{zs_{k-1}s_{k-2}\cdots s_{j+1}}$ (when written in terms of the dual Kazhdan-Lusztig basis), while the left hand side has no terms of degree three or higher. This gives the desired contradiction for Case (i).

{\bf Case (ii)}: We have the following isomorphisms:
\[ \theta_{x}^{(n)}L_{z}^{(n)}\cong\theta_{s_{k-1}}^{(n)}\theta_{s_{j-1}}^{(n)}\theta_{s_{j-2}}^{(n)}\cdots\theta_{s_{i-1}}^{(n)}L_{z}^{(n)}\cong\theta_{s_{k-1}}^{(n)}\theta_{s_{j-1}}^{(n)}L_{zs_{i}s_{i+1}\cdots s_{j-3}}^{(n)}. \]
The first isomorphism above follows from \Cref{Lem:2.6:2}, while the last isomorphism follows from Claim~(b) of \Cref{Lem:2.11:5} and Claim~(b) of \Cref{Lem:Type7KTech0}. By the isomorphism in \Cref{Eq:7Tech2-1}, this implies that
\begin{equation*}
\theta_{s_{k-1}}^{(n)}\theta_{s_{j-1}}^{(n)}L_{zs_{i}s_{i+1}\cdots s_{j-3}}^{(n)}\cong\theta_{s_{k-1}}^{(n)}\theta_{s_{j-1}}^{(n)}L_{z}^{(n)}.
\end{equation*}
Decategorifying this isomorphism yields the equality
\begin{equation*}
D_{zs_{i}s_{i+1}\cdots s_{j-3}}^{(n)}C_{s_{j-1}}^{(n)}C_{s_{k-1}}^{(n)}=D_{z}^{(n)}C_{s_{j-1}}^{(n)}C_{s_{k-1}}^{(n)}.
\end{equation*}
By \Cref{Eq:2.8:1}, this equality cannot hold as $zs_{i}s_{i+1}\cdots s_{j-3}\neq z$, recalling that $i+2<j$. This gives the desired contradiction for Case (ii).

{\bf Case (iii)}: We have the following two chains of isomorphisms:
\begin{itemize}
\item $\theta_{x}^{(n)}L_{z}^{(n)}\cong\theta_{s_{i-1}}^{(n)}\theta_{s_{k-1}}^{(n)}\left(\theta_{s_{k-2}}^{(n)}\theta_{s_{k-3}}^{(n)}\cdots\theta_{s_{j-1}}^{(n)}L_{z}^{(n)}\right)\cong\theta_{s_{i-1}}^{(n)}\theta_{s_{k-1}}^{(n)}\theta_{s_{k-2}}^{(n)}L_{zs_{j-1}s_{j}\cdots s_{k-3}}^{(n)}$,
\item $\theta_{y}^{(n)}L_{z}^{(n)}\cong\theta_{s_{k-1}}^{(n)}\theta_{s_{i-1}}^{(n)}\left(\theta_{s_{i}}^{(n)}\theta_{s_{i+1}}^{(n)}\cdots\theta_{s_{j-1}}^{(n)}L_{z}^{(n)}\right)\cong\theta_{s_{k-1}}^{(n)}\theta_{s_{i-1}}^{(n)}\theta_{s_{i}}^{(n)}L_{zs_{j-2}s_{j-3}\cdots s_{i}}^{(n)}$.
\end{itemize}
The first isomorphism for the above two chains follows from \Cref{Lem:2.6:2}, while, for the former chain, the second isomorphism follows from Claim~(b) of \Cref{Lem:2.11:5} and Claim~(c) of \Cref{Lem:Type7KTech0}, and, for the latter chain, the second isomorphism follows from Claim~(b) of \Cref{Lem:2.11:5} and Claim~(d) of \Cref{Lem:Type7KTech0}. Now, from our assumed isomorphism in \Cref{Eq:7Tech2-1}, this implies that
\begin{equation*}
\theta_{s_{i-1}}^{(n)}\theta_{s_{k-1}}^{(n)}\theta_{s_{k-2}}^{(n)}L_{zs_{j-1}s_{j}\cdots s_{k-3}}^{(n)}\cong\theta_{s_{k-1}}^{(n)}\theta_{s_{i-1}}^{(n)}\theta_{s_{i}}^{(n)}L_{zs_{j-2}s_{j-3}\cdots s_{i}}^{(n)}.
\end{equation*}
Decategorifying this isomorphism yields the equality
\begin{equation*}
D_{zs_{j-1}s_{j}\cdots s_{k-3}}^{(n)}C_{s_{k-2}}^{(n)}C_{s_{k-1}}^{(n)}C_{s_{i-1}}^{(n)}=D_{zs_{j-2}s_{j-3}\cdots s_{i}}^{(n)}C_{s_{i}}^{(n)}C_{s_{i-1}}^{(n)}C_{s_{k-1}}^{(n)}.
\end{equation*}
By applying \Cref{Eq:2.8:1} (three times for each side), the left hand side of the above equality contains the unique degree three term $D_{zs_{j-1}s_{j}\cdots s_{k-3}}^{(n)}$, while the right hand side contains the unique degree three term $D_{zs_{j-2}s_{j-3}\cdots s_{i}}^{(n)}$. Thus this equality cannot be since $zs_{j-1}s_{j}\cdots s_{k-3}\neq zs_{j-2}s_{j-3}\cdots s_{i}$. This gives the desired contradiction for Case (iii), and completes the proof of the lemma.
\end{proof}

\begin{lem}\label{Lem:Type7KTech3}
Let $x,y\leq_{R}^{(n)}z$  be distinct such that $\mathtt{sh}(x)=\mathtt{sh}(y)=(n-2,1,1)$, then 
\[ \theta_{x}^{(n)}L_{z}^{(n)}\not\cong\theta_{y}^{(n)}L_{z}^{(n)}. \]
\end{lem}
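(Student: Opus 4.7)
The argument will follow the same template as the proofs of Lemmas \ref{Lem:Type7KTech1} and \ref{Lem:Type7KTech2}. Suppose, for contradiction, that $\theta_x^{(n)} L_z^{(n)} \cong \theta_y^{(n)} L_z^{(n)}$. The implication in Equation~\eqref{Eq:2.9:6} gives $x \sim_L^{(n)} y$, hence $\mathtt{Q}_x = \mathtt{Q}_y$; combined with $x \neq y$ this yields $\mathtt{P}_x \neq \mathtt{P}_y$. Inspecting the list in \eqref{Type7RList2}, the only tableaux of shape $(n-2,1,1)$ that remain (after \Cref{Lem:5.7:1} eliminates $\begin{ytableau}\scriptstyle i\\\scriptstyle j\end{ytableau}^{\langle n-2\rangle}$) are $\begin{ytableau}\scriptstyle i\\\scriptstyle k\end{ytableau}^{\langle n-2\rangle}$ and $\begin{ytableau}\scriptstyle j\\\scriptstyle k\end{ytableau}^{\langle n-2\rangle}$. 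Up to symmetry there is thus a single pairing of right cells to consider, and by the equivalence in Equation~\eqref{Eq:2.11:3} I may fix any convenient representatives $(x, y)$ with $x \sim_L^{(n)} y$.

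The representatives I plan to use will be built around the involutions $(i-1,k)$ and $(j-1,k)$ of the two right cells, modified so they share a common left cell. Concretely, I expect to choose a reduced expression $\underline{y}$ representing an element of the right cell of $(j-1,k)$, whose initial segment is a reduced expression $s_{j-1}s_{j}\cdots s_{k-2}$, and take $\underline{x}$ of the form $\underline{y}\cdot s_{j-1}s_{j-2}\cdots s_{i}$ — this additional segment is precisely what shifts the entry $j$ in the insertion tableau down to~$i$ while preserving the recording tableau. The two key inputs are then Claim (c) of \Cref{Lem:Type7KTech0}, whose walk is compatible with $s_{j-1}s_{j}\cdots s_{k-2}$, and Claim (d), whose walk is compatible with $s_{j-1}s_{j-2}\cdots s_{i}$; these two claims, notably unused in Lemmas \ref{Lem:Type7KTech1} and \ref{Lem:Type7KTech2}, appear to have been prepared for exactly this purpose.

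Applying \Cref{Lem:2.11:5} piecewise to these two walks, I expect to rewrite
\[
\theta_y^{(n)}L_z^{(n)} \;\cong\; \theta_{w_1}^{(n)} L_{z_1}^{(n)},
\qquad
\theta_x^{(n)}L_z^{(n)} \;\cong\; \theta_{w_2}^{(n)} L_{z_2}^{(n)},
\]
for explicit Bruhat-translates $z_1,z_2$ of $z$ obtained by multiplying on the right by the respective prefixes of $\underline{y}$ and $\underline{x}$, and $\theta_{w_1}, \theta_{w_2}$ determined by the tails. Decategorifying the assumed isomorphism and expanding via \Cref{Eq:2.8:1} should then exhibit, on one side, a distinguished dual Kazhdan-Lusztig term (either the highest-degree $D_{z_i}$ contribution, or the $D_z$-coefficient located through \Cref{Lem:2.11:4}(1)) that does not appear on the other side because $z_1 \neq z_2$, delivering the contradiction exactly as in the arguments of \Cref{Lem:Type7KTech1} and \Cref{Lem:Type7KTech2}.

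The main obstacle is pinning down the precise reduced expression for $x$ and verifying that the strong right Bruhat walks of Claims (c) and (d) of \Cref{Lem:Type7KTech0} compose cleanly when applied in succession from $z$ — in particular, that the final element of the Claim~(c) walk still has the right descents to initiate the Claim~(d) walk. Because the indices $i, j, k$ are governed by the constraints $i+2<j$ and $j+1<k$, a few boundary subcases may need to be separated out; in particular the case $k = j+2$ flagged in \Cref{Rmk:Type6RList} may require a small ad hoc computation in a fixed symmetric group analogous to those in \Cref{Lem:5.7:1}. Once the walks are in place, the degree comparison and the inequality $z_1 \neq z_2$ should follow directly from the explicit one-line notations, mirroring the closing step in Case~(iii) of \Cref{Lem:Type7KTech2}.
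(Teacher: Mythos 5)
Your setup (reduction to the single pairing of insertion tableaux $\begin{ytableau}\scriptstyle i\\\scriptstyle k\end{ytableau}^{\langle n-2\rangle}$ versus $\begin{ytableau}\scriptstyle j\\\scriptstyle k\end{ytableau}^{\langle n-2\rangle}$, and the freedom via \eqref{Eq:2.11:3} to pick representatives) matches the paper. But the core of your plan does not go through as described, for two reasons. First, the representatives here are (up to the left-cell adjustment) the long transpositions $(i-1,k)$ and $(j-1,k)$, and for such elements $C_{x}^{(n)}$ is \emph{not} a product of $C_{s}^{(n)}$'s (already $C_{s_{1}}C_{s_{2}}C_{s_{1}}=C_{s_{1}s_{2}s_{1}}+C_{s_{1}}$), so you cannot apply \Cref{Lem:2.11:5} ``piecewise'' to walks compatible with prefixes of their reduced expressions; this is exactly the obstruction that made Claims (c) and (d) of \Cref{Lem:Type7KTech0} usable in Case (iii) of \Cref{Lem:Type7KTech2} (where the functors factor into $\theta_{s}$'s — note those claims are used there, contrary to your remark) but unusable here. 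The paper instead takes $y=(j-1,k)$ and $x=(s_{i-1}s_{i}\cdots s_{j-2})\,y$ — multiplying on the \emph{left} by a staircase with support disjoint from $\mathrm{Sup}(y)$, so that \Cref{Lem:2.6:2} gives $C_{x}=C_{s_{i-1}}\cdots C_{s_{j-2}}C_{y}$ — and then walks only that staircase factor using Claim (b) of \Cref{Lem:Type7KTech0}, keeping $\theta_{y}$ intact, to get $\theta_{x}^{(n)}L_{z}^{(n)}\cong\theta_{ys_{j-2}}^{(n)}L_{zs_{i}\cdots s_{j-2}}^{(n)}$. (Your right-multiplication $\underline{y}\cdot s_{j-1}\cdots s_{i}$ is also the wrong side for changing $\mathtt{P}$ while fixing $\mathtt{Q}$.)

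Second, your endgame is the step most likely to fail and is left entirely to hope. After the rewriting, the two sides are single products $D_{zs_{i}\cdots s_{j-2}}^{(n)}C_{ys_{j-2}}^{(n)}$ and $D_{z}^{(n)}C_{y}^{(n)}$ with $\mathbf{a}_{n}(y)=\mathbf{a}_{n}(ys_{j-2})=2$; both sides contain degree-$2$ terms, and the fact that $zs_{i}\cdots s_{j-2}\neq z$ does not by itself force the top-degree parts to disagree, since degree-$2$ terms $D_{a}$ with $a\neq b$ can occur in $D_{b}C_{c}$. The ``unique highest-degree term'' device of \Cref{Lem:Type7KTech1,Lem:Type7KTech2} has no analogue here. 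The paper's actual contradiction is different in kind: it compares the coefficient of the low term $D_{s_{i-1}}^{(n)}$ on both sides of the decategorified identity, converting via \eqref{Eq:2.8:2} to the structure constants $[C_{z}^{(n)}](C_{s_{i-1}}^{(n)}C_{y}^{(n)})=1$ and $[C_{zs_{i}\cdots s_{j-2}}^{(n)}](C_{s_{i-1}}^{(n)}C_{ys_{j-2}}^{(n)})=0$, both evaluated exactly by \Cref{Lem:2.6:2} since $s_{i-1}y=z$ while $s_{i-1}ys_{j-2}\neq zs_{i}\cdots s_{j-2}$. Your proposal contains no substitute for this step, so as written it is a sketch with the decisive argument missing.
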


\begin{proof}
For contradiction, let us assume that we have an isomorphism 
\begin{equation}\label{Eq:7Tech3-1}
\theta_{x}^{(n)}L_{z}^{(n)}\cong\theta_{y}^{(n)}L_{z}^{(n)}.
\end{equation}
As $x,y\leq_{R}^{(n)}z$, both $\mathtt{P}_{x}$ and $\mathtt{P}_{y}$ belong to the lis in \Cref{Type7RList2}. By the implication in \Cref{Eq:2.9:6}, $x\sim_{L}^{(n)}y$, which implies that $\mathtt{Q}_{x}=\mathtt{Q}_{y}$, and as $x$ and $y$ are distinct, this implies that $\mathtt{P}_{x}\neq\mathtt{P}_{y}$. Up to symmetry, this implies that 
\begin{equation*}
\mathtt{P}_{x}={\begin{ytableau}\scriptstyle i\\\scriptstyle k\end{ytableau}}^{\ \langle n-2\rangle} \text{ and } \hspace{2mm} \mathtt{P}_{y}={\begin{ytableau}\scriptstyle j\\\scriptstyle k\end{ytableau}}^{\ \langle n-2\rangle}.
\end{equation*}
Furthermore, by the equivalence in \Cref{Eq:2.11:3}, it suffices to consider any particular pair $(x',y')$ such that $(\mathtt{P}_{x'},\mathtt{P}_{y'})$ is one of the above pairings such that $x'\sim_{L}^{(n)}y'$. Thus, by assumption, we have that $\theta_{x}^{(n)}L_{z}^{(n)}\cong\theta_{y}^{(n)}L_{z}^{(n)}$ holds for $x,y\leq_{R}^{(n)}z$ being the pair
\begin{equation*}
(x,y)=(s_{i-1}s_{i}\cdots s_{k-2}s_{k-1}s_{k-2}\cdots s_{j}s_{j-1}, \hspace{1mm} s_{j-1}s_{j}\cdots s_{k-2}s_{k-1}s_{k-2}\cdots s_{j}s_{j-1}).
\end{equation*}
With this pair, we have the chain of isomorphisms
\[ \theta_{x}^{(n)}L_{z}^{(n)}\cong\theta_{y}^{(n)}\left(\theta_{s_{j-2}}^{(n)}\theta_{s_{j-3}}^{(n)}\cdots\theta_{s_{i-1}}^{(n)}L_{z}^{(n)}\right)\cong\theta_{y}^{(n)}\theta_{s_{j-2}}^{(n)}L_{zs_{i}s_{i+1}\cdots s_{j-2}}^{(n)}\cong\theta_{ys_{j-2}}^{(n)}L_{zs_{i}s_{i+1}\cdots s_{j-2}}^{(n)}. \]
The first and third isomorphism follows from \Cref{Lem:2.6:2}, while the second follows from Claim~(b) of \Cref{Lem:2.11:5} and Claim~(b) of \Cref{Lem:Type7KTech0}. By the isomorphism in \Cref{Eq:7Tech3-1}, this implies that
\[ \theta_{ys_{j-2}}^{(n)}L_{zs_{i}s_{i+1}\cdots s_{j-2}}^{(n)}\cong\theta_{y}^{(n)}L_{z}^{(n)}. \]
Decategorifying this isomorphism yields the equality
\begin{equation}\label{Eq:7Tech3-1n1}
D_{zs_{i}s_{i+1}\cdots s_{j-2}}^{(n)}C_{ys_{j-2}}^{(n)}=D_{z}^{(n)}C_{y}^{(n)}.
\end{equation}
We compare coefficients of the term $D_{s_{i-1}}^{(n)}$ in both sides of \Cref{Eq:7Tech3-1n1}. For the right hand side we have
\[ [D_{s_{i-1}}^{(n)}](D_{z}^{(n)}C_{y}^{(n)})=[C_{z}^{(n)}](C_{s_{i-1}}^{(n)}C_{y}^{(n)})=1, \]
where the first equality follows by \Cref{Eq:2.8:2}, and the last since $C_{s_{i-1}}^{(n)}C_{y}^{(n)}=C_{s_{i-1}y}^{(n)}$ (by \Cref{Lem:2.6:2}) and $s_{i-1}y=z$. As for the left hand side of \Cref{Eq:7Tech3-1n1}, we have
\[ [D_{s_{i-1}}^{(n)}](D_{zs_{i}s_{i+1}\cdots s_{j-2}}^{(n)}C_{ys_{j-2}}^{(n)})=[C_{zs_{i}s_{i+1}\cdots s_{j-2}}^{(n)}](C_{s_{i-1}}^{(n)}C_{ys_{j-2}}^{(n)})=0, \]
where again the first equality follows by \Cref{Eq:2.8:2}, and the last since $C_{s_{i-1}}^{(n)}C_{ys_{j-2}}^{(n)}=C_{s_{i-1}ys_{j-2}}^{(n)}$ (by \Cref{Lem:2.6:2} and recalling that $i+2<j$) and $s_{i-1}ys_{j-2}\neq zs_{i}s_{i+1}\cdots s_{j-2}$. Since these coefficients of the term $D_{s_{i-1}}^{(n)}$ disagree, the equality in \Cref{Eq:7Tech3-1n1} cannot be, giving the desired contradiction.
\end{proof}

\begin{prop}\label{Type7K}
The Type (7$^{\ast}$) involution $z$ is Kostant positive, that is $\mathbf{K}_{n}(z)=\mathtt{true}$.
\end{prop}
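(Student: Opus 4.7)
The plan is to combine the indecomposability result of \Cref{Type7IC} with the three technical non-isomorphism lemmas just established, invoking \Cref{Thm:2.13:1}. By that theorem, since $z$ is an involution (so $z^{-1}=z$), proving $\mathbf{K}_{n}(z)=\mathtt{true}$ requires two things: (1) $\theta_{x}^{(n)}L_{z}^{(n)}$ is indecomposable (or zero) for every $x\leq_{R}^{(n)}z$, and (2) $\theta_{x}^{(n)}L_{z}^{(n)}\not\cong\theta_{y}^{(n)}L_{z}^{(n)}$ in ${^{\mathbb{Z}}}\mathcal{O}_{0}^{(n)}$ for any distinct $x,y\leq_{R}^{(n)}z$. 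Part (1) has already been verified in \Cref{Type7IC}, so the task reduces to (2).

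For (2), I would first apply the implication in \Cref{Eq:2.11:2}: if both modules are non-zero (which is guaranteed by $x,y\leq_{R}^{(n)}z$ together with the equivalences in \Cref{Eq:2.9:2,Eq:2.9:5}) and isomorphic, then $x\sim_{L}^{(n)}y$. This forces $\mathtt{Q}_{x}=\mathtt{Q}_{y}$ and in particular $\mathtt{sh}(x)=\mathtt{sh}(y)$. Consequently, it is enough to verify non-isomorphism within each fixed shape class, restricted to pairs whose insertion tableaux both appear in the reduced list \Cref{Type7RList2} and differ from one another.

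Reading off \Cref{Type7RList2}, the shapes $\emptyset^{\langle n\rangle}$ and $(n-3,2,1)^{\langle n-3\rangle}$ each contribute a single tableau, so no non-trivial pairs arise. The remaining relevant shapes are: shape $(n-1,1)$ with the three one-box insertion tableaux labelled $i$, $j$, $k$; shape $(n-2,2)$ with the three horizontal two-box tableaux $(i,j)$, $(i,k)$, $(j,k)$; and shape $(n-2,1,1)$ with the two vertical tableaux containing $i,k$ and $j,k$. Every candidate pair of distinct insertion tableaux of equal shape falls into exactly one of these three situations.

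The three cases are precisely what was handled in \Cref{Lem:Type7KTech1}, \Cref{Lem:Type7KTech2}, and \Cref{Lem:Type7KTech3}, respectively. Applying them completes the verification of condition (2), and so $\mathbf{K}_{n}(z)=\mathtt{true}$. There is no substantive obstacle at this stage: all the heavy lifting (the Bruhat-walk compatibility in \Cref{Lem:Type7KTech0} and the degree/coefficient comparisons in the three technical lemmas) has been done. The only thing to be careful about is a clean enumeration argument showing that the three lemmas together exhaust all pairs of distinct $x,y\leq_{R}^{(n)}z$ with $x\sim_{L}^{(n)}y$, which is immediate from \Cref{Type7RList2} and the discussion of shapes above.
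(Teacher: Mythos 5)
Your proposal is correct and follows essentially the same route as the paper's proof: invoke \Cref{Thm:2.13:1} together with \Cref{Type7IC} for indecomposability, reduce condition (2) via $x\sim_{L}^{(n)}y$ to distinct pairs of insertion tableaux of equal shape drawn from the list in \Cref{Type7RList2}, and dispatch the three resulting shape classes $(n-1,1)$, $(n-2,2)$, $(n-2,1,1)$ by \Cref{Lem:Type7KTech1}, \Cref{Lem:Type7KTech2}, and \Cref{Lem:Type7KTech3} respectively. The only cosmetic difference is that you cite the categorical implication \Cref{Eq:2.11:2} where the paper uses its decategorified counterpart \Cref{Eq:2.9:6}; both yield $x\sim_{L}^{(n)}y$.
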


\begin{proof}
By \Cref{Thm:2.13:1} and \Cref{Type7IC}, it suffices to prove that we have a non-isomorphism
\begin{equation}\label{Eq:Type3K-1nn}
\theta_{x}^{(n)}L_{z}^{(n)}\not\cong\theta_{y}^{(n)}L_{z}^{(n)},
\end{equation}
for all distinct pairs $x,y\in\mathrm{S}_{n}$ where $x,y\leq_{R}^{(n)}z$. For contradiction, assume there exists $x,y\leq_{R}^{(n)}z$ such that $\theta_{x}^{(n)}L_{z}^{(n)}\cong\theta_{y}^{(n)}L_{z}^{(n)}$. As $x,y\leq_{R}^{(n)}z$, both $\mathtt{P}_{x}$ and $\mathtt{P}_{y}$ belong to the list in \Cref{Type7RList2} above. Moreover, by \Cref{Eq:2.9:6}, we have $x\sim_{L}^{(n)}y$, which implies $\mathtt{Q}_{x}=\mathtt{Q}_{y}$, and, in particular, $\mathtt{sh}(x)=\mathtt{sh}(y)$. Also, since $x$ and $y$ are distinct, this implies that $\mathtt{P}_{x}\neq\mathtt{P}_{y}$. Hence altogether, both $\mathtt{P}_{x}$ and $\mathtt{P}_{y}$ belong to the list in \Cref{Type7RList2}, share the same shape, but are distinct. This implies that the shape $\mathtt{sh}(x)=\mathtt{sh}(y)$ is either $(n-1,1)$, $(n-2,2)$, or $(n-2,1,1)$. Therefore, the previous three lemmas give us the desired contradiction, completing the proof.
\end{proof}

\subsection{Involutions of Shape $(n-3,2,1)$}\label{Sec:5.8}

In this section, we present the final theorems that we obtain by collecting together all the results of the previous sections. To begin, for each of the seven Types (up to conjugation by $w_{0,n}$) of involutions of shape $(n-3,2,1)$, we proved that the Indecomposability Conjecture \ref{Conj:2.12:1} holds. Hence we have the following theorem:

\begin{thm}\label{Thm:5.8:1}
Let $n\geq5$ and $z\in\mathrm{Inv}_{n}$ be such that $\mathtt{sh}(z)=(n-3,2,1)$, then $z$ satisfies the Indecomposability Conjecture \ref{Conj:2.12:1}. That is to say, we have that $\mathbf{KM}_{n}(\star,z)=\mathtt{true}$.
\end{thm}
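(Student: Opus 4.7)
The plan is to reduce this theorem to a straightforward bookkeeping exercise, since all the substantive work has already been carried out type by type in Sections 5.1--5.7. First I would verify that the thirteen-type classification set up at the beginning of Section 5 is exhaustive: given $z\in\mathrm{Inv}_{n}$ with $\mathtt{sh}(z)=(n-3,2,1)$, the standard Young tableau $\tilde{\mathtt{RS}}_{n}(z)$ is determined by the three entries $i$, $j$, $k$ outside the first row, where $i$ sits in position $(2,1)$ of the skew part, and either $i<k<j$ (which splits into Types (1)--(7) depending on the gaps between the three entries) or $i<j<k$ (which splits symmetrically into Types (2$^{\ast}$)--(7$^{\ast}$)). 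I would remark that the case-splitting is done on the quantities $j-i$, $k-i$, $j-k$, or $k-j$, and a quick combinatorial check confirms the seven-plus-six division covers every possibility without overlap.

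Next I would invoke \Cref{Eq:2.12:5}, which asserts $\mathbf{KM}_{n}(\star,z)=\mathbf{KM}_{n}(\star,w_{0,n}zw_{0,n})$, together with the explicit one-line diagram computation (performed in the text immediately after the enumeration of types) showing that conjugation by $w_{0,n}$ sends involutions of Type $(T)$ bijectively onto involutions of Type $(T^{\ast})$ for each $T\in\{2,3,4,5,6,7\}$. Consequently, the theorem reduces to establishing $\mathbf{KM}_{n}(\star,z)=\mathtt{true}$ for the seven representative types (1), (2), (3), (4$^{\ast}$), (5$^{\ast}$), (6$^{\ast}$), (7$^{\ast}$).

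Finally, the proof is completed by citing the seven lemmas \Cref{Type1IC}, \Cref{Type2IC}, \Cref{Type3IC}, \Cref{Type4IC}, \Cref{Type5IC}, \Cref{Type6IC}, and \Cref{Type7IC}, each of which has already handled the corresponding representative type. There is no genuine obstacle at this stage, since all the technical arguments (support-size bounds via \Cref{Eq:2.12:4}, inductive reductions along strong right Bruhat walks via \Cref{Lem:2.11:5}, the same-shape invocation of \cite[Section 5.2]{KiM16}, and the shift-based reduction via \Cref{Lem:3:19} to a Kostant-positive involution in a smaller symmetric group) have been absorbed into those lemmas; the theorem is simply the package of their conclusions.
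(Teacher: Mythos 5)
Your proposal matches the paper's argument exactly: the theorem is stated as a summary of the type-by-type lemmas (\Cref{Type1IC}, \Cref{Type2IC}, \Cref{Type3IC}, \Cref{Type4IC}, \Cref{Type5IC}, \Cref{Type6IC}, \Cref{Type7IC}), combined with the exhaustiveness of the thirteen-type classification and the reduction of each Type $(T)$ to Type $(T^{\ast})$ via conjugation by $w_{0,n}$ and \Cref{Eq:2.12:5}. No further comment is needed.
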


Next, for each of the seven Types of involutions of shape $(n-3,2,1)$, we provided an answer to Kostant's problem. We summarise these results into the following theorem:

\begin{thm}\label{Thm:5.8:2}
Let $n\geq5$ and $z\in\mathrm{Inv}_{n}$ be such that $\mathtt{sh}(z)=(n-3,2,1)$, then $z$ if Kostant negative if and only if it consecutively contains any of the following four patterns:
\[ 2143, \hspace{1mm} 14325, \hspace{1mm} 1536247, \hspace{1mm} 1462537. \]
\end{thm}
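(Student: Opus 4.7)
The plan is to assemble the case-by-case answers to Kostant's problem obtained in \Cref{Sec:5.1} through \Cref{Sec:5.7} into the uniform pattern-avoidance statement of the theorem. The proof naturally splits into the two standard implications.

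For the ``if'' direction, I would invoke the main result of \cite{CM26}: any permutation consecutively containing a Kostant negative pattern is itself Kostant negative. It then suffices to verify that each of the four listed patterns is Kostant negative in its own symmetric group. The pattern $2143\in\mathrm{S}_4$ is Kostant negative by \cite[Theorem~3.6]{CM25-1}, and $14325\in\mathrm{S}_5$ is Kostant negative by \Cref{Thm:4:11}, since it trivially consecutively contains itself. The pattern $1536247\in\mathrm{S}_7$ is exactly the Type~(2) involution $z_{3,6,5}^{7}$, which is Kostant negative by \Cref{Type2K} in the range $2<3<7-3$; and $1462537=w_{0,7}\cdot 1536247\cdot w_{0,7}$ is Kostant negative by \Cref{Eq:2.13:3}.

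For the ``only if'' direction, since Kostant's problem is a left cell invariant by \Cref{Eq:2.13:2} and each left cell contains a unique involution, it suffices to treat involutions $z_{i,j,k}^{n}$, each of which falls into one of the 13 types introduced at the start of \Cref{Sec:5}. Combining \Cref{Type1K,Type2K,Type3K,Type4K,Type5K,Type6K,Type7K} with \Cref{Eq:2.13:3}, the Kostant negative types are precisely Types~(4), (4$^\ast$), (5), (5$^\ast$); Type~(2) with $2<i<n-3$ together with its conjugate Type~(2$^\ast$); and Type~(6$^\ast$) with $j+1\neq n$ together with its conjugate Type~(6). For each of these I would read off a consecutive pattern from the explicit one-line formulas already displayed in Sections~\ref{Sec:5.4}--\ref{Sec:5.7}: Types~(4$^\ast$) and (5$^\ast$) visibly realise $2143$ at position $i-1$; Type~(6$^\ast$) realises $14325$ at position $j-2$; and Type~(2) realises $1536247$ at position $i-2$, the window $(i-2),(i+2),i,(i+3),(i-1),(i+1),(i+4)$ standardising to $1536247$. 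The four remaining cases Types~(4), (5), (6), (2$^\ast$) then contain the \emph{reverse-complement} of the corresponding pattern (obtained from $w_{0,n}$-conjugation, which on one-line notation acts as reverse-complement); since $\{2143,14325\}$ are each self-reverse-complement and $\{1536247,1462537\}$ are each other's reverse-complements, the list of patterns that appears is still exactly the four stated.

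The main obstacle is essentially bookkeeping: identifying $1536247$ inside Type~(2) requires carefully reading \Cref{Eq:5.2:1} and noting that the inequalities $i\geq 3$ and $i\leq n-4$ leave room for the flanking fixed points $i-2$ and $i+4$ that supply the outer $1$ and $7$ of the pattern; in the boundary cases $i=2$ or $i=n-3$ the pattern cannot fit, which is consistent with $z$ being Kostant positive there. All remaining pattern identifications are either immediate from one-line notations already in the paper, or follow via the $w_{0,n}$-conjugation symmetry, so once these routine checks are done the theorem is a direct summary of the type-by-type analyses already established.
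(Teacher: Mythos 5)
Your proposal is correct, and its ``only if'' half is essentially the paper's own argument: reduce to the seven representative types via the $w_{0,n}$-symmetry (the pattern list being closed under reverse-complement), and read the relevant occurrence off the displayed one-line notations — $2143$ at position $i-1$ for Types (4$^{\ast}$), (5$^{\ast}$), $14325$ at position $j-2$ for Type (6$^{\ast}$) with $j+1\neq n$, and $1536247$ at position $i-2$ for Type (2) with $2<i<n-3$, in exact agreement with the parameter ranges in \Cref{Type2K}, \Cref{Type4K}, \Cref{Type5K}, \Cref{Type6K}. Where you genuinely diverge is the ``if'' direction. The paper does not split the equivalence: it verifies type by type that pattern containment is \emph{equivalent} to the Kostant-negative parameter range, which obliges it to also check that the Kostant positive types (1), (3), (7$^{\ast}$) and the boundary cases of (2) and (6$^{\ast}$) avoid all four patterns (carried out explicitly only for Type (6$^{\ast}$), the rest left to the reader). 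You instead discharge ``if'' in one stroke from the heredity result of \cite{CM26}, after certifying that each of the four patterns is itself Kostant negative; your identifications $14325=z_{3,4}^{5}$ via \Cref{Thm:4:11}, $1536247=z_{3,6,5}^{7}$ via \Cref{Type2K}, and $1462537=w_{0,7}(1536247)w_{0,7}$ via \Cref{Eq:2.13:3} are all correct. This buys the contrapositive for free and eliminates the pattern-avoidance bookkeeping for the positive types, at the cost of leaning on \cite{CM26}, which the paper cites as established but in its own proofs only uses the corresponding statements for $2143$ and $14325$ through \cite[Theorem 3.6]{CM25-1}. Both routes are sound.
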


\begin{proof}
Firstly, note that this list of patterns is closed under conjugation by $w_{0,n}$. Therefore, we just need to confirm the statement of the theorem for each of the seven Types of involutions that we considered in the previous sections (being (1), (2), (3), (4), (5$^{\ast}$), (6$^{\ast}$), and (5$^{\ast}$)) and not their duals. 

By going through each of the types, one can easily confirm the theorem by consulting the one-line notation of the given involution, and the corresponding proposition which provides the answer to Kostant's problem for that type. We discuss this for Type (6$^{\ast}$), with others following analogously.

Let $z=z_{i,j,j+1}^{n}$ be of Type (6$^{\ast}$), so $i+2<j<n$. Recall, its one-line notation is given by
\[ z=1\cdots(i-2){\color{teal}i(i-1)}(i+1)\cdots(j-2){\color{teal}(j+1)}j{\color{teal}(j-1)}(j+2)\cdots n, \]
where we have coloured the non-fixed points in {\color{teal}teal}. By focusing on the positions of the {\color{teal}teal} points, one can observe that the patterns 2143, 1536247, and 1462537, can never consecutively appear in $z$ (regardless of the values of $i$ and $j$ which satisfy $i+2<j<n$). As for the pattern 1{\color{teal}4}3{\color{teal}2}5, this can only appear at position $j-2$, and only if $j+2\leq n$. Since $j<n$, the condition of $j+2\leq n$ is equivalent to $j+1\neq n$. Thus, we have shown that $z$ consecutively contains any of the four patterns presented in the statement of the theorem if and only if $j+1\neq n$. By \Cref{Type6K}, this is precisely the same condition for Kostant negatively of $z$, and thus the theorem holds for Type (6$^{\ast}$). As mentioned, one can shown that it holds for the other Types by the same considerations.
\end{proof}

\begin{rmk}\label{Rmk:5.8:3}
By the implications in Equations~\eqref{Eq:2.12:3} and \eqref{Eq:2.13:2}, both the Indecomposability Conjecture and the answer to Kostant's problem are left cell invariants. Therefore, \Cref{Thm:5.8:1} confirms the Indecomposability Conjecture for all permutations with shape $(n-3,2,1)$, and \Cref{Thm:5.8:2} provides an answer to Kostant's problem for all permutations with shape $(n-3,2,1)$.
\end{rmk}

\begin{rmk}\label{Rmk:5.8:456}
We also note that the patterns $1536247$ and $1462537$ are cuspidal 
in the terminology of \cite{CM26} (this means that they are Kostant negative
but any proper consecutive subpattern is Kostant positive).
\end{rmk}

Lastly, by knowing the answer to Kostant's problem for each type, we can prove the following:

\begin{prop}\label{Prop:5.8:4}
The Asymptotic Shape Conjecture \ref{Conj:2.13:5} holds for the shape $\lambda=(2,1)$.
\end{prop}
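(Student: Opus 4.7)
The plan is to mimic the proof of \Cref{Prop:4:15} but with heavier bookkeeping. First I would use the hook length formula on the shape $(n, 2, 1)$ (with $n+3$ boxes) to compute
\[ |\mathtt{SYT}_{n+3}((2,1)^{\langle n\rangle})| = \frac{(n+3)!}{3\,n\,(n+2)\,(n-2)!} = \frac{(n+3)(n+1)(n-1)}{3}, \]
which grows like $n^{3}/3$. Hence the denominator $|\mathtt{SYT}|^{2}$ is of order $n^{6}$. Since Kostant positivity is a left cell invariant by \Cref{Eq:2.13:2}, and each left cell of shape $(2,1)^{\langle n \rangle}$ has size $|\mathtt{SYT}_{n+3}((2,1)^{\langle n\rangle})|$, we have
\[ \mathbf{k}_{n+3}^{-}((2,1)^{\langle n\rangle}) = N(n) \cdot |\mathtt{SYT}_{n+3}((2,1)^{\langle n\rangle})|, \]
where $N(n)$ denotes the number of \emph{involutions} of shape $(2,1)^{\langle n\rangle}$ which are Kostant negative. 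So it is enough to prove $N(n) = O(n^{2})$, since then the ratio in \Cref{Conj:2.13:5} is $O(n^{2}) / O(n^{3}) = O(1/n) \to 0$.

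To bound $N(n)$, I would go through the thirteen types from \Cref{Sec:5} and count Kostant negative involutions in each, working in $\mathrm{S}_{n+3}$. Types $(1), (3), (3^{\ast}), (7), (7^{\ast})$ contribute $0$ by \Cref{Type1K}, \Cref{Type3K}, and \Cref{Type7K} (combined with \Cref{Eq:2.13:3} to pass between starred and unstarred versions). Types $(4), (4^{\ast}), (5), (5^{\ast})$ are entirely Kostant negative by \Cref{Type4K} and \Cref{Type5K}; the respective cardinalities are linear ($n-1$) and quadratic ($(n-1)(n-2)/2$) in $n$. Types $(2)$ and $(2^{\ast})$ each contribute $n-3$ Kostant negative involutions by \Cref{Type2K} (all but the two endpoints $i=2$ and $i=n$), while types $(6)$ and $(6^{\ast})$ each contribute $(n-2)(n-3)/2$ by \Cref{Type6K} (subtracting the $n-2$ Kostant positive ones with $j+1=n+3$ from the total $(n-1)(n-2)/2$). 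Summing these yields $N(n) = 2n(n-2)$, which is indeed quadratic.

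The main obstacle is purely accounting — one must be careful to use the right range for the parameters $i,j,k$ in $\mathrm{S}_{n+3}$ (rather than $\mathrm{S}_{n}$, as stated in the type definitions) and to ensure that the thirteen types genuinely partition the involutions of shape $(n, 2, 1)$, with no double counting between type $(T)$ and $(T^{\ast})$ for $T \in \{2,3,4,5,6,7\}$. Once this is verified, plugging into
\[ \frac{\mathbf{k}_{n+3}^{-}((2,1)^{\langle n\rangle})}{|\mathtt{SYT}_{n+3}((2,1)^{\langle n\rangle})|^{2}} = \frac{N(n)}{|\mathtt{SYT}_{n+3}((2,1)^{\langle n\rangle})|} = \frac{2n(n-2)}{(n+3)(n+1)(n-1)/3} = \frac{6n(n-2)}{(n+3)(n+1)(n-1)} \]
and letting $n \to \infty$ gives the result, since the numerator is quadratic and the denominator cubic in $n$.
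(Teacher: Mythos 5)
Your proposal is correct and follows essentially the same route as the paper: reduce to counting Kostant negative involutions via left-cell invariance, enumerate them type by type using the Kostant's-problem answers from Section 5 (obtaining the same total $2n(n-2)$ as the paper), and compare against the cubic growth of $|\mathtt{SYT}_{3+n}((2,1)^{\langle n\rangle})|$ from the hook length formula. The type-by-type counts and the final ratio all check out against the paper's computation.
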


\begin{proof}
It suffices to prove that the proportion of permutations of shape $(2,1)^{\langle n\rangle}$ which are Kostant negative is asymptotically equal to 0, that is to say,
\[ \frac{\mathbf{k}_{3+n}^{-}((2,1)^{\langle n\rangle})}{|\mathtt{SYT}_{3+n}((2,1)^{\langle n\rangle})|^{2}}\rightarrow 0 \hspace{2mm} \text{(as $n\geq2$ tends towards $\infty$)}. \] 
Let $\mathbf{ki}_{3+n}^{-}((2,1)^{\langle n\rangle})$ be the number of Kostant negative involutions of shape $(n,2,1)$. Then the quantity $\mathbf{k}_{3+n}^{-}((2,1)^{\langle n\rangle})$ is the product of $\mathbf{ki}_{3+n}^{-}((2,1)^{\langle n\rangle})$ and the number of standard Young tableau of shape $(2,1)^{\langle n\rangle}$. Therefore, we have that
\[ \frac{\mathbf{k}_{3+n}^{-}((2,1)^{\langle n\rangle})}{|\mathtt{SYT}_{3+n}((2,1)^{\langle n\rangle})|^{2}}=\frac{\mathbf{ki}_{3+n}^{-}((2,1)^{\langle n\rangle})|\mathtt{SYT}_{3+n}((2,1)^{\langle n\rangle})|}{|\mathtt{SYT}_{3+n}((2,1)^{\langle n\rangle})|^{2}}=\frac{\mathbf{ki}_{3+n}^{-}((2,1)^{\langle n\rangle})}{|\mathtt{SYT}_{3+n}((2,1)^{\langle n\rangle})|}. \]
Let $\mathbf{n}_{3+n}^{(T)}$ be the number of Kostant negative involutions of shape $(n,2,1)$ and Type ($T$) for
\[ T\in\{1,2,2^{\ast},3,3^{\ast},4,4^{\ast},5,5^{\ast},6,6^{\ast},7,7^{\ast}\}. \]
By \Cref{Eq:2.12:5}, we have that $\mathbf{n}_{3+n}^{(T)}=\mathbf{n}_{3+n}^{(T^{\ast})}$, for all $T\in[2,7]$, and thus we have that
\begin{equation}\label{Eq:Prop:5.8:4-1}
\mathbf{ki}_{3+n}^{-}((2,1)^{\langle n\rangle})=\mathbf{n}_{3+n}^{(1)}+2\mathbf{n}_{3+n}^{(2)}+2\mathbf{n}_{3+n}^{(3)}+2\mathbf{n}_{3+n}^{(4^{\ast})}+2\mathbf{n}_{3+n}^{(5^{\ast})}+2\mathbf{n}_{3+n}^{(6^{\ast})}+2\mathbf{n}_{3+n}^{(7^{\ast})}.
\end{equation}
In the previous sections, we answered Kostant's problem for involutions of all these types, thus we can work out all of the terms of the right hand side of Equation \eqref{Eq:Prop:5.8:4-1}. 

For example, let us demonstrate how to work out $\mathbf{n}_{3+n}^{(6^{\ast})}$. Firstly, let $N\geq 6$. Now, expressed as a one-line diagram, a Type (6$^{\ast}$) involution has the following form:
\[
\begin{matrix}\begin{tikzpicture}

\node[V,color=purple, label=below:{{\scriptsize$1$}}] (1) at (1,0){};
\node[draw=none] (1.5) at (1.5,0){{\color{purple}$\dots$}};	
\node[V,color=purple] (2) at (2,0){};
\node[draw=none] (1.5L) at (1.5,0.5){{\color{purple}{\scriptsize$\geq 0$}}};	

\node[V, label=below:{{\scriptsize$i-1$}}] (i-1) at (3,0){};	
\node[V, label=below:{{\scriptsize$i$}}] (i) at (4,0){};
\draw (i-1) to [bend left] (i);

\node[V,color=blue] (i+1) at (5,0){};	
\node[draw=none] (i+1.5) at (5.5,0){{\color{blue}$\dots$}};	
\node[V,color=blue] (j-2) at (6,0){};	
\node[draw=none] (5.5L) at (5.5,0.5){{\color{blue}{\scriptsize$\geq 1$}}};

\node[V, label=below:{{\scriptsize$j-1$}}] (j-1) at (7,0){};	
\node[V] (j) at (8,0){};	
\node[V, label=below:{{\scriptsize$j+1$}}] (j+1) at (9,0){};
\draw (j-1) to [bend left] (j+1);

\node[V,color=red] (j+2) at (10,0){};
\node[draw=none] (j+2.5) at (10.5,0){{\color{red}$\dots$}};	
\node[V,color=red, label=below:{{\scriptsize$N$}}] (N) at (11,0){};	
\node[draw=none] (10.5L) at (10.5,0.5){{\color{red}{\scriptsize$\geq 0$}}};
\end{tikzpicture}\end{matrix}.
\]
As indicated, there can be any non-negative number of {\color{red}red} and {\color{purple}purple} dots, while there needs to be at least $1$ {\color{blue}blue} dot. These conditions correspond to the inequalities imposed on $i$ and $j$ which define the Type (6$^{\ast}$) involutions. From \Cref{Type6K}, a Type (6$^{\ast}$) involution is Kostant negative if and only if its one-line diagram, as displayed above, has a least one {\color{red}red} dot, in particular, $N\geq 7$. Therefore, to calculate the number of Kostant negative involutions of Type (6$^{\ast}$) and shape $(N-3,2,1)$, i.e. the quantity $\mathbf{n}_{N}^{(6^{\ast})}$, we need to count the number of one-line diagrams of the form
\[
\begin{matrix}\begin{tikzpicture}

\node[V,color=purple, label=below:{{\scriptsize$1$}}] (1) at (1,0){};
\node[draw=none] (1.5) at (1.5,0){{\color{purple}$\dots$}};	
\node[V,color=purple] (2) at (2,0){};
\node[draw=none] (1.5L) at (1.5,0.5){{\color{purple}{\scriptsize$\geq 0$}}};	

\node[V, label=below:{{\scriptsize$i-1$}}] (i-1) at (3,0){};	
\node[V, label=below:{{\scriptsize$i$}}] (i) at (4,0){};
\draw (i-1) to [bend left] (i);

\node[V,color=blue] (i+1) at (5,0){};	
\node[draw=none] (i+1.5) at (5.5,0){{\color{blue}$\dots$}};	
\node[V,color=blue] (j-2) at (6,0){};	
\node[draw=none] (5.5L) at (5.5,0.5){{\color{blue}{\scriptsize$\geq 1$}}};

\node[V, label=below:{{\scriptsize$j-1$}}] (j-1) at (7,0){};	
\node[V] (j) at (8,0){};	
\node[V, label=below:{{\scriptsize$j+1$}}] (j+1) at (9,0){};
\draw (j-1) to [bend left] (j+1);

\node[V,color=red] (j+2) at (10,0){};
\node[draw=none] (j+2.5) at (10.5,0){{\color{red}$\dots$}};	
\node[V,color=red, label=below:{{\scriptsize$N$}}] (N) at (11,0){};	
\node[draw=none] (10.5L) at (10.5,0.5){{\color{red}{\scriptsize$\geq 1$}}};
\end{tikzpicture}\end{matrix}.
\]
The number of such diagrams where there is precisely one {\color{blue}blue} dot is $N-6$, where there is precisely two {\color{blue}blue} dots is $N-7$, and so on, with the maximum number of {\color{blue}blue} dots being $N-6$. Thus,
\[ \mathbf{n}_{N}^{(6^{\ast})}=\sum_{i=1}^{N-6}(N-5-i)=\frac{1}{2}(N-6)(N-5). \]
Therefore, substituting in $3+n$ for $N$, we obtain
\[ \mathbf{n}_{3+n}^{(6^{\ast})}=\frac{1}{2}(n-3)(n-2). \]
For the other quantities $\mathbf{n}_{3+n}^{(T)}$, we immediately have that $\mathbf{n}_{3+n}^{(1)}=\mathbf{n}_{3+n}^{(3)}=\mathbf{n}_{3+n}^{(7^{\ast})}=0$ (since the corresponding types are always Kostant positive), while the remaining quantities can be calculated by similar means to what was done above. When doing this, by \Cref{Eq:Prop:5.8:4-1}, we obtain
\begin{equation*}
\mathbf{ki}_{3+n}^{-}((2,1)^{\langle n\rangle})=2(n-3)+2(n-1)+2\left((n-1)^{2}-\frac{(n-1)n}{2}\right) +2\left(\frac{1}{2}(n-3)(n-2)\right). 
\end{equation*}
In particular, $\mathbf{ki}_{3+n}^{-}((2,1)^{\langle n\rangle})$ is quadratic in $n$. On the other hand, by the hook-length formula,
\[ |\mathtt{SYT}_{3+n}((2,1)^{\langle n\rangle})|=\frac{(n+3)!}{(n-2)!(n)(n+2)(3)}=\frac{1}{3}(n+3)(n+1)(n-1), \]
which is cubic in $n$. Therefore, we see that
\[ \frac{\mathbf{ki}_{3+n}^{-}((2,1)^{\langle n\rangle})}{|\mathtt{SYT}_{3+n}((2,1)^{\langle n\rangle})|}\rightarrow0 \hspace{3mm} (\text{as } 2\leq n\rightarrow\infty). \]
\end{proof}


\vspace{2mm}

\noindent
Department of Mathematics, Uppsala University, Box. 480,
SE-75106, Uppsala, \\ SWEDEN, 
emails:
{\tt samuel.creedon\symbol{64}math.uu.se}\hspace{5mm}
{\tt mazor\symbol{64}math.uu.se}


\begin{thebibliography}{9999999}


\bibitem[BB80]{BB} Beilinson, A.; Bernstein, J.
Localisation de $\mathfrak{g}$-modules.
C. R. Acad. Sci. Paris Ser. I Math. {\bf 292} (1981), no. 1, 15--18.

\bibitem[BB05]{BB05} Bjorner, A.; Brenti, F. 
Combinatorics of coxeter groups. 
Springer, (2005).  

\bibitem[BG80]{BG80} Bernstein, I.; Gelfand, S.;
Tensor products of finite- and infinite-dimensional 
representations of semisimple Lie algebras. Compositio Math. {\bf 41} (1980), no. 2, 245--285.

\bibitem[BGG76]{BGG76} Bernstein, I.; Gelfand, I.; Gelfand, S. 
A certain category of $\mathfrak{g}$-modules. 
Funkcional. Anal. i Prilozhen. {\bf 10} (1976), no. 2, 1--8.

\bibitem[BK81]{BK} Brylinski, J.-L.; Kashiwara, M.
Kazhdan-Lusztig conjecture and holonomic systems.
Invent. Math. {\bf 64} (1981), no. 3, 387--410.

\bibitem[BW01]{BW01} Billey, Sara C.; Warrington, Gregory S.
Kazhdan-Lusztig polynomials for 321-hexagon-avoiding permutations.
J. Algebraic Combin. {\bf 13} (2001), no. 2, 111--136.

\bibitem[CM24]{CM24} Creedon, S.; Mazorchuk, V.
Almost all permutations and involutions are Kostant negative.
Mathematica Pannonica (N. S.) {\bf 31} (2025), no. 1, 54-59. 
https://doi.org/10.1556/314.2025.00006

\bibitem[CM25-1]{CM25-1} Creedon, S.; Mazorchuk, V.
Consecutive Patterns, Kostant's Problem and Type $A_{6}$.
Preprint arXiv:2503.07809. To appear in Int. J. Algebra Comput.

\bibitem[CM25-2]{CM25-2} Creedon, S.; Mazorchuk, V.
Hecke combinatorics, K{\aa}hrstr{\"o}m's conditions and Kostant's problem.
Preprint arXiv:2510.05817.

\bibitem[CM26]{CM26} Creedon, S.; Mazorchuk, V.
Kostant cuspidal permutations.
Preprint arXiv:2601.09824.

\bibitem[CMZ19]{CMZ19} Coulembier, K.; Mazorchuk, V.; Zhang, X.
Indecomposable manipulations with simple modules in category $\mathcal{O}$.
Math. Res. Lett. {\bf 26} (2019), no. 2, 447--449.


\bibitem[Fu97]{Fu97} William Fulton. Young tableaux: with applications to representation theory and geometry. 
London Mathematical Society Student Texts (Book 35), Cambridge University Press, 1997. 

\bibitem[Ge03]{Ge03} Geck, M. On the induction of Kazhdan-Lusztig cells. 
Bull. London Math. Soc. {\bf 35} (2003), 608--614.

\bibitem[Ge06]{Ge06} Geck, M. Kazhdan-Lusztig cells and the Murphy basis.
Proc. London Math. Soc. (3) {\bf 93} (2006), no. 3, 635--665.


\bibitem[HHS25]{HHS25} He, Z., Hu, J., Sun, Y. Relations between Kazhdan-Lusztig bases, Murphy bases and seminormal bases. 
Communications in Algebra, 53(7), (2025), 2607--2624. 

\bibitem[Hu08]{Hu08} Humphreys, J.
Representations of semisimple Lie algebras in the BGG category $\mathcal{O}$.
Grad. Stud. Math., {\bf 94}
American Mathematical Society, Providence, RI, 2008, xvi+289 pp.

\bibitem[Jo80]{Jo80} Joseph, A. Kostant's problem, Goldie rank and the Gelfand-Kirillov 
conjecture. Invent. Math. {\bf 56} (1980), no. 3, 191--213.

\bibitem[K10]{K10} K{\aa}hrstr{\"o}m, J. Kostant's problem and parabolic 
subgroups. Glasg. Math. J. {\bf 52} (2010), no. 1, 19--32.


\bibitem[KM10]{KM10} K{\aa}hrstr{\"o}m, J.; Mazorchuk, V. A new approach to 
Kostant's problem. Algebra Number Theory {\bf 4} (2010), no. 3, 231--254.

\bibitem[KiM16]{KiM16} Kildetoft, T.; Mazorchuk, V.
Parabolic projective functors in type A. Adv. Math. {\bf 301} (2016), 785--803.

\bibitem[KMM23]{KMM23} Ko, H.; Mazorchuk, V.; Mr{\dj}en, R. Some homological 
properties of category $\mathcal{O}$, V. Int. Math. Res. Not. IMRN 
{\bf 2023}, no. 4, 3329--373.

\bibitem[KL79]{KL79} Kazhdan, D.; Lusztig, G. Representations of Coxeter groups 
and Hecke algebras. Invent. Math. {\bf 53} (1979), no. 2, 165--184.

\bibitem[Lu84]{Lu84} Lusztig, G. Characters of reductive groups 
over a finite field. Annals of Mathematics Studies, {\bf 107}. 
Princeton University Press, Princeton, NJ, 1984. xxi+384 pp. 


\bibitem[Lu03]{Lu03} Lusztig, G. Hekce Algebras with Unequal Parameters. 
American Mathematical Society, (2003).


\bibitem[Ma12]{Ma12} Mazorchuk, V. Lectures on algebraic categorification. QGM
Master Class Series. European Mathematical Society (EMS), Z{\"u}rich, (2012).

\bibitem[Ma23]{Ma23} Mazorchuk, V. The tale of Kostant's problem.
Contemp. Math. {\bf 829}, American Mathematical Society, Providence, RI, 
2025, 243--262.

\bibitem[MM11]{MM11} Mazorchuk, V.; Miemietz, V. Cell 2-representations of 
finitary 2-categories. Compos. Math. {\bf 147} (2011), no. 5, 1519--1545.


\bibitem[MMM24]{MMM24} Mackaay, M.; Mazorchuk, V.; Miemietz, V.
Kostant's problem for fully commutative permutations,
Rev. Mat. Iberoam. {\bf 40} (2024), no. 2, 537--563.

\bibitem[MS08]{MS08} Mazorchuk, V.; Stroppel, C. Categorification of (induced) 
cell modules and the rough structure of generalised Verma modules.
Adv. Math. {\bf 219} (2008), no.4, 1363--1426.

\bibitem[Mi15]{Mi15} Michel, J. The development version of the CHEVIE package of GAP3 
Journal of Algebra {\bf 435} (2015) 308--336.



\bibitem[Sa01]{Sa01} Sagan, B. The symmetric group. Representations, combinatorial 
algorithms, and symmetric functions. Second edition. Graduate Texts in Mathematics, 
{\bf 203}. Springer-Verlag, New York, 2001. xvi+238 pp.

\bibitem[Sc61]{Sc61} Schensted, C. Longest increasing and decreasing subsequences.
Canadian J. Math. {\bf 13} (1961), 179--191.


\bibitem[So90]{So90} Soergel, W. Kategorie $\mathcal{O}$, perverse Garben und 
Moduln {\"u}ber den Koinvarianten zur Weylgruppe.
J. Amer. Math. Soc. {\bf 3} (1990), no.2, 421--445.

\bibitem[So92]{So92} Soergel, The combinatorics of Harish-Chandra bimodules.
J. Reine Angew. Math. {\bf 429} (1992), 49--74.

\bibitem[So07]{So07} Soergel, W. Kazhdan-Lusztig-Polynome und unzerlegbare 
Bimoduln {\"u}ber Polynomringen. 
J. Inst. Math. Jussieu {\bf 6} (2007), no. 3, 501--525.

\bibitem[St03]{St03} Stroppel, C. 
Category $\mathcal{O}$: gradings and translation functors
J. Algebra {\bf 268} (2003), no. 1, 301--326.

\bibitem[T06]{T06} Ta\c{s}kin, M. 
Properties of four partial orders on standard Young tableaux. 
J. Combin. Theory Ser. A {\bf 113}. (2006), no. 6, 1092--1119.


\end{thebibliography}
\end{document}